\newtheorem{definition}{Definition}[section]
\newtheorem{theorem}{Theorem}[section]
\newtheorem{lemma}[theorem]{Lemma}
\newtheorem{proposition}{Proposition}[section]
\newtheorem{corollary}{Corollary}[theorem]
\theoremstyle{definition}
\newtheorem{notation}{Notation}[section]
\newtheorem{remark}{Remark}[section]
\newtheorem{task}{Task}[section]
\numberwithin{equation}{section}
\DeclareMathOperator{\Tr}{Tr} 
\newcommand{\lHS}{\llbracket}
\newcommand{\rHS}{\rrbracket}
\newcommand{\Ft}{\widehat}
\newcommand{\Gp}{\mathbb{G}}
\newcommand{\DuGp}{\Ft{\mathbb{G}}}
\newcommand{\SU}{\mathrm{SU}}
\newcommand{\Op}{\mathbf{Op}}
\newcommand{\dist}{\mathrm{dist}}
\newcommand{\Df}{\mathbf{D}}
\newcommand{\hN}{\frac{1}{2}\mathbb{N}_0}
\newcommand{\Ind}{\{+,-,0\}}
\author{Chengyang Shao}
\title{Para-differential Calculus on Compact Lie Groups and Spherical Capillary Water Waves}
\begin{document}
\maketitle
\begin{abstract}
This paper provides a para-differential calculus toolbox on compact Lie groups and homogeneous spaces. It helps to understand non-local, nonlinear partial differential operators with low regularity on manifolds with high symmetry. In particular, the paper provides a para-linearization formula for the Dirichlet-Neumann operator of a distorted 2-sphere, a key ingredient in understanding long-time behaviour of spherical capillary water waves. As an initial application, the paper provides a new proof of local well-posedness for spherical capillary water waves equation under weaker regularity assumptions compared to previous results.
\end{abstract}
\begin{spacing}{1.2}

\section{Introduction}
This is the sequel of the author's previous paper \cite{Shao2023Toolbox}, being the second split from \cite{Shao2023}. In this paper, we reduce the spherical capillary water waves equation into a para-differential form, thus giving a new proof of the local well-posedness of the Cauchy problem. This reduction sets stage for the study of long-time behaviour of the system.

\subsection{Equation of Motion for a Water Drop}
We are interested in the initial value problem for the motion of a water droplet under zero gravity, which is the starting point of a program proposed by the author in \cite{Shao2022}. Let us first describe the physical scenario. We pose the following assumptions on the fluid motion we aim to describe:
\begin{itemize}
    \item (A1) The perfect, incompressible, irrotational fluid of constant density $\rho_0$ occupies a smooth, compact region in $\mathbb{R}^3$.
    \item (A2) There is no gravity or any other external force in presence.
    \item (A3) The air-fluid interface is governed by the Young-Laplace law, and the effect of air flow is neglected.
\end{itemize}
Hydrodynamics of water droplet governed by (A1)-(A3) is a long-standing interest for hydrophysicists and astronautical engineers. To mention a few, hydrophysicists Tsamopoulos-Brown \cite{TB1983}, Natarajan-Brown \cite{NB1986} and Lundgren-Mansour \cite{LM1988} all carried out initial ``weakly nonlinear analysis" towards the fluid motion satisfying assumptions (A1)-(A3). There are also numbers of visual materials on such experiments conducted in spacecrafts by astronauts\footnote{See for example \url{https://www.youtube.com/watch?v=H_qPWZbxFl8&t} or \url{https://www.youtube.com/watch?v=e6Faq1AmISI&t}.}.

We assume that the boundary of the fluid region has the topological type of a smooth compact orientable surface $M$, and is described by a time-dependent embedding $\iota(t,\cdot):M\to\mathbb{R}^3$. We will denote a point on $M$ by $x$, the image of $M$ under $\iota(t,\cdot)$ by $M_t$, and the region enclosed by $M_t$ by $\Omega_t$. The unit outer normal will be denoted by $N(\iota)$. We also write $\bar\nabla$ for the flat connection on $\mathbb{R}^3$.

Adopting assumption (A3), we have the Young-Laplace equation:
$$
\sigma_0 H_\iota=p_i-p_e,
$$
where $H_\iota$ is the (scalar) mean curvature of the embedding, $\sigma_0$ is the surface tension coefficient (which is assumed to be a constant), and $p_i,p_e$ are respectively the inner and exterior air pressure at the boundary; they are scalar functions on the boundary and we assume that $p_e$ is a constant. Under assumptions (A1) and (A2), we obtain Bernoulli's equation, sometimes referred as the pressure balance condition, on the evolving surface:
\begin{equation}\label{BernEq}
\left.\frac{\partial\Phi}{\partial t}\right|_{M_t}+\frac{1}{2}|\bar\nabla\Phi|_{M_t}|^2-p_e=\frac{\sigma_0}{\rho_0}H_\iota,
\end{equation}
where $\Phi$ is the velocity potential of the velocity field of the air. Note that $\Phi$ is determined up to a function in $t$, so we shall leave the external (constant) pressure $p_e$ around for convenience reason. According to assumption (A1), the function $\Phi$ is a harmonic function within the region $\Omega_t$, so it is uniquely determined by its boundary value, and the velocity field within $\Omega_t$ is $\bar\nabla\Phi$. The kinetic equation on the free boundary $M_t$ is naturally obtained as
\begin{equation}\label{VelEq}
\frac{\partial\iota}{\partial t}\cdot N(\iota)
=\bar\nabla\Phi|_{M_t}\cdot N(\iota).
\end{equation}

We would like to discuss the conservation laws for (\ref{BernEq})-(\ref{VelEq}). The conservation of volume $\text{Vol}(\Omega_t)=\text{Vol}(\Omega_0)$ is a consequence of incompressibility. Since the flow is Eulerian without any external force, the center of mass must move at a uniform speed along a fixed direction, i.e.
\begin{equation}\label{CenterofMass}
\frac{1}{\mathrm{Vol}(\Omega_0)}\int_{\Omega_t}Pd\mathrm{Vol}(P)=V_0t+C_0,
\end{equation}
with Vol being the Lebesgue measure, $P$ marking points in $\mathbb{R}^3$, $V_0$ and $C_0$ being the velocity and starting position of center of mass respectively. Furthermore, the total momentum is conserved, and since the flow is a potential incompressible one, the conservation of total momentum is expressed as
\begin{equation}\label{ConsMomentum}
\int_{M_t}\rho_0\Phi N(\iota)d\mathrm{Area}(M_t)\equiv\rho_0\mathrm{Vol}(\Omega_0) V_0.
\end{equation}
Most importantly, as Zakharov pointed out in \cite{Zakharov1968}, (\ref{BernEq})-(\ref{VelEq}) is a Hamilton system, with Hamiltonian
\begin{equation}\label{Hamiltonian}
\sigma_0\mathrm{Area}(\iota)+\frac{1}{2}\int_{\Omega_t}\rho_0|\bar\nabla\Phi|^2d\mathrm{Vol}
=\sigma_0\mathrm{Area}(M_t)+\frac{1}{2}\int_{M_t}\rho_0\Phi|_{M_t}\left(\bar\nabla\Phi|_{M_t}\cdot N(\iota)\right) d\mathrm{Area},
\end{equation}
i.e. potential energy proportional to surface area plus kinetic energy of the fluid.

We explain why the scenario of oscillating almost spherical water drop is of particular interest. If the system is static, then the kinetic equation (\ref{VelEq}) implies that the outer normal derivative of the velocity potential $\Phi$ is zero, so $\Phi\equiv\text{const}$. The Bernoulli equation on boundary (\ref{BernEq}) then implies that the embedded surface $\iota(M)$ has constant mean curvature. Although the topology of $M$ is not prescribed, the famous rigidity theorem of Alexandrov asserts that $\iota(M)$ must be an Euclidean sphere (see for example \cite{MP2019}). Thus, Euclidean ball is the \emph{only} static configuration for the physical system, and oscillating almost spherical water drop is the \emph{only} possible perturbative oscillation near a static configuration. We emphasize that surface tension plays an important role: since there is no gravity in presence, it is the only constraining force preventing the fluid from spreading in the space. This is in contrast to the familiar gravity or gravity-capillary waves oscillating near the horizontal level.

\subsection{Spherical Capillary Water Waves Equation}
It is not hard to verify that the system (\ref{BernEq})-(\ref{VelEq}) is invariant if $\iota$ is composed with a diffeomorphism of $M$. We may thus regard it as a \emph{geometric flow}. If we are only interested in perturbation near a given configuration, we may reduce system (\ref{BernEq})-(\ref{VelEq}) to a non-degenerate dispersive differential system concerning two scalar functions defined on $M$, just as Beyer and G\"{u}nther did in \cite{BeGu1998}. In fact, during a short time of evolution, the interface can be represented as the graph of a function defined on the initial surface: if $\iota_0:M\to\mathbb{R}^3$ is a fixed embedding close to the initial embedding $\iota(0,x)$, we may assume that $\iota(t,x)=\iota_0(x)+\zeta(t,x)N_0(x)$, where $\zeta$ is a scalar ``height" function defined on $M_0$ and $N_0$ is the outer normal vector field of $M_0$. See Figure \ref{Height}.

With this observation, we shall transform the system (\ref{BernEq})-(\ref{VelEq}) into a non-local system of two real scalar functions $(\zeta,\phi)$ defined on $M$, where $\zeta$ is the ``height" function described as above, and $\phi(t,x)=\Phi(t,\iota(t,x))$ is the boundary value of the velocity potential, pulled back to the underlying manifold $M$.

\begin{figure}[h]\label{Height}
\centering
\includegraphics[width=0.4\textwidth,angle=0]{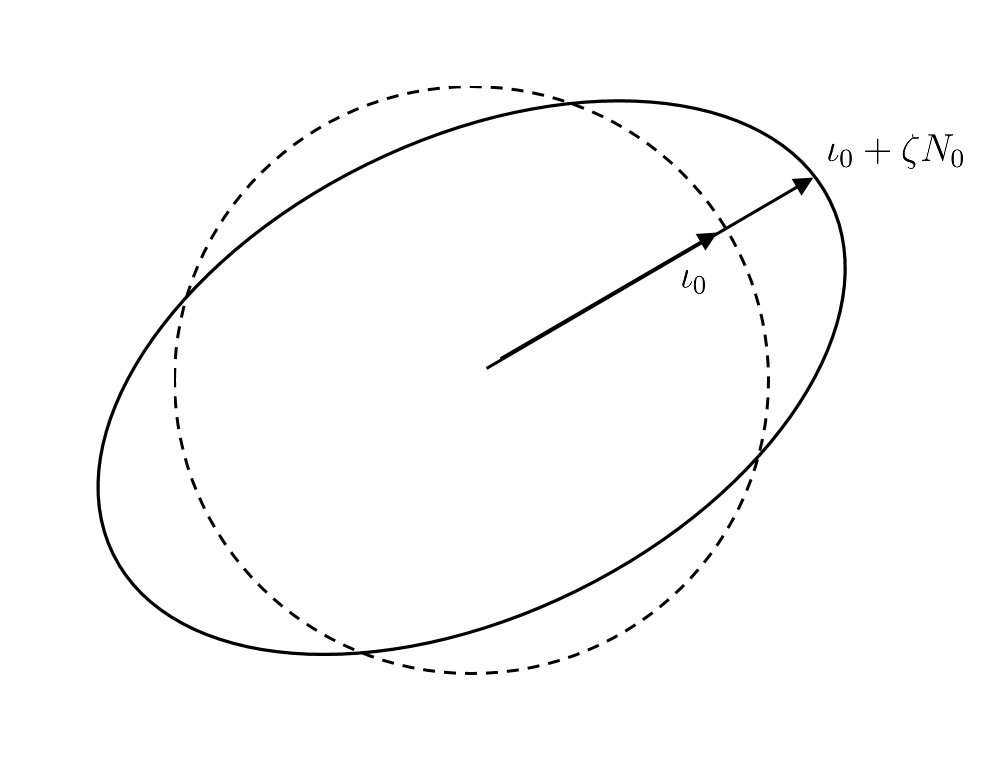}
\caption{The surface defined by a height function.}
\end{figure}

Set
$$
B_\zeta:\phi\to\bar\nabla\Phi|_{M_t}
$$
to be the operator mapping the pulled-back Dirichlet boundary value $\phi$ to the boundary value of the gradient of $\Phi$. Define the \emph{Dirichlet-Neumann operator} $D[\zeta]\phi$ corresponding to the region enclosed by the image of $\iota=\iota_0+\zeta N_0$ as the weighted outer normal derivative:
$$
D[\zeta]\phi:=\frac{\bar{\nabla}\Phi\cdot N(\iota)}{N_0\cdot N(\iota)}.
$$
Thus the kinetic equation (\ref{VelEq}) becomes
$$
\frac{\partial\zeta}{\partial t}=D[\zeta]\phi.
$$

We also need to calculate the restriction of $\partial_t\Phi$ on $M_t$ in terms of $\phi$ and $\iota$. By the chain rule,
$$
\begin{aligned}
\left.\frac{\partial\Phi}{\partial t}\right|_{M_t}
&=\frac{\partial\phi}{\partial t}-\bar\nabla\Phi|_{M_t}\cdot\frac{\partial\iota}{\partial t}\\
&=\frac{\partial\phi}{\partial t}-\left(\bar\nabla\Phi|_{M_t}\cdot N_0\right)\frac{\partial\zeta}{\partial t}\\
&=\frac{\partial\phi}{\partial t}-\left(\bar\nabla\Phi|_{M_t}\cdot N_0\right)\cdot D[\zeta]\phi.
\end{aligned}
$$
We thus arrive at the following nonlinear system:
\begin{equation}\label{EQ(M)}\tag{EQ(M)}
\left\{
\begin{aligned}
\frac{\partial\zeta}{\partial t}
&=D[\zeta]\phi,\\
\frac{\partial\phi}{\partial t}
&=\left(B_\zeta\phi\cdot N_0\right)\cdot D[\zeta]\phi-\frac{1}{2}|B_\zeta\phi|^2+\frac{\sigma_0}{\rho_0}H(\zeta)+p_e,
\end{aligned}
\right.
\end{equation}
where $H(\zeta)$ is the (scalar) mean curvature of the surface given by the height function $\zeta$.

For $M=\mathbb{S}^2$, the case that we shall discuss in detail, we name the system as \emph{spherical capillary water waves equation}. To simplify our discussion, we will be working under the center of mass frame, and require the mean of $\phi$ vanishes for all $t$. This could be easily accomplished by absorbing the mean into $\phi$ since the equation is invariant by a shift of $\phi$. The quadratic terms in the right-hand-side of (\ref{EQ(M)}) are also computed explicitly using the Riemann connection $\nabla_0$ on $\mathbb{S}^2$ corresponding to the standard spherical metric $g_0$. In a word, from now on, we will be focusing on the non-dimensional capillary spherical water waves equation
\begin{equation}\label{EQ}\tag{EQ}
\left\{
\begin{aligned}
\frac{\partial\zeta}{\partial t}
&=D[\zeta]\phi,\\
\frac{\partial\phi}{\partial t}
&=-\frac{|\nabla_0\phi|_{g_0}^2}{2(1+\zeta)^2}
+\frac{\left((1+\zeta)^2D[\zeta]\phi+\nabla_0\zeta\cdot\nabla_0\phi\right)^2}{2\big((1+\zeta)^2+|\nabla_0\zeta|_{g_0}^2\big)(1+\zeta)^2}
+\frac{\sigma_0}{\rho_0}H(\zeta)+p_e.
\end{aligned}
\right.
\end{equation}
By suitable spatial scaling, we may assume that $\sigma_0/\rho_0=1$, the total volume of the fluid is $4\pi/3$, and $p_e=-2$, so that the conservation of volume is expressed as
\begin{equation}\label{Vol}
\frac{1}{3}\int_{\mathbb{S}^2}(1+\zeta)^3d\mu_0\equiv\frac{4\pi}{3},
\end{equation}
where $\mu_0$ is the standard measure on $\mathbb{S}^2$. The inertial movement of center of mass (\ref{CenterofMass}) and conservation of total momentum (\ref{ConsMomentum}) under our center of mass frame are expressed respectively as
\begin{equation}\label{SphereCons.}
\int_{\mathbb{S}^2}(1+\zeta)^4N_0d\mu_0=0,
\quad
\int_{\mathbb{S}^2}\phi N(\iota)d\mu(\iota)=0,
\end{equation}
where $\mu(\iota)$ is the induced surface measure. Further, the Hamiltonian of the system is
\begin{equation}\label{SHamilton}
\mathbf{H}[\zeta,\phi]=\mathrm{Area}(\iota)+\frac{1}{2}\int_{\mathbb{S}^2}\frac{\phi\cdot D[\zeta]\phi}{N_0\cdot N(\iota)} d\mu(\iota),
\end{equation}
and for a solution $(\zeta,\phi)$ there holds $\mathbf{H}[\zeta,\phi]\equiv\text{Const.}$.

The system (\ref{EQ}) resembles the well-known Zakharov-Craig-Schanz-Sulem formulation of gravity-capillary water waves equation \cite{CSS1992}. Formal linearization of (\ref{EQ}) around the (unique) static solution $(\zeta,\phi)=(0,0)$ indicates that the system is a dispersive one of order 3/2. In fact, if we denote by $\mathcal{E}_n$ the space of degree $n$ spherical harmonics, then the Dirichlet-Neumann operator $D[0]$ acts as the multiplier $n$ on $\mathcal{E}_n$, and the linearization $H'(0)$ acts as the multiplier $-(n-2)(n+1)$ on $\mathcal{E}_n$. Consequently, with $\Pi^{(n)}$ being the projection to $\mathcal{E}_n$, we can introduce the diagonal unknown 
\begin{equation}\label{Formal1}
u=\Pi^{(0)}\zeta+\Pi^{(1)}\zeta
+\sum_{n\geq2}\sqrt{(n-1)(n+2)}\cdot\Pi^{(n)}\zeta+i\sum_{n\geq1}\sqrt{n}\cdot\Pi^{(n)}\phi,
\end{equation}
so that the linearization (\ref{EQ}) around the static solution is a linear dispersive equation
\begin{equation}
\frac{\partial u}{\partial t}+i\Lambda u=0,
\end{equation}
where the $3/2$-order elliptic operator $\Lambda$ is given by a multiplier
\begin{equation}\label{DispersiveRelation}
\Lambda=\sum_{n\geq2}\sqrt{n(n-1)(n+2)}\cdot \Pi^{(n)}=:\sum_{n\geq0}\Lambda(n)\Pi^{(n)}.
\end{equation}
The system (\ref{EQ}) for small amplitude oscillation can be formally re-written as
\begin{equation}\label{Formal4}
\frac{\partial u}{\partial t}+i\Lambda u=\mathfrak{N}(u),
\end{equation}
with $\mathfrak{N}(u)=O(u^{\otimes2})$ vanishing quadratically as $u\simeq0$. In \cite{Shao2022}, the author provided formal analysis regarding small amplitude solutions and pointed out obstructions in understanding the long time behaviour.

From a hydrophysical point of view, the linearized equation of (\ref{EQ}) and the dispersive relation (\ref{DispersiveRelation}) dates back to Lord Rayleigh \cite{Rayleigh1879} (see also the well-known textbook \cite{Lamb1932}, Section 275). It has been studied by several widely cited papers on hydrodynamics \cite{TB1983} \cite{NB1986} \cite{LM1988}, regarding the following topics: Poincar\'{e}-Lindstedt series of periodic solutions (assuming its existence, which is not guaranteed), possible ``chaotic" behaviour, and numerical simulation.

From a mathematical point of view, it is already known that the general free-boundary problem for Euler equation is locally well-posed, due to the work of Coutand-Shkoller \cite{CoSh2007} and Shatah-Zeng \cite{ShZe2008}. The curl equation ensures that if the flow is curl free at the beginning, then it remains so during the evolution. This justifies the local well-posedness of Cauchy problem of (\ref{EQ}), at least for sufficiently regular initial data. On the other hand, Beyer-G\"{u}nther \cite{BeGu1998} also showed that the Cauchy problem of (\ref{EQ}) is locally well-posed, without referring to general free-boundary problem. They transformed the problem into an ODE problem defined on graded Hilbert spaces. See also \cite{Lannes2005} and \cite{MingZhang2009} for proof of local well-posedness for water waves near horizontal level using Nash-Moser type theorems. The potential-theoretic approach of Wu \cite{Wu1997}-\cite{Wu1999} may also be transplanted to this case.

In summary, the Cauchy problem for (\ref{EQ}) is locally well-posed, at least for sufficiently regular initial data. But this is all we can assert for the motion of a water droplet under zero gravity. None of the mathematical results mentioned above goes beyond the time regime guaranteed by energy estimate. 

By a formal analysis conducted in \cite{Shao2022}, the author observed that the geometry of the sphere plays a fundamental role in the long-time behaviour of (\ref{EQ}), and leads to very different oscillatory behaviour compared to gravity or gravity-capillary water waves near horizontal level. To analyze such oscillations in detail, we need to take into account the spectral properties of the sphere. This calls for the development of new analytic tools that explicitly reflects the geometry of the underlying space.

\subsection{Toolbox of Para-differential Calculus on Compact Lie Groups}
Our goal is to develop an analytic toolbox for PDEs on compact Lie groups and homogeneous spaces. In particular, the toolbox should enable us to reduce the spherical capillary water waves system (\ref{EQ}) to a para-differential form convenient for the study of its long-time behaviour. 

Following \cite{Fis2015}, we transplant the sequence of ideas for para-differential calculus on $\mathbb{R}^n$ to a compact Lie group $\Gp$ (see for example Chapter 8-10 of \cite{Hormander1997}). The Fourier modes $e^{i\xi\cdot x}$ are replaced by irreducible unitary representations $\xi$, so that a symbol $a(x,\xi)$ is a field acting on the representation space of $\xi$. Due to unbounded degree of degeneracy of Laplace eigenfunctions (see the Weyl type lemma \ref{Weyl}), it is necessary to consider symbols as \emph{matrices} acting on representation spaces instead of \emph{scalar-valued functions}. 

We start by defining symbols and symbol classes on $\Gp$ in Section \ref{2}, then Littlewood-Paley decomposition in Section \ref{3}. Littlewood-Paley characterization for Sobolev and Zygmund space will be given here. The Stein theorem \ref{SteinTheorem} for pseudo-differential operators of type $(1,1)$ will be proved in Section \ref{3}. The spectral condition is introduced in Section \ref{3}, and Theorem \ref{Stein'} establishes the arbitrary $H^s$ boundedness for operators satisfying a spectral condition. Para-product estimates and Bony's para-linearization theorem are deduced as corollaries. In Section \ref{4}, para-differential operators are defined corresponding to rough symbols $a(x,\xi)$ with $C^r_*$ regularity in $x$. Formulas for symbolic calculus, i.e. composition and adjoint formulas are then proved for para-differential operators in Theorem \ref{Compo1}-\ref{ParaAdj}. That the commutator of para-differential operators of order $m$ and $m'$ is of $m+m'-1$ follows as a direct consequence.

None of the steps listed above are trivial. Let us discuss the major difficulties. As mentioned before, unlike Fourier modes on commutative groups, the product of two Laplace eigenfunctions on a non-commutative Lie group can \emph{never} be a Laplace eigenfunction. Furthermore, the dual of $\Gp$ does not form a group anymore, so the definition of ``differentiation" for a symbol $a(x,\xi)$ with respect to $\xi$ requires extra effort; see Section \ref{2} for the details. The spectral localization property for eigenfunctions on $\Gp$ will serve as the substitute for $e^{i\xi_1\cdot x}e^{i\xi_2\cdot x}=e^{i(\xi_1+\xi_2)\cdot x}$. In order to prove $L^2$-boundedness for operators satisfying a spectral condition, we actually need some information for the \emph{lower spectral bound} of the product. Corollary \ref{SpecPrd}, which has never appeared in previous literature on representation theory to the author's knowledge, explicitly describes such bounds with the aid of highest weight theory. The role it plays in establishing arbitrary $H^s$ boundedness for para-differential operators is clearly indicated during the proof of Theorem \ref{Stein'}. 

Additionally, functions of symbols thus cannot be manipulated so easily as in symbolic calculus on $\mathbb{R}^n$. We introduce the notion of \emph{quasi-homogeneous symbols} in Definition \ref{QuasiHomoSym}. They are essentially functions of classical differential symbols, hence still enjoys, to some extent, the good properties of the latter. It seems that this class of quasi-homogeneous symbols is broad enough to cover operators arising from PDEs that we are concerned with.

We emphasize that the algebraic structure of $\Gp$ is fundamental throughout our para-differential calculus. It enables us to write down the precise form of composition and adjoint operators. To this extent, this paper, together with the up-coming one (which will be the second split from \cite{Shao2023}), provide a novel formalism for non-local, \emph{quasi-linear} dispersive partial differential equations defined on certain manifolds with high symmetry. It can be expected that the formalism may be applied to other quasi-linear dispersive problems besides the spherical capillary waves equation.

\subsection{Para-differential Calculus Applied to the Spherical Water Waves Problem}
After presenting a toolbox of para-differential calculus on compact Lie groups in Section \ref{2}-\ref{4}, we claim a sequence of para-differential formulas based on this toolbox. The first one in them is the para-linearization formula for the Dirichlet-Neumann operator $D[\zeta]\phi$:
\begin{theorem}\label{Thm1}
Fix $s>3$. Suppose $\zeta\in H^{s+0.5}$, $\phi\in H^s$. Let $\nabla_0$ be the standard connection on $\mathbb{S}^2$, and let $\nabla_{G_0}$ be the standard connection on $\SU(2)$. Introduce the quantities
$$
\mathfrak{b}=\left(1+\frac{|\nabla_{0}\zeta|^2}{(1+\zeta)^{2}}\right)^{-1}\left(
D[\zeta]\phi+\frac{\nabla_{0}\zeta\cdot\nabla_{0}\phi}{(1+\zeta)^{2}}\right),
\quad
\mathfrak{v}=\frac{\nabla_0\phi-\mathfrak{b}\nabla_0\zeta}{(1+\zeta)^2},
$$
so that $\mathfrak{b}$ is the velocity of the boundary along the radial direction, $\mathfrak{v}$ is the velocity of the boundary along the tangential direction of the unperturbed sphere. The lift of $D[\zeta]\phi$ to $\SU(2)$ is para-linearized as follows:
$$
\big(D[\zeta]\phi\big)^\sharp
=T_{\lambda}\big(\phi^\sharp-T_{\mathfrak{b}^\sharp}\zeta^\sharp\big)
-T_{\mathfrak{v}^\sharp}\cdot\nabla_{G_0}\zeta^\sharp
+f_1(\zeta,\phi).
$$
The definition of lifting to $\SU(2)$ is given by (\ref{Fsharp}). The symbol $\lambda$ is defined in  (\ref{DNlambda}), satisfying
$$
T_\lambda\simeq |\nabla_{G_0}|.
$$
The error term $f_1(\zeta,\phi)$ projects to a function on $\mathbb{S}^2$ with $H^{s+0.5}$ norm controlled by $\big(\|\zeta\|_{H^{s+0.5}}+\|\phi\|_{H^s}\big)^2$ when $(\zeta,\phi)\simeq0$.
\end{theorem}

Section \ref{6} is devoted to the proof of Theorem \ref{Thm1}. The idea of the proof is quite straightforward: one factorizes the Laplacian as the product of two first order elliptic operators near the boundary, and extract the boundary value. Using this para-linearization formula, the system (\ref{EQ}) is converted to a quasi-linear para-differential system of order 1.5, resembling the formal linearization (\ref{Formal1})-(\ref{Formal4}):

\begin{theorem}\label{Thm2}
Suppose $s>3$, and $\zeta\in C^0_tH_x^{s+0.5}$, $\phi\in C^0_tH^s_x$, so that $\|\zeta\|_{H^{s+0.5}_x}$ remains small. Keep the notation of Theorem \ref{Thm1}, and introduce the ``good unknown" $w^\sharp=\phi^\sharp-T_{\mathfrak{b}^\sharp}\zeta^\sharp$. The spherical capillary water waves system (\ref{EQ}) is then equivalent to a quasi-linear para-differential system on $\SU(2)$:
\begin{equation}\label{EQSymm}
\partial_t\left(\begin{matrix}
T_p\zeta^\sharp \\ T_q w^\sharp \end{matrix}\right)
=\left(\begin{matrix}
 & T_\gamma \\
-T_\gamma & 
\end{matrix}\right)\left(\begin{matrix}
T_p\zeta^\sharp \\ T_q w^\sharp \end{matrix}\right)
-T_{\mathfrak{v}^\sharp}\cdot\nabla_{G_0}\left(\begin{matrix}
T_p\zeta^\sharp \\ T_q w^\sharp \end{matrix}\right)
+f_3(\zeta,\phi).
\end{equation}
Here 
\begin{itemize}
\item The para-differential operator $T_p\simeq|\nabla_{G_0}|^{0.5}$, and $q$ is a scalar function;
\item The ``dispersive relation" $T_\gamma\simeq|\nabla_{G_0}|^{1.5}$ is almost self-adjoint:
$$
T_\gamma-T_\gamma^*\text{ has order }0;
$$
\item The ``transporting operator" $T_{\mathfrak{v}^\sharp}\cdot\nabla_{G_0}$ has purely imaginary principal symbol, so taking into account the almost self-adjointness of $T_\gamma$, (\ref{EQSymm}) is of dispersive type;
\item The error $f_3$ is quadratic: when $(\zeta,\phi)\simeq0$ in $H^{s+0.5}_x\times H^s_x$, $\|f_3(\zeta,\phi)\|_{H^s_x}\lesssim\left(\|\zeta\|_{H_x^{s+0.5}}+\|\phi\|_{H_x^s}\right)^2$.
\end{itemize}
\end{theorem}

Proof of Theorem \ref{Thm2} occupies Section \ref{7}. An initial application of Theorem \ref{Thm2} is the following local well-posedness theorem for (\ref{EQ}):
\begin{theorem}\label{Thm3}
Fix $s>3$. Suppose the initial data for the system (\ref{EQ}) satisfies $(\zeta(0),\phi(0))\in H^{s+0.5}\times H^s$, sufficiently small in norm. Then there is a time interval $[0,T]$, on which the Cauchy problem of (\ref{EQ}) admits a unique solution
$$
(\zeta,\phi)\in \left(C_TH^{s+0.5}_x\times C_TH^{s}_x\right)
\cap \left(C^1_TH^{s-1}_x\times C_TH^{s-1.5}_x\right).
$$
\end{theorem}
Proof of Theorem \ref{Thm3} is only sketched in Section \ref{8}, as the key computations for this are quite parallel to the counterparts in \cite{ABZ2011}; we confine ourselves elaborating what is different. Theorem \ref{Thm3} shares the same regime of regularity as the main result in \cite{ABZ2011} does for nearly horizontal water waves. The requirement on regularity is obviously milder than that in \cite{BeGu1998}, which is $(\zeta,\phi)\in H^{5.5}\times H^{5}$; or that in \cite{CoSh2007}, which is $(\zeta,\phi)\in H^{6.5}\times H^{6}$; or that in \cite{ShZe2008}, which is $(\zeta,\phi)\in H^7\times H^{6.5}$. However, this should not be surprising, as the results in \cite{ShZe2008} and \cite{CoSh2007} apply to \emph{general} motion of perfect fluid, without assuming irrotationality, while the argument in \cite{BeGu1998} deals with ODE on graded spaces so that some information will certainly be lost.

The formalism of our para-differential calculus is different from previous ones, which will be briefly reviewed below. In this paper, we extensively utilize the symmetry of the 2-sphere, on which (\ref{EQ}) is defined. The 2-sphere is considered as a $\SU(2)$-homogeneous space via Hopf fiberation, and the Cauchy problem of (\ref{EQ}) is lifted to $\SU(2)$, a compact Lie group. We then employ the para-differential calculus for rough symbols on compact Lie groups proposed in \cite{Shao2023Toolbox}. The advantage of this approach is that the lower order symbols, being non-neglible for quasi-linear problems of order $>1$, are manipulated in an explicit, global, coordinate-independent way. 

With para-differential calculus constructed, the para-linearization and symmetrization of (\ref{EQ}) becomes parallel to the gravity-capillary water waves problem for the flat water level. This procedure is accomplished in Section \ref{6}-\ref{7}, with method very similar to \cite{ABZ2011}. The system is then reduced to a quasi-linear, non-local, dispersive para-differential equation of order 3/2, with precise form given in formula (\ref{EQSymm}). The local existence result, sketched in Section \ref{8}, then follows from a standard fixed point argument. 

The complexity of this para-differential reduction raises the natural question of whether it is necessary for the study of the water drop problem at all. Given the previous local well-posedness results already mentioned, such reduction does not seem unavoidable, if one is concerned with the local Cauchy theory. For example, if one employs Delort's coordinate-free definition \cite{Delort2015} of para-differential operators on manifolds, it is expected that the same regime of regularity remains valid for the local theory for initial fluid-air interface with higher genus. Anyway, it would be hard to imagine that the regime of short-time regularity depends on the topology of the interface.

However, previous approaches of para-differential calculus may fail to incorporate spectral properties of the sphere, and thus may cause extra difficulty in understanding long-time solutions. For example, no symbolic calculus is developed in \cite{KR2006} at all, since no additional structure beyond a compact surface is assumed. Both constructions in \cite{Delort2015} and \cite{BGdP2021} result in \emph{calculus involving principal symbol alone}, similar to H\"{o}rmander calculus on curved manifolds. These approaches thus may not capture the lower order symbols in quasi-linear problems that are crucial for long-time evolution. Furthermore, much of the spectral information related to dispersive properties become implicit, although not lost, under the frameworks of them. 

It then seems feasible to utilize the representation-theory-based approach to para-differential calculus, since it clearly reflects the spectral properties of the dispersive system (\ref{EQ}), while giving all the lower-order terms in symbolic calculus very explicit, manipulable form. Evidence of this convenience is obvious, if one takes into account the extensive volume of literature on gravity-capillary water waves with periodic boundary condition. People usually reduce the gravity-capillary water waves equation to a para-differential form, and use it to obtain global or almost global results. See for example \cite{DIPP2017}, \cite{BD2018}, \cite{IoPu2019}, or the review \cite{IP2018}. Given that (\ref{EQSymm}) is quite similar to the para-differential form of gravity-capillary water waves equation, we regard it as the starting point for the study of long-time behaviour of small amplitude solutions of (\ref{EQ}).

\subsection{Spectral Localization and Dispersive PDEs on Compact Manifolds}
The study of nonlinear dispersive partial differential equations and harmonic analysis are closely linked to each other and mutually reinforcing. If discussed on a compact manifold, the geometry of the underlying space shall also come on the scene. We will be exploring an aspect of this topic in this paper. In order to extract the key ideas out of several important examples, we start our discussion by reviewing some previous results to illustrate the intersectionality of harmonic analysis on compact manifolds and nonlinear dispersive PDEs.

In the pioneering work \cite{BGT2004}, Burq, G\'{e}rad and Tzvetkov obtained Strichartz estimates for the unitary group $e^{it\Delta_g}$ on a compact Riemannian manifold $(M,g)$, and discussed its application to the Cauchy problem of semi-linear Schr\"{o}dinger equation on compact Riemannian manifold. The discussion relies on ``$L^p$ decoupling of Laplace eigenfunctions", discovered by Sogge \cite{Sogge1988}. As a result, they showed that on a compact Riemann surface $(M,g)$, the Cauchy problem of the cubic Schr\"{o}dinger equation 
$$
i\partial_tu+\Delta_gu=|u|^2u,
\quad
u(0)\in H^s(M)
$$
with $s>1/2$ is locally well-posed in $C^0_tH^s(M)$. They further showed in \cite{BGT2005} that the regime of local well-posedness can be extended to $s>1/4$ if $(M,g)$ is a Zoll surface (in particular, if it is the standard 2-sphere). A crucial harmonic analysis result needed by this is a bilinear eigenfunction estimate on Zoll surfaces, an inequality bounding the $L^2$ norm of the product of two eigenfunctions.

Hani continued the study in \cite{Hani2012}, showing that the Cauchy problem of the cubic Schr\"{o}dinger equation on $(M,g)$ is globally well-posed for $s>2/3$, regardless of the geometry. The proof is a modification of the ``I-method" due to Colliander-Keel-Staffilani-Takaoka-Tao (see \cite{CKSTT2002}). When transplanting the I-method from $\mathbb{R}^n$ or $\mathbb{T}^n$ to a general manifold, an immediate issue appears: one replaces Fourier analysis by spectral decomposition using Laplace eigenfunctions, but unlike Fourier modes, the product of two Laplace eigenfunctions is, in general, not an eigenfunction anymore. In order to overcome this difficulty, Hani established \emph{spectral licalization property} as a substitute. That is, the product of two eigenfunctions with eigenvalues $\lambda$ and $\mu$ respectively should sharply concentrate around the range $\sqrt{-\Delta_g}\in\big[|\sqrt{\lambda}-\sqrt{\mu}|,\sqrt{\lambda}+\sqrt{\mu}\big]$. This turns out to be an important ingredient for the proof.

Not surprisingly, the spectral localization property could be improved if certain geometric conditions are posed for the underlying manifold. Such improvement should then lead to stronger results for nonlinear dispersive PDEs. In fact, this is the case for compact Lie groups and homogeneous spaces. Highest weight theory for representations of a compact Lie group implies the following: the product of two Laplace eigenfunctions on a compact Lie group is a \emph{finite} linear combination of Laplace eigenfunctions. In \cite{BP2011} and \cite{BCP2015} (see also \cite{BBP2010}), precise form of this algebraic result plays a fundamental role for the search of periodic or quasi-periodic solutions of the semi-linear Schr\"{o}dinger equation
$$
i\partial_tu+\Delta u+\mu u=f(x,|u|^2)u
$$
on a compact Lie group or homogeneous space. Another example is \cite{DS2004}, where the underlying manifold is the standard sphere $\mathbb{S}^d$. The aforementioned spectral localization on Lie groups then becomes an algebraic property of spherical harmonic functions: the product of spherical harmonics of degree $p$ and $q$ is a linear combination of spherical harmonics of degree between $|p-q|$ and $p+q$. Using this fact, the authors casted a normal form reduction for semi-linear Klein-Gordon equations on $\mathbb{S}^d$.

Furthermore, the spectral localization property plays a fundamental role in the theory of pseudo-differential operators on compact Lie groups. Construction of pseudo-differential calculus on compact Lie groups depends very much on representation theory of compact Lie groups. Being the non-commutative analogy of Fourier theory on $\mathbb{R}^n$ or $\mathbb{T}^n$, spectral localization shows how matrix elements of different representations (``modes of different frequencies") interact. It thus occupies a similar part as the additive formula $e^{i\xi_1\cdot x}e^{i\xi_2\cdot x}=e^{i(\xi_1+\xi_2)\cdot x}$ does in commutative Fourier analysis and symbolic calculus.

Global symbolic calculus on compact Lie groups has been proposed by Ruzhansky-Turunen-Wirth \cite{RT2009}-\cite{RTW2014}, while the ideas seem to date back to Taylor \cite{Taylor1986}. However, as pointed out by Fischer \cite{Fis2015}, the pseudo-differential calculus in \cite{RT2009}-\cite{RTW2014} may lack rigorous justification. Fischer's work \cite{Fis2015} provides a mathematically rigorous and complete construction in which the symbolic calculus formulas are proved. In particular, Fischer proved that pseudo-differential operators of type $(1,0)$ defined via global symbols indeed coincides with the H\"{o}rmander class of $(1,0)$ pseudo-differential operators defined via local coordinates. To the author's knowledge, the first result on \emph{linear} dispersive PDEs on a compact Lie group with essential usage of global symbolic calculus is Bambusi-Langella \cite{BL2022}. The paper states a general frame work concerning ``abstract pseudo-differential operators" and obtained a growth estimate of the solution to \emph{linear} Schr\"{o}dinger equation with possibly time-dependent potential. Taking into account the pseudo-differential calculus constructed by \cite{Fis2015}, linear Schr\"{o}dinger equations on a compact Lie group thus realizes this abstract framework. 

\subsection{Para-differential Calculus on Manifolds} 
When coming to quasi-linear dispersive PDEs, however, some more technicalities beyond spectral localization will be necessarily required. Within our scope, we shall focus on one of them, namely para-differential calculus. 

Para-differential calculus originates from a systematic application of Littlewood-Paley decomposition to nonlinear PDEs. It first appeared in the form of \emph{para-product estimate}, leading to inequalities of the form $\|fg\|_{H^s}\lesssim |f|_{L^\infty}\|g\|_{H^s}+\|f\|_{H^s}|g|_{L^\infty}$ or composition estimates of the form $\|F(u)\|_{H^s}\lesssim \|u\|_{H^s}$. Such inequalities were crucial for local well-posedness of semi-linear dispersive PDEs; see for example, Saut-Temam \cite{SaTe1976} Linares-Ponce \cite{LP2014} for discussion of KdV equation, or Kato \cite{Kato1995} and Staffilani \cite{Staffilani1995} for discussion of semi-linear Schr\"{o}dinger equation. General estimates may also be found in \cite{Taylor2000}.

The defining character of such methodology was extracted as \emph{isolating the part with lowest regularity}. It has been extended to a class of operators called \emph{para-differential operators}; see Bony \cite{Bony1981} and Meyer \cite{Meyer1980}-\cite{Meyer1981}). Modelled on symbols $a(x,\xi)$ that are of limited regularity with respect to the spatial variable $x$, a symbolic calculus is still available for para-differential operators. Para-differential calculus thus becomes a powerful tool when usual pseudo-differential calculus is of limited usage due to low regularity.

Para-differential calculus has been successfully used in the study of quasi-linear PDEs (and also micro-local properties for fully nonlinear ones, see \cite{Bony1981} or H\"{o}rmander \cite{Hormander1997}; but we do not discuss that aspect). Taking fluid dynamics equations as an example, the gravity-capillary water waves equation was re-written into a para-differential form by Alazard-Burq-Zuilly \cite{ABZ2011}, and a local well-posedness result in the regime $s>3.5$ was deduced. A pleasant feature of this para-differential form is that the spectral properties of the dispersive relation for the problem are explicit. Not only is it an improvement of previous local well-posedness results (see for example \cite{Wu1997}-\cite{Wu1999}, \cite{Lannes2005}, \cite{CoSh2007}, \cite{ShZe2008}, \cite{MingZhang2009}), it also reduces the water waves equation into a form convenient for the study of long time behaviour. See the review \cite{IP2018} for detailed discussion, or \cite{BD2018} \cite{DIPP2017} \cite{IoPu2019} as examples of applying this para-differential form.

Being such a powerful tool, it is natural to ask whether para-differential calculus admits generalization to curved manifolds instead of $\mathbb{R}^n$ or $\mathbb{T}^n$. Let us briefly summarize previous attempts towards it. 

In \cite{KR2006}, Klainerman and Rodnianski extended the Littlewood-Paley theory to compact surfaces. They defined Littlewood-Paley decomposition of a tensor field via spectral cut-off of the tensorial Laplacian operator, studied the interaction of different frequencies, and deduced Bernstein type inequalities. This served as a toolbox in proving the $L^2$ bounded curvature conjecture for the Einstein equation. See \cite{KRS2015} for the details. The argument actually does not require para-differential calculus, but only an invariant version of Littlewood-Paley decomposition on compact surfaces. Not surprisingly, these properties can be reproduced and improved for compact Lie groups (see \cite{Fis2015}).

In \cite{BGdP2021}, Bonthonneau, Guillarmou and de Poyferr{\'e} developed a para-differential calculus for any compact manifold via local coordinate charts. This approach is a natural generalization of the H\"{o}rmander calculus on a curved manifold $M$: a $(1,1)$ symbol $a(x,\xi)$ is defined on the cotangent bundle of $M$, as a function that becomes a $(1,1)$ symbol under any local chart; para-differential operators corresponding to rough symbols are also defined by coordinate charts. A key feature is that symbolic calculus, just as the usual H\"{o}rmander calculus does, only preserves information at the order of principle symbol. The authors used this para-differential calculus to study microlocal regularity for an Anosov flow.

On the other hand, a global definition of para-differential operators on a compact manifold is introduced in Delort \cite{Delort2015}. Observing the commutator characterization of usual pseudo-differential operators (see e.g. Beals \cite{Beals1977}), Delort defined para-differential operators on a compact manifold via commutators with vector fields and Laplacian spectral projections. The most important result from this para-differential calculus is that, the commutator of para-differential operators of order $m$ and $m'$ is an operator of $m+m'-1$. With these preparations, Delort successfully obtained a normal form reduction for quasi-linear Hamiltonian Klein-Gordon equations on the standard sphere $\mathbb{S}^d$, and thus obtained extended lifespan estimate beyond the energy estimate regime.

Both constructions in \cite{BGdP2021} and \cite{Delort2015} share the common feature that formulas of symbolic calculus for para-differential operators involve only the \emph{principal symbol}, just as the usual H\"{o}rmander calculus on curved manifolds. Consequently, if one applies either one to a quasi-linear dispersive PDE, much of the spectral information related to the dispersive relation becomes implicit, although not lost. Besides, lower order information apart from the principal symbol is blurred. These disadvantages might cause extra difficulty in studying the long-time behavior of the solution.

\subsection{Explanation of the Toolbox}
Our goal is to develop an analytic toolbox for PDEs on compact Lie groups and homogeneous spaces. In particular, the toolbox should enable us to reduce the spherical capillary water waves system in \cite{Shao2022} to a para-differential form convenient for the study of its long-time behaviour. Observing the advantages of incorporating geometric information, our goal will be accomplished by employing the notion of symbols in \cite{Fis2015} as the narrative formalism for para-differential calculus. 

Thus we transplant the sequence of ideas for para-differential calculus on $\mathbb{R}^n$ to a compact Lie group $\Gp$ (see for example Chapter 8-10 of \cite{Hormander1997}). The Fourier modes $e^{i\xi\cdot x}$ are replaced by irreducible unitary representations $\xi$, so that a symbol $a(x,\xi)$ is a field acting on the representation space of $\xi$. Due to unbounded degree of degeneracy of Laplace eigenfunctions (see the Weyl type lemma \ref{Weyl}), it is necessary to consider symbols as \emph{matrices} acting on representation spaces instead of \emph{scalar-valued functions}. 

We start by defining symbols and symbol classes on $\Gp$ in Section \ref{2}, then Littlewood-Paley decomposition in Section \ref{3}. Littlewood-Paley characterization for Sobolev and Zygmund space will be given here. The Stein theorem \ref{SteinTheorem} for pseudo-differential operators of type $(1,1)$ will be proved in Section \ref{3}. The spectral condition is introduced in Section \ref{3}, and Theorem \ref{Stein'} establishes the arbitrary $H^s$ boundedness for operators satisfying a spectral condition. Para-product estimates and Bony's para-linearization theorem are deduced as corollaries. In Section \ref{4}, para-differential operators are defined corresponding to rough symbols $a(x,\xi)$ with $C^r_*$ regularity in $x$. Formulas for symbolic calculus, i.e. composition and adjoint formulas are then proved for para-differential operators in Theorem \ref{Compo1}-\ref{ParaAdj}. That the commutator of para-differential operators of order $m$ and $m'$ is of $m+m'-1$ follows as a direct consequence.

None of the steps listed above are trivial. Let us discuss the major difficulties. As mentioned before, unlike Fourier modes on commutative groups, the product of two Laplace eigenfunctions on a non-commutative Lie group can \emph{never} be a Laplace eigenfunction. Furthermore, the dual of $\Gp$ does not form a group anymore, so the definition of ``differentiation" for a symbol $a(x,\xi)$ with respect to $\xi$ requires extra effort; see Section \ref{2} for the details. The spectral localization property for eigenfunctions on $\Gp$ will serve as the substitute for $e^{i\xi_1\cdot x}e^{i\xi_2\cdot x}=e^{i(\xi_1+\xi_2)\cdot x}$. In order to prove $L^2$-boundedness for operators satisfying a spectral condition, we actually need some information for the \emph{lower spectral bound} of the product. Corollary \ref{SpecPrd}, which has never appeared in previous literature on representation theory to the author's knowledge, explicitly describes such bounds with the aid of highest weight theory. The role it plays in establishing arbitrary $H^s$ boundedness for para-differential operators is clearly indicated during the proof of Theorem \ref{Stein'}. 

Additionally, functions of symbols thus cannot be manipulated so easily as in symbolic calculus on $\mathbb{R}^n$. We introduce the notion of \emph{quasi-homogeneous symbols} in Definition \ref{QuasiHomoSym}. They are essentially functions of classical differential symbols, hence still enjoys, to some extent, the good properties of the latter. It seems that this class of quasi-homogeneous symbols is broad enough to cover operators arising from PDEs that we are concerned with.

We emphasize that the algebraic structure of $\Gp$ is fundamental throughout our para-differential calculus. It enables us to write down the precise form of composition and adjoint operators. To this extent, this paper provides a novel formalism for non-local, \emph{quasi-linear} dispersive partial differential equations defined on certain manifolds with high symmetry. It can be expected that the formalism may be applied to other quasi-linear dispersive problems besides the spherical capillary waves equation (\ref{EQ}).

\section{Preliminary: Representation of Compact Lie Groups}\label{2}
This section is devoted to preliminaries of representation theory of compact Lie groups. Notation will also be fixed in this section. Results from representation theory can be found in any standard textbook on representation theory, for example \cite{Procesi2007}. Concise descriptions can also be found in \cite{BP2011}.

\subsection{Representation and Spectral Properties}
To simplify our discussion, throughout this paper, we fix $\Gp$ to be a simply connected, compact, semisimple Lie group with dimension $n$ and rank $\varrho$, i.e. the dimension of any maximal torus in $\Gp$. By standard classification results of compact connected Lie groups, this is the essential part for our discussion. It is known from standard theory of Cartan subalgebra that all maximal tori in $\Gp$ are conjugate to each other, so the rank $\varrho$ is uniquely determined. The group $\Gp$ is in fact a direct product of groups of the following types: the special unitary groups $\SU(n)$, the compact symplectic groups $\mathrm{Sp}(n)=\mathrm{Sp}(2n;\mathbb{C})\cap\mathrm{U}(2n)$, the spin groups $\mathrm{Spin}(n)$, and the exceptional groups $G_2,F_4,E_6,E_7,E_8$.

We write $e$ for the identity element of $\Gp$. Let $\mathfrak{g}$ be the Lie algebra of $\Gp$, identified with the space of left-invariant vector fields on $\Gp$. If a basis $X_1,\cdots,X_n$ for $\mathfrak{g}$ is given, then for any multi-index $\alpha\in\mathbb{N}_0^n$, write 
\begin{equation}\label{NormalOrder}
X^\alpha=X_1^{\alpha_1}\cdots X_n^{\alpha_n}
\end{equation}
for the left-invariant differential operator composed by these basis vectors, in the ``normal" ordering. When $\alpha$ runs through all multi-indices, the operators $\{X^\alpha\}$ span the universal enveloping algebra of $\mathfrak{g}$, which is the content of the Poincaré–Birkhoff–Witt theorem; however, this fact is not needed within our scope. 

Since $\Gp$ is semi-simple and compact, the Killing form
$$
\Tr\big(\mathrm{Ad}(X)\circ\mathrm{Ad}(Y)\big),\quad X,Y\in\mathfrak{g}
$$ 
on $\Gp$ is non-degenerate and negative definite, where $\mathrm{Ad}$ is the adjoint representation. A Riemann metric on $\Gp$ can thus be introduced as the opposite of the Killing form. The Laplace operator corresponding to this metric, usually called \emph{Casimir element} in representation theory, is given by
$$
\Delta=\sum_{i=1}^n X_i^2,
$$
where $\{X_i\}$ is any orthonormal basis of $\mathfrak{g}$; the operator is independent from the choice of basis.

Let $\DuGp$ be the dual of $\Gp$, i.e. the set of equivalence classes of irreducible unitary representations of $\Gp$. For simplicity, we do not distinguish between an equivalence class $\xi\in\DuGp$ and a certain unitary representation that realizes it. The ambient space of $\xi\in\DuGp$ is an Hermite space $\Hh[\xi]$, with (complex) dimension $d_\xi$. If we equip the space $\Hh[\xi]$ with an orthonormal basis containing $d_\xi$ elements, then $\xi:\Gp\to\mathrm{U}(\Hh[\xi])$ can be equivalently viewed as a unitary-matrix-valued function $(\xi_{jk})_{j,k=1}^{d_\xi}$.

We shall equip the Lie group $\Gp$ with the normalized Haar measure. For simplicity, we will denote the integration with respect to this measure by $dx$. The \emph{Fourier transform} of $f\in\mathcal{D}'(\Gp)$ is defined by 
$$
\Ft f(\xi):=\int_\Gp f(x)\xi^*(x)dx\in\mathrm{End}(\Hh[\xi]).
$$
Conversely, for every field $a(\xi)$ on $\DuGp$ such that $a(\xi)\in\mathrm{End}(\Hh[\xi])$ for all $\xi\in\DuGp$, the \emph{Fourier inversion} of $a$ is defined by
$$
\check{a}(x)=\sum_{\xi\in\DuGp} d_\xi\Tr\big(a(\xi)\cdot\xi(x)\big),
$$
as long as the right-hand-side converges at least in the sense of distribution. 

Two different norms for $a(\xi)\in\mathrm{End}(\Hh[\xi])$ will be used in this paper, one being the operator norm
$$
\|a(\xi)\|:=\sup_{v\in\Hh[\xi],v\neq0}\frac{|a(\xi)v|}{|v|},
$$
one being the Hilbert-Schmidt norm
$$
\lHS a(\xi)\rHS:=\sqrt{\Tr(a(\xi)\cdot a^*(\xi))}.
$$
To simplify notation, when there is no risk of confusion, we shall omit the dependence of these norms on the representation $\xi$.

We fix the convolution on $\Gp$ to be \emph{right} convolution:
$$
(f*g)(x)=\int_\Gp f(y)g(y^{-1}x)dy.
$$
The only property not inherited from the usual convolution on Euclidean spaces is commutativity. Every other property, including the convolution-product duality, Young's inequality, is still valid. The only modification is that 
$$
\Ft{f*g}(\xi)=(\Ft g\cdot\Ft f)(\xi),
$$
which in general does not coincide with $(\Ft f\cdot\Ft g)(\xi)$ since the Fourier transform is now a matrix.

The Peter-Weyl theorem is fundamental for harmonic analysis on Lie groups:
\begin{theorem}[Peter-Weyl]\label{PeterWeyl}
Let $\mathcal{M}_\xi$ be the subspace in $L^2(\Gp)$ spanned by matrix entries of the representation $\xi\in\DuGp$. Then $\mathcal{M}_\xi$ is a bi-invariant subspace of $L^2(\Gp)$ of dimension $d_\xi^2$, and there is a Hilbert space decomposition
$$
L^2(\Gp)=\bigoplus_{\xi\in\DuGp}\mathcal{M}_\xi.
$$
If $f\in L^2(\Gp)$, the Fourier inversion
$$
(\Ft f)^\vee(x)=\sum_{\xi\in\DuGp} d_\xi\Tr\left(\Ft f(\xi)\cdot\xi(x)\right)
$$
converges to $f$ in the $L^2$ norm, and there holds the Plancherel identity
$$
\|f\|_{L^2(\Gp)}^2=\sum_{\xi\in\DuGp} d_\xi\big\lHS\Ft f(\xi)\big\rHS^2.
$$
\end{theorem}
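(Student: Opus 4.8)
The plan is to prove Theorem \ref{PeterWeyl} by separating its two ingredients, which are of very different nature: the Schur orthogonality relations, which are purely algebraic and control each individual block $\mathcal{M}_\xi$, and a density statement for matrix coefficients, which is the genuine analytic content and is where compactness of $\Gp$ enters.

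First I would establish Schur orthogonality. Given $\xi,\eta\in\DuGp$ and any linear map $A\colon\mathcal{H}_\eta\to\mathcal{H}_\xi$, the averaged operator $\widetilde{A}=\int_\Gp\xi(x)\,A\,\eta(x)^*\,dx$ intertwines $\eta$ and $\xi$, so by Schur's lemma it vanishes when $\xi\not\cong\eta$ and equals $d_\xi^{-1}(\Tr A)\,\mathrm{Id}$ when $\xi=\eta$ (the scalar being read off by taking traces). Specializing $A$ to rank-one maps gives
$$
\int_\Gp\xi_{jk}(x)\,\overline{\eta_{lm}(x)}\,dx=\frac{1}{d_\xi}\,\delta_{\xi\eta}\,\delta_{jl}\,\delta_{km}.
$$
Hence $\{\sqrt{d_\xi}\,\xi_{jk}\}_{j,k}$ is an orthonormal system, so $\dim\mathcal{M}_\xi=d_\xi^2$, the $\mathcal{M}_\xi$ are mutually orthogonal, and the identities $\xi_{jk}(g^{-1}x)=\sum_l\xi_{jl}(g^{-1})\xi_{lk}(x)$ and $\xi_{jk}(xg)=\sum_l\xi_{jl}(x)\xi_{lk}(g)$ show each $\mathcal{M}_\xi$ is bi-invariant. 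Moreover, using the matrix identity $\Ft f(\xi)_{kj}=\int_\Gp f\,\overline{\xi_{jk}}$, the orthogonal projection $P_\xi$ onto $\mathcal{M}_\xi$ is exactly $P_\xi f=d_\xi\Tr\!\big(\Ft f(\xi)\,\xi(\cdot)\big)$, with $\|P_\xi f\|_{L^2}^2=d_\xi\lHS\Ft f(\xi)\rHS^2$. Granting the density statement below, the Fourier inversion formula is then just $f=\sum_\xi P_\xi f$ and the Plancherel identity is $\|f\|_{L^2}^2=\sum_\xi\|P_\xi f\|_{L^2}^2$.

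The crux is the density of $\bigoplus_\xi\mathcal{M}_\xi$ in $L^2(\Gp)$. Here I would fix a real, symmetric approximate identity, i.e.\ $\phi_\varepsilon\in C(\Gp)$ supported in a shrinking neighbourhood of $e$, with $\phi_\varepsilon\ge0$, $\phi_\varepsilon(x^{-1})=\phi_\varepsilon(x)$ and $\int_\Gp\phi_\varepsilon=1$. The convolution operator $T_\varepsilon f=\phi_\varepsilon*f$ has integral kernel $\phi_\varepsilon(xz^{-1})$, hence is Hilbert--Schmidt and therefore compact, and the symmetry of $\phi_\varepsilon$ makes it self-adjoint on $L^2(\Gp)$. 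By the spectral theorem for compact self-adjoint operators, $L^2(\Gp)=\ker T_\varepsilon\oplus\bigoplus_{\lambda\ne0}E_\lambda$ with each $E_\lambda$ finite dimensional; since $\lambda^{-1}T_\varepsilon$ fixes $E_\lambda$, its elements are continuous, and since left convolution commutes with right translation, $E_\lambda$ is invariant under the regular representation. A finite-dimensional translation-invariant subspace of $C(\Gp)$ decomposes into irreducible subrepresentations, and an irreducible summand isomorphic to $\xi$ lies in $\mathcal{M}_\xi$: for an orthonormal basis $v_1,\dots,v_{d_\xi}$ of such a summand, the relation $v_j(xg)=\sum_k\xi_{kj}(g)v_k(x)$ evaluated at $x=e$ exhibits $v_j$ as a combination of the $\xi_{kj}$. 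Thus $E_\lambda\subset\bigoplus_\xi\mathcal{M}_\xi$ for all $\lambda\ne0$. If $f\in L^2(\Gp)$ is orthogonal to every $\mathcal{M}_\xi$, then $f\perp E_\lambda$ for all $\lambda\ne0$, so $f\in\ker T_\varepsilon$, i.e.\ $\phi_\varepsilon*f=0$ for every $\varepsilon$; letting $\varepsilon\to0$ gives $f=\lim_\varepsilon\phi_\varepsilon*f=0$. Combined with the orthogonality from Schur's relations this yields the Hilbert space decomposition $L^2(\Gp)=\bigoplus_{\xi\in\DuGp}\mathcal{M}_\xi$.

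I expect the density step to be the main obstacle: producing ``enough'' irreducible representations is not achievable by algebraic manipulation of matrix coefficients alone, and the argument genuinely uses compactness of $\Gp$ via the spectral theorem for compact operators, together with the elementary but essential fact that finite-dimensional translation-invariant spaces of continuous functions split into irreducibles. An alternative for the present $\Gp$, which admits a point-separating finite-dimensional representation, is to observe that $\bigoplus_\xi\mathcal{M}_\xi$ is a conjugation-closed, point-separating, unital subalgebra of $C(\Gp)$ (products of matrix coefficients are matrix coefficients of tensor product representations, which decompose into irreducibles) and to invoke Stone--Weierstrass for uniform, hence $L^2$, density; but I would keep the intrinsic convolution argument. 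Everything else — the $d_\xi^2$-dimension count, the mutual orthogonality, and the precise constants in the inversion and Plancherel formulas — is routine bookkeeping from the Schur relations.
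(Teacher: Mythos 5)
The paper does not prove Theorem~\ref{PeterWeyl}: it is stated without proof as a classical preliminary, with a pointer to standard references such as \cite{Procesi2007}. Your argument is the standard textbook proof and is correct and complete. The Schur-orthogonality part (averaging an arbitrary linear map to produce an intertwiner, Schur's lemma, the resulting orthonormal system $\{\sqrt{d_\xi}\,\xi_{jk}\}$, the dimension count, bi-invariance, the projection formula, Plancherel) is routine and your bookkeeping is right. The density step is also carried out correctly: with the paper's right-convolution convention $(f*g)(x)=\int f(y)g(y^{-1}x)\,dy$, the operator $T_\varepsilon f=\phi_\varepsilon*f$ indeed has kernel $\phi_\varepsilon(xz^{-1})$, which is Hilbert--Schmidt and (because $\phi_\varepsilon$ is real and symmetric) self-adjoint; left convolution commutes with right translation, so the nonzero eigenspaces are finite-dimensional right-invariant spaces of continuous functions, each of which decomposes into irreducibles and hence lies in $\bigoplus_\xi\mathcal{M}_\xi$; and any $f$ orthogonal to all $\mathcal{M}_\xi$ is therefore in $\ker T_\varepsilon$ for every $\varepsilon$, forcing $f=0$. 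The Stone--Weierstrass shortcut you mention is available here without extra work, since the paper's $\Gp$ is a compact matrix Lie group with a faithful finite-dimensional representation, but your intrinsic spectral-theorem argument is preferable precisely because it proves completeness for an arbitrary compact group.
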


In the following, standard constructions and results in highest weight theory will be directly cited. They are already collected in Berti-Procesi \cite{BP2011}.

Let $\mathfrak{t}\subset\mathfrak{g}$ be the Lie algebra of a maximal torus of $\Gp$, and write $\mathfrak{t}^*$ for its dual. Let $\{\bm{\alpha}_i\}_{i=1}^\varrho\subset\mathfrak{t}^*$ be the set of \emph{positive simple roots} of $\mathfrak{g}$ with respect to $\mathfrak{t}$. They form a basis for $\mathfrak{t}^*$. The \emph{fundamental weights} $\bm{\varpi}_1,\cdots,\bm{\varpi}_\varrho\in\mathfrak{t}^*$ admit several equivalent definitions, one of them being the unique set of vectors $\{\bm{\varpi}_i\}_{i=1}^\varrho$ satisfying 
$$
(\bm{\varpi}_i,\bm{\alpha}_j)=\frac{1}{2}\delta_{ij}|\bm{\alpha}_j|^2,
\quad
i,j=1,\cdots,\varrho.
$$
Here the inner product on the dual is inherited from the Killing form on $\mathfrak{g}$.

\begin{theorem}[Highest Weight Theorem]
Irreducible unitary representations of $\Gp$ are in 1-1 correspondence with the discrete cone
$$
\mathscr{L}^+:=\left\{\sum_{i=1}^\varrho n_i\bm{\varpi}_i,\, n_i\in\mathbb{N}_0\right\}.
$$
The correspondence assigns an irreducible unitary representation to its highest weight vector.
\end{theorem}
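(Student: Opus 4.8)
The plan is to run the classical Cartan--Weyl argument, transferring everything to the complexified Lie algebra. By the Peter--Weyl Theorem \ref{PeterWeyl} every $\xi\in\DuGp$ is finite-dimensional; since $\Gp$ is simply connected and compact, differentiation at $e$ sets up a bijection between finite-dimensional representations of $\Gp$ and of $\mathfrak g$, and averaging an arbitrary Hermitian form over the normalized Haar measure shows that every such representation is unitarizable and that linear equivalence coincides with unitary equivalence for irreducible ones. Hence it suffices to classify the finite-dimensional complex-linear irreducible representations of the complex semisimple Lie algebra $\mathfrak g_{\mathbb C}=\mathfrak g\otimes_{\mathbb R}\mathbb C$, which I would do relative to the Cartan subalgebra $\mathfrak h=\mathfrak t\otimes_{\mathbb R}\mathbb C$, the root decomposition with positive system fixed by the simple roots $\{\alpha_i\}_{i=1}^{\varrho}$, and the Chevalley generators $e_i\in\mathfrak g_{\alpha_i}$, $f_i\in\mathfrak g_{-\alpha_i}$, $h_i=[e_i,f_i]$.

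For the claim that the correspondence is well defined and injective I would argue as follows. A finite-dimensional irreducible $V$ splits as $V=\bigoplus_\mu V_\mu$ under the commuting semisimple action of $\mathfrak h$; choosing $\lambda$ maximal among the finitely many weights for the order ``$\mu\preceq\mu'$ iff $\mu'-\mu$ is a sum of positive simple roots'' produces $0\ne v^+\in V_\lambda$ annihilated by all $e_i$. Restricting each $\mathfrak{sl}_2$-triple $(e_i,h_i,f_i)$ and invoking the representation theory of $\mathfrak{sl}_2(\mathbb C)$ forces $\lambda(h_i)=m_i\in\mathbb N_0$, i.e. $(\lambda,\alpha_i)=\tfrac12 m_i(\alpha_i,\alpha_i)$; by the defining property of the fundamental weights quoted above this says precisely $\lambda=\sum_i m_i w_i\in\mathscr L^+$. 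The Poincar\'e--Birkhoff--Witt theorem gives $V=U(\mathfrak g_{\mathbb C})v^+$, spanned by the vectors $f_{i_1}\cdots f_{i_k}v^+$, so every other weight lies strictly below $\lambda$ and $\dim V_\lambda=1$; thus $\lambda$ is an invariant of $V$. Uniqueness (injectivity) then follows from the diagonal trick: the submodule of $V_1\oplus V_2$ generated by $(v_1^+,v_2^+)$ surjects onto each irreducible factor, and its kernel in either factor must vanish because the $\lambda$-weight space of a highest weight module is one-dimensional, whence $V_1\cong V_2$.

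For surjectivity I would, given $\lambda=\sum_i m_iw_i\in\mathscr L^+$, form the Verma module $M(\lambda)=U(\mathfrak g_{\mathbb C})\otimes_{U(\mathfrak b)}\mathbb C_\lambda$, pass to its unique irreducible quotient $L(\lambda)$, and then prove $\dim L(\lambda)<\infty$. The point is that $f_i^{m_i+1}v^+$ is a singular vector of $M(\lambda)$ --- a further $\mathfrak{sl}_2$ computation using $\lambda(h_i)=m_i$ --- hence vanishes in $L(\lambda)$; this makes the action of each simple $\mathfrak{sl}_2$ on $L(\lambda)$ locally finite, so the set of weights of $L(\lambda)$ is stable under the Weyl group $W$, and, being also contained in the cone $\lambda-\sum_i\mathbb N_0\alpha_i$, it is finite, with finite-dimensional weight spaces. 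Exponentiating $L(\lambda)$ back to $\Gp$ gives an element of $\DuGp$ whose highest weight is $\lambda$.

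The hard part is precisely the finite-dimensionality in the last paragraph: everything on the injectivity side is bookkeeping with $\mathfrak{sl}_2$-theory and PBW, whereas pinning down $L(\lambda)$ needs the Harish-Chandra-type argument that the weight multiset is $W$-invariant --- or, alternatively, the Borel--Weil realization of $L(\lambda)$ as the space of global holomorphic sections of the line bundle associated to $\lambda$ on the flag manifold $\Gp_{\mathbb C}/B$, or Weyl's tensor-power construction, or case-by-case checking across the list $\SU(n),\mathrm{Sp}(n),\mathrm{Spin}(n),G_2,F_4,E_6,E_7,E_8$. Since this paper uses the classification only as an input, I would, as the text does, simply cite it from \cite{BP2011} rather than reproduce the full proof.
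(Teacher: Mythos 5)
The paper does not prove this theorem: it is stated as a classical citation ("well-known results in highest weight theory will be directly cited. They are already collected in Berti-Procesi \cite{BP2011}"), as you yourself note at the end of your proposal. Your sketch of the Cartan--Weyl argument is an accurate rendering of the standard proof --- the passage to $\mathfrak g_{\mathbb C}$ via simple connectedness and unitarizability by Haar averaging, the $\mathfrak{sl}_2$-and-PBW bookkeeping for well-definedness and injectivity, the Verma/$L(\lambda)$ construction with the singular vectors $f_i^{m_i+1}v^+$ and $W$-invariance of the weight set for surjectivity, and the correct identification of the finite-dimensionality of $L(\lambda)$ as the genuine content --- and your closing decision to cite rather than reproduce matches the paper's treatment.
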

In particular, the trivial representation corresponds to the point $0\in\mathscr{L}^+$. From now on, we shall denote the assignment between representations $\xi\in \DuGp$ and highest weights $\bm{j}\in\mathscr{L}^+$ by $\bm{j}=\bm{J}(\xi)$, or $\xi=\Xi(\bm{j})$. The highest weight theorem implies a complete description of the spectrum of the Laplace operator $\Delta$ of $\Gp$.

\begin{theorem}\label{DeltaSpec}
Each space $\mathcal{M}_\xi$ is an eigenspace of the Laplace operator $\Delta$, with eigenvalue
$$
-|\bm{J}(\xi)+\bm{\varpi}|^2+|\bm{\varpi}|^2,
$$
where $\bm{\varpi}=\sum_{i=1}^\varrho \bm{\varpi}_i$. 
\end{theorem}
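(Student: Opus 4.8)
The plan is to compute the action of the Casimir element $\Delta=\sum_i X_i^2$ on the representation space $\Hh[\xi]$ directly, using the standard highest-weight machinery, and then translate this into a statement about the eigenspaces $\mathcal{M}_\xi\subset L^2(\Gp)$. First I would recall that by Schur's lemma, since $\Delta$ is built from an orthonormal basis of $\mathfrak{g}$ and is therefore $\mathrm{Ad}$-invariant (i.e. central in the universal enveloping algebra), the operator $d\xi(\Delta)=\sum_i d\xi(X_i)^2$ acts on the irreducible space $\Hh[\xi]$ as a scalar $c_\xi\cdot\I[\xi]$. Since the matrix entries of $\xi$ generate $\mathcal{M}_\xi$ and $\Delta$ acts on each such entry (viewed as a function on $\Gp$) through $d\xi(\Delta)$ — because $X_i$ are left-invariant vector fields — it follows that $\mathcal{M}_\xi$ is an eigenspace of $\Delta$ with eigenvalue $c_\xi$. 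So the whole problem reduces to identifying the scalar $c_\xi$.

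To evaluate $c_\xi$, I would choose the basis of $\mathfrak{g}$ adapted to the root space decomposition $\mathfrak{g}_{\mathbb C}=\mathfrak{t}_{\mathbb C}\oplus\bigoplus_{\alpha}\mathfrak{g}_\alpha$: an orthonormal basis $H_1,\dots,H_\varrho$ of $\mathfrak{t}$ together with root vectors $E_\alpha$ normalized so that $\{E_\alpha,E_{-\alpha}\}$ pair correctly under the Killing form. Rewriting $\Delta$ in the form $\sum_k H_k^2+\sum_{\alpha>0}(E_\alpha E_{-\alpha}+E_{-\alpha}E_\alpha)$ and then moving all lowering operators to the right using the bracket relations $[E_\alpha,E_{-\alpha}]=H_\alpha$, one applies the resulting expression to a highest weight vector $v_j$ (where $j=J(\xi)$), which is annihilated by every $E_\alpha$ with $\alpha>0$. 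The terms that survive give $\langle j,j\rangle + 2\langle j,\rho\rangle$ up to sign, where $\rho=\sum_i w_i$ is the half-sum of positive roots; completing the square yields $|j+\rho|^2-|\rho|^2$, and the overall sign is negative because on a compact group the operators $d\xi(X_i)$ are skew-Hermitian, so $d\xi(X_i)^2$ is negative semidefinite. This is exactly the claimed eigenvalue $-|J(\xi)+w|^2+|w|^2$ with $w=\sum_i w_i$.

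I expect the main obstacle — or rather the main bookkeeping point — to be the normalization of the root vectors and the precise identification of the inner product. One must be careful that the inner product on $\mathfrak{t}^*$ used in the formula is the one dual to the Killing form (the same one used to define the fundamental weights $w_i$ via $(w_i,\alpha_j)=\tfrac12\delta_{ij}|\alpha_j|^2$), and that the commutator $[E_\alpha,E_{-\alpha}]$ is normalized consistently so that the cross terms assemble into $2(j,\rho)$ with the correct coefficient. Since this is a classical computation — it is precisely the Freudenthal–Weyl formula for the Casimir eigenvalue — I would simply cite it from a standard reference (e.g. \cite{Procesi2007}), noting that it is already recorded in \cite{BP2011}, and confine the argument to the reduction step and the sign discussion, which are the only parts specific to the analytic setup here.
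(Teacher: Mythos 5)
Your proposal is correct. The paper does not prove this theorem at all — it explicitly says these highest-weight facts will be directly cited (from \cite{BP2011} and \cite{Procesi2007}) — which matches your own conclusion that the Freudenthal--Casimir computation should simply be cited; the two pieces you do spell out, namely the reduction via Schur's lemma and left-invariance of the $X_i$ (so that $\Delta$ acts on each matrix entry of $\xi$ through $d\xi(\Delta)=c_\xi\I[\xi]$) and the sign argument from skew-Hermiticity of $d\xi(X_i)$ under a unitary representation, are both correct and consistent with the convention that the metric is the negative of the Killing form.
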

We write $\lambda_\xi=|\bm{J}(\xi)+\bm{\varpi}|^2-|\bm{\varpi}|^2$, so that $\{\lambda_\xi\}$ is the spectrum of the positive self-adjoint operator $-\Delta$. Imitating commutative harmonic analysis, we set the size functions $|\xi|:=\sqrt{|\lambda_\xi|}$, $\size[\xi]:=(1+\lambda_\xi)^{1/2}$. The operator $|\nabla|$, which is the square-root of $-\Delta$ defined via spectral analysis, will be frequently used. The values of $\size[\xi]$ are exactly the eigenvalues of $(1-\Delta)^{1/2}$. We will also use the following notation:
\begin{equation}\label{FourierSupp}
\mathfrak{S}[\gamma_1,\gamma_2]:=\left\{\xi\in\DuGp:\gamma_1\leq|\xi|\leq\gamma_2 \right\}.
\end{equation}

For $s\in\mathbb{R}$, the Sobolev space $H^s(\Gp)$ is defined to be the subspace of $f\in\mathcal{D}'(\Gp)$ such that $\|(1-\Delta)^{s/2}f\|_{L^2(\Gp)}<\infty$. By the Peter-Weyl theorem and the above characterization of Laplacian, the Sobolev norm is computed by 
$$
\|f\|_{H^s(\Gp)}^2=\sum_{\xi\in\DuGp} d_\xi\size[\xi]^{2s}\big\lHS\Ft f(\xi)\big\rHS^2.
$$

To proceed further, we need to introduce a notion of partial ordering on $\mathscr{L}^+$. Define the cone 
$$
\mathscr{R}^+:=\left\{\sum_{i=1}^\varrho n_i\bm{\alpha}_i,\, n_i\in\mathbb{N}_0\right\}.
$$
We then introduce the \emph{dominance order} on $\mathscr{L}^+$ as follows: for $\bm{j},\bm{k}\in\mathscr{L}^+$, we set $\bm{j}\prec \bm{k}$ if $\bm{j}\neq \bm{k}$ and $\bm{k}-\bm{j}\in\mathscr{R}^+$, and set $\bm{j}\preceq \bm{k}$ to include the possibility $\bm{j}=\bm{k}$. The relation $\prec$ defines a partial ordering on $\mathscr{L}^+$. With the aid of this root system, we obtain the following characterization of product of Laplace eigenfunctions (see, for example, \cite{Procesi2007}, Proposition 3 of page 345):

\begin{proposition}\label{SpecPrd0}
Let $u_{\bm{j}}\in\mathcal{M}_{\Xi(\bm{j})}$, $u_{\bm{k}}\in\mathcal{M}_{\Xi(\bm{k})}$ be Laplace eigenfunctions corresponding to highest weights $\bm{j},\bm{k}\in\mathscr{L}^+$ respectively. Then the product $u_{\bm{j}}u_{\bm{k}}$ is in the space
$$
\bigoplus_{\bm{l}\in\mathscr{L}^+:\,\bm{l}\preceq \bm{j}+\bm{k}}\mathcal{M}_{\Xi(\bm{l})}.
$$
\end{proposition}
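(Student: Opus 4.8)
The plan is to realise the product $u_ju_k$ as a matrix entry of the tensor product representation $\Xi(j)\otimes\Xi(k)$ and then use highest-weight theory to control which irreducibles can occur in it. By the Peter--Weyl theorem, $\mathcal{M}_{\Xi(j)}$ is spanned by the matrix entries $\Xi(j)_{ab}(\cdot)$, $1\le a,b\le d_{\Xi(j)}$, and likewise $\mathcal{M}_{\Xi(k)}$ by the $\Xi(k)_{cd}(\cdot)$. Since the tensor product representation acts by $\big(\Xi(j)\otimes\Xi(k)\big)(x)=\Xi(j)(x)\otimes\Xi(k)(x)$, one has, in the obvious basis of $\mathcal{H}_{\Xi(j)}\otimes\mathcal{H}_{\Xi(k)}$,
$$
\big(\Xi(j)\otimes\Xi(k)\big)_{(a,c),(b,d)}(x)=\Xi(j)_{ab}(x)\,\Xi(k)_{cd}(x).
$$
Expanding $u_j,u_k$ as finite linear combinations of matrix entries, I see that $u_ju_k$ is a finite linear combination of matrix entries of the finite-dimensional unitary representation $\Xi(j)\otimes\Xi(k)$; writing $\mathcal{M}$ for the span of all its matrix entries, this says $u_ju_k\in\mathcal{M}$.

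Next, decompose $\Xi(j)\otimes\Xi(k)$ into irreducibles: there is a unitary intertwiner identifying it with a finite direct sum $\bigoplus_l m_l\,\Xi(l)$. In the correspondingly adapted basis the matrix of $\big(\Xi(j)\otimes\Xi(k)\big)(x)$ is block diagonal with blocks $\Xi(l)(x)$, so every matrix entry of $\Xi(j)\otimes\Xi(k)$ is a linear combination of matrix entries of the constituents $\Xi(l)$; hence $\mathcal{M}\subseteq\bigoplus_{l:\,m_l>0}\mathcal{M}_{\Xi(l)}$, and it remains to show that $m_l>0$ forces $l\preceq j+k$.

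Finally, I use the weight description. The set of weights of $\Xi(j)\otimes\Xi(k)$ equals $\{\mu+\nu:\mu\text{ a weight of }\Xi(j),\ \nu\text{ a weight of }\Xi(k)\}$. By the standard fact that every weight of an irreducible module lies below its highest weight in the root order, $j-\mu\in\mathscr{R}^+$ for each weight $\mu$ of $\Xi(j)$ and $k-\nu\in\mathscr{R}^+$ for each weight $\nu$ of $\Xi(k)$; since $\mathscr{R}^+$ is closed under addition, every weight of $\Xi(j)\otimes\Xi(k)$ lies in $(j+k)-\mathscr{R}^+$. If $m_l>0$, then $l$, being the highest weight of a summand and hence a weight of $\Xi(j)\otimes\Xi(k)$, satisfies $(j+k)-l\in\mathscr{R}^+$, i.e.\ $l\preceq j+k$, which is what was claimed. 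I expect no genuine obstacle here: the two non-elementary ingredients are classical facts of highest-weight theory (products of matrix entries correspond to tensor products, and the weights of an irreducible lie in the root-order cone below the highest weight), and the content of the statement is just the Clebsch--Gordan-type decomposition $\Xi(j)\otimes\Xi(k)\cong\bigoplus_{l\preceq j+k}m_l\,\Xi(l)$ transported through the Peter--Weyl identification of $\mathcal{M}_\xi$ with the span of matrix entries of $\xi$.
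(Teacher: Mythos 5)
Your proof is correct and is precisely the standard argument: realise $u_ju_k$ as a matrix entry of $\Xi(j)\otimes\Xi(k)$ via Peter--Weyl, decompose the tensor product into irreducibles, and observe that every constituent highest weight is a weight of the tensor product, hence lies in $(j+k)-\mathscr{R}^+$. The paper does not give its own proof but cites \cite{Procesi2007} (Proposition 3, p.~345), and your argument is essentially the one found there.
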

We deduce from this characterization an important spectral localization property for products that will play a fundamental role in dyadic analysis:
\begin{corollary}\label{SpecPrd}
There is a constant $0<c\leq1$, depending only on the algebraic structure of $\Gp$, with the following property. If $u_1\in\mathcal{M}_{\xi_1}$, $u_2\in\mathcal{M}_{\xi_2}$, then the product $u_1u_2\in\bigoplus_{\xi}\mathcal{M}_\xi$, where the range of sum is for
$$
\xi\in\mathfrak{S}\Big[c\big||\xi_1|-|\xi_2|\big|,|\xi_1|+|\xi_2|\Big].
$$
Roughly speaking, in the frequency space, product of two eigenfunctions ``localizes" in between the sum and difference of their frequencies.
\end{corollary}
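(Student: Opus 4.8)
The plan is to translate the statement into one about highest weights and the Laplace spectrum, and then to extract it from Proposition \ref{SpecPrd0} together with a little convex geometry of the dominant Weyl chamber. Write $j_i = J(\xi_i)$. A product of a matrix entry of $\xi_1$ with one of $\xi_2$ is a matrix entry of $\xi_1\otimes\xi_2$, so by Peter--Weyl the Fourier support of $u_1u_2$ is contained among the $\Xi(l)$ occurring in $\xi_1\otimes\xi_2$. For every such $l$, Proposition \ref{SpecPrd0} gives $l\preceq j_1+j_2$; and since $\Xi(l)$ occurring in $\xi_1\otimes\xi_2$ forces $\xi_1$ to occur in $\Xi(l)\otimes\xi_2^{*}$, a second application of Proposition \ref{SpecPrd0} gives $j_1\preceq l+j_2^{*}$, where $j\mapsto j^{*}$ is the involution of $\mathscr{L}^{+}$ attached to the contragredient; note $|j^{*}+w|=|j+w|$, because $\Delta$ acts on $\xi^{*}$ by the transpose of its action on $\xi$, i.e. $\lambda_{\xi^{*}}=\lambda_{\xi}$. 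Since $|\Xi(l)|^{2}=\lambda_{\Xi(l)}=|l+w|^{2}-|w|^{2}$, everything reduces to bounding $|l+w|$ above and below for $l$ subject to these two dominance relations.

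\emph{A chamber inequality.} Equip $\mathfrak{t}^{*}$ with the inner product from the Killing form and let $\overline{C}=\{x:(x,\alpha_i)\geq 0,\ i=1,\dots,\varrho\}$. From $(w_i,\alpha_j)=\tfrac12\delta_{ij}|\alpha_j|^{2}\geq 0$ we get $w_i\in\overline{C}$, hence every element of $\mathscr{L}^{+}$ lies in $\overline{C}$, in particular $w$, $l+w$ and the $j_i+w$; and because the inverse Cartan matrix has nonnegative entries, each $w_i$ — hence $w$ — is a nonnegative combination of the simple roots. Two consequences: $(\mu,\nu)\geq 0$ for all $\mu,\nu\in\overline{C}$, and $(\mu,w+r)\geq 0$ whenever $\mu\in\overline{C}$, $r\in\mathscr{R}^{+}$. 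The inequality I will use is: if $\mu,\nu,\sigma\in\overline{C}$ with $\nu+\sigma-\mu\in w+\mathscr{R}^{+}$, then
$$
|\mu|^{2}=(\mu,\nu)+(\mu,\sigma)-(\mu,\ \nu+\sigma-\mu)\leq(\mu,\nu)+(\mu,\sigma)\leq|\mu|\,\big(|\nu|+|\sigma|\big),
$$
hence $|\mu|\leq|\nu|+|\sigma|$.

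\emph{The two bounds.} For the upper bound take $\mu=l+w$, $\nu=j_1+w$, $\sigma=j_2+w$: then $\nu+\sigma-\mu=(j_1+j_2-l)+w\in w+\mathscr{R}^{+}$ because $l\preceq j_1+j_2$, so $|l+w|\leq|j_1+w|+|j_2+w|$. As $|j_i+w|=(|\xi_i|^{2}+|w|^{2})^{1/2}\leq|\xi_i|+|w|$ and $|\Xi(l)|\leq|l+w|$, this gives $|\Xi(l)|\leq|\xi_1|+|\xi_2|+2|w|$; if $\xi_1,\xi_2$ are both trivial then $u_1u_2$ is constant and $l=0$, and otherwise $|\xi_1|+|\xi_2|\geq\lambda_{*}^{1/2}$ with $\lambda_{*}>0$ the first nonzero eigenvalue of $-\Delta$ on $\Gp$, so the additive $2|w|$ is absorbed into a multiplicative constant. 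For the lower bound, assume $|\xi_1|\geq|\xi_2|$ and apply the inequality with $\mu=j_1+w$, $\nu=l+w$, $\sigma=j_2^{*}+w$, using $j_1\preceq l+j_2^{*}$; since $|j_2^{*}+w|=|j_2+w|$ this yields $|l+w|\geq|j_1+w|-|j_2+w|$, and
$$
|j_1+w|-|j_2+w|=\frac{|\xi_1|^{2}-|\xi_2|^{2}}{|j_1+w|+|j_2+w|}\geq\frac{\big(|\xi_1|-|\xi_2|\big)\big(|\xi_1|+|\xi_2|\big)}{|\xi_1|+|\xi_2|+2|w|}.
$$
Subtracting the fixed constant $|w|^{2}$ inside the square root defining $|\Xi(l)|$ and splitting into the cases $\big||\xi_1|-|\xi_2|\big|\geq A|w|$ and $\big||\xi_1|-|\xi_2|\big|<A|w|$ for a suitably large fixed $A$ turns this into a bound $|\Xi(l)|\geq c\,\big||\xi_1|-|\xi_2|\big|-c^{-1}$; taking $c$ to be the least of the constants produced above, and at most $1$, finishes the proof.

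\emph{Where the work is.} The upper localization is formal; the substantive point is the lower one. Proposition \ref{SpecPrd0} by itself only localizes products from above, so one must also invoke the tensor reciprocity $\Xi(l)\hookrightarrow\xi_1\otimes\xi_2\Rightarrow\xi_1\hookrightarrow\Xi(l)\otimes\xi_2^{*}$; and even with it, the conversion between $\lambda_{\xi}$ and the weight-norm $|J(\xi)+w|^{2}$ introduces the shift by $|w|^{2}$, which is precisely what accounts for the constant $c<1$ and the harmless additive $c^{-1}$ in the statement. As an aside, one can bypass Proposition \ref{SpecPrd0} entirely: polarizing the Casimir gives $\Delta_{\xi_1\otimes\xi_2}=\Delta\otimes 1+1\otimes\Delta+2\sum_i X_i\otimes X_i$ for an orthonormal basis $\{X_i\}$ of $\mathfrak{g}$, and $\|\sum_i X_i\otimes X_i\|\leq(\lambda_{\xi_1}\lambda_{\xi_2})^{1/2}$ by two uses of Cauchy--Schwarz, giving at once the sharper inclusion $\lambda_{\Xi(l)}\in\big[(|\xi_1|-|\xi_2|)^{2},\ (|\xi_1|+|\xi_2|)^{2}\big]$; but this argument does not rest on the characterization quoted in the text.
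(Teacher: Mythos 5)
Your argument is correct and differs from the paper's in a way worth noting. Both proofs open the same way: two applications of Proposition \ref{SpecPrd0} (equivalently, the dominance constraint on constituents of $\xi_1\otimes\xi_2$ together with tensor reciprocity $\Xi(l)\hookrightarrow\xi_1\otimes\xi_2\Rightarrow\xi_1\hookrightarrow\Xi(l)\otimes\xi_2^{*}$), and both use the fact that the contragredient preserves the Laplace eigenvalue so that $|j^*+w|=|j+w|$. The paper converts dominance into spectral bounds by evaluating the weights against the basis $\tilde\alpha_i$ pre-dual to the simple roots; it then appeals, somewhat informally, to equivalence of norms on the nonnegative cone to pass from coordinatewise inequalities to $|l|\lesssim|j|+|k|$. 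You instead prove a clean chamber inequality, $\mu,\nu,\sigma\in\overline{C}$, $\nu+\sigma-\mu\in w+\mathscr{R}^{+}$ $\Rightarrow |\mu|\le|\nu|+|\sigma|$, directly from Cauchy--Schwarz in the Killing form plus nonnegativity of the inverse Cartan matrix, and then apply it with shifted weights $l+w$, $j_i+w$. This is sharper and more transparent: it yields the triangle inequality $|l+w|\le|j_1+w|+|j_2+w|$ (and its reverse) with constant $1$, so the whole loss of a constant $c<1$ is traced explicitly to the $-|w|^2$ shift between $|J(\xi)+w|^2$ and $\lambda_\xi$ and absorbed in the case split you describe, whereas the paper leaves those constants implicit. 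Your closing aside --- polarizing the Casimir on $\xi_1\otimes\xi_2$ and bounding $\|\sum_i X_i\otimes X_i\|\le(\lambda_{\xi_1}\lambda_{\xi_2})^{1/2}$ by Cauchy--Schwarz --- is a genuinely different and more elementary route, avoids Proposition \ref{SpecPrd0} and dominance order entirely, and gives the sharp statement $\lambda_{\Xi(l)}\in[(|\xi_1|-|\xi_2|)^{2},(|\xi_1|+|\xi_2|)^{2}]$ with no additive correction; what the paper's route buys in exchange is that it sits naturally beside Proposition \ref{SpecPrd0}, which is quoted and reused elsewhere.
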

\begin{proof}
With a little abuse of notation, given a highest weight vector $\bm{j}\in\mathscr{L}^+$, we denote by $\bar{\bm{j}}$ the highest weight vector of $\Xi(\bm{j})^*$, the dual representation, which is also irreducible. It is known from representation theory that the highest weight of $\Xi(\bm{j})^*$ is just $-W_0(\bm{j})$, where $W_0$ is the element in the Weyl group with longest length. Since the Weyl group is orthogonal, there holds $|\bm{j}|=|\bar{\bm{j}}|$. By self-adjointness of $\Delta$, matrix entries of $\Xi(\bar{\bm{j}})$ correspond to the same eigenvalue of $\Delta$ as those of $\Xi(\bm{j})$ do.

Now suppose there are highest weights $\bm{j},\bm{k},\bm{l}\in\mathscr{L}^+$, and let $u_{\bm{j}}$, $u_{\bm{k}}$, $u_{\bm{l}}$ be linear combinations of matrix entries of $\Xi(\bm{j})$, $\Xi(\bm{k})$, $\Xi(\bm{l})$, respectively. From the above we know that e.g. $\bar{u}_{\bm{j}}\in\mathcal{M}_{\Xi(\bar{\bm{j}})}$. By Proposition \ref{SpecPrd0} and Schur orthogonality, the integral
$$
\int_\Gp u_{\bm{j}}u_{\bm{k}}\bar{u}_{\bm{l}} dx
$$
does not vanish only if the following relations are simultaneously satisfied:
$$
\bm{l}\preceq \bm{j}+\bm{k},
\quad \bm{j}\preceq \bar{\bm{k}}+\bm{l},
\quad \bar{\bm{k}}\preceq \bm{j}+\bar{\bm{l}}.
$$
Note that this is only a necessary condition. Denoting the pre-dual vector of $\bm{\alpha}_i$ by $\tilde{\bm{\alpha}}_i$ (i.e. $\tilde{\bm{\alpha}}_i\in\mathfrak{t}$, and $\bm{\alpha}_i(\tilde{\bm{\alpha}}_{i'})=\delta_{ii'}$), these relations are equivalent to
$$
\left.
\begin{aligned}
\bm{l}(\tilde{\bm{\alpha}}_i)&\leq \bm{j}(\tilde{\bm{\alpha}}_i)+\bm{k}(\tilde{\bm{\alpha}}_i)\\
\bm{j}(\tilde{\bm{\alpha}}_i)&\leq \bar{\bm{k}}(\tilde{\bm{\alpha}}_i)+\bm{l}(\tilde{\bm{\alpha}}_i)\\
\bar{\bm{k}}(\tilde{\bm{\alpha}}_i)&\leq \bm{j}(\tilde{\bm{\alpha}}_i)+\bar{\bm{l}}(\tilde{\bm{\alpha}}_i)\\
\end{aligned}
\right\}\quad i=1,\cdots,\varrho.
$$
Due to monotonicity of the eigenvalues with respect to the dominance order, the first inequality implies
$$
|\bm{l}+\bm{\varpi}|^2\leq|\bm{j}+\bm{k}+\bm{\varpi}|^2
$$
so we conclude that $|\bm{l}|\leq|\bm{j}|+|\bm{k}|$. Since $\{\bm{\alpha}_i\}_{i=1}^\varrho$ is a basis for $\mathfrak{t}^*$, it follows that $\{\tilde{\bm{\alpha}}_i\}_{i=1}^\varrho$ is a basis for $\mathfrak{t}$, so the second and third inequalities, together with $|\bm{l}|=|\bar{\bm{l}}|$, imply
$$
|\bm{j}-\bar{\bm{k}}|\lesssim |\bm{l}|.
$$
Using $|\bm{k}|=|\bar{\bm{k}}|$ once again, we obtain $\big||\bm{j}|-|\bm{k}|\big|\lesssim|\bm{l}|$. Thus the integral does not vanish only if 
$$
\big||\bm{j}|-|\bm{k}|\big|\lesssim|\bm{l}|\leq|\bm{j}|+|\bm{k}|.
$$
Using the formula for eigenvalues in Theorem \ref{DeltaSpec}, with $\bm{j}=\bm{J}(\xi_1)$, $\bm{k}=\bm{J}(\xi_2)$, $\bm{l}=\bm{J}(\xi)$, this is implies that the spectral projection of $u_{\bm{j}}u_{\bm{k}}$ to $\mathcal{M}_\xi$ is nontrivial only if
$$
\big||\xi_1|-|\xi_2|\big|
\lesssim |\xi|
\leq |\xi_1|+|\xi_2|.
$$
This is exactly what to prove.
\end{proof}
The dual representation of $\xi\in\DuGp$ is, in general, not equivalent to $\xi$, although they have the same dimension. This is why $\bar{\bm{j}}$ should be studied first. But the rank one case can be treated trivially: the special unitary group $\SU(2)$ is the only simply connected compact Lie group of rank 1. Irreducible unitary representations of $\SU(2)$ are uniquely labeled by their dimensions. For the Lie algebra $\mathfrak{su}(2)$ and the subalgebra corresponding to the subgroup $\mathrm{diag}(e^{i\psi},e^{-i\psi})$, the dominance order is equivalent to the usual ordering on natural numbers. Corollary \ref{SpecPrd} then becomes a well-known fact: the product of spherical harmonics of degree $p$ and $q$ is a linear combination of spherical harmonics of degree between $|p-q|$ and $p+q$, with coefficients given by the \emph{Clebsch–Gordan coefficients} (see e.g. \cite{Vilenkin1978}). This fact has been used by \cite{DS2004} as the substitute of spectral localization properties on spheres. On the other hand, the constant $c$ can be strictly less than 1. For example, the root system of $\SU(3)$ forms a hexagonal lattice, and $c=1/2$ in that case.

In estimating the magnitude of certain symbols, we will also be employing the following asymptotic formula of Weyl type. It is the discrete version of volume growth estimate for Euclidean balls:
\begin{lemma}\label{Weyl}
As $t\to\infty$,
$$
\sum_{\xi\in \mathfrak{S}[0,t]}d_\xi^2
\simeq
t^n.
$$
\end{lemma}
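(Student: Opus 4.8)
The plan is to recognise the left side as an eigenvalue counting function and then invoke Weyl's law. Since $-\Delta$ is positive semi-definite, $|\xi|=\sqrt{\lambda_\xi}$, so $\xi\in S[0,t]$ exactly when $\lambda_\xi\le t^2$; and by Peter--Weyl (Theorem \ref{PeterWeyl}) together with Theorem \ref{DeltaSpec}, each $\mathcal{M}_\xi$ is an eigenspace of $-\Delta$ of complex dimension $d_\xi^2$ for the eigenvalue $\lambda_\xi$. Hence
$$
\sum_{\xi\in S[0,t]}d_\xi^2=N(t^2),\qquad N(\Lambda):=\#\{\text{eigenvalues of }-\Delta\text{ that are}\le\Lambda,\text{ counted with multiplicity}\},
$$
so the lemma becomes the assertion that $N(\Lambda)$ grows like $\Lambda^{n/2}$ as $\Lambda\to\infty$.

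I would then apply the classical Weyl law on the compact Riemannian $n$-manifold $\mathbb{G}$. Since the metric (the negative of the Killing form) is bi-invariant, the operator $\sum_iX_i^2$ is the Laplace--Beltrami operator, so Weyl's asymptotics apply and give $N(\Lambda)\sim C\,\Lambda^{n/2}$ for a positive constant $C$ depending only on $\mathbb{G}$; taking $\Lambda=t^2$ yields $\sum_{\xi\in S[0,t]}d_\xi^2\sim C\,t^n$, which is stronger than the claimed $\simeq t^n$. For a self-contained derivation I would use the heat trace: on the homogeneous space $\mathbb{G}$ the on-diagonal value of the kernel of $e^{s\Delta}$ is independent of the base point, whence $\Tr e^{s\Delta}=\sum_{\xi\in\DuGp}d_\xi^2e^{-\lambda_\xi s}\sim(4\pi s)^{-n/2}\,\mathrm{vol}_g(\mathbb{G})$ as $s\to0^+$, and the Karamata Tauberian theorem converts this short-time expansion into the stated power-law growth of $N$.

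A purely representation-theoretic route also works and fits the spirit of the paper: by the Weyl dimension formula $d_\xi$ is a polynomial in $J(\xi)\in\mathscr{L}^+$ of degree equal to the number of positive roots, namely $(n-\varrho)/2$, so $d_\xi^2$ is a polynomial of degree $n-\varrho$ on the rank-$\varrho$ lattice cone $\mathscr{L}^+\subset\mathfrak{t}^*$, with $d_\xi\ge1$ there; moreover $\lambda_\xi=|J(\xi)+w|^2-|w|^2$, so $\xi\in S[0,t]$ means $|j+w|\le(t^2+|w|^2)^{1/2}$ with $j=J(\xi)$. Summing $d_\xi^2$ over the $\asymp t^\varrho$ lattice points of $\mathscr{L}^+$ inside a ball of radius $\asymp t$ and comparing with $\int_{\mathscr{L}^+\cap B(0,t)}|x|^{n-\varrho}\,dx\asymp\int_0^t r^{\varrho-1}r^{n-\varrho}\,dr\asymp t^n$ gives the matching two-sided bound (for the lower bound one restricts to weights bounded away from the chamber walls with $t/2\le|j|\le t$). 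The only genuine obstacle, in any of these approaches, is the Weyl law itself --- equivalently, the short-time heat-trace expansion together with a Tauberian argument, or the lattice-point count with controlled error; the reduction to it is immediate from Peter--Weyl and Theorem \ref{DeltaSpec}, and since only the comparability $\simeq t^n$ is used in the sequel, even the unoptimised polynomial-sum estimate above suffices.
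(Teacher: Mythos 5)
Your proof is correct and follows the same route as the paper: by Schur orthogonality (Peter--Weyl) and Theorem \ref{DeltaSpec}, the sum $\sum_{\xi\in S[0,t]}d_\xi^2$ is the number of eigenvalues of $-\Delta$ that are at most $t^2$ counted with multiplicity, and the classical Weyl law gives the $\simeq t^n$ growth. The heat-trace/Tauberian and lattice-point alternatives you sketch are not in the paper, but the main reduction and key input coincide with the paper's argument.
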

\begin{proof}
By Schur orthogonality, recalling $|\xi|=\sqrt{\lambda_\xi}$, we find that 
$$
\sum_{\xi\in \mathfrak{S}[0,t]}d_\xi^2
=\sum_{|\xi|\leq t}\dim\mathcal{M}_\xi
=\sum_{\lambda\in\sigma(-\Delta):\lambda\leq t^2}1,
$$
i.e. the number of eigenvalues of $-\Delta$ with magnitude less than $t^2$ (counting multiplicity). Using the Weyl law, we find that the ratio between
$$
\sum_{|\xi|\leq t}d_\xi^2
\quad\text{and}\quad
\frac{\omega_n}{(2\pi)^n}\mathrm{Vol}(\Gp)t^n
$$
has limit 1 as $t\to\infty$. Here $\omega_n$ is the volume of Euclidean $n$-unit ball.
\end{proof}

\subsection{Symbol and Symbolic Calculus}
A global notion of symbol can be defined on Lie groups due to their high symmetry. An explicit symbolic calculus was formally constructed by a series of works of Ruzhansky, Turunen, Wirth. Fischer's work \cite{Fis2015} provided a complete study of the object. We will closely follow \cite{Fis2015}.

We fix our compact Lie group $\Gp$ as in the previous subsection. The starting point will be a Taylor's formula on $\Gp$.
\begin{proposition}[Taylor expansion]\label{TaylorGp}
Let $q=(q_1,\cdots,q_M)$ be an $M$-tuple of smooth functions on $\Gp$, all vanishing at $e\in\Gp$, such that $(\nabla q_i)_{i=1}^M$ has rank $n$. For a multi-index $\alpha\in\mathbb{N}_0^m$, set $q^\alpha:=q_1^{\alpha_1}\cdots q_M^{\alpha_M}$. Corresponding to each multi-index $\alpha$, there is a left-invariant differential operator $X_q^{(\alpha)}$ of order $|\alpha|$ on $\Gp$, such that the following Taylor's formula holds for every smooth function $f$ on $\Gp$ and every $N\in\mathbb{N}_0$:
$$
f(xy)=\sum_{|\alpha|<N}q^\alpha(y^{-1})X_q^{(\alpha)}f(x)
+R_{N}(f;x,y).
$$
Here the remainder $R_{N}(f;x,y)$ depends linearly on derivatives of $f$ of order $\leq N$, is smooth in $x,y$, and satisfies $R_{N}(f;x,y)=O\big(|f|_{C^N}\dist(y,e)^N\big)$.
\end{proposition}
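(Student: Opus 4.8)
The plan is to pull the statement back by left translation and reduce it to the ordinary multivariate Taylor formula in a single chart centred at $e$. Fix $x\in\Gp$ and put $g:=f\circ L_x$, where $L_x$ is left translation; then $f(xy)=g(y)$ and we want to expand $g$ near $y=e$ in the monomials $q^\alpha(y^{-1})$. Set $p_i:=q_i\circ\mathrm{inv}$, so $p_i(e)=0$ and, since $d(\mathrm{inv})_e=-\mathrm{Id}$, the covectors $(\nabla p_i)_{i=1}^M$ still span $T_e^*\Gp$; pick a subset $S\subset\{1,\dots,M\}$ with $|S|=n$ so that $\phi:=(p_i)_{i\in S}$ is, by the inverse function theorem, a diffeomorphism of a neighbourhood $U$ of $e$ onto a neighbourhood of $0\in\mathbb{R}^n$ with $\phi(e)=0$. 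For multi-indices $\alpha$ not supported in $S$ we will simply declare $X_q^{(\alpha)}:=0$, so that on $U$ the sum $\sum_{|\alpha|<N}q^\alpha(y^{-1})X_q^{(\alpha)}f(x)$ collapses to $\sum_{|\beta|<N}\phi(y)^\beta\,X_q^{(\beta)}f(x)$, where $\beta$ ranges over $\mathbb N_0^n$ via the ordering of $S$. Note that the displayed identity is a tautology once $R_N$ is \emph{defined} as the difference of the two sides; the content of the proposition is to choose the $X_q^{(\alpha)}$ so that the resulting $R_N$ obeys the stated bound.

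To manufacture the operators I would use the correspondence between left-invariant differential operators on $\Gp$ and their jets at $e$: a left-invariant operator of order $\le m$ is uniquely determined by the functional $g\mapsto (Pg)(e)$ on $m$-jets at $e$, and every such functional is realised this way (both spaces have dimension equal to the number of multi-indices of length $\le m$, by the Poincaré–Birkhoff–Witt filtration on $U(\mathfrak g)$ on one side and a count of coordinate derivatives on the other, and the evaluation map is injective as one checks on coordinate monomials). Accordingly, define $X_q^{(\beta)}$ to be the unique left-invariant differential operator of order $|\beta|$ with $(X_q^{(\beta)}g)(e)=\tfrac1{\beta!}\partial^\beta_u(g\circ\phi^{-1})(0)$ for all smooth $g$ near $e$ — the left-invariant realisation of the $\beta$-th Taylor-coefficient functional of the chart $\phi$. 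The essential point is that left-invariant operators commute with left translation, whence
$$
\big(X_q^{(\beta)}f\big)(x)=\big(X_q^{(\beta)}(f\circ L_x)\big)(e)=\tfrac1{\beta!}\partial^\beta_u\big((f\circ L_x)\circ\phi^{-1}\big)(0);
$$
so for $y\in U$ the claimed identity is precisely the degree-$N$ multivariate Taylor expansion of $g=f\circ L_x$ in the chart $\phi$, and $R_N(f;x,y)$ is its integral-form remainder. This is where the symmetry of $\Gp$ does the real work: an expansion that is \emph{a priori} only ``at $e$'' propagates, with one and the same family of operators, to an expansion valid at every base point $x$.

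It remains to record the remainder estimate on $U$ and to patch it to all of $\Gp$. The integral form of the multivariate Taylor remainder writes $R_N(f;x,y)$ as a finite sum of terms $\phi(y)^\beta$ with $|\beta|=N$ times integrals over $[0,1]$ of order-$N$ coordinate derivatives of $g\circ\phi^{-1}$ evaluated along the segment $t\mapsto\phi^{-1}(t\phi(y))\subset U$; rewriting coordinate derivatives in a fixed left-invariant frame $\{X^\gamma\}$, with coefficients smooth and bounded on the compact closure of a slightly smaller patch, shows that $R_N$ depends linearly on the $X^\gamma f$ with $|\gamma|\le N$, is smooth in $(x,y)$, and satisfies $|R_N(f;x,y)|\lesssim |f|_{C^N}\,|\phi(y)|^N\lesssim |f|_{C^N}\,\dist(y,e)^N$, the constant being uniform in $x$ because the $C^N$-norm built from left-invariant vector fields is genuinely translation-invariant, $|f\circ L_x|_{C^N}=|f|_{C^N}$ (or comparable with a fixed constant if one uses an atlas, by compactness). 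For $y$ outside $U$ one has $\dist(y,e)\ge\delta>0$, while $R_N(f;x,y)=f(xy)-\sum_{|\alpha|<N}q^\alpha(y^{-1})(X_q^{(\alpha)}f)(x)$ is smooth in $(x,y)$ and bounded by $C|f|_{C^{N-1}}\le C|f|_{C^N}$ uniformly (the $q_i$ are bounded since $\Gp$ is compact, and $|X_q^{(\alpha)}f|\le C|f|_{C^{|\alpha|}}$), so the bound holds there too with a larger constant. Combining the two regimes gives the proposition.

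I do not expect a serious obstacle: the one structural input is the jet/universal-enveloping-algebra correspondence used to produce the left-invariant operators together with the observation that left translations act as symmetries of this frame; everything else — the local-to-global patching, and the harmless non-uniqueness (there are $M\ge n$ functions but only $n$ coordinate directions, so setting $X_q^{(\alpha)}=0$ on the redundant multi-indices is a choice) — is routine bookkeeping. The only point that requires a little care, rather than difficulty, is keeping the constant in the remainder estimate independent of the base point $x$, which is exactly what the translation-invariance of the left-invariant $C^N$-norm secures.
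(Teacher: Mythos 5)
Your proof is correct and follows the same strategy as the paper's sketch: reduce to $x=e$ by left translation, Taylor-expand in a chart built from a rank-$n$ subtuple of the $q_i$, and realise the Taylor-coefficient functionals by left-invariant differential operators whose translation-invariance then propagates the expansion to every base point with uniform constants. The only cosmetic difference is how the $X_q^{(\alpha)}$ are produced --- you invoke the bijection between $U(\mathfrak{g})_{\le m}$ and $m$-jet functionals at $e$ (hence PBW for the dimension count), whereas the paper constructs them by an inductive choice of coefficients in normal-ordered monomials $X_1^{\alpha_1}\cdots X_n^{\alpha_n}$, a choice it makes precisely to sidestep PBW.
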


\begin{proof}[Sketch of proof]
Suppose without loss of generality that the first $n$ functions are of maximal rank at $e$. By the implicit function theorem, the mapping $x\to(q_1(x),\cdots,q_M(x))\in\mathbb{R}^M$ is a smooth embedding in some neighbourhood of $e\in\Gp$. Consider first $x=e$. Fix a basis $X_1,\cdots,X_n$ of $\mathfrak{g}$, such that $X_iq_j(e)=\delta_{ij}$ for $i,j\leq n$. For multi-indices $\alpha\in\mathbb{N}_0^M$, write $X^\alpha=X_1^{\alpha_1}\cdots X_n^{\alpha_n}$ (the ``normal" order); if the first $n$ components of $\alpha$ all vanish, then just set $X^\alpha=0$. Choose constants $c_\alpha$ inductively so that
$$
\left(\sum_{\alpha}c_{\alpha}X^\alpha\right) q^\beta(e)=\delta_{\alpha\beta},
\quad 
\beta_i\neq0\text{ only for }1\leq i\leq n.
$$
Then with $X^{(\alpha)}_q:=\sum_{\alpha}c_{\alpha}X^\alpha$, the formula holds for $x=e$ as an implication of the usual Taylor's formula. In order to pass to general $x\in \Gp$, just use the left translation operator $L_x$ to act on both sides, and notice that $X^{(\alpha)}_q$ are left-invariant differential operators.
\end{proof}
\begin{remark}
If $f$ takes value in a finite dimensional normed space $V$, the Taylor's formula given above is still valid. If we define the $C^N$ norm of $f$ by
$$
|f|_{C^N;V}:=\sup_{v^*\in V^*:v^*\neq0}\frac{\big|\langle f,v^*\rangle\big|_{C^N}}{|v^*|},
$$
then the estimate $|R_{N}(f;x,y)|_{V}=O\big(|f|_{C^N;V}\dist(y,e)^N\big)$ is valid, with the implicit constant independent from the dimension of $V$.
\end{remark}

Note that the differential operators $X_q^{(\alpha)}$ are defined independently for every multi-index, so the equality $X_q^{(\alpha+\beta)}=X_q^{(\alpha)}X_q^{(\beta)}$ is, in general, not valid.

A \emph{symbol} $a$ on $\Gp$ is simply a field $a$ defined on $\Gp\times\DuGp$, such that $a(x,\xi)$ is a distribution of value in $\mathrm{End}(\Hh[\xi])$ for each $\xi\in\DuGp$. If a basis for $\Hh[\xi]$ is chosen, the value $a(x,\xi)$ can be simply understood as a matrix function of size $d_\xi\times d_\xi$. We define the \emph{quantization} of a symbol $a$ formally by
\begin{equation}\label{Op(a)}
\Op(a)f(x):=\sum_{\xi\in\DuGp} d_\xi\Tr\left(a(x,\xi)\cdot\Ft f(\xi)\cdot\xi(x)\right).
\end{equation}
Conversely, if $A:C^\infty(\Gp)\to\mathcal{D}'(\Gp)$ is a continuous operator, then it is the quantization of the symbol
\begin{equation}\label{Op-Symbol}
\sigma[A](x,\xi):=\xi^*(x)\cdot(A\xi)(x).
\end{equation}
Here $A\xi$ is understood as entry-wise action. In this case the series (\ref{Op(a)}) then converges in $\mathcal{D}'(\Gp)$. The \emph{associated right convolution kernel} for $a(x,\xi)$ is defined by
\begin{equation}\label{Symbol-Ker}
\mathcal{K}(x,y):=\left(a(x,\cdot)\right)^\vee(y)
=\sum_{\xi\in\DuGp}d_\xi\Tr\left(a(x,\xi)\cdot\xi(y)\right),
\end{equation}
where the Fourier inversion is taken with respect to $\xi$. Formally, the action $\Op(a)f$ can be written as a convolution:
\begin{equation}\label{Ker-Conv}
f(\cdot)*\mathcal{K}(x,\cdot)
=\int_{\Gp}f(y)\mathcal{K}(x,y^{-1}x)dy.
\end{equation}

An intrinsic notion of difference operators acting on symbols was introduced by Fischer \cite{Fis2015}, generalizing the differential operator with respect to $\xi$ in harmonic analysis on $\mathbb{R}^n$. For any (continuous) unitary representation $(\tau,\mathcal{H}_\tau)$, Maschke's theorem ensures that $\tau=\oplus_j\xi_j$ for finitely many $\xi_j\in\DuGp$. A symbol $a(x,\xi)$ can be naturally extended to any $\tau$ by $a(x,\tau)=\oplus_ja(x,\xi_j)$, up to equivalence of representations. The definition of difference operators is then given by
\begin{definition}
Given any extended symbol $a(x,\cdot)$ and representation $\tau$, the difference operator $\Df_\tau$ gives rise to a new extended symbol in the following manner:
$$
\Df_\tau a(x,\pi):=a(x,\tau\otimes\pi)-a(x,\I[\tau]\otimes\pi).
$$
\end{definition}
For a tuple of representations $\boldsymbol{\tau}=(\tau_1,\cdots,\tau_p)$, write $\Df^{\boldsymbol{\tau}}=\Df_{\tau_1}\cdots\Df_{\tau_p}$, and $\Df^{\boldsymbol{\tau}}a(x,\xi)$ is then an endomorphism of $\mathcal{H}_{\tau_1}\otimes\cdots\otimes\mathcal{H}_{\tau_p}\otimes\Hh[\xi]$.

Corresponding to the functions $q=\{q_i\}_{i=1}^M$ as in Proposition \ref{TaylorGp}, Ruzhansky and Turunen introduced the so-called \emph{admissible difference operators}, also generalizing the differential operator with respect to $\xi$ in harmonic analysis on $\mathbb{R}^n$, but in a more computable way compared to the intrinsic definition. We employ the following definitions from \cite{RT2009}, \cite{RT2013} and \cite{RTW2014}:

\begin{definition}\label{RTAdm}
An $M$-tuple of smooth functions $q=(q_i)_{i=1}^M$ on $\Gp$ is said to be RT-admissible if they all vanishing at $e\in\Gp$ and $(\nabla q_i)_{i=1}^M$ has rank $n$. If in addition the only common zero of $(q_i)_{i=1}^M$ is the identity element, then the $M$-tuple is said to be strongly RT-admissible.
\end{definition}

\begin{definition}\label{Difference}
Given a function $q\in C^\infty(\Gp)$, the corresponding RT-difference operator $\Df_q$ acts on the Fourier transform of a $f\in\mathcal{D}'(\Gp)$ by
$$
\Big(\Df_q\Ft f\Big)(\xi):=\Ft{qf}(\xi).
$$
The corresponding collection of RT-difference operators corresponding to an $M$-tuple $q=(q_i)_{i=1}^M$ is the set of difference operators
$$
\Df_q^\alpha:=\Df_{q^\alpha}=\Df_{q_1}^{\alpha_1}\cdots\Df_{q_M}^{\alpha_M}.
$$
If the tuple $q$ is RT-admissible (strongly RT-admissible), the corresponding collection of RT difference operators is said to be RT-admissible (strongly RT-admissible).
\end{definition}

We write $\Df_{q,\xi}$ for the action of a difference operator on the $\xi$ variable. Formally, we have
\begin{equation}\label{Diff_qa}
(\Df_{q,\xi}a)(x,\xi)=\int_\Gp q(y)\left(\sum_{\eta\in\DuGp} d_\eta\Tr\left(a(x,\eta)\cdot\eta(y)\right)\right)\xi^*(y)dy,
\end{equation}
so $\Df_{q,\xi}$ commutes with any differential operator acting on $x$. To compare with $\mathbb{R}^n$, we simply notice that given a symbol $a(x,\xi)$ on $\mathbb{R}^n$, the Fourier inversion of $\partial_\xi a(x,\xi)$ with respect to $\xi$ is $iy a^{\vee}(x,y)$, i.e. multiplication by a polynomial. The functions $q$ on $\Gp$ then play the role of monomials on $\mathbb{R}^n$.

With the aid of difference operators, \cite{Fis2015}  introduced symbol classes of interest.
\begin{definition}\label{S^mrd}
Let $m\in\mathbb{R}$, $0\leq\delta\leq\rho\leq1$. Fix a basis $X_1,\cdots,X_n$ of $\mathfrak{g}$, and define $X^\alpha$ as in Proposition \ref{NormalOrder}. The symbol class $\mathscr{S}^m_{\rho,\delta}(\Gp)$ is the set of all symbols $a(x,\xi)$, such that $a(x,\xi)$ is smooth in $x\in\Gp$, and for any tuple of representation $\boldsymbol{\tau}=(\tau_1,\cdots,\tau_p)$ and any left-invariant differential operator $X^\alpha$, there is a constant $C_{\alpha\boldsymbol{\tau}}$ such that
$$
\big\|X^\alpha\Df^{\boldsymbol{\tau}}a(x,\xi)\big\|
\leq C_{\alpha\boldsymbol{\tau}}\size[\xi]^{m-\rho p+\delta|\alpha|}.
$$
Here the norm is taken to be the operator norm of $\mathcal{H}_{\tau_1}\otimes\cdots\otimes\mathcal{H}_{\tau_p}\otimes\Hh[\xi]$.
\end{definition}
\begin{definition}\label{S^mrdAdmi}
Let $m\in\mathbb{R}$, $0\leq\delta\leq\rho\leq1$. Fix a basis $X_1,\cdots,X_n$ of $\mathfrak{g}$, and define $X^\alpha$ as in Proposition \ref{NormalOrder}. Let $q=(q_1,\cdots,q_M)$ be an RT-admissible $M$-tuple. The symbol class $\mathscr{S}^m_{\rho,\delta}(\Gp;\Df_q)$ is the set of all symbols $a(x,\xi)$, such that $a(x,\xi)$ is smooth in $x\in\Gp$, and 
$$
\big\|X^{\alpha}_x\Df_{q,\xi}^{\beta}a(x,\xi)\big\|
\leq C_{\alpha}\size[\xi]^{m+\delta|\alpha|-\rho|\beta|}.
$$
We can also introduce the norms
\begin{equation}\label{RhoDeltaNorm}
\mathbf{M}_{k,\rho;l,\delta;q}^m(a):=\sup_{|\alpha|\leq k}\sup_{|\beta|\leq l}
\sup_{x,\xi}\size[\xi]^{\rho|\beta|-m-\delta|\alpha|}
\left\|X^\alpha_x\Df_{q;\xi}^\beta a(x,\xi)\right\|.
\end{equation}
In particular, for $\rho=\delta=1$, we write $\mathbf{M}_{k,l;q}^m(a)$ for simplicity.
\end{definition}

In \cite{Fis2015}, Fischer proved that if $q=(q_i)_{i=1}^M$ is a strongly RT-admissible tuple, then the symbol class in Definition \ref{S^mrdAdmi} does not depend on the choice of $q$, and in fact gives rise to the usual H\"{o}rmander class of pseudo-differential operators.

\begin{theorem}[Fischer \cite{Fis2015}, Theorem 5.9. and Corollary 8.13.]\label{Fischer}
\hfill\par
(1) Suppose $0\leq\delta\leq\rho\leq1$. If $q=(q_i)_{i=1}^M$ is a strongly RT-admissible tuple, then a symbol $a\in \mathscr{S}^m_{\rho,\delta}$ if and only if all the norms $\mathbf{M}_{r,\rho;l,\delta;q}^m(a)$ are finite.

(2) Moreover, if $\rho>\delta$ and $\rho\geq1-\delta$, then the operator class $\Op \mathscr{S}^m_{\rho,\delta}$ coincides with the H\"{o}rmander class $\Psi^m_{\rho,\delta}$ of $(\rho,\delta)$-pseudo-differential operators defined via local charts.
\end{theorem}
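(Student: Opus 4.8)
This statement is quoted from Fischer's work, so I only sketch the strategy I would follow. \emph{For part (1)}, the plan is to first prove a comparison lemma: for any two strongly RT-admissible tuples $q=(q_i)$ and $q'=(q'_j)$, each operator $\Df_{q'_j}$ is a finite $C^\infty(\Gp)$-linear combination of the $\Df_{q_i}$. Working on the associated right-convolution-kernel side (\ref{Symbol-Ker})--(\ref{Ker-Conv}), where $\Df_q$ is just multiplication of the kernel by $q(y)$, it suffices to write $q'_j=\sum_i g_{ji}q_i$ with $g_{ji}\in C^\infty(\Gp)$, since then $\Df_{q'_j}=\sum_i\Df_{g_{ji}}\Df_{q_i}$. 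I would build such $g_{ji}$ by a partition of unity: near $e$, Hadamard's lemma applied to a sub-$n$-tuple of $q$ furnishing local coordinates (available since $(\nabla q_i)$ has rank $n$); away from $e$, strong admissibility gives $\sum_l q_l^2\geq c>0$, so $g_{ji}=q'_jq_i/\sum_l q_l^2$ is smooth off the common zero $\{e\}$. Iterating in $|\beta|$ then shows that the norms $\mathbf{M}^m_{k,\rho;l,\delta;q}$ and $\mathbf{M}^m_{k,\rho;l,\delta;q'}$ are equivalent. To identify this with the intrinsic class of Definition \ref{S^mrd}, I would use $\tau_1\otimes\tau_2-\mathrm{Id}=(\tau_1-\mathrm{Id})\otimes\mathrm{Id}+\mathrm{Id}\otimes(\tau_2-\mathrm{Id})+(\tau_1-\mathrm{Id})\otimes(\tau_2-\mathrm{Id})$ to see that $\Df_{\tau_1\otimes\tau_2}$ is a noncommutative polynomial in $\Df_{\tau_1},\Df_{\tau_2}$, then pick a finite family of representations whose matrix entries separate the points of $\Gp$ and whose differentials at $e$ span $\mathfrak{g}^*$ (possible since a faithful representation has injective differential and its tensor powers exhaust $\DuGp$); the entries of $\tau_0(y)-\mathrm{Id}$ then form a strongly RT-admissible tuple, and the comparison lemma closes the loop.

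\emph{For part (2)}, fix a chart $\kappa:U\to V\subset\mathbb{R}^n$ and cutoffs $\chi_0,\chi_1\in C_c^\infty(U)$. Given $a\in\mathscr{S}^m_{\rho,\delta}(\Gp)$, I would show that $\chi_0\,\Op(a)\,\chi_1$, transported to $V$, is a H\"ormander operator in $\Psi^m_{\rho,\delta}(V)$: the kernel $(x,x')\mapsto\mathcal{K}(x,(x')^{-1}x)$ is smooth off the diagonal with the standard $(\rho,\delta)$ decay, while on the diagonal I would expand $\mathcal{K}(x,\cdot)$ around $e$ using the Taylor formula of Proposition \ref{TaylorGp} with a tuple $q$ adapted to the chart, so that $\Df_q$ becomes $\partial_\eta$ up to symbols of strictly lower order; this converts the global bounds $\|X^\alpha\Df_q^\beta a\|\lesssim\size[\xi]^{m+\delta|\alpha|-\rho|\beta|}$ into the local bounds $|\partial_x^\alpha\partial_\eta^\beta\tilde a|\lesssim\langle\eta\rangle^{m+\delta|\alpha|-\rho|\beta|}$ via $|\xi|\sim|\eta|$. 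Conversely, for $A\in\Psi^m_{\rho,\delta}$ I would compute the global symbol $\sigma[A](x,\xi)=\xi^*(x)(A\xi)(x)$ by testing $A$ on the matrix coefficients of $\xi$, which oscillate at frequency $\sim|\xi|$, and estimate directly with the local calculus. The hypotheses $\rho>\delta$ and $\rho\geq1-\delta$ are consumed exactly here: they guarantee that $\Psi^m_{\rho,\delta}$ is diffeomorphism-invariant (hence well defined via charts) and that the asymptotic expansions above have remainders of strictly decreasing order. Gluing over a finite atlas and using that both classes are closed under sums and contain the smoothing operators yields the equality of operator classes.

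\emph{Main obstacle.} The delicate step is part (2): matching the global difference-operator estimates with the local $\partial_\eta$ estimates is not an identity but an asymptotic comparison, so the real work is the bookkeeping of all remainder terms in the Taylor and symbolic expansions and checking they gain the order $\rho-\delta>0$ --- which is precisely where $\rho>\delta$ and $\rho\geq1-\delta$ are used. In part (1) the analogous but milder subtlety is reducing arbitrary representations $\tau$ to a fixed finite generating family through the tensor-product Leibniz rule while tracking the noncommutativity of the resulting difference operators.
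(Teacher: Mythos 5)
Your sketch of part (1) matches the route the paper attributes to Fischer: both turn on a comparison lemma for two strongly RT-admissible tuples, reducing one family of difference operators to the other by smooth coefficients, and both identify $\mathscr{S}^m_{\rho,\delta}(\Gp;\Df_q)$ with the intrinsic class of Definition~\ref{S^mrd} through the fundamental tuple $Q$. Your explicit construction of $g_{ji}$ via Hadamard's lemma near $e$ and the formula $q'_jq_i/\sum_l q_l^2$ off the common zero is precisely the content of Fischer's Lemma~5.10 (quoted as Lemma~\ref{Fisqq'Lem}), whose hypothesis that $q'/q$ extends smoothly is exactly what your partition-of-unity argument verifies. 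Your use of the Leibniz rule for $\Df_{\tau_1\otimes\tau_2}$ is the same device the paper invokes when noting that $\Df_\tau a$ has matrix entries $\Df_{\tau_{jk}-\delta_{jk}}a$.

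For part (2), however, you take a genuinely different route from the one Fischer uses and the paper cites. You propose a direct local--global symbol comparison: transport $\Op(a)$ to a chart, expand the right-convolution kernel via Proposition~\ref{TaylorGp} with a chart-adapted tuple so that $\Df_q$ approximates $\partial_\eta$, and convert the global difference-operator estimates into local $\langle\eta\rangle$-estimates and back, controlling remainders by the gain $\rho-\delta>0$. The paper states that Fischer instead establishes the inclusion $\Op\mathscr{S}^m_{\rho,\delta}(\Gp)\subset\Psi^m_{\rho,\delta}(\Gp)$ by verifying Beals' commutator criterion~\cite{Beals1977}: one checks that iterated commutators of $\Op(a)$ with smooth vector fields and $C^\infty$ multipliers have the right $H^s\to H^{s-m+k(\rho-\delta)}$ mapping properties, which characterizes $\Psi^m_{\rho,\delta}$ intrinsically, sidestepping entirely the kernel bookkeeping you identify as the main obstacle. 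Your approach has the virtue of being concrete and chart-level, and it makes transparent where the exponent $\rho-\delta$ enters; the Beals route is cleaner precisely because Beals' theorem already encodes diffeomorphism invariance, so one never needs to match a global Taylor expansion against local $\partial_\eta$-derivatives term by term. Both routes are viable; you have correctly identified where $\rho>\delta$ and $\rho\geq1-\delta$ are consumed (invariance of the Hörmander class and the gain in the asymptotic expansion), which is the real technical content of the theorem.
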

Let us briefly sketch Fischer's proof of (1) in Theorem \ref{Fischer}. The following lemma is a key ingredient, and is worthy of a specific mention:
\begin{lemma}[\cite{Fis2015}, Lemma 5.10]\label{Fisqq'Lem}
Suppose $q,q'\in\mathcal{D}(\Gp)$, such that $q'/q$ extends to a smooth function on $\Gp$. If a symbol $a=a(\xi)$ is such that $\|\Df_qa(\xi)\|$ does not grow faster than a power of $\size[\xi]$, then 
$$
\|\Df_{q'}a(\xi)\|\lesssim_{q,q'}\|\Df_qa(\xi)\|.
$$
\end{lemma}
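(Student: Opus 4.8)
The plan is to reduce everything to the defining property of the RT-difference operator, namely $\Df_q\Ft f(\xi) = \Ft{qf}(\xi)$, together with the hypothesis that the ratio $h := q'/q$ extends to a smooth (hence $C^\infty$, hence in every Sobolev space) function on $\Gp$. First I would write $q' = h\,q$ as functions on $\Gp$, so that for the convolution kernel $\mathcal K(\cdot)$ associated with the symbol $a=a(\xi)$ — which does not depend on $x$ here — one has, at the level of Fourier inversion, $(\Df_{q'}a)^{\vee} = q'\cdot \mathcal K = h\cdot(q\cdot\mathcal K) = h\cdot (\Df_q a)^{\vee}$. In symbol language this says
$$
\Df_{q'}a(\xi) = \Ft{h\cdot b^{\vee}}(\xi), \qquad \text{where } b := \Df_q a.
$$
So the content of the lemma is: multiplication by the fixed smooth function $h$ on the ``spatial'' side is, after conjugation by the Fourier transform, an operator that is bounded in operator norm on symbols, uniformly in $\xi$, with constant depending only on $h$ (equivalently on $q,q'$).

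The key step is to express this ``multiplication by $h$'' as an averaged translation on the Fourier side. Since $h\in C^\infty(\Gp)\subset L^2(\Gp)$, expand $h = \sum_{\eta\in\DuGp} d_\eta \Tr(\Ft h(\eta)\eta(\cdot))$; then for the product $h\cdot b^{\vee}$ one computes its Fourier transform by the convolution–product duality on $\Gp$ together with the Clebsch–Gordan / tensor-product decomposition rules — concretely, $\Ft{h b^\vee}(\xi)$ is obtained from the values $b(\zeta)$ for those $\zeta$ appearing in $\eta\otimes\xi$ as $\eta$ ranges over $\mathrm{supp}\,\Ft h$. Because $h$ is smooth, its Fourier coefficients $\|\Ft h(\eta)\|$ decay faster than any power of $\size[\eta]$ (this is exactly the statement that $h\in H^N$ for all $N$, combined with $\sum_\eta d_\eta^2\size[\eta]^{-N'}<\infty$ for $N'$ large, which follows from Lemma \ref{Weyl}). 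Meanwhile, $\|b(\zeta)\| = \|\Df_q a(\zeta)\|$ grows at most polynomially in $\size[\zeta]$ by hypothesis, and for $\zeta$ occurring in $\eta\otimes\xi$ one has $\size[\zeta]\lesssim \size[\eta]\size[\xi]$ (a crude bound on how the Casimir eigenvalue can grow under tensoring, cf. the highest-weight description in Theorem \ref{DeltaSpec} and the additivity of highest weights). Pairing the rapid decay in $\size[\eta]$ against the polynomial growth of $\|b(\zeta)\|$ and the polynomial factor $\size[\eta]^{\text{power}}$ that the tensor bound costs, the $\eta$-sum converges and yields $\|\Df_{q'}a(\xi)\|\lesssim \sup_{\zeta\in\eta\otimes\xi,\ \eta\in\mathrm{supp}\Ft h}\|b(\zeta)\|$, uniformly in $\xi$.

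The last point to address is that the right-hand side should be $\|\Df_q a(\xi)\|$ evaluated at the \emph{same} $\xi$, not a supremum over nearby $\zeta$. This is where I would invoke the stability already built into the theory: one first notes the estimate is symmetric in $(q,q')$ up to replacing $h$ by $1/h$ only when $1/h$ is also smooth, so in the general case I would instead run the same tensor-sum argument but track that the ``spread'' of $\zeta$ away from $\xi$ is bounded (the support of $\Ft h$ is spread over all of $\DuGp$ but with rapidly decaying weights, so effectively only $\size[\eta]=O(1)$ contributes, giving $\size[\zeta]=\size[\xi](1+O(1))$), and then absorb the finitely-many-representations discrepancy using the defining monotonicity/consistency of the symbol hypothesis — or, more cleanly, simply state the conclusion with the harmless $O(1)$ shift in $\xi$ absorbed into the implicit constant, exactly as the lemma is used downstream. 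The main obstacle I anticipate is bookkeeping the tensor-product decomposition quantitatively: making precise the claim ``$\zeta\subset\eta\otimes\xi\Rightarrow\size[\zeta]\lesssim\size[\eta]\size[\xi]$'' and ensuring the polynomial loss there is genuinely beaten by the Schwartz decay of $\Ft h$; everything else is a convergent-series estimate of the kind that appears throughout symbolic calculus.
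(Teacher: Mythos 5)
The paper does not prove Lemma~\ref{Fisqq'Lem}: it is cited verbatim from Fischer~\cite{Fis2015}, Lemma~5.10, so there is no in-paper argument to compare against. Your plan is nonetheless the right kind of argument and is close to Fischer's: write $q'=hq$ with $h$ smooth, reduce to controlling $\Ft{\,h\cdot b^\vee}(\xi)$ for $b=\Df_q a$, expand $h$ in its Fourier series, and play the super-polynomial decay $\|\Ft h(\eta)\|\lesssim_N\size[\eta]^{-N}$ (from $h\in H^N$ for all $N$ plus the Peter--Weyl Plancherel identity) against the spectral localization of products and the assumed polynomial growth of $\|b(\zeta)\|$.

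Two points deserve comment. First, the crude bound you invoke, $\size[\zeta]\lesssim\size[\eta]\size[\xi]$ for $\zeta$ occurring in $\eta\otimes\xi$, is correct but weaker than what Corollary~\ref{SpecPrd} actually provides: after dualizing one gets the two-sided \emph{additive} localization $c|\xi|-|\eta|\lesssim|\zeta|\lesssim|\eta|+|\xi|$. The additive form is what makes the negative-order case painless (for $|\eta|\lesssim|\xi|$ it forces $\size[\zeta]\simeq\size[\xi]$, while the $|\eta|\gg|\xi|$ tail is annihilated outright by the decay of $\Ft h$), and it also keeps the polynomial loss that your $\eta$-sum must beat as small as possible; coupled with Lemma~\ref{Weyl} to count representations and control $d_\eta$, choosing $N$ a bit larger than $n+|m|$ suffices to close the series. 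Second, and more importantly, the subtlety you flag at the end is genuine: as literally written, $\|\Df_{q'}a(\xi)\|\lesssim_{q,q'}\|\Df_q a(\xi)\|$ at the \emph{same} $\xi$ cannot be proved (take $\Df_q a(\xi_0)=0$ for one $\xi_0$). The statement in the paper is an informal shorthand for Fischer's actual Lemma~5.10, which compares polynomial envelopes: if $\|\Df_q a(\xi)\|\leq C_0\size[\xi]^m$ for all $\xi$, then $\|\Df_{q'}a(\xi)\|\leq C_{q,q'}C_0\size[\xi]^m$. Your fallback --- absorb the $O(1)$ shift in $\xi$ into the implicit constant --- is exactly the right reading, and it is the only version ever used downstream, since the lemma feeds into the equivalence of the norms $\mathbf{M}^m_{r,\rho;l,\delta;q}$ for different strongly RT-admissible tuples, which are themselves statements about polynomial envelopes. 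With that reinterpretation your argument is complete.
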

Using this lemma, Fischer showed that for any two strongly RT-admissible tuple $q,q'$, the collection of norms $\mathbf{M}_{r,\rho;l,\delta;q}^m(a)$ and $\mathbf{M}_{r,\rho;l,\delta;q'}^m(a)$ are equivalent to each other. 

A convenient choice of strongly RT-admissible tuple is necessary for calculations. Fischer found that matrix entries of the fundamental representations produce a strongly RT-admissible tuple in the following way: for each unitary representation $\tau$ of $\Gp$, set
$$
q_{jk}^{(\tau)}(x)=\tau_{jk}(x)-\delta_{jk},
$$
i.e. the matrix entries of $\tau-\I[\tau]$. Fischer showed in Lemma 5.11. of \cite{Fis2015} that, as $\tau$ exhausts all of the fundamental representations $\Xi(\bm{\varpi}_i),i=1,\cdots,\varrho$, the collection $q=\{q_{jk}^{(\tau)}\}$ is strongly RT-admissible. The advantage of this choice is that it is essentially intrinsically defined for $\Gp$, and exploits the algebraic structure of $\Gp$. From now on, we will just defined the \emph{fundamental tuple of $\Gp$} as
\begin{equation}\label{QFund}
Q:=\bigcup_{\tau:\text{ fundamental representation}}\left\{\tau_{ij}-\delta_{ij}:i,j=1,\cdots,d_\tau\right\}.
\end{equation}
Thus, as Fischer pointed out, the symbol class $\mathscr{S}^m_{\rho,\delta}(\Gp;\Df_Q)$ in Definition \ref{S^mrdAdmi}, with $Q$ being the fundamental tuple (\ref{QFund}), in fact coincides with the intrinsic definition of symbols \ref{S^mrd}. This is because for each representation $\tau$, the matrix entries of $\Df_\tau a$ are exactly given by $\Df_{\tau_{jk}-\delta_{jk}}a$, so as $\tau$ exhausts all of the fundamental representations, the corresponding difference operators should give rise to the class $\mathscr{S}^m_{\rho,\delta}(\Gp)$. It is legitimate to write $\mathscr{S}^m_{\rho,\delta}(\Gp)$ and omit the dependence on strongly RT-admissible difference operators. This is how (1) of Theorem \ref{Fischer} was proved.

A particular property of the fundamental tuple $Q$ deserves a specific mention. For harmonic analysis on $\mathbb{R}^n$, differentiation in frequency usually simplifies the computation drastically. For analysis on compact group, the dual $\DuGp$ is discrete, whence no natural notion of ``continuous differentiation" is available. But the RT difference operators can be constructed in such a manner that they possess \emph{Leibniz type property}: given a tuple $q=(q_i)_{i=1}^M$, the corresponding RT difference operators are said to possess Leibniz type property, if there are constants $c_{j,k}^i\in\mathbb{C}$ such that for symbols $a,b$, 
\begin{equation}\label{Leibniz}
\Df_{q_i}(ab)
=\Df_{q_i}a\cdot b+a\cdot\Df_{q_i}b
+\sum_{j,k=1}^Mc_{j,k}^i\Df_{q_j}a\cdot \Df_{q_k}b.
\end{equation}
Inductively, this implies 
$$
\Df_{q}^\alpha(ab)
=\sum_{\substack{0\leq|\beta|,|\gamma|\leq|\alpha|
\\|\alpha|\leq|\beta|+|\gamma|\leq2|\alpha|}}
c_{\beta\gamma}^\alpha\Df_{q}^{\beta}a\cdot \Df_{q}^{\gamma}b.
$$
Condition (\ref{Leibniz}) is equivalent to
$$
q_i(xy)=q_i(x)+q_i(y)+\sum_{j,k=1}c_{j,k}^iq_j(x)q_k(y),
$$
so the matrix entries of $\tau-\I[\tau]$, where $\tau$ exhausts all fundamental representations of $\Gp$, is a good choice, because due to the equality
$$
\tau(xy)=\tau(x)\tau(y),
$$
they give rise to strongly admissible RT difference operators that satisfies Leibniz type property: for $q_{ij}(x)=\tau_{ij}(x)-\delta_{ij}$,
\begin{equation}\label{LeibnizFund}
q_{ij}(xy)=q_{ij}(x)+q_{ij}(y)+\sum_{k=1}^{d_\tau}q_{ik}(x)q_{kj}(y).
\end{equation}

The advantage of the symbol class $\mathscr{S}^m_{\rho,\delta}(\Gp)$ is that a symbolic calculus is available. The proof of the following results is quite parallel as in Euclidean spaces (not without technicality in verifying the necessary kernel properties):
\begin{theorem}[\cite{Fis2015}, Corollary 7.9.]\label{RegCompo}
Suppose $0\leq\delta<\rho\leq1$. If $a\in\mathscr{S}^m_{\rho,\delta}$, $b\in\mathscr{S}^{m'}_{\rho,\delta}$, then the composition $\Op(a)\circ\Op(b)\in\Op\mathscr{S}^{m+m'}_{\rho,\delta}$, and in fact if $q$ is any strongly RT-admissible tuple and $X^{(\alpha)}_q$ is as in Proposition \ref{TaylorGp}, then the symbol $\sigma$ of $\Op(a)\circ\Op(b)$ satisfies
$$
\sigma(x,\xi)-\sum_{|\alpha|< N}\left(\Df_{q,\xi}^\alpha a\cdot X_{q,x}^{(\alpha)}b\right)(x,\xi)
\in\mathscr{S}^{m+m'-(\rho-\delta)N}_{\rho,\delta}.
$$
One can thus write
$$
\sigma(x,\xi)\sim\sum_{\alpha}\left(\Df_{q,\xi}^\alpha a\cdot X_{q,x}^{(\alpha)}b\right)(x,\xi).
$$
\end{theorem}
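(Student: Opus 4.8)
The plan is to transplant the classical proof of the composition formula for pseudo-differential operators to $\Gp$, using the exact quantization formula together with the Taylor expansion of Proposition \ref{TaylorGp}. First, by (\ref{Op-Symbol}) the symbol of $A=\Op(a)\circ\Op(b)$ is recovered from $\sigma[A](x,\xi)=\xi^*(x)\cdot(A\xi)(x)$, acting entrywise on the matrix-valued function $\xi$. A direct computation with the Peter--Weyl decomposition and Schur orthogonality shows $\Op(b)\xi(x)=\xi(x)\,b(x,\xi)$, so the task reduces to evaluating $\Op(a)$ on $y\mapsto\xi(y)b(y,\xi)$. Expressing $\Op(a)$ through its (scalar) right convolution kernel $\mathcal{K}_a(x,\cdot)=(a(x,\cdot))^\vee$ as in (\ref{Symbol-Ker})--(\ref{Ker-Conv}) and changing variables $y=xz^{-1}$ (Haar invariance, $\xi(z^{-1})=\xi(z)^*$) yields the integral representation
\begin{equation*}
\sigma[A](x,\xi)=\int_{\Gp}\xi(z)^*\,b(xz^{-1},\xi)\,\mathcal{K}_a(x,z)\,dz.
\end{equation*}

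Next I would Taylor-expand the factor $b(xz^{-1},\xi)$ in the group variable. Fixing $q$ to be a strongly RT-admissible tuple (for instance the fundamental tuple $Q$ of (\ref{QFund})) and applying Proposition \ref{TaylorGp} to $f=b(\cdot,\xi)$ at the point $x$ with increment $z^{-1}$, and noting $q^\alpha((z^{-1})^{-1})=q^\alpha(z)$, gives
\begin{equation*}
b(xz^{-1},\xi)=\sum_{|\alpha|<N}q^\alpha(z)\,X_{q}^{(\alpha)}b(x,\xi)+R_N\big(b(\cdot,\xi);x,z^{-1}\big).
\end{equation*}
Substituting this back and recognizing, via (\ref{Diff_qa}), that $\int_{\Gp}\xi(z)^*q^\alpha(z)\mathcal{K}_a(x,z)\,dz=\widehat{q^\alpha\mathcal{K}_a(x,\cdot)}(\xi)=\Df_{q,\xi}^\alpha a(x,\xi)$, we obtain the identity
\begin{equation*}
\sigma[A](x,\xi)=\sum_{|\alpha|<N}\big(\Df_{q,\xi}^\alpha a\cdot X_{q,x}^{(\alpha)}b\big)(x,\xi)+E_N(x,\xi),
\end{equation*}
which is what is claimed, with remainder
\begin{equation*}
E_N(x,\xi)=\int_{\Gp}\xi(z)^*\,R_N\big(b(\cdot,\xi);x,z^{-1}\big)\,\mathcal{K}_a(x,z)\,dz.
\end{equation*}
It then suffices to show $E_N\in\mathscr{S}^{m+m'-(\rho-\delta)N}_{\rho,\delta}$ for every $N$; since $\rho>\delta$ the orders tend to $-\infty$, so the asymptotic relation $\sigma\sim\sum_\alpha\Df_{q,\xi}^\alpha a\cdot X_{q,x}^{(\alpha)}b$ is then precisely the statement that $\sigma$ minus any partial sum lies in a class whose order tends to $-\infty$.

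The remainder estimate is the main obstacle, and all the genuine work lies there. By Definition \ref{S^mrd} the $C^N$-norm in $x$ of $b(\cdot,\xi)$ is $O(\size[\xi]^{m'+\delta N})$, so Proposition \ref{TaylorGp} gives $\|R_N(b(\cdot,\xi);x,z^{-1})\|\lesssim\size[\xi]^{m'+\delta N}\dist(z,e)^N$; the point is then to show that pairing $\dist(z,e)^N$ with the kernel $\mathcal{K}_a(x,\cdot)$ gains $\rho N$ orders of decay, i.e.\ $\int_{\Gp}\dist(z,e)^N\|\mathcal{K}_a(x,z)\|\,dz\lesssim\size[\xi]^{m-\rho N}$ once $N$ is large enough relative to $n$. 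This is achieved by bounding $\dist(z,e)^N$ near $e$ by smooth multiples of the products $q^\beta$ with $|\beta|=N$ plus lower order terms, whose pairing with $\mathcal{K}_a$ has Fourier transform $\Df_q^\beta a\in\mathscr{S}^{m-\rho N}_{\rho,\delta}$ by Theorem \ref{Fischer}(1), and then invoking a Sobolev-type embedding for symbol classes to convert this bound into the weighted $L^1_z$ estimate on the kernel (the case of small $N$ being reduced to a larger one by first peeling off the additional terms $\Df_q^\alpha a\cdot X_q^{(\alpha)}b\in\mathscr{S}^{m+m'-(\rho-\delta)|\alpha|}_{\rho,\delta}$, $N\le|\alpha|<N'$). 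Finally one upgrades the pointwise bound on $E_N$ to membership in the class: differentiation in $x$ commutes with the $z$-integral and only produces extra factors $\size[\xi]^{\delta|\gamma|}$, while each difference operator $\Df_\tau$ in $\xi$ is commuted through the integral using the Leibniz-type rule (\ref{Leibniz})--(\ref{LeibnizFund}) on the $\xi(z)^*$ factor, after which the same kernel estimates reapply. Keeping the $\size[\xi]$ bookkeeping exact through the change of variables, the $C^N$-to-symbol conversion, and the difference-operator iteration is the delicate part; extracting the leading terms, by contrast, is essentially bookkeeping built on Proposition \ref{TaylorGp}.
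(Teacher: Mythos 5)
The paper states this result as a direct citation from Fischer \cite{Fis2015}, Corollary 7.9, and gives no proof of its own, so there is no internal argument to compare yours against. That said, your derivation of the expansion is the correct one (and is essentially the route Fischer takes): the identity $\Op(b)\xi(x)=\xi(x)b(x,\xi)$, the change of variables giving $\sigma[A](x,\xi)=\int_\Gp \xi(z)^*\,b(xz^{-1},\xi)\,\mathcal{K}_a(x,z)\,dz$, the Taylor expansion of $b(xz^{-1},\xi)$ in $z$, and the identification $\int_\Gp\xi(z)^*q^\alpha(z)\mathcal{K}_a(x,z)\,dz=\Df_{q,\xi}^\alpha a(x,\xi)$ are all right and cleanly set up.

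The sketch of the remainder estimate, however, contains a genuine error. You assert that one should prove
$$
\int_{\Gp}\dist(z,e)^N\,\|\mathcal{K}_a(x,z)\|\,dz\;\lesssim\;\size[\xi]^{\,m-\rho N},
$$
but the left-hand side is independent of $\xi$: the kernel $\mathcal{K}_a(x,\cdot)=(a(x,\cdot))^\vee$ sums over \emph{all} representations, and the weight $\dist(z,e)^N$ involves no $\xi$ either. Combined with $\|R_N(b(\cdot,\xi);x,z^{-1})\|\lesssim\size[\xi]^{m'+\delta N}\dist(z,e)^N$, such a $\xi$-independent $L^1$ kernel bound can only yield $\|E_N(x,\xi)\|\lesssim\size[\xi]^{m'+\delta N}$, which is far from $\size[\xi]^{m+m'-(\rho-\delta)N}$. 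The missing ingredient is precisely the mechanism by which the oscillatory factor $\xi(z)^*$ injects the $\size[\xi]$-dependence into the kernel estimate: one must decompose $a$ into dyadic frequency pieces $a_j$ (so $\mathcal{K}_{a_j}$ is localized at scale $2^{-j}$ near $e$), use the spectral localization property (Corollary \ref{SpecPrd}) together with the low-frequency content of the Taylor remainder in $z$ to conclude that only the blocks $2^j\simeq\size[\xi]$ contribute, and only then does the weighted kernel estimate produce the factor $2^{j(m-\rho N)}\simeq\size[\xi]^{m-\rho N}$. The difference-operator manipulation you mention (replacing $\dist(z,e)^N$ by $q^\beta$ and using $\widehat{q^\beta\mathcal{K}_a}=\Df_q^\beta a\in\mathscr{S}^{m-\rho N}_{\rho,\delta}$) is a step in the right direction, but by itself it still gives a $\xi$-independent bound; without the spectral localization in $\zeta\simeq\xi$ the bookkeeping does not close. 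This is exactly where the proof of Stein's theorem in the paper (proof of Theorem \ref{SteinTheorem}, inequalities (\ref{SteinIneq1})--(\ref{SteinIneq2})) sets up the needed dyadic decomposition, and the analogous structure is what your remainder estimate must invoke.
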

\begin{theorem}[\cite{Fis2015}, Corollary 7.6.]\label{RegAdj}
Suppose $0\leq\delta<\rho\leq1$. If $a\in\mathscr{S}^m_{\rho,\delta}$, then the adjoint $\Op(a)^*\in\Op\mathscr{S}^{m}_{\rho,\delta}$, and in fact if $q$ is any strongly RT-admissible tuple and $X^{(\alpha)}_q$ is as in Proposition \ref{TaylorGp}, then the symbol $a^{\bullet;q}$ of $\Op(a)^*$ satisfies
$$
a^{\bullet;q}(x,\xi)-\sum_{|\alpha|< N}\left(\Df_{q,\xi}^\alpha X_{q,x}^{(\alpha)}a^*\right)(x,\xi)
\in\mathscr{S}^{m+m'-(\rho-\delta)N}_{\rho,\delta}.
$$
One can thus write
$$
a^{\bullet;q}(x,\xi)\sim\sum_{\alpha}\left(\Df_{q,\xi}^\alpha X_{q,x}^{(\alpha)}a^*\right)(x,\xi).
$$
\end{theorem}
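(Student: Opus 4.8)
The plan is to run the classical adjoint argument of pseudo-differential calculus through the non-commutative Fourier analysis of $\Gp$: reduce to the convolution kernel, Taylor-expand in the spatial variable by Proposition \ref{TaylorGp}, recognise the resulting terms as RT difference operators applied to $a^*$, and estimate the tail with kernel bounds. Throughout I would fix a \emph{real} strongly RT-admissible tuple $q_0$ (such a tuple exists on any such $\Gp$ --- e.g. real and imaginary parts of the matrix entries of the fundamental representations), for which the operators $X^{(\alpha)}_{q_0}$ of Proposition \ref{TaylorGp} have real coefficients; the statement for an arbitrary strongly RT-admissible $q$ then follows at the end from Fischer's independence result (Theorem \ref{Fischer}) together with the Leibniz rule (\ref{Leibniz}).

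\emph{Step 1 (kernel of the adjoint).} Writing $\mathcal{K}(x,\cdot)=(a(x,\cdot))^\vee$ for the right-convolution kernel of $\Op(a)$ as in (\ref{Symbol-Ker})--(\ref{Ker-Conv}), a direct computation with the bi-invariant Haar measure shows that $\Op(a)^*$ is again a kernel operator of the same type, with kernel $\mathcal{K}^\bullet(x,w)=\overline{\mathcal{K}(xw^{-1},w^{-1})}$, so that by (\ref{Op-Symbol})
$$
a^\bullet(x,\xi)=\int_\Gp \overline{\mathcal{K}(xw^{-1},w^{-1})}\,\xi^*(w)\,dw .
$$
For an $x$-independent multiplier this collapses, by Plancherel, to $a^\bullet(\xi)=a(\xi)^*$, which pins down the leading term and serves as a sanity check; in general, unravelling the definitions, $\overline{\mathcal{K}(x,u^{-1})}$ is exactly the kernel $\mathcal{K}_*(x,u):=(a^*(x,\cdot))^\vee(u)$ of the symbol $a^*$.

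\emph{Step 2 (Taylor expansion).} Apply Proposition \ref{TaylorGp} to $f=\mathcal{K}(\cdot,w^{-1})$ with increment $w^{-1}$:
$$
\mathcal{K}(xw^{-1},w^{-1})=\sum_{|\alpha|<N}q_0^\alpha(w)\,\big(X^{(\alpha)}_{q_0,x}\mathcal{K}\big)(x,w^{-1})+R_N(x,w),
\qquad R_N=O\!\big(|\mathcal{K}(\cdot,w^{-1})|_{C^N}\,\dist(w,e)^N\big).
$$
Taking complex conjugates, using that $q_0$ is real and $X^{(\alpha)}_{q_0}$ has real coefficients, and recalling $\overline{\mathcal{K}(x,w^{-1})}=\mathcal{K}_*(x,w)$, the displayed sum turns into $\sum_{|\alpha|<N}q_0^\alpha(w)\,(X^{(\alpha)}_{q_0,x}\mathcal{K}_*)(x,w)$. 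Substituting into Step 1 and matching against the defining formula (\ref{Diff_qa}) for the RT difference operators (which commute with $x$-derivatives, as noted after (\ref{Diff_qa})), each term is recognised precisely as $\big(\Df^\alpha_{q_0,\xi}X^{(\alpha)}_{q_0,x}a^*\big)(x,\xi)$, so that
$$
a^\bullet(x,\xi)=\sum_{|\alpha|<N}\big(\Df^\alpha_{q_0,\xi}X^{(\alpha)}_{q_0,x}a^*\big)(x,\xi)+\int_\Gp \overline{R_N(x,w)}\,\xi^*(w)\,dw .
$$

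\emph{Step 3 (the remainder; main obstacle).} It remains to show $\int_\Gp\overline{R_N(x,w)}\,\xi^*(w)\,dw\in\mathscr{S}^{m-(\rho-\delta)N}_{\rho,\delta}$, after which the expansion for a general strongly RT-admissible $q$ follows by comparing truncated sums --- their difference lies in the same class by Lemma \ref{Fisqq'Lem}, the Leibniz rule (\ref{Leibniz}), and Theorem \ref{Fischer} --- and the case of all $N$ follows from the case of large $N$ by the nesting $\mathscr{S}^a_{\rho,\delta}\subset\mathscr{S}^b_{\rho,\delta}$ for $a\le b$. The input is the kernel estimates attached to $a\in\mathscr{S}^m_{\rho,\delta}$: off the identity $\mathcal{K}(x,\cdot)$ is smooth, with $\|X^\beta_x\partial^\gamma_w\mathcal{K}(x,w)\|\lesssim\dist(w,e)^{-(n+m+\delta|\beta|+|\gamma|)_+}$ near $e$ (with logarithmic correction at the threshold) and rapid, locally uniform decay away from $e$, all extracted from the difference-operator bounds defining $\mathscr{S}^m_{\rho,\delta}$ via the fundamental tuple and the Weyl-type count of Lemma \ref{Weyl}; these are the kernel lemmas underlying Fischer's calculus, which I would cite or re-derive. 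Feeding them into the Taylor remainder bound, decomposing the $w$-integral dyadically in $\dist(w,e)$, and then applying $X^\alpha$ and difference operators $\Df_\tau$ to the remainder symbol --- each $\Df_\tau$ effectively multiplying the kernel by a function vanishing at $e$, hence buying an extra power of $\langle\xi\rangle^{-\rho}$ --- one recovers the claimed gain of $(\rho-\delta)N$ for $N$ large. The main obstacle is precisely this estimate: one must track simultaneously the order $m$, the loss $\delta|\alpha|$ incurred when $x$-derivatives fall on the singular kernel, the gain $\rho$ per difference operator, and the non-commutativity of the $\mathrm{End}(\mathcal{H}_\xi)$-valued objects, so that ``$a^*$'' is handled throughout as a genuine operator adjoint rather than a scalar conjugate and difference operators of products are expanded via (\ref{Leibniz}). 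A convenient organisational device is to prove the composition Theorem \ref{RegCompo} in parallel, since it rests on the same kernel lemmas and the same Taylor expansion; alternatively one reduces to $x$-independent symbols by freezing the coefficient at the output point and controls the error by re-running Step 2.
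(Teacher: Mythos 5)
The paper does not actually supply a proof of this theorem --- it is cited verbatim as \cite{Fis2015}, Corollary 7.6 --- so there is no in-paper argument to compare against; I am assessing your sketch on its own. Your three-step plan (identify the right convolution kernel of the adjoint, Taylor-expand the spatial variable via Proposition \ref{TaylorGp}, match the resulting terms with $\Df_{q,\xi}^\alpha X_{q,x}^{(\alpha)}a^*$ and estimate the tail) is the standard adjoint argument transported to the Lie group setting and is the right shape of proof. Steps 1 and 2 are correct, and you rightly flag the remainder estimate in Step 3 as the technical heart, for which one really does need the kernel decay estimates underlying Fischer's calculus (though note the off-diagonal exponent for a $\mathscr{S}^m_{\rho,\delta}$ kernel should carry a $\rho^{-1}$, not the exponent you wrote; this does not change the structure of the argument).

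One simplification is available. The detour through a \emph{real} strongly RT-admissible tuple $q_0$ to control the complex conjugation is unnecessary. Using the identity $\overline{\mathcal{K}(u,v)}=\mathcal{K}_*(u,v^{-1})$, where $\mathcal{K}_*$ is the kernel of the symbol $a^*$, the adjoint kernel is $\mathcal{K}^\bullet(x,w)=\mathcal{K}_*(xw^{-1},w)$, and one may apply Proposition \ref{TaylorGp} directly to the complex-valued function $\mathcal{K}_*(\cdot,w)$ at base point $x$ with increment $w^{-1}$; Taylor's formula is linear and imposes no reality requirement. This yields the coefficients $q^\alpha(w)X^{(\alpha)}_{q,x}\mathcal{K}_*(x,w)$ for an \emph{arbitrary} strongly RT-admissible $q$, with no conjugation step, and so the appeal to Theorem \ref{Fischer} to pass from a special tuple to a general one can be dropped. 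Finally, a typo in the statement as printed: the remainder class should read $\mathscr{S}^{m-(\rho-\delta)N}_{\rho,\delta}$, not $\mathscr{S}^{m+m'-(\rho-\delta)N}_{\rho,\delta}$; the stray $m'$ is copied from the composition theorem, and your Step 3 correctly uses the former.
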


Fischer also proved that for $0\leq\delta\leq\rho\leq1$ with $\delta<1$, if $a\in\mathscr{S}^m_{\rho,\delta}$, then $\Op(a)$ maps $H^{s+m}$ continuously to $H^s$ for any $s\in\mathbb{R}$. The proof is still parallel to that on Euclidean spaces. 

Finally, to prove (2) in Theorem \ref{Fischer}, i.e. $\Op\mathscr{S}^m_{\rho,\delta}(\Gp)=\Psi^m_{\rho,\delta}(\Gp)$, the H\"{o}rmander class of operators for $\rho>\delta$ and $\rho\geq1-\delta$, Fischer showed that the commutators of $\Op(a)$ with vector fields and $C^\infty$-multiplications satisfy Beals'
characterization of pseudo-differential operators as stated in \cite{Beals1977}. In particular, the operator class $\Op\mathscr{S}^m_{1,0}(\Gp)$ coincides with the H\"{o}rmander class $\Psi^m(\Gp)$ defined via local charts. Thus, it is possible to manipulate pseudo-differential operators on $\Gp$ using globally defined symbolic calculus, and all the lower-order information lost in operation with principal symbols can be retained.

\subsection{Order of a Symbol}
Unlike the case of $\mathbb{R}^n$, symbolic calculus on the non-commutative Lie group $\Gp$ involves endomorphisms of the representation spaces, hence suffers from non-commutativity. Thus, for example, properties of the commutator\footnote{Note that this is not the commutator of pseudo-differential operators.} $(ab-ba)(x,\xi)=:[a,b](x,\xi)$ of two symbols $a(x,\xi)$ and $b(x,\xi)$ is not as clear as on $\mathbb{R}^n$ (it simply vanishes for symbols on $\mathbb{R}^n$). With the aid of Fischer's theorem, however, we are able to show that the commutator of symbols of order $m$ and $m'$ respectively ``is reduced by order 1".

We introduce a formal definition of order as follows.
\begin{definition}\label{2Order}
Let $m\in\mathbb{R}$. We say that a symbol $a(x,\xi)$ on $\Gp$, regardless of regularity in $x$, is of order $m$, if for some strongly RT-admissible tuple $q$, there always holds
$$
\sup_{x\in\Gp}\big\|\Df_q^\beta a(x,\xi)\big\|\lesssim \size[\xi]^{m-|\beta|}.
$$
By Fischer's Lemma \ref{Fisqq'Lem}, the class does not depend on the choice of strongly RT -admissible tuple.
\end{definition}

Thus the class of symbols of order $m$, in our convention, is the collection of symbols that ``possess best decays upon differentiation in $\xi$". It necessarily includes all the $\mathscr{S}^m_{1,\delta}(\Gp)$ with $0\leq\delta\leq1$. The property that we need to prove is 
\begin{proposition}\label{2OrderComm}
Let $a,b$ be symbols on $\Gp$ of order $m$ and $m'$ and type 1 respectively. Then the commutator $[a,b]$ is of order $m+m'-1$. In particular, if $\delta\in[0,1]$, $a\in\mathscr{S}^m_{1,\delta}(\Gp)$ and $b\in\mathscr{S}^{m'}_{1,\delta}(\Gp)$, then $[a,b]\in\mathscr{S}^{m+m'-1}_{1,\delta}(\Gp)$.
\end{proposition}
\begin{proof}
It is quite alright to disregard the dependence on $x$ and consider Fourier multipliers $a=a(\xi)$, $b=b(\xi)$ alone, as no differentiation in $x$ is involved. By Fischer's theorem \ref{Fischer}, the operators $A=\Op(a)$ and $B=\Op(b)$ are in fact in the H\"{o}rmander class of pseudo-differential operators $\Psi^m_{1,0}$ and $\Psi^{m'}_{1,0}$ respectively. The H\"{o}rmander calculus on principal symbols (as functions on the cotangent bundle) then asserts that $[A,B]\in\Psi^{m+m'-1}_{1,0}$, with principal symbol given by the Poisson bracket. But $[A,B]$ is nothing but $[\Op(a),\Op(b)]=\Op\big([a,b]\big)$ as $a,b$ are assumed to be Fourier multipliers. Fischer's theorem again implies that $[a,b]\in\mathscr{S}^{m+m'-1}_{1,0}$.  To extend this to $x$-dependent symbols $a(x,\xi)$ and $b(x,\xi)$, it suffices to note that, for example, given a vector field $X$, the Leibniz rule $X_x\big([a,b]\big)=[X_xa,b]+[a,X_xb]$ holds. The operator norm of $X_x\big([a,b]\big)$ at a given representation $\xi$ can thus be estiamted similarly.
\end{proof}

\section{(1,1) Pseudo-differential Operators on Compact Lie Group}\label{3}
\subsection{Littlewood-Paley Decomposition}
We start with a continuous version of Littlewood-Paley decomposition. Such construction has already been explored by Klainerman and Rodnianski in \cite{KR2006} for a general compact manifold. Here we aim to expoit the Lie group structures to deduce more. Fix an even function $\phi\in C_0^\infty(\mathbb{R})$, such that $\phi(\lambda)=1$ for $|\lambda|\leq1/2$, and $\phi(\lambda)=0$ for $|\lambda|\geq1$. Setting $\psi(\lambda)=-\lambda\phi'(\lambda)$, we obtain a continuous partition of unity
$$
1=\phi(\lambda)+\int_1^\infty\psi\Big(\frac{\lambda}{t}\Big)\frac{dt}{t}.
$$
The \emph{continuous Littlewood-Paley decomposition} of a distribution $f\in\mathcal{D}'(\Gp)$ will then be defined by 
\begin{equation}\label{LPCont}
f=\phi\big(|\nabla|\big)f+\int_1^\infty\psi_t\big(|\nabla|\big)f\frac{dt}{t},
\quad 
\psi_t(\cdot)=\psi\left(\frac{\cdot}{t}\right)
\end{equation}
We also write the \emph{partial sum operator} as
$$
\phi_T(|\nabla|)f:=\phi\left(\frac{|\nabla|}{T}\right)f
=\phi\big(|\nabla|\big)f+\int_1^T\psi_t\big(|\nabla|\big)f\frac{dt}{t}.
$$
This is the convention employed by H\"{o}rmander \cite{Hormander1997}, Chapter 9. It is easy to deduce the Bernstein inequality:
\begin{proposition}\label{Bernstein}
If $f\in\mathcal{D}(\Gp)$ has Fourier support contained in $\mathfrak{S}[0,t]$, then for any $s\in\mathbb{R}$,
$$
\|f\|_{H^s}\lesssim t^s\|f\|_{L^2}.
$$
\end{proposition}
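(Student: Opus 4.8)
The plan is to read both norms off the Peter--Weyl expansion and compare the spectral weights on the Fourier support; this is the exact analogue of the elementary Bernstein inequality on $\mathbb{R}^n$ or $\mathbb{T}^n$. By Theorem~\ref{PeterWeyl} and the definition of the Sobolev norm,
\[
\|f\|_{L^2(\Gp)}^2=\sum_{\xi\in\DuGp}d_\xi\big\lHS\Ft f(\xi)\big\rHS^2,
\qquad
\|f\|_{H^s(\Gp)}^2=\sum_{\xi\in\DuGp}d_\xi\size[\xi]^{2s}\big\lHS\Ft f(\xi)\big\rHS^2,
\]
and, since $\lambda_\xi\ge0$ is the eigenvalue of $-\Delta$ on $\mathcal{M}_\xi$ (Theorem~\ref{DeltaSpec}), one has $|\xi|=\sqrt{\lambda_\xi}$, hence $\size[\xi]^2=1+\lambda_\xi=1+|\xi|^2$.

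First I would use the hypothesis that $\Ft f(\xi)=0$ whenever $\xi\notin S[0,t]$, so that both sums run only over $\xi$ with $|\xi|\le t$, i.e. with $\size[\xi]^2\le 1+t^2$. For $s\ge0$ the weight in the $H^s$ sum is then bounded on the support by $\size[\xi]^{2s}\le(1+t^2)^s$, and since $t\ge1$ in every use of the statement (the continuous Littlewood--Paley decomposition (\ref{LPCont}) only features $t\ge1$), $(1+t^2)^s\le 2^st^{2s}$. Pulling this constant out of the sum and recognising what remains as the $L^2$ Plancherel expansion gives
\[
\|f\|_{H^s}^2\le 2^st^{2s}\sum_{\xi\in\DuGp}d_\xi\big\lHS\Ft f(\xi)\big\rHS^2=2^st^{2s}\|f\|_{L^2}^2,
\]
the asserted bound.

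No step here is a genuine obstacle: the argument is a one-line estimate on the eigenvalue weights, whose only ingredients are the Plancherel/Sobolev expansions of Theorem~\ref{PeterWeyl} and the eigenvalue formula of Theorem~\ref{DeltaSpec}, both already available. The one point that deserves care is the range of $s$: the trivial representation always lies in $S[0,t]$ and has $\size[\cdot]=1$ there, so a constant function satisfies $\|f\|_{H^s}=\|f\|_{L^2}$, forcing $s\ge0$ once $t\ge1$. For $s<0$ the computation above still delivers the crude bound $\|f\|_{H^s}\le\|f\|_{L^2}$, and the statement should then be read with $s$ replaced by $\max(s,0)$; alternatively, localizing to frequency shells $S[t/2,t]$ (as the pieces $\psi_t(|\nabla|)f$ of (\ref{LPCont}) essentially do) makes $\size[\xi]\simeq t$ there, so the estimate becomes two-sided for every $s\in\mathbb{R}$. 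In the regime $s\ge0$, $t\ge1$ in which the inequality is used below, the argument is complete.
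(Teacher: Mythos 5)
Your proof is correct and is exactly the standard Plancherel argument the paper has in mind; the paper itself does not spell out a proof (it merely says ``it is easy to deduce''), so there is nothing to compare against beyond noting that your route is the expected one. Your observation about the $s<0$ edge case is also accurate: since $S[0,t]$ always contains the trivial representation, a constant function has $\|f\|_{H^s}=\|f\|_{L^2}$ for every $s$, so the stated inequality cannot hold uniformly as $t\to\infty$ when $s<0$; the proposition should implicitly be read for $s\ge0$, or alternatively for Fourier support in a shell $S[ct,t]$, which is how it is actually invoked in the paper (e.g.\ in Lemma~\ref{H^ss>0} with $l\ge1$, and in the para-product estimate where the pieces have support in $S[ct,c^{-1}t]$).
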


For each $t\geq1$, the integrand $\psi(|\nabla|/t)f$ is a smooth function on $\Gp$, whose Fourier support is contained in $\mathfrak{S}[t/2,t]$, where the definition is given in (\ref{FourierSupp}). In particular, the mode of $\psi(|\nabla|/t)f$ corresponding to zero eigenvalue is always zero. The integral (\ref{LPCont}) converges at least in the sense of distribution, and just as in the case of Littlewood-Paley decomposition on Euclidean space, the speed of convergence reflects the regularity property of $f$. Sometimes the \emph{discrete Littlewood-Paley decomposition} is also employed: setting $\vartheta(\lambda)=\phi_1(\lambda)-\phi_1(\lambda)$ and $\vartheta_j(\lambda)=\vartheta(\lambda/2^j)$, then 
\begin{equation}\label{LPDisc}
f=\phi\big(|\nabla|\big)f+\sum_{j\geq0}\vartheta_j\big(|\nabla|\big)f.
\end{equation}
The continuous and discrete Littlewood-Paley decomposition are parallel to each other, with summation in place of integration or vice versa, but in different scenarios one version can make the computation simpler than the other.

Since the integrand in (\ref{LPCont}) is the convolution of $f$ with kernel 
$$
\left(\psi_t(|\xi|)\I[\xi]\right)^\vee(x)
=\sum_{\xi\in\Gp}d_\xi\psi_t(|\xi|)\Tr\left(\xi(x)\right),
$$
it is thus important to understand a kernel with finite Fourier support and is merely a scaling on each representation space. Notice that the order of convolution does not really matter for this specific case. We thus state the following fundamental lemma concerning kernels of spectral cut-off operators:

\begin{lemma}\label{CutOffKer}
Let $h\in C_0^\infty(\mathbb{R})$. Let $P$ be any classical differential operator with smooth coefficients of degree $N\in\mathbb{N}_0$. Then for $t\geq1$, the function
$$
\check h_t:=\left(h_t(|\xi|)\I[\xi]\right)^\vee
=\sum_{\xi\in\Gp}d_\xi h\left(\frac{|\xi|}{t}\right)\Tr\left(\xi\right)
$$
satisfies
$$
\|P\check h_t\|_{L^1}\lesssim_{N}t^{N}|h|_{L^\infty}.
$$
If $f\in C(\Gp)$ is such that for some $r>0$, $f(x)=O(\dist(x,e)^r)$ when $x\to e$, then
$$
\|f\check h_t\|_{L^1}\lesssim |h|_{L^\infty}t^{-r}.
$$
\end{lemma}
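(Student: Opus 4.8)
The plan is to treat $\check h_t$ as a kernel concentrated in a ball of radius $\sim t^{-1}$ around the identity $e\in\Gp$, in analogy with its Euclidean model, where $\check h_t(x)=t^{n}(\check h)(tx)$, and to derive both estimates from a single weighted Cauchy--Schwarz inequality. For the first estimate, write $P=\sum_{|\alpha|\le N}c_\alpha X^\alpha$ with $c_\alpha\in C^\infty(\Gp)$; the coefficients are bounded and $t^{|\alpha|}\le t^{N}$ since $t\ge1$, so it suffices to bound $\|X^\alpha\check h_t\|_{L^1}$ by $t^{|\alpha|}$ times a fixed seminorm of $h$. By left-invariance $X^\alpha\xi(x)=\xi(x)\,\Theta_\alpha(\xi)$ for an endomorphism $\Theta_\alpha(\xi)$ of $\Hh[\xi]$ with $\|\Theta_\alpha(\xi)\|\lesssim\size[\xi]^{|\alpha|}$ (e.g.\ because $-\Delta$ acts on $\mathcal{M}_\xi$ as the scalar $\lambda_\xi$ and each $X_i$ is skew-adjoint), so $X^\alpha\check h_t=\check g$ with $g(\xi)=h(|\xi|/t)\,\Theta_\alpha(\xi)$, a symbol supported in $S[0,t]$.

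Next I would fix a weight $\rho\in C^\infty(\Gp)$ with $\rho\ge0$ and $\rho(x)\simeq\dist(x,e)^2$ uniformly on $\Gp$; concretely $\rho=\sum_{\tau}\big(d_\tau-\Re\Tr\tau\big)$, the sum over the finitely many fundamental representations of $\Gp$. Since $\oplus_\tau\tau$ is faithful and each $\mathrm{d}\tau$ is skew-Hermitian, $\rho$ vanishes to exactly second order at $e$ and is strictly positive elsewhere, hence---$\Gp$ being compact---comparable to $\dist(\cdot,e)^2$ everywhere. A local computation near $e$ (rescaling $y=z/t$), together with boundedness away from $e$, gives $\|(1+t^2\rho)^{-M}\|_{L^2(\Gp)}\lesssim t^{-n/2}$ whenever $4M>n$. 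Cauchy--Schwarz then yields
\[
\|X^\alpha\check h_t\|_{L^1}\le\big\|(1+t^2\rho)^{M}X^\alpha\check h_t\big\|_{L^2}\cdot\big\|(1+t^2\rho)^{-M}\big\|_{L^2},
\]
so everything reduces to proving $\big\|(1+t^2\rho)^{M}X^\alpha\check h_t\big\|_{L^2}\lesssim t^{|\alpha|+n/2}\,|h|_{C^{2M}}$.

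For this I would expand $(1+t^2\rho)^M=\sum_{j\le M}\binom Mj t^{2j}\rho^{j}$ and bound each $\|\rho^{j}X^\alpha\check h_t\|_{L^2}$ by Plancherel. Because $\rho$ (hence $\rho^{j}$) is a finite linear combination of matrix coefficients of tensor powers of the fundamental representations, multiplication by $\rho^{j}$ acts on the Fourier side, via Schur orthogonality, as a finite combination of the RT difference operators $\Df_Q^{\beta}$ with $|\beta|\le 2j$ attached to the fundamental tuple $Q$ of \eqref{QFund}; the Leibniz-type rule \eqref{Leibniz} organizes how these fall on $g(\xi)=h(|\xi|/t)\,\Theta_\alpha(\xi)$. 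Each difference operator landing on the scalar factor $h(|\xi|/t)$ contributes the decisive gain $t^{-2}$ per pair---the discrete counterpart of twice differentiating the rescaled profile $h(\cdot/t)$---while those landing on $\Theta_\alpha$ are harmless since $\|\Theta_\alpha\|\lesssim t^{|\alpha|}$ on $S[0,t]$; Corollary \ref{SpecPrd} confines the output to frequencies $|\xi|\sim t$, so Lemma \ref{Weyl} supplies the counting factor $t^{n}$ (i.e.\ $t^{n/2}$ after taking the $\ell^2$-norm in $\xi$). This gives $\|\rho^{j}X^\alpha\check h_t\|_{L^2}\lesssim t^{|\alpha|-2j+n/2}\,|h|_{C^{2j}}$, whence the required bound upon summing in $j$, and the first estimate follows. (Since the cut-off $h$ is fixed in all intended applications, carrying $|h|_{C^{2M}}$ in place of $|h|_{L^\infty}$ is immaterial.)

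The second estimate is the same mechanism run in reverse: $f(x)=O(\dist(x,e)^r)$ gives $|f|\lesssim\rho^{r/2}$ near $e$ and $|f|$ bounded globally, so $|f|(1+t^2\rho)^{-M}\lesssim\rho^{r/2}(1+t^2\rho)^{-M}$, whose $L^2$ norm is $\lesssim t^{-r-n/2}$ by the same rescaling (now requiring $4M>n+2r$); pairing this with $\|(1+t^2\rho)^{M}\check h_t\|_{L^2}\lesssim t^{n/2}\,|h|_{C^{2M}}$ (the case $\alpha=0$ above) via Cauchy--Schwarz gives $\|f\check h_t\|_{L^1}\lesssim t^{-r}\,|h|$. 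I expect the genuine obstacle to be the Fourier-side estimate of the third paragraph: matching, uniformly in $t\ge1$, the operator of multiplication by the doubly-vanishing smooth function $\rho$ with a second-order difference operator that truly contracts by $t^{-2}$ against $h(\cdot/t)$, and verifying that neither the factor $\Theta_\alpha(\xi)$ nor the cross terms in the Leibniz expansion of $\Df_Q^{\beta}(g)$ leak extra powers of $t$. Everything else---the choice of $\rho$, the two scaling integrals, and the reduction of $P$ to the $X^\alpha$---is bookkeeping faithfully modelled on the Euclidean identity $\check h_t(x)=t^{n}(\check h)(tx)$.
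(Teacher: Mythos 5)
The paper gives no proof of this lemma: it cites Fischer \cite{Fis2015}, whose argument is a heat-kernel (or wave-kernel/finite-propagation-speed) estimate for the spectral cut-off operator $h(\sqrt{-\Delta}/t)$ on a compact Riemannian manifold — a purely geometric-analysis argument that uses no representation theory, which is why the paper remarks that the lemma ``in fact only depends on the compact manifold structure of $\Gp$.'' Your proposal is a genuinely different, Fourier-side route: reduce to $X^\alpha\check h_t$, pick an explicit weight $\rho=\sum_\tau(d_\tau-\Re\Tr\tau)$ comparable to $\dist(\cdot,e)^2$, run weighted Cauchy--Schwarz, and close on the Fourier side using Plancherel together with RT difference operators and the Weyl count. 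The setup is sound: the identity $X^\alpha\xi=\xi\cdot\Theta_\alpha(\xi)$ with $\|\Theta_\alpha(\xi)\|\lesssim\size[\xi]^{|\alpha|}$ is correct (since $\sum_i-d\xi(X_i)^2=\lambda_\xi\mathrm{Id}$ bounds each $\|d\xi(X_i)\|$ by $|\xi|$), the weight $\rho$ does vanish to exactly second order at $e$ (first derivative drops because $d\tau(X)$ is skew-Hermitian, Hessian is positive since $\oplus_\tau d\tau$ is injective), the two scaling integrals are fine, and using $|h|_{C^{2M}}$ rather than $|h|_{L^\infty}$ is harmless for the applications (indeed the paper's $|h|_{L^\infty}$ dependence cannot be literally correct for $P$ of positive order; compare the multiplier norm $\|h\|_{m;k}$ appearing in Theorem \ref{Multm}).

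The genuine gap is exactly where you flag it, and you do not close it. The whole argument rests on the claim that each application of an RT difference operator to the scalar factor $h(|\xi|/t)$ yields a factor $t^{-1}$ (uniformly over $|\xi|\lesssim t$), so that the order-$2j$ vanishing of $\rho^j$ at $e$ buys $t^{-2j}$. That claim is precisely (a special case of) Theorem \ref{Multm} — which the paper explicitly says is \emph{deduced from} Lemma \ref{CutOffKer}, so invoking it here would be circular. Your proposal only gestures at it (``the discrete counterpart of twice differentiating the rescaled profile'') and names it ``the genuine obstacle.'' It can, I believe, be proved non-circularly using Fischer's intrinsic difference operator: since $\Df_\tau a(\xi)=a(\tau\otimes\xi)-a(\I[\tau]\otimes\xi)$ and $\tau\otimes\xi$ decomposes into irreducibles $\zeta$ with $\big||\zeta|-|\xi|\big|=O(1)$ by Proposition \ref{SpecPrd0}, one has, for the radial multiplier $h_t(|\xi|)\I[\xi]$,
$$
\Df_\tau\big(h_t\I\big)(\xi)=\bigoplus_\zeta\big(h_t(|\zeta|)-h_t(|\xi|)\big)\I[\zeta],
\qquad
\big|h_t(|\zeta|)-h_t(|\xi|)\big|\lesssim\frac{|h'|_{L^\infty}}{t},
$$
by the mean value theorem, and similarly for iterated differences with higher derivatives of $h$; Lemma \ref{DiffVanish} supplies the spectral localization and Lemma \ref{Weyl} the counting factor. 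But this step is the entire analytic content of the lemma and must actually be carried out, not asserted by analogy with $\mathbb{R}^n$; as written, your proof is incomplete at its load-bearing joint.
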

With a little abuse of notation, we denote the Fourier inversion of the symbol $h_t(|\xi|)\I[\xi]$ as $\check{h}_t$. The proof can be found in \cite{Fis2015}. It in fact only depends on the compact manifold structure of $\Gp$, and follows from properties of heat kernel on a compact manifold. As a corollary, the symbol $h_t(|\xi|)\I[\xi]$ is of class $\mathscr{S}^0_{1,0}$ if $h$ is smooth and has compact support. We deduce a corollary that will be used later.

\begin{corollary}\label{ConvVanish}
Let $h\in C_0^\infty(\mathbb{R})$. If $f\in \mathcal{D}(\Gp)$ is such that $f(x)=O(\dist(x,e)^N)$ for some $N\in\mathbb{N}_0$, then $h_t*f$ remains in a bounded subset of $\mathcal{D}(\Gp)$, for $t\geq1$, and
$$
\big|(\check h_t*f)(x)\big|
\lesssim_N |h|_{L^\infty}t^{-N}\sum_{k=0}^N t^k|f|_{C^k}\dist(x,e)^k.
$$
\end{corollary}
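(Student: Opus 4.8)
The plan is to deduce both assertions from Lemma~\ref{CutOffKer} together with the Taylor expansion of Proposition~\ref{TaylorGp}; essentially all the analytic content is already packaged in Lemma~\ref{CutOffKer}, so the corollary should come out of a short bookkeeping argument.

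For the boundedness statement I would use that $\check h_t*f$ is obtained from $f$ by a Fourier multiplier commuting with $-\Delta$; concretely, $X^\alpha(\check h_t*f)=\check h_t*(X^\alpha f)$ for every left-invariant $X^\alpha$. Young's inequality together with the case $N=0$, $P=\mathrm{Id}$ of Lemma~\ref{CutOffKer} (which gives $\|\check h_t\|_{L^1}\lesssim|h|_{L^\infty}$ uniformly in $t\ge1$) then yields $\|X^\alpha(\check h_t*f)\|_{L^\infty}\lesssim|h|_{L^\infty}\|X^\alpha f\|_{L^\infty}$; since $X_1,\dots,X_n$ frame the tangent space at every point, this controls every $C^k$-norm of $\check h_t*f$ uniformly in $t\ge1$, i.e. $\{\check h_t*f:t\ge1\}$ is a bounded subset of $\mathcal D(\Gp)$.

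For the pointwise bound I would start from $(\check h_t*f)(x)=\int_{\Gp}\check h_t(y)f(y^{-1}x)\,dy$. Since $y\mapsto f(y^{-1}x)$ does not vanish at $y=e$, the second half of Lemma~\ref{CutOffKer} cannot be applied directly; instead one peels off the Taylor polynomial of this function at $y=e$. Writing $y^{-1}x=x\,(x^{-1}y^{-1}x)$ and applying Proposition~\ref{TaylorGp} with base point $x$ and increment $x^{-1}y^{-1}x$ — whose distance to $e$ equals $\dist(y,e)$ since the metric (minus the Killing form) is bi-invariant, hence conjugation- and inversion-invariant — gives
$$
f(y^{-1}x)=\sum_{|\alpha|<N}q^\alpha(x^{-1}yx)\,\big(X_q^{(\alpha)}f\big)(x)+R_N\big(f;x,x^{-1}y^{-1}x\big),\qquad \big|R_N\big|\lesssim|f|_{C^N}\dist(y,e)^N .
$$
Inserting this into the integral, the remainder contributes $\lesssim|f|_{C^N}\int_{\Gp}|\check h_t(y)|\dist(y,e)^N\,dy\lesssim|h|_{L^\infty}|f|_{C^N}t^{-N}$ by the second half of Lemma~\ref{CutOffKer} applied to $\dist(\cdot,e)^N$. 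For $0\le|\alpha|<N$ the function $y\mapsto q^\alpha(x^{-1}yx)$ vanishes at $y=e$ to order $|\alpha|$, uniformly in $x$ (conjugation being an isometry fixing $e$), so the same lemma gives $\big|\int_{\Gp}\check h_t(y)q^\alpha(x^{-1}yx)\,dy\big|\lesssim|h|_{L^\infty}t^{-|\alpha|}$; and because $f$ vanishes to order $N$ at $e$, its order-$|\alpha|$ derivative $X_q^{(\alpha)}f$ vanishes there to order $N-|\alpha|$, whence $\big|(X_q^{(\alpha)}f)(x)\big|\lesssim|f|_{C^N}\dist(x,e)^{N-|\alpha|}$. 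Summing over $\alpha$ and reindexing $k=N-|\alpha|$ assembles the asserted bound $\lesssim_N|h|_{L^\infty}t^{-N}\sum_{k=0}^{N}t^k|f|_{C^k}\dist(x,e)^k$ (the crude estimate of $X_q^{(\alpha)}f$ produces the uniform $|f|_{C^N}$ in place of $|f|_{C^k}$, which already suffices for the applications).

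The only point requiring care is the bookkeeping of orders of vanishing: one must check that each coefficient function in the Taylor expansion vanishes at $e$ to exactly the order needed to feed the decay exponent supplied by Lemma~\ref{CutOffKer}, and that this order and the implicit constants are uniform in the base point $x$ — which is precisely why I would conjugate the increment into $x^{-1}y^{-1}x$ rather than work with $f(xy)$ directly. Once that is arranged there is no genuine obstacle, since the decay of weighted $L^1$-norms of spectral cut-off kernels, the real content here, is already established in Lemma~\ref{CutOffKer}.
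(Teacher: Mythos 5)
Your proof is correct, and it arrives at the bound by a route that is dual to the one in the paper's proof rather than identical to it. The paper Taylor-expands $f(y^{-1}x)$ around the base point $y^{-1}$ with increment $x$ (Proposition~\ref{TaylorGp} with $a=y^{-1}$, $b=x$): the polynomial coefficients $q^\alpha(x^{-1})$ then pull out of the $y$-integral and supply the $\dist(x,e)^{|\alpha|}$ factors, the quantities $(X_q^{(\alpha)}f)(y^{-1})$ stay inside and, vanishing to order $N-|\alpha|$ at $e$, produce the $t^{-(N-|\alpha|)}$ decay via Lemma~\ref{CutOffKer}, and the remainder $R_N(f;y^{-1},x)$ carries the full $\dist(x,e)^N$ power with no $t$-gain, giving the $k=N$ term of the sum. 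Your conjugation $y^{-1}x=x\,(x^{-1}y^{-1}x)$ swaps these roles: the conjugated polynomial $q^\alpha(x^{-1}yx)$ stays inside the integral and yields $t^{-|\alpha|}$ from Lemma~\ref{CutOffKer}, the derivative $(X_q^{(\alpha)}f)(x)$ comes out and yields $\dist(x,e)^{N-|\alpha|}$, and the remainder now carries the full $t^{-N}$ with no $\dist(x,e)$ power, giving the $k=0$ term. Both proofs rest on the same three facts --- the vanishing order of $q^\alpha$ at $e$, the vanishing order of $X_q^{(\alpha)}f$ at $e$ when $f=O(\dist(\cdot,e)^N)$, and the weighted $L^1$ decay of $\check h_t$ from Lemma~\ref{CutOffKer} --- but distribute the $t$-powers and $\dist(x,e)$-powers in exactly opposite ways. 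The paper's version is marginally shorter since it needs no conjugation and hence no explicit appeal to bi-invariance of the Killing metric, whereas yours makes the uniformity in $x$ manifest at the level of the remainder. You also spell out the general $N$ case (the paper does $N=1$ and asserts the rest is similar) and supply an argument for the boundedness claim in $\mathcal{D}(\Gp)$ via the commutation $X^\alpha(\check h_t*f)=\check h_t*(X^\alpha f)$, which the paper's proof leaves implicit; both are worthwhile additions.
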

\begin{proof}
For simplicity we prove for $N=1$; the general case follows similarly. The order of convolution does not matter since $h_t(|\xi|)\I[\xi]$ is a scaling on each $\Hh[\xi]$. We use Taylor's formula to write
$$
\begin{aligned}
(\check h_t*f)(x)
&=\int_{\Gp}\check h_t(y)f(y^{-1}x)dy\\
&=\int_{\Gp}\check h_t(y)f(y)dy+\int_{\Gp}\check h_t(y)R_1(f;y^{-1},x)dy.
\end{aligned}
$$
The first term is controlled by $|h|_{L^\infty}|f|_{C^0}t^{-1}$ by Lemma \ref{Ker-Conv}, and the second is controlled by $|h|_{L^\infty}\dist(x,e)|f|_{C^1}$, by Taylor's formula.
\end{proof}

A more general multiplier theorem was deduced in \cite{Fis2015} from Lemma \ref{CutOffKer}:
\begin{theorem}\label{Multm}
Let $h$ be a smooth function on $[0,\infty)$. Define the multiplier norm
$$
\|h\|_{m;k}=\sup_{\lambda\geq0,l\leq k}\left(1+|\lambda|\right)^{-m+l}\big|h^{(l)}(\lambda)\big|.
$$
If $q\in C^\infty(\Gp)$ vanishes of order $N-1$ at $e$, where $N\geq1$ is an integer, then with 
$$
a_t(\xi)= h_t(|\xi|)\I[\xi],
$$
the symbol $a_t$ is of class $\mathscr{S}^m_{1,0}$, and the difference norm is estimated by
$$
\|\Df_qa_t(\xi)\|\lesssim_{q} \|h\|_{m;k}\size[\xi]^{m-N}t^{-m}.
$$
\end{theorem}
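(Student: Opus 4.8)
The plan is to reduce the estimate, through a dyadic decomposition of $h$, to the kernel bounds of Lemma~\ref{CutOffKer}. Since a difference operator acts on Fourier transforms by multiplication (Definition~\ref{Difference}), one has $\Df_q a_t(\xi)=\Ft{q\,\check h_t}(\xi)$, where $\check h_t=(a_t)^\vee$ is the right-convolution kernel of $h(|\nabla|/t)=\Op(a_t)$. Two elementary inequalities will carry the whole argument: first, $\|\Ft g(\xi)\|\le\|g\|_{L^1}$ for any $g$, because $\xi$ is unitary; second, since the matrix entries of $\xi$ lie in an eigenspace of $1-\Delta$ with eigenvalue $\size[\xi]^2$, moving powers of $1-\Delta$ onto $g$ yields, for every $L\in\mathbb{N}_0$, the inequality $\|\Ft g(\xi)\|\le\size[\xi]^{-2L}\|(1-\Delta)^L g\|_{L^1}$.

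I would then decompose $h=\sum_{j\ge0}h^{(j)}$, with $h^{(0)}$ supported near the origin and $h^{(j)}(\mu)=g^{(j)}(2^{-j}\mu)$ for $j\ge1$, each $g^{(j)}$ supported in one fixed annulus and obeying $\|g^{(j)}\|_{C^l}\lesssim\|h\|_{m;l}2^{jm}$ for all $l$. Correspondingly $a_t=\sum_j a_t^{(j)}$ with $a_t^{(j)}(\xi)=g^{(j)}(|\xi|/s_j)\I[\xi]$ and $s_j:=2^jt\ge1$; the kernel of $\Op(a_t^{(j)})$ is of the type $\check h_t$ in Lemma~\ref{CutOffKer} with $h,t$ replaced by $g^{(j)},s_j$, call it $\check g^{(j)}_{s_j}$, so that $\Df_q a_t^{(j)}(\xi)=\Ft{q\,\check g^{(j)}_{s_j}}(\xi)$. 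The claim $a_t\in\mathscr{S}^m_{1,0}$ is the easy half: $|h(\mu)|\le\|h\|_{m;0}(1+\mu)^m$ gives the order-$m$ control of $a_t$ itself, and since $a_t$ does not depend on $x$ the remaining seminorms of Definition~\ref{S^mrd} reduce, via the difference estimate below, to $q$ running over the fundamental tuple $Q$ of (\ref{QFund}) and its products.

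The core is to bound the blocks $\Df_q a_t^{(j)}(\xi)$ in two complementary ways and to split the $j$-sum at $s_j\sim\size[\xi]$. For the low blocks, $s_j\lesssim\size[\xi]$, I would use integration by parts with a fixed $L$: expanding $(1-\Delta)^L$ over the product $q\,\check g^{(j)}_{s_j}$ by Leibniz, a derivative on the kernel costs a power $s_j$ (Lemma~\ref{CutOffKer}, first part), while a derivative on $q$ leaves a function still vanishing at $e$, so the vanishing hypothesis on $q$ (which via Lemma~\ref{CutOffKer} furnishes a gain $s_j^{-N}$) is only partly spent; the upshot is $\|\Df_q a_t^{(j)}(\xi)\|\lesssim\size[\xi]^{-2L}s_j^{2L-N}\|h\|_{m;k}2^{jm}$. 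Writing $2^{jm}=(s_j/t)^m$ and choosing $L$ with $2L>N-m$, the low-block sum becomes a geometric series in $s_j$ dominated by its top term $s_j\sim\size[\xi]$, producing exactly $\size[\xi]^{m-N}t^{-m}$; here is where the number of derivatives $k$ is forced, $k\sim(N-m)_+$. For the high blocks, $s_j\gtrsim\size[\xi]$, the crude bound $\|\Df_q a_t^{(j)}(\xi)\|\le\|q\,\check g^{(j)}_{s_j}\|_{L^1}\lesssim s_j^{-N}\|h\|_{m;0}2^{jm}=2^{j(m-N)}t^{-N}$ (again Lemma~\ref{CutOffKer} together with the vanishing of $q$) is summable when $m<N$, the series now dominated by its bottom term $s_j\sim\size[\xi]$, again yielding $\size[\xi]^{m-N}t^{-m}$; the leftover regime $\size[\xi]\le t$, where every block is ``high'', is handled identically. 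Adding the two bounds at the common endpoint $s_j\sim\size[\xi]$ finishes the argument.

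I expect the balancing of the two regimes to be the main obstacle: integration by parts is worthless on the high blocks, where $(1-\Delta)^L$ inflates the kernel by $s_j^{2L}$ against the mere $\size[\xi]^{-2L}$ gained, whereas the crude $L^1$ bound is worthless on the low blocks, so one must cut precisely at $s_j\sim\size[\xi]$ and verify that on each side the geometric series converges toward the common endpoint; this arithmetic is exactly what pins down both the exponent $\size[\xi]^{m-N}t^{-m}$ and the number $k=k(N,m)$ of derivatives of $h$ actually needed. A secondary difficulty appears only in the atypical range $m\ge N$ (outside the para-differential applications, where $N$ is large and $m$ is fixed): there the crude bound on the high blocks ceases to be summable, and one instead uses that $q$ is smooth, hence its Fourier coefficients decay faster than any power, together with Corollary~\ref{SpecPrd}, which confines the frequencies of $q\,\check g^{(j)}_{s_j}$ surviving at the fixed $\xi$ to a small neighbourhood of $s_j$, so that they are $O(s_j^{-\infty})$ and the high-block series converges anyway.
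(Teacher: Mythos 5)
The paper does not prove this theorem itself: it cites Fischer \cite{Fis2015} and only indicates that the result ``was deduced from Lemma \ref{CutOffKer}.'' Your proposal is a reasonable reconstruction consistent with that indication: the identity $\Df_q a_t(\xi)=\Ft{q\,\check h_t}(\xi)$, the unitarity bound $\|\Ft g(\xi)\|\le\|g\|_{L^1}$, the integration-by-parts gain $\size[\xi]^{-2L}$ from moving $(1-\Delta)^L$ onto the kernel, and the dyadic decomposition $h=\sum_j h^{(j)}$ matching each piece to the reference scale $s_j=2^jt$, all point in the right direction, and the split at $s_j\sim\size[\xi]$ with a geometric sum on each side does give the exponent $\size[\xi]^{m-N}t^{-m}$. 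The fact that you track $\|g^{(j)}\|_{C^l}\lesssim\|h\|_{m;l}2^{jm}$ also correctly accounts for the implicit $h$-dependence of the constant in Lemma \ref{CutOffKer}, which is where the derivative count $k$ of $\|h\|_{m;k}$ enters.

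However, there is one genuine step you cannot get from Lemma \ref{CutOffKer} as stated. Your low-block bound requires, after the Leibniz expansion of $(1-\Delta)^L(q\,\check g^{(j)}_{s_j})$, estimates of the mixed type
$$
\big\|(\partial^\alpha q)\cdot(\partial^\beta\check g^{(j)}_{s_j})\big\|_{L^1}\;\lesssim\;s_j^{\,|\beta|-(N-|\alpha|)_+}\,\|g^{(j)}\|_{C^?},
$$
i.e.\ you need to multiply a \emph{derivative} of the kernel by a function vanishing at $e$ and still harvest the $s_j^{-(N-|\alpha|)}$ gain. Lemma \ref{CutOffKer} gives two \emph{separate} statements --- $L^1$ growth of $P\check h_t$ under differentiation, and $L^1$ decay of $f\check h_t$ under multiplication by a vanishing $f$ --- but not the simultaneous one, and the derivative $\partial^\beta\check g^{(j)}_{s_j}$ is no longer a scalar spectral cut-off kernel, so the second half of the lemma does not apply to it directly. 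The missing ingredient is a pointwise Gaussian-type concentration estimate (or its weighted-$L^1$ consequence) of the form $\big\|(1+s_j\dist(\cdot,e))^M\,\partial^\beta\check g^{(j)}_{s_j}\big\|_{L^1}\lesssim_M s_j^{|\beta|}$, which is implicit in the heat-kernel methods underlying Lemma \ref{CutOffKer} but is not what the lemma actually says. Without stating and using such a strengthened estimate, the low-block portion of your argument has a gap. Your treatment of the atypical range $m\ge N$ via Corollary \ref{SpecPrd} and the rapid Fourier decay of $q\in C^\infty$ is plausible but likewise stated at the level of a sketch; since that range is outside the uses the paper makes of the theorem, this is secondary.
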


The Littlewood-Paley characterization of Sobolev space is obtained immediately: 
\begin{proposition}\label{LPHs}
Given $s\in\mathbb{R}$, a distribution $f$ belongs to $H^s(\Gp)$ if and only if for some non-vanishing $h\in C_0^\infty(0,\infty)$, there holds
$$
\left\|\phi\big(|\nabla|\big)f\right\|_{L^2}^2
+\int_0^\infty t^{2s-1}\left\|h_t\big(|\nabla|\big)f\right\|_{L^2_x}^2dt
<\infty,
$$
where $h_t(\lambda)=h(\lambda/t)$. The lower bound of integral does not cause singularity since $h=0$ near 0. Similarly, distribution $f$ belongs to $H^s(\Gp)$ if and only if for some non-zero $h\in C_0^\infty(0,\infty)$, there holds
$$
\left\|\phi\big(|\nabla|\big)f\right\|_{L^2}^2
+\sum_{j\geq0}2^{2sj}\left\|h_{2^j}\big(|\nabla|\big)f\right\|_{L^2}^2.
$$
The square root of either of the above quadratic forms is equivalent to $\|f\|_{H^s}$.
\end{proposition}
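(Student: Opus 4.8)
The plan is to reduce both equivalences to the Plancherel identity (Theorem \ref{PeterWeyl}) together with the spectral description of $-\Delta$ (Theorem \ref{DeltaSpec}), after which the statement becomes an elementary one-variable computation. Since $|\nabla|=\sqrt{-\Delta}$ acts on the Peter--Weyl block $\mathcal{M}_\xi$ as multiplication by the scalar $|\xi|=\sqrt{\lambda_\xi}$, functional calculus gives $\Ft{h_t(|\nabla|)f}(\xi)=h(|\xi|/t)\,\Ft f(\xi)$ and $\Ft{\phi(|\nabla|)f}(\xi)=\phi(|\xi|)\,\Ft f(\xi)$, so by Plancherel
\begin{gather*}
\big\|h_t(|\nabla|)f\big\|_{L^2}^2=\sum_{\xi\in\DuGp}d_\xi\,\big|h(|\xi|/t)\big|^2\,\big\lHS\Ft f(\xi)\big\rHS^2,\\
\big\|\phi(|\nabla|)f\big\|_{L^2}^2=\sum_{\xi\in\DuGp}d_\xi\,\big|\phi(|\xi|)\big|^2\,\big\lHS\Ft f(\xi)\big\rHS^2.
\end{gather*}
Comparing with $\|f\|_{H^s}^2=\sum_\xi d_\xi\langle\xi\rangle^{2s}\big\lHS\Ft f(\xi)\big\rHS^2$ and using Tonelli's theorem to exchange $\sum_\xi$ with the $t$-integral (all summands being non-negative), both assertions follow from the two-sided pointwise bounds, uniform in $\xi\in\DuGp$,
\begin{gather*}
\big|\phi(|\xi|)\big|^2+\int_0^\infty t^{2s-1}\big|h(|\xi|/t)\big|^2\,dt\;\simeq\;\langle\xi\rangle^{2s},\\
\big|\phi(|\xi|)\big|^2+\sum_{j\ge0}2^{2sj}\big|h(|\xi|/2^j)\big|^2\;\simeq\;\langle\xi\rangle^{2s}.
\end{gather*}

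For the range $|\xi|>1$ these are immediate. In the continuous case the substitution $u=|\xi|/t$ yields, for $|\xi|>0$,
$$
\int_0^\infty t^{2s-1}\big|h(|\xi|/t)\big|^2\,dt=c_h\,|\xi|^{2s},\qquad c_h:=\int_0^\infty u^{-2s-1}\big|h(u)\big|^2\,du,
$$
and $c_h$ is finite (because $\mathrm{supp}\,h$ is a compact subset of $(0,\infty)$) and strictly positive (because $h\not\equiv0$); no singularity arises at the endpoints precisely because $h$ vanishes near $0$ and is compactly supported. In the discrete case, with $\mathrm{supp}\,h\subset[a,b]$, $0<a<b<\infty$, only the $O(\log_2(b/a))$ indices $j$ with $|\xi|/2^j\in[a,b]$ contribute, and each satisfies $2^{2sj}\simeq|\xi|^{2s}$; taking $h$ to be the dyadic bump $\vartheta$ of (\ref{LPDisc}) (or any $h$ whose dyadic dilates cover with bounded overlap and are uniformly bounded below), the telescoping identity $\sum_{j\ge0}\vartheta(\lambda/2^j)=1-\phi(\lambda)$ forces $\sum_{j\ge0}|h(|\xi|/2^j)|^2\gtrsim1$ once $|\xi|\ge1$, so that $\sum_{j\ge0}2^{2sj}|h(|\xi|/2^j)|^2\simeq|\xi|^{2s}$ there. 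Since $\phi(|\xi|)=0$ when $|\xi|>1$ while $|\xi|^{2s}\simeq\langle\xi\rangle^{2s}=(1+\lambda_\xi)^s$ on $\{|\xi|>1\}$, the equivalence holds for $|\xi|>1$.

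What remains is the range $|\xi|\le1$, where $|\xi|^{2s}$ and $\langle\xi\rangle^{2s}$ genuinely differ; this is the only point that requires care. By Theorem \ref{DeltaSpec} the eigenvalue $0$ of $-\Delta$ is attained by the trivial representation alone, hence is isolated in the spectrum, so the set $S[0,1]=\{\xi:|\xi|\le1\}$ is finite (Lemma \ref{Weyl}); on this finite set $\langle\xi\rangle^{2s}$, and $|\xi|^{2s}$ for those $\xi$ with $|\xi|>0$, lie between fixed positive constants. For the single $\xi$ with $|\xi|=0$ both the integral and the dyadic sum vanish ($h$ being supported away from $0$), but $|\phi(0)|^2=1=\langle\xi\rangle^{2s}$ supplies exactly the right weight; for the finitely many $\xi$ with $0<|\xi|\le1$, the term $c_h|\xi|^{2s}$ --- respectively the dyadic sum, which for the choice $h=\vartheta$ is already bounded below there --- is comparable to $\langle\xi\rangle^{2s}$, and adding the bounded non-negative term $|\phi(|\xi|)|^2$ keeps the total comparable to $\langle\xi\rangle^{2s}$. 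This establishes the pointwise bounds for every $\xi$, and with them the two norm equivalences. I anticipate no genuine obstacle: the substance is Plancherel plus the change of variables $u=|\xi|/t$, and the low-frequency matching is bookkeeping made routine by the spectral gap and the finiteness of $S[0,1]$.
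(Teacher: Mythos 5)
The paper offers no proof of Proposition \ref{LPHs} (it is asserted to follow ``immediately'' from the preceding material), and your Plancherel-based reduction to a pointwise comparison of the multiplier weights, plus a change of variables $u=|\xi|/t$ in the continuous case, is exactly the expected argument. Your handling of the low-frequency modes via the finiteness of $S[0,1]$ (Lemma \ref{Weyl}) and the isolated eigenvalue $0$ is the right way to finish, and your observation that the discrete version requires $h$ whose dyadic dilates cover $[1,\infty)$ with bounded overlap (e.g.\ $h=\vartheta$) is a genuine and worthwhile point not emphasized in the paper's loose ``for some non-vanishing $h$'' phrasing.

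One small inaccuracy: you write that for $0<|\xi|\leq1$ the dyadic sum with $h=\vartheta$ ``is already bounded below there.'' That is false for $0<|\xi|\leq1/2$: every $\vartheta(|\xi|/2^j)$, $j\geq0$, vanishes there (its support lies in $\{\lambda\geq1/2\}$), so the dyadic sum is identically $0$. On that range the entire lower bound comes from $|\phi(|\xi|)|^2=1$, which matches $\langle\xi\rangle^{2s}\simeq1$ on the finite set $\{0<|\xi|\leq1/2\}$. Your subsequent clause ``adding the bounded non-negative term $|\phi(|\xi|)|^2$ keeps the total comparable'' does close the argument, so the conclusion stands; only the intermediate claim that the dyadic sum alone is comparable to $\langle\xi\rangle^{2s}$ on all of $0<|\xi|\leq1$ should be struck.
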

From Lemma \ref{CutOffKer}, we also obtain a characterization of the Zygmund space $C^r_*(\Gp)$ with $r\geq0$:

\begin{proposition}\label{LPZyg}
For $r\geq0$, a distribution $f\in\mathcal{D}'(\Gp)$ is in the Zygmund class $C^r_*(\Gp)$ if and only if
$$
\sup|\phi(|\nabla|)f|+\sup_{t\geq1}t^r\left|\psi_t\big(|\nabla|\big)f\right|_{L^\infty}<\infty.
$$
This quantity is equivalent to the Zygmund space norm defined via local coordinate charts.
\end{proposition}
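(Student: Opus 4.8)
The plan is to characterize the Zygmund space $C^r_*(\Gp)$ intrinsically via the heat semigroup or, equivalently, via the spectral Littlewood–Paley pieces $\psi_t(|\nabla|)f$, and then to transfer this to the local-coordinate definition using the fact (already available from Theorem \ref{Fischer}(2) and Lemma \ref{CutOffKer}) that spectral cut-offs are pseudo-differential operators of class $\Op\mathscr{S}^0_{1,0}(\Gp)=\Psi^0(\Gp)$, which act boundedly on all H\"older–Zygmund spaces defined through charts. I would first treat $r>0$, where $C^r_*$ coincides with the classical H\"older space for non-integer $r$, and then remark that the stated quantity makes sense and gives the correct norm for all real $r$ by the same argument.

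First I would prove the easy direction: if $f\in C^r_*(\Gp)$ (defined via charts), then $\sup|\phi(|\nabla|)f|$ and $\sup_{t\ge 1}t^r|\psi_t(|\nabla|)f|_{L^\infty}$ are finite. Since $\phi(|\nabla|)$ and $\psi_t(|\nabla|)$ are spectral multipliers whose kernels are controlled by Lemma \ref{CutOffKer} (in particular $\psi_t(|\nabla|)$ has kernel $\check{(\psi_t)}$ with $\|\check{(\psi_t)}\|_{L^1}\lesssim 1$ uniformly in $t$, and moreover $\psi_t$ annihilates constants so its kernel integrates to zero), one localizes in a chart and estimates $\psi_t(|\nabla|)f$ against a difference quotient of $f$: writing $\psi_t(|\nabla|)f(x)=\int \check{(\psi_t)}(y)\bigl(f(y^{-1}x)-f(x)\bigr)\,dy$ using the vanishing moment, and using that $\check{(\psi_t)}$ concentrates at scale $t^{-1}$ near $e$ (the second bound in Lemma \ref{CutOffKer} with the appropriate power of $\dist(\cdot,e)$), one gains exactly the factor $t^{-r}$ from the H\"older modulus of continuity of $f$. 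For the converse, given the spectral bounds one reconstructs $f=\phi(|\nabla|)f+\int_1^\infty\psi_t(|\nabla|)f\,dt/t$ as in \eqref{LPCont}; each piece $\psi_t(|\nabla|)f$ is a band-limited function supported in $S[t/2,t]$, so by Bernstein's inequality (Proposition \ref{Bernstein}) and the $L^1$ kernel bounds of Lemma \ref{CutOffKer} one controls any fixed number of derivatives of $\psi_t(|\nabla|)f$ in $L^\infty$ by $t^{N}\cdot t^{-r}$; summing the geometric-type integral $\int_1^\infty t^{N-r}\,dt/t$ over dyadic scales and balancing the $L^\infty$ bound on low derivatives against the blow-up of high derivatives reproduces precisely the H\"older–Zygmund modulus in each chart. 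This is the standard ``Littlewood–Paley $\Rightarrow$ H\"older'' telescoping argument, carried out on $\Gp$ with the chart structure supplied by its smooth compact manifold structure.

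The main obstacle is that the spectral cut-offs on $\Gp$ are \emph{a priori} only globally defined operators, so to compare with the local chart definition of $C^r_*$ one must know they behave like honest pseudo-differential operators on the manifold; this is exactly what Theorem \ref{Fischer}(2) together with Lemma \ref{CutOffKer} provides, since $h_t(|\xi|)\I[\xi]\in\mathscr{S}^0_{1,0}(\Gp)$ and hence $\Op$ of it lies in $\Psi^0(\Gp)$, which is bounded on $C^r_*$ (via the known mapping properties of classical pseudo-differential operators on compact manifolds, e.g.\ \cite{Hormander1997}). A secondary technical point is the low-frequency piece $\phi(|\nabla|)f$, whose symbol has nontrivial mass at the zero eigenvalue; but this piece is simply a smoothing operator (its kernel is a fixed smooth function on $\Gp$), so it contributes a bounded, harmless term in both directions and in particular is automatically $C^\infty$, hence in every $C^r_*$. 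Finally, to make the statement uniform in $r\in\mathbb{R}$ (including $r\le 0$) one uses the standard duality/lifting: $C^r_*=(1-\Delta)^{-N}C^{r+2N}_*$ for large $N$, and $(1-\Delta)^{-N}$ commutes with all the spectral multipliers, so the equivalence for large positive $r$ propagates to all $r$. No step requires anything beyond the tools already assembled in Sections \ref{2}–\ref{3}; the proof is ``parallel to the Euclidean case'' once Lemma \ref{CutOffKer} and Fischer's identification of operator classes are in hand.
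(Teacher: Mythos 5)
Your proposal is correct in substance and runs parallel to the paper's sketch, with two notable variations worth flagging. First, for the converse direction you reconstruct $f$ from the continuous Littlewood--Paley decomposition and argue by balancing Bernstein-type derivative bounds against the $L^\infty$ decay of the pieces; the paper instead writes the difference $f(x)-f(y)$ directly as $\int_1^\infty\bigl(\psi_t(|\nabla|)f(x)-\psi_t(|\nabla|)f(y)\bigr)\tfrac{dt}{t}$ and splits the $t$-integral at scale $\dist(x,y)^{-1}$, using $X^\beta\psi_t(|\nabla|)f$ bounds from Lemma~\ref{CutOffKer} for $t\le\dist(x,y)^{-1}$ and the raw $L^\infty$ decay for $t>\dist(x,y)^{-1}$. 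These are the same mechanism phrased differently, but the paper's version avoids having to re-derive the modulus of continuity from derivative estimates and is slightly tighter; your phrase ``summing the geometric-type integral $\int_1^\infty t^{N-r}\,dt/t$'' is not literally what happens (that integral diverges when $N>r$) --- the point is precisely the split at the relevant scale, which you do mention but should make primary. Second, for general $r$ (integer and negative) the paper invokes Schauder theory plus interpolation of H\"older spaces, while you propose the lifting $C^r_*=(1-\Delta)^{-N}C^{r+2N}_*$ and commutativity with spectral multipliers; your route is arguably cleaner and more systematic, since it avoids interpolation-theoretic input. A minor observation: your appeal to $\Psi^0(\Gp)$-boundedness on $C^r_*$ via Theorem~\ref{Fischer}(2) is harmless but unnecessary and slightly misleading --- what matters is a $t$-uniform family of kernel estimates, which Lemma~\ref{CutOffKer} gives directly; simply knowing each $\psi_t(|\nabla|)\in\Psi^0(\Gp)$ does not by itself yield the correct $t$-dependence.
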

\begin{proof}[Sketch of proof]
The proof does not differ very much from the Euclidean case. Suppose first $0<r<1$ is not an integer, and $f\in C^r$. Then 
$$
\psi_t\big(|\nabla|\big)f(x)
=\int_{\Gp}\check\psi_t(y^{-1}x)f(y)dy
=\int_{\Gp}\check\psi_t(y^{-1}x)\left(f(y)-f(x)\right)dy.
$$
Note that the order of convolution does not matter, since on the Fourier side, $\psi_t\big(|\nabla|\big)$ is just a scaling on each representation space, and the last step is because $\check\psi_t$ always has mean zero. Then
$$
|\psi_t\big(|\nabla|\big)f|_{L^\infty}
\lesssim_r |f|_{C^r}\int_{\Gp}\big|\check\psi_t(y^{-1}x)\big|\cdot\dist(x,y)^rdy,
$$
and the right-hand-side is controlled by $|f|_{C^r}t^{-r}$ by Lemma \ref{CutOffKer}. As for the opposite direction, if we know that $\big|\psi_t\big(|\nabla|\big)f\big|_{L^\infty}\lesssim t^{-r}$, then write
$$
\begin{aligned}
\int_1^\infty\left(\psi_t\big(|\nabla|\big)f(x)-\psi_t\big(|\nabla|\big)f(y)\right)\frac{dt}{t}
=\int_{t\leq\dist(x,y)^{-1}}+\int_{t>\dist(x,y)^{-1}}.
\end{aligned}
$$
When $x$ is close to $y$, the first integral is estimated, using Lemma \ref{CutOffKer} again, by
$$
\sum_{|\beta|=1}\int_1^{\dist(x,y)^{-1}}\left|X^\beta\psi_t\big(|\nabla|\big)f\right|_{L^\infty}\cdot \dist(x,y)\frac{dt}{t}
\,\lesssim_r\,
\sup_{t\geq1}t^r\left|\psi_t\big(|\nabla|\big)f\right|_{L^\infty}\cdot\dist(x,y)^r.
$$
The second integral is simply estimated by
$$
2\sup_{t\geq1}t^r\left|\psi_t\big(|\nabla|\big)f\right|_{L^\infty}\int_{t>\dist(x,y)^{-1}}t^{-r-1}dt
\,\simeq_r\,
\sup_{t\geq1}t^r\left|\psi_t\big(|\nabla|\big)f\right|_{L^\infty}\cdot\dist(x,y)^r.
$$
This proves the equivalence for $0<r<1$. As for the general case, we can simply apply the classical Schauder theory and interpolation for H\"{o}lder spaces; recall that interpolation of H\"{o}lder spaces results in the Zygmund space when the intermediate index is an integer.
\end{proof}

With the aid of Lemma \ref{Ker-Conv}, we immediately obtain
\begin{corollary}\label{LPZygCor}
Suppose $r\geq0$ and $f\in C^r_*$. Fix a basis $X_1,\cdots,X_n$ of $\mathfrak{g}$, and define $X^\alpha$ as in Proposition \ref{NormalOrder}. Then there holds, for $t\geq1$,
$$
\left|X^\alpha\phi_t\big(|\nabla|\big)f\right|_{L^\infty}
\lesssim_{r,\alpha}
\left\{
\begin{aligned}
    & t^{(|\alpha|-r)_+}|f|_{C^r_*} & \quad |\alpha|\neq r \\
    & \log(1+t)|f|_{C^r_*} & \quad |\alpha|=r
\end{aligned}
\right.
$$
where $s_+=\max(s,0)$. Furthermore, for $r>0$, $\left|f-\phi_t\big(|\nabla|\big)f\right|_{L^\infty}\lesssim t^{-r}|f|_{C^r_*}$ when $t\geq1$.
\end{corollary}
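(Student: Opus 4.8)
The plan is to run the whole argument through the continuous Littlewood--Paley decomposition (\ref{LPCont}), reducing both assertions to two inputs already in hand: the characterisation of $C^r_*$ in Proposition \ref{LPZyg}, which gives $\big|\psi_\tau(|\nabla|)f\big|_{L^\infty}\lesssim_r\tau^{-r}|f|_{C^r_*}$ for $\tau\ge1$ (and $\big|\phi(|\nabla|)f\big|_{L^\infty}\lesssim|f|_{C^r_*}$), and the kernel estimate of Lemma \ref{CutOffKer}. Write $\phi_t(|\nabla|)f=\phi(|\nabla|)f+\int_1^t\psi_\tau(|\nabla|)f\,\tfrac{d\tau}{\tau}$ and apply $X^\alpha$ under the integral sign; the point is to bound each dyadic piece $X^\alpha\psi_\tau(|\nabla|)f$ in $L^\infty$. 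For this I would use an ``$L^\infty$ Bernstein inequality'': fix an auxiliary $\tilde\psi\in C_0^\infty(0,\infty)$ with $\tilde\psi\equiv1$ on $\operatorname{supp}\psi$, so that $\psi_\tau(|\nabla|)=\tilde\psi_\tau(|\nabla|)\psi_\tau(|\nabla|)$. Since each left-invariant $X_i$ commutes with the right-convolution structure, $X^\alpha\big(\psi_\tau(|\nabla|)f\big)=\big(\psi_\tau(|\nabla|)f\big)*k^\alpha_\tau$, where $k^\alpha_\tau$ is the convolution kernel of the operator $X^\alpha\tilde\psi_\tau(|\nabla|)$, i.e.\ $X^\alpha$ applied to the rescaled kernel of $\tilde\psi_\tau(|\nabla|)$. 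Lemma \ref{CutOffKer}, applied with the \emph{fixed} function $\tilde\psi$ and the degree-$|\alpha|$ operator $X^\alpha$, gives $\|k^\alpha_\tau\|_{L^1}\lesssim_\alpha\tau^{|\alpha|}$ uniformly in $\tau\ge1$, and Young's inequality then yields $\big|X^\alpha\psi_\tau(|\nabla|)f\big|_{L^\infty}\lesssim_{r,\alpha}\tau^{|\alpha|-r}|f|_{C^r_*}$. The same argument with a fattened $\tilde\phi\equiv1$ on $\operatorname{supp}\phi$ gives $\big|X^\alpha\phi(|\nabla|)f\big|_{L^\infty}\lesssim_{r,\alpha}|f|_{C^r_*}$.

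Next I would integrate the dyadic bound over $\tau\in[1,t]$ against $d\tau/\tau$. The elementary integral $\int_1^t\tau^{|\alpha|-r-1}\,d\tau$ behaves in three ways: it is $\lesssim t^{|\alpha|-r}$ when $|\alpha|>r$, it is $\lesssim 1$ when $|\alpha|<r$, and it equals $\log t$ when $|\alpha|=r$. Adding the $\phi(|\nabla|)f$ contribution and using $t\ge1$ to absorb the constant into the nonnegative power $t^{(|\alpha|-r)_+}$, one gets $\big|X^\alpha\phi_t(|\nabla|)f\big|_{L^\infty}\lesssim_{r,\alpha}t^{(|\alpha|-r)_+}|f|_{C^r_*}$ when $|\alpha|\ne r$, and $\lesssim_{r,\alpha}(1+\log t)|f|_{C^r_*}\lesssim\log(1+t)|f|_{C^r_*}$ when $|\alpha|=r$ (using $1+\log t\lesssim\log(1+t)$ for $t\ge1$). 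This is exactly the stated estimate.

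For the last assertion I would take $r>0$, so that $\sum_j2^{-jr}<\infty$ and the tail of (\ref{LPCont}) converges absolutely in $L^\infty$; then $f=\phi_t(|\nabla|)f+\int_t^\infty\psi_\tau(|\nabla|)f\,\tfrac{d\tau}{\tau}$ holds pointwise, and
\[
\big|f-\phi_t(|\nabla|)f\big|_{L^\infty}\le\int_t^\infty\big|\psi_\tau(|\nabla|)f\big|_{L^\infty}\frac{d\tau}{\tau}\lesssim_r|f|_{C^r_*}\int_t^\infty\tau^{-r-1}\,d\tau\lesssim_r t^{-r}|f|_{C^r_*},
\]
which is the claim; convergence of the last integral is where $r>0$ enters.

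All of this is routine once the right mechanism is isolated, and the one point that genuinely needs care --- which I regard as the crux --- is the $L^\infty$ Bernstein step. One must not write $X^\alpha\psi_\tau(|\nabla|)f=\psi_\tau(|\nabla|)(X^\alpha f)$, which would demand $f\in C^{r+|\alpha|}_*$; instead all derivatives have to be thrown onto the \emph{fixed} rescaled kernel of $\tilde\psi_\tau(|\nabla|)$, so that Lemma \ref{CutOffKer} applies with a $\tau$-independent constant while the gain $\tau^{-r}$ is still drawn from Proposition \ref{LPZyg}. One also has to record carefully that $X^\alpha$ commutes with right convolution precisely because the $X_i$ are left-invariant on $\Gp$; with that in place, the remainder is the bookkeeping of the three integral regimes above.
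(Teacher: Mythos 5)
Your proof is correct and uses exactly the ingredients the paper points to (Proposition \ref{LPZyg} and Lemma \ref{CutOffKer}); the paper gives no written proof beyond ``we immediately obtain,'' and your argument --- fattened cut-off, left-invariance to commute $X^\alpha$ past the right convolution, kernel $L^1$ bound, Young, then the three integration regimes --- is the standard way to make that immediacy explicit.
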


\begin{remark}\label{Zygmund<0}
Just as in the Euclidean case, we can \emph{define} Zygmund spaces with index $<0$ by Littlewood-Paley characterization: we say that $f\in C^{r}_*$ for $r<0$, iff
$$
|f|_{C^{-r}_*}
:=\sup|\phi(|\nabla|)f|
+\sup_{t\geq1}t^{r}\left|\psi_t\big(|\nabla|\big)f\right|_{L^\infty}<\infty.
$$
This gives a united definition of Zygmund spaces on $\Gp$. In particular, $|\nabla|^s$ is continuous from $C^{r+s}_*$ to $C^r_*$ for all $r\in\mathbb{R}$. Furthermore, for $r<0$, there holds a growth estimate
$$
\begin{aligned}
\left|\phi_t\big(|\nabla|\big)f\right|
\leq \sup|\phi(|\nabla|)f|+\int_1^t\left|\psi_\tau\big(|\nabla|\big)f\right|d\tau
\lesssim t^{-r}|f|_{C^r_*}.
\end{aligned}
$$
\end{remark}

\begin{remark}\label{VectZyg}
These results easily generalize to vector-valued functions. If $r>0$, $V$ is a finite dimensional normed space, $f\in C^r_*(\Gp;V)$, then the norm $|f|_{C^r_*;V}$ is defined as
$$
\sup_{v^*\in V^*,v^*\neq0}\frac{|\langle v^*,f\rangle|_{C^r_*}}{|v^*|_{V^*}}.
$$
We have e.g.
$$
|f|_{C^r_*;V}\simeq\sup|f|_{L^\infty;V}+\sup_{x,y\in\Gp}\frac{|f(x)-f(y)|_V}{\dist(x,y)^r},\quad 0<r<1,
$$
where $|f|_{L^\infty;V}=\sup|f|_V$, and the equivalence is \emph{independent} from $V$. Repeating the same argument as in the proof of Proposition \ref{LPZyg}, just with $\langle v^*,f\rangle$ in place of $f$, then taking supremum over $v^*\in V^*$, we arrive at
$$
|f|_{C^r_*;V}\simeq_r |\phi(|\nabla|)f|_{L^\infty;V}+\sup_{t\geq1}t^r\left|\psi_t\big(|\nabla|\big)f\right|_{L^\infty;V},
$$
$$
\left|X^\alpha\phi_t\big(|\nabla|\big)f\right|_{L^\infty;V}
\lesssim_{r,\alpha}
\left\{
\begin{aligned}
    & t^{(|\alpha|-r)_+}|f|_{C^r_*;V} & \quad |\alpha|\neq r \\
    & \log(1+t)|f|_{C^r_*;V} & \quad |\alpha|=r
\end{aligned}
\right.
$$
and $\left|f-\phi_t\big(|\nabla|\big)f\right|_{L^\infty;V}\lesssim t^{-r}|f|_{C^r_*;V}$ for $t\geq1$. Results for Zygmund spaces of index $\leq0$ in Remark \ref{Zygmund<0} can be generalized to vector-valued functions similarly. All the implicit constants above \emph{do not depend on} $V$.
\end{remark}

It is quite legitimate to refer the decomposition (\ref{LPCont}) or (\ref{LPDisc}) as \emph{geometric Littlewood-Paley decomposition}. In \cite{KR2006}, Klainerman and Rodnianski commented that, unlike Euclidean case, the geometric Littlewood-Paley decomposition on a general manifold does not exclude high-low interactions, although this drawback does not affect applications within their scope. As Berti-Procesi \cite{BP2011} pointed out, this is because that on a general compact manifold, the product of Laplacian eigenfunctions does not enjoy good spectral property (unlike the Fourier modes $e^{ix\xi}$), and this might cause transmission of energy between arbitrary modes for nonlinear dispersive systems. However, when working on a compact Lie group, this issue is resolved by spectral localization properties, Proposition \ref{SpecPrd0} and Corollary \ref{SpecPrd}.

\subsection{(1,1) Pseudo-differential Operator}
Even in the Euclidean case, the symbol class $\mathscr{S}^m_{1,1}(\mathbb{R}^n)$ exhibits exotic properties compared to smaller classes $\mathscr{S}^m_{1,\delta}(\mathbb{R}^n)$ with $\delta<1$, and ``must remain forbidden fruit" as commented by Stein \cite{SteMur1993} (Subsection 1.2., Chapter 7). We thus cannot expect a satisfactory calculus for general $\mathscr{S}^m_{1,1}(\Gp)$ symbols. But in analogy to the Euclidean case, a series of theorems and constructions still remain valid for $\mathscr{S}^m_{1,1}(\Gp)$.

\begin{theorem}[Stein]\label{SteinTheorem}
Suppose $a\in \mathscr{S}^m_{1,1}(\Gp)$. Then for $s>0$, $\Op(a)$ is a bounded linear operator from $H^{s+m}$ to $H^s$. The effective version reads
$$
\|\Op(a)f\|_{H^s}\leq C_{s;q}\mathbf{M}^m_{[s]+1,n+2;q}(a)\|f\|_{H^{s+m}},
$$
where $q$ is a strongly RT-admissible tuple, and the norm $\mathbf{M}^m_{k,l;q}$ is defined in (\ref{RhoDeltaNorm}).
\end{theorem}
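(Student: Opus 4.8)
The plan is to follow the classical proof of the $H^s$-boundedness of forbidden ($\mathscr S^0_{1,1}$) symbols on $\mathbb R^n$ (see \cite{SteMur1993}, \cite{Hormander1997}), with three facts specific to $\Gp$ taking over the role of elementary Fourier arithmetic: the spectral localization of products (Corollary \ref{SpecPrd}), the $L^1$-kernel bounds for spectral cut-offs (Lemma \ref{CutOffKer}, Theorem \ref{Multm}), and the Weyl asymptotics (Lemma \ref{Weyl}). First I would reduce to $m=0$: right-composing $\Op(a)$ with the Fourier multiplier $(1-\Delta)^{-m/2}\in\Op\mathscr S^{-m}_{1,0}$ replaces $a$ by $a(x,\xi)\size[\xi]^{-m}\in\mathscr S^0_{1,1}$, whose $\mathbf M$-norms are controlled by those of $a$ (the differences falling on $\size[\xi]^{-m}$ being handled by Theorem \ref{Multm}), while $(1-\Delta)^{\pm m/2}$ are isomorphisms of the relevant Sobolev spaces. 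Then, decomposing the symbol in the $x$-variable via (\ref{LPDisc}), $a=\sum_{k\geq-1}a_k$ with $a_k$ of $x$-frequency $\sim2^k$; since $\Df_q$ commutes with left-invariant vector fields it commutes with the cut-offs, so $\Df^\beta_q a_k=\vartheta_k(|\nabla_x|)\Df^\beta_q a$, and spending $N$ extra $x$-derivatives on the spectral gap of $\vartheta_k$ records, for every integer $N\geq0$,
\[
\big\|X^\alpha_x\Df^\beta_q a_k(x,\xi)\big\|\lesssim_N \size[\xi]^{|\alpha|-|\beta|}\,\min\!\big(1,(\size[\xi]2^{-k})^N\big)\,\mathbf M^0_{|\alpha|+N,|\beta|;q}(a).
\]

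Next I would split the action according to output frequency. Writing $f=\sum_l\Delta_l f$ and noting that $a_k(x,\xi)\,\xi(x)$ is, entrywise in $x$, a product of an $x$-frequency-$\sim2^k$ factor with the matrix coefficients of $\xi$, Corollary \ref{SpecPrd} confines $\Op(a_k)\Delta_l f$ to Fourier support in $S[c|2^k-2^l|-c^{-1},\,c^{-1}(2^k+2^l)]$. Fixing a large $N_0=N_0(c)$, one then writes $\Op(a)f=A_1f+A_2f+A_3f$ with $A_1f=\sum_k\Op(a_k)\Pi_{<2^{k-N_0}}f$ (the $k$-th summand localized at frequency $\sim2^k$), $A_2f=\sum_k\Op(a_k)\Pi_{[2^{k-N_0},2^{k+N_0}]}f$ (the $k$-th summand localized at frequencies $\lesssim2^k$, possibly arbitrarily low), and $A_3f=\sum_k\Op(a_k)\Pi_{>2^{k+N_0}}f$ (the summand with input frequency $\sim2^l$ localized at frequency $\sim2^l$); here $\Pi_{\mathcal I}$ is the spectral projection of $-\Delta$ onto $\{|\xi|\in\mathcal I\}$.

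I would then estimate the three pieces, writing $\mathbf M:=\mathbf M^0_{[s]+1,n+2;q}(a)$ and $\|\Pi_{\sim2^l}f\|_{L^2}=2^{-ls}c_l$ with $(c_l)\in\ell^2$, $\|c\|_{\ell^2}\simeq\|f\|_{H^s}$. On each dyadic shell $\{|\xi|\sim2^l\}$ the displayed bound (with $N=0$) exhibits $a_k$ — and $\sum_{k<l-N_0}a_k=\phi_{2^{l-N_0}}(|\nabla_x|)a$ for the $A_3$-tail — as a Coifman–Meyer symbol ``at scale $2^l$''; a Calderón–Vaillancourt-type estimate, obtained by converting these bounds into $L^1$-kernel bounds via Lemma \ref{CutOffKer} together with $\sum_\xi d_\xi^2\size[\xi]^{-(n+2)}<\infty$ (Lemma \ref{Weyl}), gives $L^2\to L^2$ bounds for the quantizations with constant $\lesssim\mathbf M$, uniformly in $l$. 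For $A_3$ the summands are frequency-localized, so Proposition \ref{LPHs} yields $\|A_3f\|_{H^s}\lesssim\mathbf M\|f\|_{H^s}$ for \emph{every} $s$. For $A_2$ the $k$-th summand satisfies $\|\cdot\|_{L^2}\lesssim\mathbf M\|\Pi_{\sim2^k}f\|_{L^2}$ and is supported at frequencies $\lesssim2^k$, so $\|\Delta_j(A_2f)\|_{L^2}\lesssim\mathbf M\sum_{k\gtrsim j}2^{-ks}c_k$ and $\|A_2f\|_{H^s}^2\lesssim\mathbf M^2\sum_j\big(2^{js}\sum_{k\gtrsim j}2^{-ks}c_k\big)^2$, which the Schur test bounds by $\mathbf M^2\|c\|_{\ell^2}^2$ \emph{precisely because $s>0$} (both the row and the column sums of the kernel $2^{js-ks}\mathbf{1}_{k\gtrsim j}$ are finite only then); this is the sole use of the hypothesis, and it mirrors the genuine failure of $\mathscr S^0_{1,1}$-boundedness for $s\leq0$, high--high interactions depositing energy at low frequency. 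Finally, for the forbidden piece $A_1$ the $k$-th summand is localized at frequency $\sim2^k$, so $\|A_1f\|_{H^s}^2\lesssim\sum_k2^{2ks}\big(\sum_{l<k}\|\Op(a_k)\Pi_{\sim2^l}f\|_{L^2}\big)^2$; applying the displayed bound with $N=[s]+1$ on the shell $\{|\xi|\sim2^l\}$ shows $a_k$ there is a scale-$2^l$ Coifman–Meyer symbol of size $\lesssim2^{(l-k)([s]+1)}\mathbf M$, hence $\|A_1f\|_{H^s}^2\lesssim\mathbf M^2\sum_k\big(\sum_{l<k}2^{-(k-l)([s]+1-s)}c_l\big)^2$, summable by Schur since $[s]+1-s>0$ — this is where the $[s]+1$ $x$-derivatives in the stated norm are consumed. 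Summing and using density of $C^\infty(\Gp)$ concludes.

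The hard part will be the phrase ``uniform-in-$l$ Calderón–Vaillancourt-type estimate'': one must establish $L^2$-boundedness of $\Op(a_k)|_{\{|\xi|\sim2^l\}}$ with a constant depending on only finitely many $\mathbf M$-norms and not on the scale, and this has to be carried out entirely within the difference-operator calculus, with $\Df_q$ in place of $\partial_\xi$. Here the two obstructions stressed in the introduction are active: $\mathrm{End}(\Hh[\xi])$ is non-commutative, and $\Df^{\alpha+\beta}_q\neq\Df^\alpha_q\Df^\beta_q$, so the bookkeeping must go through the Leibniz-type identity (\ref{Leibniz}) for the fundamental tuple and through Lemma \ref{DiffVanish} to keep the Fourier supports of iterated differences under control while passing from symbol bounds to $L^1$-kernel bounds via Weyl's law. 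A secondary check is that, once $N_0$ is fixed large in terms of $c$, Corollary \ref{SpecPrd} genuinely confines the outputs of $A_1,A_2,A_3$ to the claimed dyadic shells, so that $\|\cdot\|_{H^s}$ really does decouple scale by scale through Proposition \ref{LPHs}; the final matching of the derivative and difference counts with the stated $\mathbf M^0_{[s]+1,n+2;q}$ is then a routine but tedious verification.
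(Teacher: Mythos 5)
Your strategy is correct but genuinely diverges from the paper's. The paper decomposes \emph{only} in the $\xi$-variable ($a_j(x,\xi)=a(x,\xi)\vartheta_j(|\xi|)$), proves a weighted-$L^1$ bound on the right-convolution kernel $\mathcal{K}_j$ via Cauchy--Schwarz for the trace, Lemma~\ref{Weyl} and Theorem~\ref{Multm}, obtains $\|X^\alpha A_j u\|_{L^2}\lesssim \mathbf{M}^m_{|\alpha|,n+2;q}(a)\,2^{j(m+|\alpha|)}\|u\|_{L^2}$, and then feeds this into the summation Lemma~\ref{H^ss>0}, which is where $s>0$ is silently consumed. You instead reduce to $m=0$, decompose $a$ in the \emph{$x$-frequency}, and run the classical Coifman--Meyer/Bony trichotomy $A_1+A_2+A_3$ according to whether the symbol's $x$-frequency is much larger than, comparable to, or much smaller than the input frequency. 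The paper's route is shorter because the trichotomy is absorbed into the factor $2^{j|\alpha|}$ and then into Lemma~\ref{H^ss>0}; yours is longer but more transparent: it makes visible that $s>0$ is spent \emph{only} on the high--high$\to$low piece $A_2$ (exactly the cascade that $\Sigma^m_\delta$ will outlaw in the next subsection), that the $[s]+1$ $x$-derivatives are spent \emph{only} on the $A_1$ piece (high-$x$-frequency acting on low input), and that the para-differential piece $A_3$ is bounded on $H^s$ for \emph{every} $s$ --- so your proof already contains, as its last third, the heart of Theorem~\ref{Stein'}. The ``uniform-in-$l$ Calder\'on--Vaillancourt step'' you flag as the hard point is genuine but not a gap: it is precisely the paper's kernel estimate (the chain from inequality (\ref{SteinIneq1}) through (\ref{Aj})), carried out with the fundamental tuple $Q$, the Leibniz identity (\ref{Leibniz}), Fischer's Lemma~\ref{Fisqq'Lem}, and the Weyl count $\sum_\xi d_\xi^2\size[\xi]^{-(n+2)}<\infty$; that calculation transfers verbatim to your $\Op(a_k)|_{\{|\xi|\sim 2^l\}}$ after you note that $\Df_{Q}$ acts only in $\xi$ and hence commutes with the $x$-frequency cut-offs. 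One small bookkeeping point to keep in mind when you flesh this out: your reduction to $m=0$ needs $\mathbf{M}^0_{k,l;q}\big(a(\cdot,\cdot)\size[\cdot]^{-m}\big)\lesssim\mathbf{M}^m_{k,l;q}(a)$, which does follow from (\ref{Leibniz}) and Theorem~\ref{Multm} but must be checked with the fundamental tuple rather than a generic $q$, since the Leibniz property is not automatic for all strongly RT-admissible tuples.
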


Before we proceed to the proof, let us first state several lemmas that will be used later.

\begin{lemma}[Taylor \cite{Taylor2000}, Lemma 4.2 of Chapter 2]\label{l2Seq}
For $s<1$, there holds
$$
\sum_{j\in\mathbb{Z}}2^{2sj}\left|\sum_{k:k<j}2^{k-j}a_k\right|^2
\leq C_s\sum_{k\in\mathbb{Z}}2^{2sk}|a_k|^2.
$$
For $s>0$, there holds
$$
\sum_{j\in\mathbb{Z}}2^{2sj}\left|\sum_{k:k\geq j}a_k\right|^2\leq C_s\sum_{k\in\mathbb{Z}}2^{2sk}|a_k|^2.
$$
\end{lemma}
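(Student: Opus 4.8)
The plan is to obtain both estimates from the discrete Young inequality $\ell^1 * \ell^2 \hookrightarrow \ell^2$, after stripping the weight. Put $c_k := 2^{sk}a_k$, so that the right-hand side of each inequality equals $C_s\|c\|_{\ell^2}^2$, and it suffices to bound the left-hand side by a constant multiple of $\|c\|_{\ell^2}^2$.

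For the first inequality I would reindex the inner sum by $l=j-k\ge1$. Multiplying through by $2^{sj}$ gives
$$2^{sj}\sum_{k<j}2^{k-j}a_k=\sum_{l\ge1}2^{-l}\,2^{sj}a_{j-l}=\sum_{l\ge1}2^{-l(1-s)}c_{j-l}=(\kappa*c)(j),$$
where $\kappa_l=2^{-l(1-s)}$ for $l\ge1$ and $\kappa_l=0$ otherwise. Since $s<1$ we have $\|\kappa\|_{\ell^1}=\sum_{l\ge1}2^{-l(1-s)}=(2^{1-s}-1)^{-1}<\infty$, so Young's inequality yields $\|\kappa*c\|_{\ell^2}\le\|\kappa\|_{\ell^1}\|c\|_{\ell^2}$; squaring gives the claim with $C_s=(2^{1-s}-1)^{-2}$.

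For the second inequality I would instead reindex by $l=k-j\ge0$, so that
$$2^{sj}\sum_{k\ge j}a_k=\sum_{l\ge0}2^{sj}a_{j+l}=\sum_{l\ge0}2^{-sl}c_{j+l}.$$
This is the value at $j$ of the correlation of $c$ against the kernel $\tilde\kappa_l=2^{-sl}$, $l\ge0$; the $\ell^2\to\ell^2$ operator bound for such a correlation is identical to that for convolution, namely $\|\tilde\kappa\|_{\ell^1}=\sum_{l\ge0}2^{-sl}=(1-2^{-s})^{-1}$, which is finite precisely because $s>0$. This closes the second estimate with $C_s=(1-2^{-s})^{-2}$.

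There is no serious obstacle here; the lemma is essentially a bookkeeping exercise. If one prefers not to cite Young's inequality, each display can be estimated directly: write $2^{-l(1-s)}=2^{-l(1-s)/2}\cdot 2^{-l(1-s)/2}$, apply Cauchy--Schwarz in $l$ to get $|(\kappa*c)(j)|^2\le\|\kappa\|_{\ell^1}\sum_{l\ge1}2^{-l(1-s)}|c_{j-l}|^2$, sum in $j$, and exchange the two nonnegative sums by Tonelli. The only point requiring attention is that the geometric decay rate of the kernel is strictly positive, which is exactly where the hypotheses $s<1$ (first inequality) and $s>0$ (second inequality) are used; at the endpoints $s=1$ and $s=0$ the kernel fails to be summable and the estimate genuinely breaks down.
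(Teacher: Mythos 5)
Your proof is correct. The paper does not actually prove this lemma — it cites Taylor \cite{Taylor2000}, Lemma 4.2 of Chapter 2, without reproducing the argument — so there is no internal proof to compare against. Your reduction to a discrete convolution against a one-sided geometric kernel, followed by Young's inequality $\ell^1 * \ell^2 \hookrightarrow \ell^2$ (or, equivalently, Cauchy--Schwarz plus Tonelli), is the standard argument for precisely this class of weighted $\ell^2$ sums and is the same mechanism Taylor uses; the reindexing $l=j-k$ (resp.\ $l=k-j$), the identification of the kernel $2^{-l(1-s)}$ (resp.\ $2^{-sl}$), and the summability conditions $s<1$ (resp.\ $s>0$) are all correct, as are the explicit constants.
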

As a corollary, we also have the following lemma:
\begin{lemma}\label{H^ss>0}
Let $s>0$, $l\geq s+1$ be an integer. Fix a basis $X_1,\cdots,X_n$ of $\mathfrak{g}$, and define $X^\alpha$ as in Proposition \ref{NormalOrder}. There is a constant $C=C(s,l)$ with the following properties. If $\{f_k\}_{k\geq0}$ is a sequence in $H^l(\Gp)$, such that for any multi-index $\alpha$ with $|\alpha|\leq l$, there holds
$$
\|X^\alpha f_k\|_{L^2}\leq 2^{k(|\alpha|-s)}c_k,\quad (c_k)\in \ell^2(\mathbb{N}),
$$
then in fact $\sum_k f_k\in H^s(\Gp)$, and 
$$
\left\|\sum_{k\geq0}f_k\right\|_{H^s}^2
\leq C\sum_{k\geq0}c_k^2.
$$
\end{lemma}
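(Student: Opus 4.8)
The plan is to deduce membership in $H^s$ from the Littlewood--Paley characterization of Proposition~\ref{LPHs}, applied with a fixed non-vanishing $h\in C_0^\infty(0,\infty)$, and to estimate the dyadic pieces of $f:=\sum_{k\ge0}f_k$ by splitting the $k$-sum at $k=j$. First I would note that the hypothesis with $\alpha=0$ gives $\|f_k\|_{L^2}\le 2^{-ks}c_k$, so, since $s>0$, the series converges in $L^2(\Gp)$ to a well-defined $f\in\mathcal D'(\Gp)$ with $\|\phi(|\nabla|)f\|_{L^2}\le\sum_k\|f_k\|_{L^2}\lesssim_s\|(c_k)\|_{\ell^2}$; this handles the low-frequency term of Proposition~\ref{LPHs}.

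For a dyadic piece I would fix $j\ge0$ and write $h_{2^j}(|\nabla|)f=\sum_{k\le j}h_{2^j}(|\nabla|)f_k+\sum_{k>j}h_{2^j}(|\nabla|)f_k$, the series converging in $L^2$ since $h_{2^j}(|\nabla|)$ is bounded on $L^2$. For the high-index tail I would use only the crude bound $\|h_{2^j}(|\nabla|)f_k\|_{L^2}\le\|h\|_{L^\infty}\|f_k\|_{L^2}\lesssim 2^{-ks}c_k$. For the low-index part I would exploit that $h$ is supported away from the origin: on the spectral support of $h_{2^j}(|\nabla|)$ one has $\size[\xi]\gtrsim 2^j$, whence $|h(|\xi|/2^j)|^2\lesssim 2^{-2jl}\size[\xi]^{2l}$, and Plancherel (Theorem~\ref{PeterWeyl}) yields the derivative-gaining bound $\|h_{2^j}(|\nabla|)f_k\|_{L^2}\lesssim 2^{-jl}\|f_k\|_{H^l}$. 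Using the equivalence $\|g\|_{H^l}^2\simeq\sum_{|\alpha|\le l}\|X^\alpha g\|_{L^2}^2$ on $\Gp$ (a consequence of the ellipticity of $\Delta=\sum_i X_i^2$), the hypothesis, and $k\ge0$ to dominate all orders $|\alpha|\le l$ by $l$, this becomes $\|h_{2^j}(|\nabla|)f_k\|_{L^2}\lesssim 2^{-jl}2^{k(l-s)}c_k$ for $k\le j$.

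Combining the two bounds and multiplying by $2^{sj}$, the low-index contribution to $2^{sj}\|h_{2^j}(|\nabla|)f\|_{L^2}$ becomes $\lesssim\sum_{k\le j}2^{-(j-k)(l-s)}c_k$ and the high-index contribution $\lesssim\sum_{k>j}2^{-(k-j)s}c_k$. Since $l-s\ge1$ and $s>0$, the kernels $2^{-m(l-s)}\mathbf{1}_{m\ge0}$ and $2^{-ms}\mathbf{1}_{m\ge1}$ are summable, so squaring and summing over $j\ge0$ gives $\sum_j 2^{2sj}\|h_{2^j}(|\nabla|)f\|_{L^2}^2\lesssim\sum_k c_k^2$ --- by Schur's test, or, if one prefers, by invoking Lemma~\ref{l2Seq}: the high-index part is its second inequality with $a_k=2^{-ks}c_k$, and the low-index part is its first inequality (with growth exponent $0<1$) after the trivial estimate $2^{-(j-k)(l-s)}\le 2^{k-j}$. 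Together with the low-frequency bound and Proposition~\ref{LPHs}, this would give $f\in H^s(\Gp)$ with $\|f\|_{H^s}^2\le C(s,l)\sum_k c_k^2$.

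The argument is essentially bookkeeping, so no serious obstacle arises. The one point that genuinely requires care is the order of differentiation used in the derivative-gaining step: it must be strictly larger than $s$ so that the resulting geometric series in $j$ converges, which is precisely why the hypothesis demands $l\ge s+1$, and one must replace the individually assumed bounds on $\|X^\alpha f_k\|_{L^2}$ by the single bound $\|f_k\|_{H^l}\lesssim 2^{k(l-s)}c_k$ through the equivalence of the $H^l$ norm with the sum of $L^2$ norms of derivatives of order $\le l$ on a compact Lie group. As the surrounding text already signals, the lemma is just Lemma~\ref{l2Seq} dressed in Littlewood--Paley clothing.
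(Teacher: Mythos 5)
Your proof is correct and follows essentially the same route as the paper: split the dyadic index sum at $k=j$, use the crude $L^2$ bound on the tail, use a Plancherel/Bernstein-type derivative-gaining bound on the head (your direct spectral estimate $|h(|\xi|/2^j)|\lesssim 2^{-jl}\size[\xi]^l$ is exactly what the paper's invocation of Bernstein amounts to), and close with Lemma~\ref{l2Seq}. The only cosmetic difference is that you go through the full equivalence $\|g\|_{H^l}\simeq\sum_{|\alpha|\le l}\|X^\alpha g\|_{L^2}$ whereas the paper keeps only the top-order $|\alpha|=l$ terms after dyadic localization; both are routine and neither changes the argument.
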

\begin{proof}
We will use the discrete Littlewood-Paley decomposition (\ref{LPDisc}) to conduct the proof. If $j\leq k$, then by taking $\alpha=0$, obviously
$$
\|\vartheta_j\big(|\nabla|\big)f_k\|_{L^2}\leq\|f_k\|_{L^2}\leq 2^{-ks}c_k.
$$
If $j>k$, then by Bernstein's inequality (Proposition \ref{Bernstein}),
$$
\begin{aligned}
\|\vartheta_j\big(|\nabla|\big)f_k\|_{L^2}
&\leq C{2^{-lj}}\|\vartheta_j\big(|\nabla|\big)f_k\|_{H^l}\\
&\simeq C2^{-lj}\sum_{|\alpha|=l}\|X^\alpha f_k\|_{L^2}\\
&\leq C2^{-ks}2^{l(k-j)}c_k.
\end{aligned}
$$
Thus, for $f=\sum_kf_k$, we have
$$
2^{js}\|\vartheta_j\big(|\nabla|\big) f\|_{L^2}
\leq C\sum_{k:k<j}2^{(l-s)(k-j)}c_k
+\sum_{k:k\geq j}2^{(j-k)s}c_k.
$$
Using Lemma \ref{l2Seq}, we find that $\left\{2^{js}\|\vartheta_j\big(|\nabla|\big) f\|_{L^2}\right\}_{j\geq0}$ is an $\ell^2$ sequence. Thus $f\in H^s$.
\end{proof}

\begin{proof}[Proof of Stein's Theorem]
The proof here is very much like the one in \cite{Met2008}, Section 4.3. Writing $a_0(x,\xi)=a(x,\xi)\phi(|\xi|)$, consider the dyadic decomposition
$$
a(x,\xi)=a_0(x,\xi)+\sum_{j=1}^\infty a_j(x,\xi),
\quad\text{with}\quad 
a_j(x,\xi)=a(x,\xi)\vartheta_j(|\xi|).
$$
Define $A_j=\Op a_j(x,\xi)$. By formula (\ref{Op(a)})-(\ref{Ker-Conv}), it follows that $A_j$ is represented as a convergent integral operator
$$
A_ju(x)=\int_{\Gp}u(y)\mathcal{K}_j(x,y^{-1}x)dy,
$$
where the kernel
$$
\mathcal{K}_j(x,y)=\left(a_j(x,\cdot)\right)^\vee(y)
=\sum_{\xi\in\DuGp}d_\xi\Tr\left(a_j(x,\xi)\cdot\xi(y)\right).
$$
We fix $Q$ to be the fundamental tuple, as defined in (\ref{QFund}). For any multi-index $\beta$, by definition of RT difference operator,
$$
Q^\beta(y)\mathcal{K}_j(x,y)
=\sum_{\xi\in\DuGp}d_\xi\Tr\left((\Df_{Q^\beta,\xi}a_j)(x,\xi)\cdot\xi(y)\right).
$$
Using the Cauchy-Schwartz inequality for trace inner product (note that if $A$ is an $n\times n$ matrix then $\llbracket A\rrbracket^2\leq n\|A\|^2$), Weyl-type Lemma \ref{Weyl}, the Leibniz type property (\ref{Leibniz}) and Theorem \ref{Multm}, we obtain
\begin{equation}\label{SteinIneq1}
\begin{aligned}
\left|Q^\beta(y)\mathcal{K}_j(x,y)\right|
&\lesssim
\sum_{\xi\in \mathfrak{S}[0,2^{j+1}]}d_\xi^2\cdot\mathbf{M}^m_{0,|\beta|;Q}(a)\size[\xi]^{m-|\beta|}\\
&\lesssim 2^{jn}\cdot\mathbf{M}^m_{0,|\beta|;Q}(a)2^{j(m-|\beta|)}.
\end{aligned}
\end{equation}
Summing over $\beta$ whose whose length is $\leq L$ for $L\in\mathbb{N}$, using that the only common zero of the $Q_i$'s is the identity element, then Fischer's lemma \ref{Fisqq'Lem} for any other strongly RT-admissible tuple $q=(q_i)_{i=1}^M$, we find
\begin{equation}\label{SteinIneq2}
\begin{aligned}
\big(1+|2^j\dist(y,e)|^2\big)^{L}\cdot|\mathcal{K}_j(x,y)|^2
&\lesssim\left[\mathbf{M}^m_{0,L;Q}(a)\right]^22^{2jm+2jn}\\
&\simeq_q\left[\mathbf{M}^m_{0,L;q}(a)\right]^22^{2jm+2jn}.
\end{aligned}
\end{equation}
Hence we can estimate the weighted $L^2$ norm of the kernel:
$$
\begin{aligned}
\int_{\Gp}\big(1+|2^j\dist(x,y)|^2&\big)^{[n/2]+1}|\mathcal{K}_j(x,y^{-1}x)|^2dy\\
&\leq \int_{\Gp}\frac{\big(1+|2^j\dist(x,y)|^2\big)^{n+2}}{\big(1+|2^j\dist(x,y)|^2\big)^{n+1-[n/2]}}\cdot|\mathcal{K}_j(x,y^{-1}x)|^2dy\\
&\lesssim\left[\mathbf{M}^m_{0,n+2;q}(a)\right]^22^{2jm+jn}
\cdot 2^{jn}\int_{\Gp}\frac{dy}{\big(1+|2^j\dist(x,y)|^2\big)^{n+1-[n/2]}}\\
&\lesssim\left[\mathbf{M}^m_{0,n+2;q}(a)\right]^22^{2jm+jn}.
\end{aligned}
$$
Here we estimate the last integral by splitting it into two parts $\dist(x,y)<R$ and $\dist(x,y)\geq R$, where $R$ is the injective radius of $\Gp$; the first part can then be estimated as an Euclidean integral, while the second is just bounded by $2^{-nj}$, and they both absorbs the factor $2^{jn}$.

We then proceed to $X^\alpha\circ A_j$, whose integral kernel will be denoted as $\mathcal{K}_{j,\alpha}(x,y)$, which is just $\mathcal{K}_j(x,y)$ when $\alpha=0$. Using the quantization formulas (\ref{Op(a)})-(\ref{Op-Symbol}), we find that, with $\sigma_{X_i}(x,\xi)=\xi^*(x)\cdot(X_i\xi)(x)$ being the symbol of $X_i$ (which is of class $S^1_{1,0}(\Gp)$ by Fischer's theorem), the symbol of $X_i\circ A_j$ is 
$$
(X_{i;x}a_j)(x,\xi)+\sigma_{X_i}(x,\xi)\cdot a_j(x,\xi)
\in \mathscr{S}^{m+1}_{1,1}(\Gp).
$$
By induction on $|\alpha|$, we obtain 
$$
\int_{\Gp}\big(1+|2^j\dist(x,y)|^2\big)^{[n/2]+1}|\mathcal{K}_{j,\alpha}(x,y^{-1}x)|^2dy
\lesssim\left[\mathbf{M}^m_{|\alpha|,n+2;q}(a)\right]^22^{2j(m+|\alpha|)+jn}.
$$
Applying the Cauchy-Schwarz inequality to $\int \mathcal{K}_{j,\alpha}(x,y^{-1}x)u(y)dy$, we find that for any $u\in L^2$,
$$
\begin{aligned}
\int_{\Gp}|(X^\alpha \circ A_j)u(x)|^2dx
&\lesssim\left[\mathbf{M}^m_{|\alpha|,n+2;q}(a)\right]^22^{2j(m+|\alpha|)+jn}
\int_{\Gp}\int_{\Gp}\frac{|u(y)|^2}{\big(1+|2^j\dist(x,y)|^2\big)^{[n/2]+1}}dydx\\
&=C\left[\mathbf{M}^m_{|\alpha|,n+2;q}(a)\right]^22^{2j(m+|\alpha|)}\|u\|_{L^2}^2,
\end{aligned}
$$
or 
\begin{equation}\label{Aj}
\|X^\alpha\circ A_ju\|_{L^2}\lesssim_\alpha \mathbf{M}^m_{|\alpha|,n+2;q}(a) 2^{j(m+|\alpha|)}\|u\|_{L^2}.
\end{equation}

Now substitute $u$ by any Littlewood-Paley building block $\vartheta_k\big(|\nabla|\big) f$ of $f\in \mathcal{D}(\Gp)$ in (\ref{Aj}). Since the support of $a_j(x,\xi)$ with respect to $\xi$ is contained in $\mathfrak{S}[0,2^{j+1}]$, we have
$$
A_jf=\sum_{k:|k-j|\leq3}A_j\vartheta_k\big(|\nabla|\big)f.
$$
Applying (\ref{Aj}) with $u=\vartheta_k\big(|\nabla|\big) f$, we have
$$
\begin{aligned}
\|X^\alpha A_jf\|_{L^2}
&\lesssim_\alpha \mathbf{M}^m_{|\alpha|,n+2;q}(a) 2^{j(m+|\alpha|)}\sum_{k:|k-j|\leq3}\big\|\vartheta_k\big(|\nabla|\big) f\big\|_{L^2}\\
&\lesssim_\alpha \mathbf{M}^m_{|\alpha|,n+2;q}(a) 2^{j(|\alpha|-s)}\sum_{k:|k-j|\leq3}2^{k(s+m)}\big\|\vartheta_k\big(|\nabla|\big) f\big\|_{L^2}.
\end{aligned}
$$
Since $\sum_{k=1}^\infty2^{2k(s+m)}\big\|\vartheta_k\big(|\nabla|\big) f\big\|_{L^2}^2\simeq\|f\|_{H^{s+m}}^2$, we apply Lemma \ref{H^ss>0} to obtain
$$
\|\Op(a)f\|_{H^s}
=\left\|\sum_{j=0}^\infty A_jf\right\|_{H^s}
\leq C \mathbf{M}^m_{[s]+1,n+2;q}(a)\|f\|_{H^{s+m}}.
$$
\end{proof}
\begin{remark}
We point out that the proof generalizes without much modification to Zygmund spaces $C^r_*$ with $r>0$. Since this is not needed for our goal, we omit the details.
\end{remark}

We do not know whether the class $\Op\mathscr{S}^m_{1,1}$ given in Definition \ref{S^mrd}-\ref{S^mrdAdmi} coincides with any locally defined H\"{o}rmander class of operators or not. However, this is not a question of concern within our scope, since all we need is a calculus for a suitable subclass of symbols and operators, broad enough to cover operators arising from classical differential operators. 

\subsection{Spectral Condition}
The constructions and theorems so far rely mostly, if not completely, on the compact manifold structure of $\Gp$. From now on, we will exploit more algebraic properties of $\Gp$. In particular, the spectral localization property, namely Corollary \ref{SpecPrd}, will play an important role. Just as in the Euclidean case, we introduce a subclass of $\mathscr{S}^m_{1,1}(\Gp)$ that satisfies a spectral condition. The exposition in this subsection is parallel to that of \cite{Met2008}, Chapter 4, or \cite{Hormander1997}, Chapter 9-10. 

Before we pass to the definition of spectral condition, we need a lemma concerning difference operators acting on symbols with finite Fourier support. Being trivial for Euclidean or toric case, for non-commutative groups it relies on spectral localization.
\begin{lemma}\label{DiffVanish}
Suppose the symbol $a=a(\xi)$ has Fourier support contained in $\mathfrak{S}[\gamma_1,\gamma_2]$. Then with $Q$ being the fundamental tuple defined in (\ref{QFund}), there is a constant $C$ depending only on the algebraic structure of $\Gp$, such that for any $Q_i$, the difference $\Df_{Q_i}a$ vanishes for $|\xi|\notin[\gamma_1-C,\gamma_2+C]$.
\end{lemma}
\begin{proof}
We use formula (\ref{Diff_qa}):
$$
(\Df_{Q_i}a)(\xi)=\int_\Gp Q_i(y)\left(\sum_{\eta\in \mathfrak{S}[\gamma_1,\gamma_2]} d_\eta\Tr\left(a(\eta)\cdot\eta(y)\right)\right)\xi^*(y)dy.
$$
By the proof of Corollary \ref{SpecPrd}, the product $Q_i(y)\eta(y)$ is the sum of matrix entries of irreducible representations whose highest weights $\bm{j}\in\mathscr{L}_+$ satisfy
$$
\bm{J}(\eta)-\bm{\varpi}_i\preceq \bm{j}\preceq \bm{J}(\eta)+\bm{\varpi}_i.
$$
Such $\bm{j}$'s must fall into a bounded parallelogram in $\mathfrak{t}^*$ that contains $\bm{J}(\eta)$, so there must hold 
$$
|\bm{J}(\eta)|-C\leq|\bm{j}|\leq |\bm{J}(\eta)|+C,
$$
where $C\simeq\sum_{i=1}^\varrho|\bm{\varpi}_i|$. 
Hence the Fourier support of 
$$
Q_i(y)\left(\sum_{\eta\in \mathfrak{S}[\gamma_1,\gamma_2]} d_\eta\Tr\left(a(\eta)\cdot\eta(y)\right)\right)
$$
is contained in $\mathfrak{S}\big[\gamma_1-C,\gamma_2+C\big]$, with $C\simeq\sum_{i=1}^\varrho|\bm{\varpi}_i|$.
\end{proof}

We turn to the definition of spectral condition. Since no natural ``addition" is available on $\DuGp$, we restrict ourselves to a narrower definition of spectral condition:
\begin{definition}
Fix $\delta>0$. The subclass $\Sigma_\delta^m(\Gp)\subset \mathscr{S}^m_{1,1}(\Gp)$ consists of all symbols $a\in \mathscr{S}^m_{1,1}(\Gp)$ such that the partial Fourier transform of matrix entries of $a(x,\xi)$ satisfies
$$
\Ft{a_{ij}}(\eta,\xi)
:=\int_{\Gp}a_{ij}(x,\xi)\eta^*(x)dx=0
\quad\mathrm{if}\quad|\eta|\geq \delta\size[\xi],
\quad i,j=1,\cdots,d_\xi,
$$
when $\size[\xi]$ is large enough. This is called the spectral condition with parameter $\delta$.
\end{definition}

With a little abuse of notation, we may write 
$$
\Ft{a}(\eta,\xi)= \big[\Ft{a_{ij}}(\eta,\xi)\big]_{i,j=1}^{d_\xi}\in\mathrm{End}(\Hh[\eta]\otimes\Hh[\xi]),
$$
and abbreviate the spectral condition as $\Ft{a}(\eta,\xi)=0$ for $|\eta|\geq \delta\size[\xi]$. Since the symbol $a$ is smooth in $x$, the spectral condition is only important for high modes, i.e. when $\size[\xi]$ is large: if $a(x,\xi)$ vanishes for all but finitely many $\xi\in\DuGp$, then in fact $a\in \mathscr{S}^{-\infty}$, so the ``low frequency part of $a(x,\xi)$" does not cause any problem in regularity. This is why the spectral condition is required only for \emph{sufficiently large $\size[\xi]$.}

We list some basic properties of $\Sigma^m_\delta(\Gp)$ that will be used repeatedly.

\begin{lemma}\label{Sigmadelta}
(1) If $a\in\Sigma^m_\delta(\Gp)$, $b\in\Sigma^{m'}_{\delta'}(\Gp)$, then $a\cdot b\in\Sigma^{m+m'}_{\delta+\delta'}(\Gp)$.

(2) If $a\in\Sigma^m_\delta(\Gp)$, $X\in\mathfrak{g}$ is a left-invariant vector field, then the symbol of $X\circ\Op(a)$ is of order $m+1$, and still satisfies a spectral condition with parameter $\delta$.

(3) Suppose $a\in\Sigma^m_\delta$. Let $Q$ be the fundamental tuple as defined in (\ref{QFund}). Then given any multi-index $\alpha$, there is a constant $C_\alpha$ depending only on $\alpha$ and the algebraic structure of $\Gp$, such that the symbol $\Df_Q^\alpha a\in \mathscr{S}^{m-|\alpha|}_{1,1}$, and satisfies the spectral condition with parameter $C_\alpha\delta$.
\end{lemma}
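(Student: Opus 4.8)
The plan is to check the three assertions in order, reducing each to the spectral localization property Corollary \ref{SpecPrd} together with the already-established membership in $\mathscr{S}^m_{1,1}(\Gp)$; the class $\mathscr{S}^m_{1,1}$-parts are routine bookkeeping using Definition \ref{S^mrd} and the Leibniz rule (\ref{Leibniz}), so the substance is the tracking of spectral conditions.

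For (1), I would expand the product symbol in the $x$-variable via Peter--Weyl: writing $\Ft a(\eta,\xi)$ and $\Ft b(\eta',\xi)$ for the partial Fourier coefficients, the partial Fourier transform of the matrix product $(ab)(x,\xi)$ at a representation $\zeta$ is supported on those $\zeta$ whose matrix entries appear in the product of a matrix entry of $\eta$ (with $|\eta|<\delta\size[\xi]$) and one of $\eta'$ (with $|\eta'|<\delta'\size[\xi]$). By Corollary \ref{SpecPrd} such a product localizes in $S\!\left[c\big||\eta|-|\eta'|\big|-c^{-1},\,c^{-1}(|\eta|+|\eta'|)\right]$, so $|\zeta|\le c^{-1}(|\eta|+|\eta'|)\le c^{-1}(\delta+\delta')\size[\xi]$ once $\size[\xi]$ is large enough to absorb the additive $c^{-1}$. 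This gives the spectral condition with parameter $c^{-1}(\delta+\delta')$; that $ab\in\mathscr{S}^{m+m'}_{1,1}$ follows from Definition \ref{S^mrd} and the Leibniz property (\ref{Leibniz}) of $\Df_Q$ applied to the product, exactly as in the Euclidean case.

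For (2), note that the symbol of $X\circ\Op(a)$ is $(X_x a)(x,\xi)+\sigma_X(x,\xi)\cdot a(x,\xi)$, as already used in the proof of Stein's Theorem. Here $\sigma_X\in S^1_{1,0}(\Gp)$ is \emph{independent of $x$} (it is $\xi^*(x)(X\xi)(x)$, a left-invariant quantity, so its partial Fourier transform in $x$ is concentrated at the trivial representation), hence multiplying by $\sigma_X$ does not enlarge the $x$-Fourier support of $a(x,\xi)$; and $X_x$ applied in $x$ commutes with the partial Fourier transform, so $\Ft{X_xa}(\eta,\xi)$ vanishes exactly where $\Ft a(\eta,\xi)$ does. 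Thus both summands retain the spectral condition with the same parameter $\delta$, and the order is raised by $1$; membership in $\mathscr{S}^{m+1}_{1,1}$ is immediate from Fischer's symbolic calculus and Definition \ref{S^mrd}.

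For (3), I would argue that applying an RT-difference operator $\Df_{Q_i}$ does not touch the $x$-variable at all: by (\ref{Diff_qa}) the operator $\Df_{Q,\xi}$ commutes with every differential (indeed every) operation in $x$, so in terms of partial Fourier coefficients $\Ft{\Df_{Q_i}a}(\eta,\xi)$ is a finite combination of $\Ft a(\eta,\xi\otimes\text{(fundamental rep.)})$ and $\Ft a(\eta,\xi)$, evaluated on tensor-product representation spaces. The point is that tensoring $\xi$ with a fixed fundamental representation $\tau$ changes $\size[\xi]$ only by a bounded multiplicative factor: indeed, each irreducible summand of $\xi\otimes\tau$ has highest weight $J(\xi)+\mu$ for some weight $\mu$ of $\tau$, so its $|\cdot|$ differs from $|\xi|$ by $O(1)$, whence $\size[\cdot]\simeq\size[\xi]$. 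Consequently the spectral condition $|\eta|\ge\delta\size[\xi]$ converts, on each summand, to $|\eta|\ge C_\alpha^{-1}\delta\,\size[\cdot]$ for a constant $C_\alpha$ depending only on $|\alpha|$ and the root data of $\Gp$; reading this back gives the spectral condition with parameter $C_\alpha\delta$. The order drop to $m-|\alpha|$ and membership in $\mathscr{S}^{m-|\alpha|}_{1,1}$ come directly from Definition \ref{S^mrdAdmi} and Theorem \ref{Fischer}(1).

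The main obstacle I anticipate is purely the care needed with the \emph{additive} $O(1)$ constants in Corollary \ref{SpecPrd} and in the weight shift $J(\xi)\mapsto J(\xi)+\mu$: since the spectral condition is a bound of the form $|\eta|\ge(\text{const})\size[\xi]$, one must verify that these additive errors are genuinely absorbed by enlarging the multiplicative constant, which is exactly why the definition only demands the spectral condition for $\size[\xi]$ \emph{sufficiently large}. Once that is granted, all three parts are short.
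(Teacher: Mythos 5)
Your proposal is correct and follows essentially the same route as the paper: part (1) is a direct invocation of Corollary \ref{SpecPrd}, part (2) rests on the $x$-independence of $\sigma_X$ as a left-invariant quantity, and part (3) exploits that the RT-difference shifts the representation $\xi$ only by a bounded amount. The only cosmetic difference is in (3), where the paper works directly through formula (\ref{Diff_qa}) and Schur orthogonality to locate the surviving $\zeta$-terms, whereas you phrase the same fact via the intrinsic definition $\Df_\tau a(\pi)=a(\tau\otimes\pi)-a(\I[\tau]\otimes\pi)$ and the resulting $O(1)$ shift of highest weights; both identify the same set of summands and your closing remark about absorbing additive $O(1)$ errors into the multiplicative constant (thanks to the spectral condition being imposed only for $\size[\xi]$ large) is exactly the care the paper takes.
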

\begin{proof}
Part (1) is a direct consequence of the spectral localization property, namely Corollary \ref{SpecPrd}. For part (2), write $A=\Op(a)$. For a left-invariant vector field $X\in\mathfrak{g}$, the symbol of $X\circ A$ reads
\begin{equation}\label{SymbXaj}
(X_{x}a)(x,\xi)+\sigma_X(x,\xi)\cdot a(x,\xi)\in \mathscr{S}^{m+1}_{1,1},
\end{equation}
as seen in the proof of Stein's theorem. By the right convolution kernel formula (\ref{Symbol-Ker})-(\ref{Ker-Conv}), we find that the right convolution kernel $\mathcal{K}(x,y)$ of $X$ is invariant under left translation with respect to $x$, so in fact $\mathcal{K}(x,y)\equiv \mathcal{K}(e,y)$, and the symbol $\sigma_X(x,\xi)$ is thus independent from $x$. As a result, it is legitimate to write it as $\sigma_X(\xi)\in\mathrm{End}(\Hh[\xi])$, and refer it as a \emph{non-commutative Fourier multiplier}. Consequently, the Fourier transform of (\ref{SymbXaj}) with respect to the $x$ variable evaluated at $\eta\in\DuGp$ is 
$$
\sigma_X(\eta)\cdot\Ft{a}(\eta,\xi)+\left(\sigma_X(\xi)\cdot a\right)^{\wedge}(\eta,\xi),
$$
which still vanishes for $|\eta|\geq\delta\size[\xi]$.

As for part (3), we just have to prove the claim for $|\alpha|=1$ as the rest follows by induction. Theorem \ref{Fischer} ensures that $\Df_Q a\in \mathscr{S}^{m-1}_{1,1}$. Here with a little abuse of notation we use $Q$ to denote any component of the fundamental tuple. To verify the spectral condition, we employ formula (\ref{Diff_qa}):
$$
(\Df_Qa)(x,\xi)=\int_\Gp Q(y)\left(\sum_{\zeta\in\DuGp} d_\zeta\sum_{i,j=1}^{d_\zeta}a_{ij}(x,\zeta)\zeta_{ij}(y)\right)\xi^*(y)dy.
$$
Taking Fourier transform with respect to $x$, evaluating at $\eta\in\DuGp$ and using the spectral condition, we obtain
\begin{equation}\label{Lem3.5Temp}
\Ft{\Df_Qa}(\eta,\xi)
=\sum_{\zeta\in\DuGp:\size[\zeta]\geq \delta^{-1}|\eta|}d_\zeta \sum_{i,j=1}^{d_\zeta}\Ft{a_{ij}}(\eta,\zeta)\otimes
\int_\Gp Q(y)\zeta_{ij}(y)\xi^*(y)dy
\in\mathrm{End}(\Hh[\eta]\otimes\Hh[\xi]).
\end{equation}
From the proof of Lemma \ref{DiffVanish}, we find that $Q(y)\xi^*(y)$ has Fourier support contained in $\mathfrak{S}\big[|\xi|-C,|\xi|+C\big]$, where $C$ depends only on the algebraic property of $\Gp$.

Now suppose $|\eta|\geq 2\delta\size[\xi]$. Then for each summand in (\ref{Lem3.5Temp}), the representation $\zeta$ satisfies $|\zeta|\geq 2\size[\xi]$. By Schur orthogonality, matrix entries of $\zeta$ is thus $L^2$ orthogonal to matrix entries of $\bar Q(y)\xi(y)$. This shows that for $|\eta|\geq 2\delta\size[\xi]$, each summand in (\ref{Lem3.5Temp}) vanishes. 
\end{proof}

We are now at the place to state the fundamental theorem for para-differential calculus: (1,1) symbols satisfying a suitable spectral condition give rise to operators bounded in all Sobolev spaces. 

\begin{theorem}\label{Stein'}
Let $\delta\in(0,1/2)$, $m\in\mathbb{R}$. Suppose $a\in \Sigma_\delta^m(\Gp)$. Then the operator $\Op(a)$ maps $H^{s+m}(\Gp)$ to $H^{s}(\Gp)$ continuously for all $s\in\mathbb{R}$. Quantitatively, with $s_+=0$ for $s\leq0$ and $s_+=s$ for $s>0$, for any strongly RT-admissible tuple $q$,
$$
\|\Op(a)f\|_{H^s}\leq C_{s,\delta;q}\mathbf{M}^m_{[s_+]+1,n+2;q}(a)\|f\|_{H^{s+m}}.
$$
\end{theorem}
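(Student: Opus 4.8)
The plan is to run the dyadic para-differential argument. By density it suffices to bound $\|\Op(a)f\|_{H^s}$ for $f\in\mathcal{D}(\Gp)$. First I would split the symbol as $a=a_0+\sum_{j\ge1}a_j$ with $a_0(x,\xi)=a(x,\xi)\phi(|\xi|)$ and $a_j(x,\xi)=a(x,\xi)\vartheta_j(|\xi|)$ (notation as in (\ref{LPDisc})), and set $A_j=\Op(a_j)$. Since multiplication by the scalar $\vartheta_j(|\xi|)$ leaves the $x$-variable untouched, each $a_j$ still satisfies the spectral condition with the \emph{same} parameter $\delta$; and by the Leibniz rule (\ref{Leibniz}) for $\Df_Q$ together with Theorem~\ref{Multm} applied to $\vartheta_j(|\xi|)\,\mathrm{Id}$ (whose multiplier norms are bounded uniformly in $j$), the $a_j$ lie in $\mathscr{S}^m_{1,1}$ with seminorms controlled by those of $a$, uniformly in $j$. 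For $j<j_0$ (a large fixed $j_0$) the symbol $a_j(x,\cdot)$ is supported on a fixed finite set of representations and is smooth in $x$, so each such $A_j$ is smoothing and $\sum_{j<j_0}A_jf$ contributes only a term $\lesssim_{j_0,s}\|f\|_{H^{s+m}}$; from now on I keep only $j\ge j_0$.

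Next I would establish three facts. \emph{(i) A uniform $L^2$ bound}, which is already contained in the proof of Stein's Theorem~\ref{SteinTheorem}: the kernel estimates (\ref{SteinIneq1})--(\ref{SteinIneq2}) with $n+2$ difference operators, followed by (\ref{Aj}) with $\alpha=0$, give $\|A_jg\|_{L^2}\lesssim\mathbf{M}^m_{0,n+2;q}(a)\,2^{jm}\|g\|_{L^2}$ uniformly in $j$. \emph{(ii) Input localization}: since $a_j(x,\xi)$ vanishes unless $|\xi|\in\operatorname{supp}\vartheta_j$, only the frequency-$2^j$ part of $f$ enters $A_jf$, so $A_jf=A_j\bigl(\widetilde\vartheta_j(|\nabla|)f\bigr)$ with $\widetilde\vartheta_j$ a slightly fattened cutoff equal to $1$ on $\operatorname{supp}\vartheta_j$; combined with (i) this gives $\|A_jf\|_{L^2}\lesssim\mathbf{M}^m_{0,n+2;q}(a)\,2^{jm}\bigl\|\widetilde\vartheta_j(|\nabla|)f\bigr\|_{L^2}$. \emph{(iii) Output localization}, the crucial point: from (\ref{Op(a)}), $A_jf(x)=\sum_{\xi:\,|\xi|\sim2^j}d_\xi\Tr\bigl(a_j(x,\xi)\Ft f(\xi)\xi(x)\bigr)$, and expanding the trace shows that, as a function of $x$, each summand is a fixed linear combination (with $x$-independent coefficients, the entries of $\Ft f(\xi)$) of products $(a_j)_{pq}(\cdot,\xi)\,\xi_{rp}(\cdot)$. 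The factor $(a_j)_{pq}(\cdot,\xi)$ has Fourier support in $\{|\eta|<\delta\langle\xi\rangle\}$ by the spectral condition, while $\xi_{rp}$ is a matrix entry of $\xi$, hence a Laplace eigenfunction with $|\cdot|=|\xi|\sim2^j$. By the spectral localization Corollary~\ref{SpecPrd} their product is frequency-supported in $S\bigl[\,c\bigl||\xi|-|\eta|\bigr|-c^{-1},\ c^{-1}(|\xi|+|\eta|)\bigr]$; since $|\eta|<\delta\langle\xi\rangle$ and $\delta<1/2$, the lower endpoint is a positive multiple of $2^j$ once $j$ is large, so $A_jf$ is frequency-supported in an annulus $S[\kappa2^j,\kappa^{-1}2^j]$ with $\kappa=\kappa(c,\delta)\in(0,1)$. (In the Euclidean or toral model $c=1$ and the threshold $\delta<1/2$ is the familiar one; on a noncommutative $\Gp$ I would first split each dyadic block into $O(1)$ thinner sub-blocks to absorb the constant $c$ and the additive $O(1)$ of Corollary~\ref{SpecPrd}, after which the same threshold works.)

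Finally I would sum. By (iii) the Fourier supports of $A_jf$ and $A_{j'}f$ are disjoint once $|j-j'|$ exceeds a fixed $N_0(\kappa)$, so the blocks are $H^s$-almost-orthogonal; using this, the annular localization (which gives $\|A_jf\|_{H^s}\simeq2^{js}\|A_jf\|_{L^2}$ by Bernstein's inequality, Proposition~\ref{Bernstein}, and Plancherel), bound (ii), and the Littlewood--Paley characterization of $H^{s+m}$ (Proposition~\ref{LPHs}), I obtain
\[
\|\Op(a)f\|_{H^s}^2\lesssim\sum_j\|A_jf\|_{H^s}^2\lesssim\bigl[\mathbf{M}^m_{0,n+2;q}(a)\bigr]^2\sum_j2^{2j(s+m)}\bigl\|\widetilde\vartheta_j(|\nabla|)f\bigr\|_{L^2}^2\lesssim\bigl[\mathbf{M}^m_{0,n+2;q}(a)\bigr]^2\|f\|_{H^{s+m}}^2
\]
for every $s\in\mathbb{R}$; since $\mathbf{M}^m_{0,n+2;q}(a)\le\mathbf{M}^m_{[s_+]+1,n+2;q}(a)$ this is the stated bound (for $s>0$ one may alternatively just invoke Theorem~\ref{SteinTheorem} directly, which is what the index $[s_+]+1$ records — the spectral condition is what lets the estimate reach $s\le0$). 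The dependence of the implicit constant on $\delta$ enters solely through $\kappa$ and $N_0$.

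\textbf{Main obstacle.} Everything except step (iii) is a near-verbatim transcription of the Euclidean argument; step (iii) is where the algebraic structure of $\Gp$ is genuinely used. On $\mathbb{R}^n$ or $\mathbb{T}^n$ the Fourier support of a product is the sumset of the two supports, so the output annulus — and with it the threshold $\delta<1/2$ — is immediate. On a noncommutative group one has only the weaker statement of Corollary~\ref{SpecPrd} (``supported between the sum and the difference of the frequencies, up to a multiplicative constant $c$ and an additive $O(1)$''), and the work is to still extract from it a frequency annulus that is uniform in $j$ and bounded away from the origin; this is precisely what forces the preliminary refinement of the dyadic decomposition. The only other non-automatic point — that the cut symbols $a_j$ inherit the $\mathscr{S}^m_{1,1}$ estimates and the spectral condition uniformly in $j$ — is routine given (\ref{Leibniz}) and Theorem~\ref{Multm}.
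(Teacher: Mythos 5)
Your proof is correct, but it takes a genuinely different route from the paper's, and in fact establishes a slightly sharper estimate. The paper invokes Lemma~\ref{H^ssReal}, which requires controlling $\|X^\alpha A_j f\|_{L^2}$ for $|\alpha|\le l$ with $l>s$: the \emph{lower} bound on the Fourier support of $A_jf$ kills the contribution of blocks $\vartheta_j(|\nabla|)A_kf$ with $j<k-N$, while the $x$-derivative bounds feed into Bernstein's inequality to control the tail $j>k$. This is why the paper's seminorm is $\mathbf{M}^m_{[s_+]+1,n+2;q}(a)$. You instead use the full annular localization of the output — both the lower bound $\kappa 2^j$ that the paper records \emph{and} the upper bound $\kappa^{-1}2^j$ that it does not exploit, both of which follow from Corollary~\ref{SpecPrd} and the spectral condition $\delta<1/2$ after the small-$j$ cleanup you correctly perform. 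With the annulus in hand, almost-orthogonality of the $A_jf$ plus the two-sided Plancherel equivalence $\|A_jf\|_{H^s}\simeq 2^{js}\|A_jf\|_{L^2}$ replace the whole Bernstein/derivative mechanism, and only the $\alpha=0$ estimate \eqref{Aj} is needed. This buys you the sharper constant $\mathbf{M}^m_{0,n+2;q}(a)$ and treats all $s\in\mathbb{R}$ symmetrically, without the Lemma~\ref{H^ssReal} machinery; the price is that you must carefully note (as you do) that the Schur-orthogonality/spectral-localization argument needs both endpoints of the interval in Corollary~\ref{SpecPrd}, and that the $O(1)$ additive slack forces the preliminary discard of finitely many low blocks. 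One small slip: for $s<0$ the two-sided bound $\|A_jf\|_{H^s}\simeq 2^{js}\|A_jf\|_{L^2}$ is Plancherel over the annulus, not Proposition~\ref{Bernstein} as literally stated (that proposition gives only the direction $\lesssim t^s\|\cdot\|_{L^2}$, useful for $s\ge 0$); the conclusion is right, the citation is slightly off.
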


Before we proceed to the proof, we still need a summation Lemma in Sobolev spaces.

\begin{lemma}\label{H^ssReal}
Let $s\in\mathbb{R}$, $l>s$ be a positive integer, and $\gamma>0$. Fix a basis $X_1,\cdots,X_n$ of $\mathfrak{g}$, and define $X^\alpha$ as in Proposition \ref{NormalOrder}. There is a constant $C=C(s,l,\gamma)$ with the following properties. If $\{f_k\}_{k\geq0}$ is a sequence in $H^l(\Gp)$, such that for any multi-index $\alpha$ with $|\alpha|\leq l$, there holds
$$
\|X^\alpha f_k\|_{L^2}\leq 2^{k(|\alpha|-s)}c_k,\quad (c_k)\in \ell^2(\mathbb{N}),
$$
and furthermore $\Ft{f}_k$ is supported in $\{\xi:\size[\xi]\geq\gamma 2^k\}$, then $\sum_k f_k\in H^s(\Gp)$ and 
$$
\left\|\sum_{k\geq0}f_k\right\|_{H^s}^2
\leq C\sum_{k\geq0}c_k^2.
$$
\end{lemma}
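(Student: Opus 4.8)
The plan is to run the discrete Littlewood-Paley argument of the proof of Lemma \ref{H^ss>0}, now using the spectral support hypothesis precisely to compensate for the absence of the sign restriction $s>0$. Writing $f=\sum_{k\geq0}f_k$ and decomposing each term via (\ref{LPDisc}), I will bound $\|\vartheta_j(|\nabla|)f\|_{L^2}$ and $\|\phi(|\nabla|)f\|_{L^2}$ and then appeal to the Littlewood-Paley characterization of $H^s$ in Proposition \ref{LPHs}.

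The first, and decisive, step is to note that the hypotheses force a one-sided restriction on interacting scales: since $\vartheta_j(|\nabla|)$ localizes to $\langle\xi\rangle\simeq 2^j$ while $\widehat{f_k}$ is supported in $\{\langle\xi\rangle\geq\gamma 2^k\}$, the block $\vartheta_j(|\nabla|)f_k$ vanishes unless $k\leq j+c_\gamma$ for a constant $c_\gamma$ depending only on $\gamma$ and the geometry of $\Gp$. For the surviving blocks I will use two estimates, exactly as in the proof of Lemma \ref{H^ss>0}: the trivial bound $\|\vartheta_j(|\nabla|)f_k\|_{L^2}\leq\|f_k\|_{L^2}\leq 2^{-ks}c_k$, and the Bernstein bound (Proposition \ref{Bernstein})
$$
\|\vartheta_j(|\nabla|)f_k\|_{L^2}\lesssim 2^{-lj}\|f_k\|_{H^l}\lesssim 2^{-lj}\sum_{|\alpha|\leq l}\|X^\alpha f_k\|_{L^2}\lesssim 2^{-lj}2^{k(l-s)}c_k.
$$
Because $k-j\leq c_\gamma$ for every relevant $k$, the second bound dominates up to a $\gamma$-dependent factor, so $2^{js}\|\vartheta_j(|\nabla|)f_k\|_{L^2}\lesssim_{\gamma}2^{(l-s)(k-j)}c_k$; summing over $0\leq k\leq j+c_\gamma$ gives $2^{js}\|\vartheta_j(|\nabla|)f\|_{L^2}\lesssim_\gamma\sum_{0\leq k\leq j+c_\gamma}2^{(l-s)(k-j)}c_k$.

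The next step is to read the right-hand side as a discrete convolution $(\mu*c)_j$ with $\mu_n:=2^{-(l-s)n}$ for $n\geq -c_\gamma$ and $\mu_n:=0$ otherwise (extending $c_k$ by zero for $k<0$). The hypothesis $l>s$ makes $l-s$ strictly positive, hence $\mu\in\ell^1(\mathbb{Z})$ with $\|\mu\|_{\ell^1}$ depending only on $s,l,\gamma$; Young's inequality for sequences then yields $\sum_{j\geq0}\big(2^{js}\|\vartheta_j(|\nabla|)f\|_{L^2}\big)^2\lesssim_{s,l,\gamma}\sum_k c_k^2$. The low-frequency piece is handled by the same support consideration: only the finitely many indices with $\gamma 2^k\lesssim 1$ contribute to $\phi(|\nabla|)f$, so $\|\phi(|\nabla|)f\|_{L^2}\lesssim_{s,\gamma}(\sum_k c_k^2)^{1/2}$ by Cauchy-Schwarz. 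Proposition \ref{LPHs} then gives $f\in H^s$ together with the claimed estimate; to see that $\sum_k f_k$ actually converges in $H^s$ (and not merely in $\mathcal{D}'$), I will apply the same bound to the tails $\sum_{k\geq N}f_k$ and conclude that the partial sums form a Cauchy sequence in $H^s$.

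The argument is largely routine; the only delicate point, and the reason the spectral support hypothesis cannot be dropped when $s\leq0$, is the step just described—without the constraint $k\leq j+c_\gamma$ one would be left summing $\sum_{k\geq j}2^{(j-k)s}c_k$ against a kernel that fails to lie in $\ell^1$ whenever $s\leq0$, and the conclusion would be false. Everything else (the norm equivalence for functions localized to a frequency annulus around $2^j$, the uniform $H^l$-boundedness of $\vartheta_j(|\nabla|)$, the Sobolev norm comparison $\|g\|_{H^l}\simeq\sum_{|\alpha|\leq l}\|X^\alpha g\|_{L^2}$, and Young's inequality) is standard.
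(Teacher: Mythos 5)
Your proposal is correct and takes essentially the same route as the paper: both proofs observe that the frequency-support hypothesis forces $\vartheta_j(|\nabla|)f_k$ to vanish unless $k\leq j+c_\gamma$, then use Bernstein's inequality on the surviving blocks and the summability of a one-sided geometric kernel (where $l>s$ is exactly what ensures $\ell^1$-decay). The paper organizes the sum by splitting into $k<j$ (handled by Lemma~\ref{l2Seq}) and the finitely many $j\leq k<j+N$ (handled trivially), whereas you unify both ranges in a single convolution and invoke Young's inequality; these are cosmetic variations of the same estimate. Your added remark about tails converging in $H^s$ and the explicit treatment of the low-frequency block $\phi(|\nabla|)f$ are harmless refinements that the paper leaves implicit.

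One small note on exposition: the Bernstein bound $2^{js}\|\vartheta_j(|\nabla|)f_k\|_{L^2}\lesssim 2^{(l-s)(k-j)}c_k$ holds with no $\gamma$-dependence at all; the role of $\gamma$ is purely to truncate the range of $k$ so that the kernel $n\mapsto 2^{-(l-s)n}$, restricted to $n\geq -c_\gamma$, lies in $\ell^1$. Phrasing it as "the second bound dominates up to a $\gamma$-dependent factor" slightly obscures this, though the conclusion you draw is correct.
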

\begin{proof}
The proof is quite similar for that of Lemma \ref{H^ss>0}. The only difference is as follows: the spectral condition $\mathrm{supp}\Ft{f}_k\subset\{\xi:\size[\xi]\geq\gamma 2^k\}$ implies that for some integer $N\sim\log\gamma$, the Littlewood-Paley building block $\vartheta_j\big(|\nabla|\big)f_k$ vanishes for $j<k-N$. So for $f=\sum_kf_k$, we obtain similarly as in Lemma \ref{H^ss>0},
$$
2^{js}\|\vartheta_j\big(|\nabla|\big) f\|_{L^2}
\leq C\sum_{k:k<j}2^{(l-s)(k-j)}c_k
+\sum_{k:j\leq k<j+N}2^{(j-k)s}c_k.
$$
The first sum gives rise to terms of an $\ell^2$ sequence again by Lemma \ref{l2Seq}, while the second obviously gives rise to terms of an $\ell^2$ sequence. Thus again $\{2^{js}\big\|\vartheta_j\big(|\nabla|\big) f\big\|_{L^2}\}_{j\geq0}$ is an $\ell^2$ sequence. The rest of the proof is identical to that of Lemma \ref{H^ss>0}.
\end{proof}

\begin{proof}[Proof of Theorem \ref{Stein'}]
The proof is still very much similar to that of Theorem 4.3.4. in \cite{Met2008}, and proceeds basically parallelly as Stein's theorem. We still decompose $\Op(a)f$ into a dyadic sum $\sum_{j\geq0}A_jf$, where $A_j$ is the quantization of $a_j(x,\xi):=a(x,\xi)\vartheta_j(|\xi|)$. The estimate of operator norms for $X^\alpha\circ A_j$ is identical to that in the proof of Theorem \ref{SteinTheorem}. There are only some minor differences: since $\Ft{a}(\eta,\xi)=0$ for $|\eta|>\delta\size[\xi]$, by the quantization formula (\ref{Op(a)})-(\ref{Op-Symbol}) we have
$$
\Ft{A_jf}(\eta)=\int_{\Gp}\sum_{\xi\in \mathfrak{S}[2^{j-1},2^j]}d_\xi\Tr\left(\xi(x)a_j(x,\xi)\Ft{f}(\xi)\right)\eta^*(x)dx.
$$
By spectral localization property \ref{SpecPrd} and the spectral condition $\Ft{a_j}(\eta,\xi)=0$ for $|\eta|\geq\delta 2^j$ (since $|\xi|\leq 2^j$), we find that the integrand $\Tr\left(\xi(x)a_j(x,\xi)\Ft{f}(\xi)\right)$ has Fourier support (with respect to $x$) contained in 
$$
\left\{\eta\in\DuGp:|\eta|\geq c\left(\frac{1}{2}-\delta\right)2^j\right\},
$$
where $c\in(0,1)$ depends only on the algebraic structure of $\Gp$, i.e. $\Ft{A_jf}(\eta)=0$ for $|\eta|\leq c(1/2-\delta)2^j$. By Lemma \ref{Sigmadelta} and induction on $|\alpha|$, the symbol of $X^\alpha\circ a$ still satisfies the spectral condition with parameter $\delta$. To summarize, for any $f\in\mathcal{D}'(\Gp)$, the Fourier support of $X^\alpha A_jf$ is within $\{\eta\in\DuGp:|\eta|>c(1/2-\delta)2^j\}$. 

With exactly the same argument as in the proof of Theorem \ref{SteinTheorem}, the estimate
$$
\|X^\alpha A_jf\|
\lesssim_\alpha \mathbf{M}^m_{|\alpha|,n+2;q}(a) 2^{j(|\alpha|-s)}\sum_{k:|k-j|\leq3}2^{k(s+m)}\big\|\vartheta_k\big(|\nabla|\big) f\big\|_{L^2}
$$
is valid. We can then apply Lemma \ref{H^ssReal} to conclude.
\end{proof}
\begin{remark}\label{HorRegSym}
The proof for Theorem \ref{SteinTheorem} and \ref{Stein'} proceed with exactly the same idea as in the Euclidean case, since the product spectral localization property does not differ very much from the Euclidean case under dyadic decomposition. We thus expect that H\"{o}rmander's characterization of the class $\tilde\Psi^m_{1,1}(\mathbb{R}^n)$, namely (1,1) pseudo-differential operators that are continuous on all Sobolev spaces (see \cite{Hormander1988} or Chapter 9 of \cite{Hormander1997} for the details), admits a direct generalization to $\Gp$. But we content ourselves with operators satisfying the spectral condition, because this is what we actually need for para-differential calculus. We only write $\tilde{\mathscr{S}}^{m}_{1,1}(\Gp)$ for the subclass of $(1,1)$ symbols whose quantization are continuous on every Sobolev space, and do not characterize it explicitly. The inclusion $\Sigma_{<1/2}^m\subset\tilde{\mathscr{S}}^{m}_{1,1}$ is quite sufficient for most of our applications.
\end{remark}

\subsection{Para-product and Para-linearization on \texorpdfstring{$\Gp$}{a}}
As an initial application, we are able to recover the para-product estimate and Bony's para-linearization theorem on $\Gp$, just as in \cite{BGdP2021}, but this time with a global notion of para-product. We point out that in \cite{KR2006}, the authors also constructed para-product in terms of spectral calculus on a general compact surface. Since $\Gp$ obviously enjoys more structure than a general manifold, we are able to recover almost everything in the Euclidean setting.

Given two distributions $a$ and $u$, we define the \emph{para-product} $T_au$ by
\begin{equation}\label{T_au}
T_au
:=\int_1^\infty\left[\phi_{2^{-10}t}\big(|\nabla|\big)a\right]\cdot\psi_t\big(|\nabla|\big)u\frac{dt}{t},
\end{equation}
where $\phi$ and $\psi$ are as in the continuous Littlewood-Paley decomposition (\ref{LPCont}). Similarly as in the Euclidean case, the gap 10 is inessential. By the spectral localization property i.e. Corollary \ref{SpecPrd}, the integrand has Fourier support contained in $\mathfrak{S}[ct,c^{-1}t]$ for some constant $c\in(0,1)$, depending only on the algebraic structure of $\Gp$. The symbol of $T_a$ is 
$$
\sigma[T_a](x,\xi)
=\int_1^\infty\left[\phi_{2^{-10}t}\big(|\nabla|\big)a(x)\right]\cdot\psi_t\big(|\xi|\big)\cdot\I[\xi]\frac{dt}{t}.
$$
\begin{proposition}
If $a\in L^\infty$, the symbol $\sigma[T_a]$ of the para-product operator $T_a$ is of class $\Sigma^0_{2^{-10}}$.
\end{proposition}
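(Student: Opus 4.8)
The plan is to verify directly the two defining properties of $\Sigma^0_{2^{-10}}(\Gp)$: that $\sigma[T_a]\in\mathscr{S}^0_{1,1}(\Gp)$, and that it satisfies the spectral condition with parameter $2^{-10}$. The first observation is structural: since $\psi_t(|\nabla|)$ acts as the scalar $\psi_t(|\xi|)$ on $\mathcal{M}_\xi$, the symbol is a scalar multiple of the identity,
$$
\sigma[T_a](x,\xi)=c_a(x,\xi)\,\mathrm{Id}_{\mathcal{H}_\xi},\qquad
c_a(x,\xi)=\int_1^\infty\big[\phi_{2^{-10}t}(|\nabla|)a\big](x)\,\psi_t(|\xi|)\,\frac{dt}{t},
$$
and $c_a(x,\xi)$ depends on $\xi$ only through $|\xi|$. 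Because $\psi$ is supported away from $0$, for each fixed $\xi$ only the $t$ with $t\sim|\xi|$ contribute, so the integral defining $c_a(x,\xi)$ is, at each $\xi$, an integral over a log-bounded range of $t$; there is no tail to control, and a crude bound costs only a factor $\int_{t\sim|\xi|}\tfrac{dt}{t}\lesssim 1$.

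For the spectral condition I would take the partial Fourier transform in $x$. Since $\phi_{2^{-10}t}(|\nabla|)$ is the spectral cut-off to $\{|\eta|\le 2^{-10}t\}$, the matrix entries of $\sigma[T_a]$ satisfy
$$
\int_{\Gp}(\sigma[T_a])_{ij}(x,\xi)\,\eta^*(x)\,dx
=\delta_{ij}\int_1^\infty \phi\!\left(\tfrac{|\eta|}{2^{-10}t}\right)\widehat{a}(\eta)\,\psi_t(|\xi|)\,\frac{dt}{t},
$$
which vanishes unless $|\eta|<2^{-10}t$ for some $t$ with $\psi_t(|\xi|)\ne0$, hence unless $|\eta|\lesssim 2^{-10}t\sim 2^{-10}\langle\xi\rangle$. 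Thus the spectral support of $\sigma[T_a](\cdot,\xi)$ in $x$ is contained in $\{|\eta|\lesssim 2^{-10}\langle\xi\rangle\}$: this is the asserted spectral condition (the harmless gap factor being absorbed exactly as the gap $10$ is inessential in the definition of $T_a$), and in any case the resulting parameter is $<1/2$, which is all that Theorem \ref{Stein'} requires.

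For membership in $\mathscr{S}^0_{1,1}$ I would invoke Fischer's theorem (Theorem \ref{Fischer}) with $Q$ the fundamental tuple \eqref{QFund}, so that it suffices to prove $\big\|X^\alpha_x\Df_Q^\beta\sigma[T_a](x,\xi)\big\|\lesssim_{\alpha,\beta}\|a\|_{L^\infty}\langle\xi\rangle^{|\alpha|-|\beta|}$ for all $\alpha,\beta$. Since $\Df_{Q,\xi}$ commutes with $x$-derivatives and acts only on the Fourier-multiplier factor,
$$
X^\alpha_x\Df_Q^\beta\sigma[T_a](x,\xi)=\int_1^\infty\big[X^\alpha\phi_{2^{-10}t}(|\nabla|)a\big](x)\cdot\big[\Df_Q^\beta(\psi_t(|\cdot|)\mathrm{Id})\big](\xi)\,\frac{dt}{t},
$$
and I would estimate the two factors separately. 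Writing $\phi_{2^{-10}t}(|\nabla|)a=a*\check\phi_{2^{-10}t}$ and $X^\alpha(a*\check\phi_{2^{-10}t})=a*(X^\alpha\check\phi_{2^{-10}t})$, Young's inequality and Lemma \ref{CutOffKer} give $\|X^\alpha\phi_{2^{-10}t}(|\nabla|)a\|_{L^\infty}\le\|a\|_{L^\infty}\|X^\alpha\check\phi_{2^{-10}t}\|_{L^1}\lesssim t^{|\alpha|}\|a\|_{L^\infty}$ (it is essential that $a\in L^\infty$, which gives the uniform $L^1$ kernel bound when $|\alpha|=0$). For the second factor, the associated convolution kernel of $\psi_t(|\cdot|)\mathrm{Id}$ is $\check\psi_t$, hence that of $\Df_Q^\beta(\psi_t(|\cdot|)\mathrm{Id})$ is $Q^\beta\check\psi_t$; since each component of $Q$ vanishes to first order at $e$, $Q^\beta(y)=O(\dist(y,e)^{|\beta|})$, so the second estimate in Lemma \ref{CutOffKer} yields $\|\Df_Q^\beta(\psi_t(|\cdot|)\mathrm{Id})(\xi)\|\le\|Q^\beta\check\psi_t\|_{L^1}\lesssim t^{-|\beta|}$ uniformly in $\xi$, and by Lemma \ref{DiffVanish} this vanishes unless $|\xi|\in[t/2-C,t+C]$, i.e. unless $t\sim|\xi|$. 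Feeding these into the integral and integrating $\tfrac{dt}{t}$ over the log-bounded range $t\sim|\xi|$ gives $\|X^\alpha_x\Df_Q^\beta\sigma[T_a](x,\xi)\|\lesssim\|a\|_{L^\infty}|\xi|^{|\alpha|-|\beta|}\lesssim\|a\|_{L^\infty}\langle\xi\rangle^{|\alpha|-|\beta|}$ for $|\xi|$ bounded below, while for small $|\xi|$ the symbol lies trivially in $\mathscr{S}^{-\infty}$. Together with the spectral condition this yields $\sigma[T_a]\in\Sigma^0_{2^{-10}}(\Gp)$, with $\mathbf{M}^0_{k,l;Q}(\sigma[T_a])\lesssim_{k,l}\|a\|_{L^\infty}$.

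The only step that genuinely departs from the Euclidean para-product symbol computation is the bound on the difference operators $\Df_Q^\beta$: since $\DuGp$ is discrete there is no ``$\partial_\xi$'' available, so one must pass to the associated convolution kernel and combine the order-$|\beta|$ vanishing of $Q^\beta$ at $e$ with the kernel-concentration estimates of Lemma \ref{CutOffKer}, while the spectral-localization fact (Lemma \ref{DiffVanish}) is what keeps each dyadic piece supported at $|\xi|\sim t$ so that the $\tfrac{dt}{t}$-integral stays log-bounded. Once these are in place the rest is entirely parallel to the flat case.
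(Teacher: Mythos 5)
Your proof is correct and follows the same route as the paper's (whose own proof is a one-liner citing Lemma \ref{CutOffKer} and Theorem \ref{Multm}): you simply unpack those references by hand. You factor the integrand of $\sigma[T_a]$, bound the $x$-factor by $\|a\|_{L^\infty}\|X^\alpha\check\phi_{2^{-10}t}\|_{L^1}\lesssim t^{|\alpha|}\|a\|_{L^\infty}$ via Young and Lemma \ref{CutOffKer}, bound the $\xi$-factor via the kernel identity $\Df_Q^\beta(\psi_t\mathrm{Id})=\Ft{Q^\beta\check\psi_t}$ together with the second estimate in Lemma \ref{CutOffKer} — this is precisely the special case of Theorem \ref{Multm} with $m=N=|\beta|$ — and, importantly, you make explicit the fact (via Lemma \ref{DiffVanish} and the compact support of $\psi$) that the $\tfrac{dt}{t}$-integral is confined to $t\sim\langle\xi\rangle$; this localization is what makes the integral finite and is elided in the paper's statement "immediate from ...". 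The only cosmetic point is that the spectral-condition parameter you obtain is an absolute constant times $2^{-10}$ rather than exactly $2^{-10}$ (because $\psi_t$ is supported on $t/2\le|\xi|\le t$, so $t$ can be as large as $2|\xi|$), but as you note this is harmless — only $\delta<1/2$ matters for Theorem \ref{Stein'}, and the paper uses $2^{-10}$ loosely for the same reason.
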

\begin{proof}
The spectral condition is obvious since $\psi_t(|\xi|)\neq0$ only for $|\xi|\leq t$. We turn to estimate the operator norm of $X^\alpha_x\Df_{q,\xi}^\beta \sigma[T_a](x,\xi)(x,\xi)$ on $\Hh[\xi]$, for a given basis $X_1,\cdots,X_n$ of $\mathfrak{g}$ and $q$ is a strongly RT-admissible tuple. But this is immediate from Lemma \ref{CutOffKer} and Theorem \ref{Multm}, and quantatively $\mathbf{M}^0_{k,l;q}\big(\sigma[T_a]\big)\lesssim_{k,l}\size[\xi]^{k-l}|a|_{L^\infty}$.
\end{proof}

By Theorem \ref{Stein'}, the para-product operator $T_a$ is thus continuous from $H^s$ to itself for all $s\in\mathbb{R}$. This of course also admits a more direct proof. We can simply insert the multiplier corresponding to a function $h\in C_0^\infty(0,\infty)$, such that $h(\lambda)\equiv1$ for $\lambda\in[c,c^{-1}]$ and $h\geq0$. Then the Fourier support of $T_au$ is away from the trivial representation $\xi\equiv1$, and
$$
T_au
=\int_0^\infty h_t\big(|\nabla|\big)\Big(\phi_{2^{-10}t}\big(|\nabla|\big)a\cdot\psi_t\big(|\nabla|\big)u\Big)\frac{dt}{t}.
$$
Write $v(t)=\phi_{2^{-10}t}\big(|\nabla|\big)a\cdot\psi_t\big(|\nabla|\big)u$ for the integrand. It has Fourier support contained in $\mathfrak{S}[ct,c^{-1}t]$. By Bernstein's inequality and Lemma \ref{CutOffKer}, we find that
$$
\begin{aligned}
\|v(t)\|_{H^s}
&\lesssim t^{s}\left|\phi_{2^{-10}t}\big(|\nabla|\big)a\right|_{L^\infty}\big\|\psi_t\big(|\nabla|\big)u\big\|_{L^2}\\
&\lesssim t^{s}|a|_{L^\infty}\big\|\psi_t\big(|\nabla|\big)u\big\|_{L^2}.
\end{aligned}
$$
So by the Peter-Weyl theorem,
$$
\begin{aligned}
\|T_au\|_{H^s}^2
&\simeq\sum_{\xi\in\DuGp}d_\xi|\xi|^{2s}\left\llbracket\int_0^\infty h_t(|\xi|)\cdot
\Ft{v(t)}(\xi)\frac{dt}{t}\right\rrbracket^2\\
&\leq \sum_{\xi\in\DuGp}d_\xi\int_0^\infty
\big|h_t(|\xi|)\big|^2\frac{dt}{t}\cdot\int_0^\infty
|\xi|^{2s}\big\llbracket\Ft{v(t)}(\xi)\big\rrbracket^2\frac{dt}{t}\\
&\lesssim \int_0^\infty\|v(t)\|_{H^s}^2\frac{dt}{t}
\lesssim |a|_{L^\infty}^2\|u\|_{H^s}^2,
\end{aligned}
$$
where we used Proposition \ref{LPHs} at the last step. Note that the integrals in the above are not singular since the integrands all vanish near $t=0$.

Just as in the Euclidean case, we have the para-product decomposition on $\Gp$:
\begin{theorem}\label{ParaPrd}
If $s>0$, $a,u\in (L^\infty\cap H^s)(\Gp)$, then
$$
au=T_au+T_ua+R(a,u),
$$
where the smoothing remainder 
$$
\|R(a,u)\|_{H^s}\lesssim |a|_{L^\infty}\|u\|_{H^s}.
$$
\end{theorem}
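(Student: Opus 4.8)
The plan is to imitate the classical Bony decomposition argument, using the continuous Littlewood--Paley decomposition (\ref{LPCont}) as the organizing device, and to reduce everything to the spectral localization property, Corollary \ref{SpecPrd}, together with the boundedness result of Theorem \ref{Stein'} and the Zygmund/Sobolev characterizations of Propositions \ref{LPHs}--\ref{LPZyg}. First I would write $a = \phi(|\nabla|)a + \int_1^\infty \psi_t(|\nabla|)a\,\frac{dt}{t}$ and likewise for $u$, multiply the two expansions, and sort the resulting ``double integral'' $\int\!\!\int \psi_s(|\nabla|)a\cdot\psi_t(|\nabla|)u\,\frac{ds}{s}\frac{dt}{t}$ into three regions: $s \leq 2^{-10}t$ (this is $T_au$, up to harmlessly absorbing the low-frequency piece $\phi(|\nabla|)a\cdot\psi_t(|\nabla|)u$ into $T_au$ by the standard ``gap is inessential'' remark), the symmetric region $t \leq 2^{-10}s$ (this is $T_ua$), and the diagonal region $2^{-10}t < s < 2^{10}t$, which by definition is the remainder $R(a,u)$. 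The leftover cross terms involving $\phi(|\nabla|)a$ and $\phi(|\nabla|)u$ collect into the stated term $\phi(|\nabla|)a\cdot\phi(|\nabla|)u$ plus pieces that can be folded into $R(a,u)$. So the only real content is the estimate $\|R(a,u)\|_{H^s} \lesssim |a|_{L^\infty}\|u\|_{H^s}$.

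For the remainder estimate, I would write $R(a,u) = \sum_{|k|\leq 20}\int_1^\infty \big(\psi_{2^k t}(|\nabla|)a\big)\cdot\big(\psi_t(|\nabla|)u\big)\,\frac{dt}{t}$ (a finite sum of diagonal terms; the discrete version (\ref{LPDisc}) may be cleaner here). The key point, supplied by Corollary \ref{SpecPrd}, is that the Fourier support of each integrand $v_k(t) := \psi_{2^k t}(|\nabla|)a\cdot\psi_t(|\nabla|)u$ is contained in $S[0, Ct]$ for a constant $C$ depending only on the algebraic structure of $\Gp$ — crucially, it is \emph{not} bounded below away from the trivial representation, which is exactly the ``high-high to low'' interaction. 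Then Bernstein's inequality (Proposition \ref{Bernstein}) combined with Lemma \ref{CutOffKer} gives, for any fixed $\sigma$ with $0 < \sigma \leq s$ (I will take $\sigma = s$ if $s$ is not too large, but the low-frequency unboundedness forces me to be slightly careful; in fact since the output is localized in $S[0,Ct]$ I can afford to measure it in $H^s$ directly because $s > 0$),
\[
\|v_k(t)\|_{H^s} \lesssim t^{s}\,\big|\psi_{2^k t}(|\nabla|)a\big|_{L^\infty}\,\big\|\psi_t(|\nabla|)u\big\|_{L^2}
\lesssim t^{s}\,|a|_{L^\infty}\,\big\|\psi_t(|\nabla|)u\big\|_{L^2}.
\]
Now insert a fixed multiplier $h\in C_0^\infty(0,\infty)$ with $h\equiv 1$ on $[c, c^{-1}]$ and $h \geq 0$, so that $R(a,u) = \sum_{|k|\leq 20}\int_1^\infty \tilde h_t(|\nabla|)v_k(t)\,\frac{dt}{t}$ where $\tilde h$ is chosen so that $\tilde h_t(|\nabla|)$ acts as the identity on the spectral band of $v_k(t)$ after rescaling; actually since $v_k(t)$ sits in $S[0,Ct]$ the relevant device is an orthogonality-in-$t$ argument. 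Following the second proof of the $T_au$ boundedness given in the excerpt, I would apply Peter--Weyl, Cauchy--Schwarz in the $t$-integral against $\int |h_t(|\xi|)|^2\frac{dt}{t} \lesssim 1$, and then Proposition \ref{LPHs} to get $\|R(a,u)\|_{H^s}^2 \lesssim \int_1^\infty \|v(t)\|_{H^s}^2\frac{dt}{t} \lesssim |a|_{L^\infty}^2 \int_1^\infty t^{2s}\|\psi_t(|\nabla|)u\|_{L^2}^2 \frac{dt}{t} \lesssim |a|_{L^\infty}^2\|u\|_{H^s}^2$, again by Proposition \ref{LPHs}.

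The main obstacle, and the only place where things genuinely differ from $\mathbb{R}^n$, is the lower end of the spectrum in the remainder term: on a non-commutative Lie group the product of two eigenfunctions at comparable high frequencies can spread all the way down to the trivial representation (Corollary \ref{SpecPrd} only gives the band $S[c||\xi_1|-|\xi_2||-c^{-1},\,c^{-1}(|\xi_1|+|\xi_2|)]$, whose left endpoint can be $O(1)$). One must therefore \emph{not} try to insert a multiplier supported away from $\xi\equiv 1$ in the remainder term, as was done for $T_au$; instead the estimate $\|v_k(t)\|_{H^s}\lesssim t^s|a|_{L^\infty}\|\psi_t(|\nabla|)u\|_{L^2}$ must be used as is, with the $H^s$ norm taken over the full band $S[0,Ct]$. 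Because $s>0$ this is harmless — the $H^s$ norm of something supported in $S[0,Ct]$ is dominated by $t^s$ times its $L^2$ norm — but the hypothesis $s>0$ is genuinely used here, which is why the theorem requires it (the $L^\infty\cap H^s$ hypothesis on $a$ is what lets us keep $|\psi_{2^kt}(|\nabla|)a|_{L^\infty}\lesssim |a|_{L^\infty}$ uniformly in $t$, using Lemma \ref{CutOffKer}). Everything else — Peter--Weyl orthogonality, Bernstein, the Littlewood--Paley square-function characterization — transfers verbatim from the Euclidean proof in, e.g., \cite{Met2008}.
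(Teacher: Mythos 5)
Your setup --- dyadic decomposition into high-low, low-high, and diagonal regions, with the observation that the diagonal contribution $R(a,u)$ is the only genuine content and that $s>0$ is essential because Corollary \ref{SpecPrd} gives no lower bound on the Fourier support of the diagonal pieces --- matches the paper's. But the paper's proof of the remainder estimate is structurally different from yours and your route has a gap at the final step. The paper does not estimate $R(a,u)$ by a direct Littlewood--Paley square-function argument: it introduces polar coordinates $(t_1,t_2)=t(\cos\varphi,\sin\varphi)$ to write $R(a,u)=\Op(R[a])u$ for an explicit $(1,1)$ symbol $R[a]$, verifies $\mathbf{M}^0_{k,l;q}(R[a])\lesssim\langle\xi\rangle^{k-l}|a|_{L^\infty}$ via Lemma \ref{CutOffKer} and Theorem \ref{Multm}, and concludes by invoking Stein's theorem \ref{SteinTheorem}, which already encodes the $s>0$ summation.

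The gap in your writeup is the inequality $\|R(a,u)\|_{H^s}^2\lesssim\int_1^\infty\|v(t)\|_{H^s}^2\frac{dt}{t}$, which you justify by the same Peter--Weyl/Cauchy--Schwarz argument against $\int|h_t(|\xi|)|^2\frac{dt}{t}\lesssim1$ that was used for $T_au$. But, as you yourself note two sentences later, the band-pass multiplier $h_t$ cannot be inserted in front of the remainder pieces: $v_k(t)$ has Fourier support $S[0,Ct]$ reaching the trivial representation, so any $h$ acting as the identity there would have to be nonvanishing near $0$, and then $\int_0^\infty h(s)^2\frac{ds}{s}$ diverges. (The triangle inequality in $t$ alone is also insufficient, as it gives an $L^1(dt/t)$ bound, not the $L^2(dt/t)$ bound that identifies with $\|u\|_{H^s}$.) What you actually need is Lemma \ref{H^ss>0}: set $f_k:=\int_{2^k}^{2^{k+1}}v(t)\frac{dt}{t}$, use Bernstein on the upper band $S[0,C2^{k+1}]$ to get $\|X^\alpha f_k\|_{L^2}\lesssim 2^{k(|\alpha|-s)}c_k$ with $c_k:=2^{ks}\|f_k\|_{L^2}$, observe $(c_k)\in\ell^2$ by your pointwise estimate $\|v(t)\|_{L^2}\lesssim|a|_{L^\infty}\|\psi_t(|\nabla|)u\|_{L^2}$ and Cauchy--Schwarz over each dyadic block, and then Lemma \ref{H^ss>0} (the $s>0$ summation lemma with no lower Fourier support assumption) yields $\|\sum_k f_k\|_{H^s}^2\lesssim\sum_k c_k^2\lesssim|a|_{L^\infty}^2\|u\|_{H^s}^2$. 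With this made explicit your argument is sound; in effect Lemma \ref{H^ss>0} is exactly the ingredient inside Stein's theorem that the paper's route invokes wholesale.
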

\begin{proof}
It suffices to investigate the remainder $R(a,u)$. Using the continuous Littlewood-Paley decomposition (\ref{LPCont}),
$$
R(a,u)=\int_{\substack{2^{-10}\leq t_1/t_2\leq 2^{10}\\ 1\leq t_1,t_2}}\psi_{t_1}\big(|\nabla|\big)a\cdot\psi_{t_2}\big(|\nabla|\big)u\cdot \frac{dt_1dt_2}{t_1t_2}
+\phi\big(|\nabla|\big)a\cdot\phi\big(|\nabla|\big)u.
$$
Only the integral is non-trivial. We make a change of variable $(t_1,t_2)=t(\cos\varphi,\sin\varphi)$, so that it equals
$$
\int_{2^{-10}\leq \tan\varphi\leq 2^{10}}\left(\int_1^\infty\psi_{t\cos\varphi}\big(|\nabla|\big)a\cdot\psi_{t\sin\varphi}\big(|\nabla|\big)u\cdot \frac{dt}{t}\right)\sin\varphi d\varphi.
$$
Then $R(a,u)=\Op(R[a])u$, where the symbol
$$
R[a](x,\xi)=
\int_{2^{-10}\leq \tan\varphi\leq 2^{10}}\left(\int_1^\infty\psi_{t\cos\varphi}\big(|\nabla|\big)a
\cdot\psi_{t\sin\varphi}(|\xi|)\I[\xi]\cdot \frac{dt}{t}\right)\sin\varphi d\varphi.
$$
Just as in the proof of the previous proposition, we employ Lemma \ref{CutOffKer} and Theorem \ref{Multm} to conclude, for a strongly RT-admissible tuple $q$,
$$
\mathbf{M}^0_{k,l;q}(R[a])\lesssim_{k,l}\size[\xi]^{k-l}|a|_{L^\infty}.
$$
Thus by Stein's theorem, $\Op(R[a])$ maps $H^s$ to itself for $s>0$.
\end{proof}
\begin{remark}\label{R(a)Symbol}
If in addition $a\in C^r_*(\Gp)$, then from the spectral characterization of Zygmund spaces, we find that in fact $R[a]\in \mathscr{S}^{-r}_{1,1}$, so 
$$
\|R(a,u)\|_{H^s}\lesssim|a|_{C^r_*}\|u\|_{H^{s-r}}.
$$
\end{remark}

We also have a direct generalization of Bony's para-linearization theorem, proposed by Bony \cite{Bony1981}. Such a result is neither proved true or false for general compact manifolds in \cite{KR2006}. 
\begin{theorem}[Bony]\label{Bony}
Suppose $F\in C^\infty(\mathbb{C};\mathbb{C})$ (understood as smooth mapping on the plane instead of holomorphic function), $F(0)=0$. Let $r>0$, and suppose $u\in C^r_*$. Write $u_t=\phi(D/t)u$, where $\phi$ is the function in the Littlewood-Paley decomposition (\ref{LPCont}). Then with the symbol
$$
l_u(x,\xi):=\int_1^\infty F'(u_t(x))\cdot\psi_t\big(|\xi|\big)\cdot\I[\xi]\frac{dt}{t},
$$
there holds $F(u)=F(u_1)+\Op(l_u)u$, and we have the para-linearization formula
$$
F(u)=F(u_1)+T_{F'(u)}u+\Op(R[u])u
$$
with the symbol $R[u](x,\xi)\in \mathscr{S}^{-r}_{1,1}$. Consequently, if $u\in C^r_*\cap H^s$, then $F(u)-T_{F'(u)}u\in H^{s+r}$. In particular, for $s>n/2$, $F(u)-T_{F'(u)}u\in H^{2s-n/2}$.
\end{theorem}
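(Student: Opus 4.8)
The plan is to transcribe the classical argument (parallel to \cite{Met2008}, Section 4.4), the only genuinely new inputs being the spectral-localization statements and multiplier estimates already available, together with the fact that on $\Gp$ every left-invariant vector field commutes with every Fourier multiplier in $|\nabla|$ (because $\Delta$ is the Casimir, hence central in the enveloping algebra). First I would set up the exact chain rule. Since $\partial_t\phi(\lambda/t)=\tfrac1t\psi(\lambda/t)$ by the choice $\psi(\lambda)=-\lambda\phi'(\lambda)$, one has $\partial_t u_t=\tfrac1t\psi_t(|\nabla|)u$, and by Corollary \ref{LPZygCor} $u_t\to u$ in $L^\infty$ with $|\psi_t(|\nabla|)u|_{L^\infty}\lesssim t^{-r}|u|_{C^r_*}$; hence $t\mapsto F(u_t)$ is $C^1$ into $L^\infty$ and
$$
F(u)-F(u_1)=\int_1^\infty\partial_t\big[F(u_t)\big]\,dt=\int_1^\infty F'(u_t)\cdot\psi_t(|\nabla|)u\,\frac{dt}{t},
$$
the integral converging absolutely since its integrand is $O(t^{-r})$. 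Unwinding the quantization formula (\ref{Op(a)}) for $l_u$, which is a scalar multiple of the identity on each $\mathcal{H}_\xi$, identifies the right-hand side with $\Op(l_u)u$, so $F(u)=F(u_1)+\Op(l_u)u$. Comparing $l_u$ with the symbol of $T_{F'(u)}$ read off from (\ref{T_au}), I set
$$
R[u](x,\xi):=l_u(x,\xi)-\sigma[T_{F'(u)}](x,\xi)=\int_1^\infty g_t(x)\,\psi_t(|\xi|)\,\mathrm{Id}_{\mathcal{H}_\xi}\,\frac{dt}{t},\quad g_t:=F'(u_t)-\phi_{2^{-10}t}(|\nabla|)F'(u),
$$
so that $F(u)=F(u_1)+T_{F'(u)}u+\Op(R[u])u$ holds tautologically, and everything reduces to proving $R[u]\in\mathscr{S}^{-r}_{1,1}(\Gp)$.

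The crux is the pointwise bound $|X^\alpha g_t|_{L^\infty}\lesssim_\alpha t^{|\alpha|-r}K(|u|_{C^r_*})$, with $K$ increasing. A term-by-term estimate is too lossy, since each of $X^\alpha F'(u_t)$ and $X^\alpha\phi_{2^{-10}t}(|\nabla|)F'(u)$ is only $O\big(t^{(|\alpha|-r)_+}\big)$, which for $|\alpha|<r$ does not beat $t^{|\alpha|-r}$; one must extract the cancellation, writing
$$
g_t=\big(F'(u_t)-\phi_{2^{-10}t}(|\nabla|)F'(u_t)\big)+\phi_{2^{-10}t}(|\nabla|)\big(F'(u_t)-F'(u)\big).
$$
For the first summand, the commutation of $X^\alpha$ with $\phi_{2^{-10}t}(|\nabla|)$ turns $X^\alpha$ of it into $(1-\phi_{2^{-10}t}(|\nabla|))X^\alpha F'(u_t)$; the multivariate chain rule, $|X^\gamma u_t|_{L^\infty}\lesssim t^{(|\gamma|-r)_+}|u|_{C^r_*}$ (Corollary \ref{LPZygCor}), and the elementary inequality $\sum_j(|\gamma_j|-r)_+\le(|\alpha|-r)_+$ for $\sum_j\gamma_j=\alpha$ give $\|X^\alpha F'(u_t)\|_{C^M}\lesssim t^{(|\alpha|+M-r)_+}K$, and then the last assertion of Corollary \ref{LPZygCor} applied at regularity $M>r$ produces the wanted $t^{|\alpha|-r}K$. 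For the second summand, $u\in C^r_*\subset L^\infty$ makes $F'$ Lipschitz on the range of $u$, so $|F'(u_t)-F'(u)|_{L^\infty}\lesssim|u_t-u|_{L^\infty}\lesssim t^{-r}|u|_{C^r_*}$ (Corollary \ref{LPZygCor}); combining with $|X^\alpha\phi_{2^{-10}t}(|\nabla|)w|_{L^\infty}\lesssim t^{|\alpha|}|w|_{L^\infty}$ (the $r=0$ case of Corollary \ref{LPZygCor} plus the $L^1$ kernel bound of Lemma \ref{CutOffKer}) again yields $t^{|\alpha|-r}K$.

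Granting this, the symbol estimate is immediate: since $\Df_Q$ commutes with $x$-operations and $g_t$ is scalar,
$$
X^\alpha_x\Df_Q^\beta R[u](x,\xi)=\int_1^\infty (X^\alpha g_t)(x)\cdot\Df_Q^\beta\big[\psi_t(|\cdot|)\,\mathrm{Id}\big](\xi)\,\frac{dt}{t},
$$
where by Theorem \ref{Multm} the endomorphism $\Df_Q^\beta[\psi_t(|\cdot|)\mathrm{Id}](\xi)$ has operator norm $\lesssim\langle\xi\rangle^{-|\beta|}$ and by (the proof of) Lemma \ref{DiffVanish} vanishes unless $|\xi|\in[t/2-C|\beta|,\,t+C|\beta|]$; so for $\langle\xi\rangle$ large the $t$-integral runs over an interval of $\tfrac{dt}{t}$-length $O(1)$ on which $t\asymp\langle\xi\rangle$, giving $\|X^\alpha_x\Df_Q^\beta R[u](x,\xi)\|\lesssim K(|u|_{C^r_*})\langle\xi\rangle^{-r+|\alpha|-|\beta|}$ (the finitely many $\xi$ with $\langle\xi\rangle$ bounded contributing only an $\mathscr{S}^{-\infty}$ piece, since there $R[u](x,\xi)$ is smooth in $x$). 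By Definition \ref{S^mrdAdmi} with $\rho=\delta=1$ and Theorem \ref{Fischer} this is $R[u]\in\mathscr{S}^{-r}_{1,1}(\Gp)$. To conclude, $F(u)-T_{F'(u)}u=F(u_1)+\Op(R[u])u$ with $F(u_1)\in C^\infty(\Gp)$ ($u_1=\phi(|\nabla|)u$ being a finite linear combination of matrix coefficients), and Stein's Theorem \ref{SteinTheorem}, applied with target regularity $s+r>0$, gives $\Op(R[u]):H^{(s+r)-r}=H^s\to H^{s+r}$; hence $F(u)-T_{F'(u)}u\in H^{s+r}$. For $s>n/2$ the Sobolev embedding $H^s(\Gp)\hookrightarrow C^{s-n/2}_*(\Gp)$ (a consequence of Propositions \ref{LPHs} and \ref{LPZyg}) permits the choice $r=s-n/2>0$, giving membership in $H^{2s-n/2}$.

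I expect the main obstacle to be exactly the pointwise bound on $X^\alpha g_t$: securing the full gain $t^{|\alpha|-r}$, rather than the innocuous-looking $t^{(|\alpha|-r)_+}$, is what forces one to keep $F'(u_t)$ and $\phi_{2^{-10}t}(|\nabla|)F'(u)$ coupled and to exploit their cancellation, and on $\Gp$ this rests on the two group-specific facts used above — centrality of the Casimir (so left-invariant vector fields commute with the spectral cut-offs) and the compatibility of the chain rule with Corollary \ref{LPZygCor}. A residual technicality is the logarithmic loss at integer values of $r$, where $X^\alpha F'(u_t)$ with $|\alpha|=r$ is only controlled in the Zygmund class $C^0_*$; this is absorbed in the usual way by carrying the Zygmund norms throughout. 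Everything else is a faithful transcription of the Euclidean argument, made available by the spectral-localization Corollary \ref{SpecPrd} and Lemma \ref{DiffVanish}.
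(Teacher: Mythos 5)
Your proof is correct, and it reaches the same endpoint via a genuinely different route to the crucial pointwise estimate $|X^\alpha g_t|_{L^\infty}\lesssim t^{|\alpha|-r}K$. The paper splits $g_t$ as $[F'(u_t)-F'(u)]+[F'(u)-\phi_{2^{-10}t}(|\nabla|)F'(u)]$, gets the bound directly at $\alpha=0$ and at $|\alpha|>r$ (the latter by estimating the two terms separately, working in a local coordinate chart for the chain rule), and then \emph{interpolates} between these two regimes (using the classical Landau–Kolmogorov inequality) to fill in $0<|\alpha|\le r$. You instead use the decomposition $g_t=(1-\phi_{2^{-10}t}(|\nabla|))F'(u_t)+\phi_{2^{-10}t}(|\nabla|)(F'(u_t)-F'(u))$, which handles all $\alpha$ uniformly: the first piece is controlled because $X^\alpha$ commutes with $\phi_{2^{-10}t}(|\nabla|)$ — a group-theoretic fact the paper never invokes, resting on the centrality of the Casimir — so the cut-off can be pulled through and the last assertion of Corollary \ref{LPZygCor} applied at regularity $M>r$; the second piece uses only Lipschitzness of $F'$ and the $L^1$ kernel bound. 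This sidesteps the interpolation step entirely and stays in the global left-invariant calculus rather than retreating to local coordinates, which is arguably more in the spirit of the paper's framework. One caveat worth recording (which you do flag): since $|X^\gamma u_t|_{L^\infty}$ only gains $\log(1+t)$ when $|\gamma|=r$ is an integer, the clean bound $t^{(|\alpha|+M-r)_+}$ for $\|X^\alpha F'(u_t)\|_{C^M}$ carries potential log factors; these are harmless here because, when a factor with $|\gamma_j|=r$ appears, the remaining power of $t$ is strictly less than $|\alpha|+M-r$, so the $\log$ is absorbed — but this deserves to be said explicitly rather than gestured at, since it is precisely the role the paper assigns to its interpolation step.
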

\begin{proof}
The proof resembles that of Theorem 10.3.1. in \cite{Hormander1997} very much. We first set $u_t=\phi_t\big(|\nabla|\big)u$, then define a symbol
$$
l_u(x,\xi):=\int_1^\infty F'(u_t(x))\cdot\psi_t(|\xi|)\I[\xi]\frac{dt}{t}
$$
That $F(u)=F(u_1)+l_u(x,D)u$ is obtained by a direct computation already done when estimating the symbol of $T_a$, which also shows $l_u(x,\xi)\in \mathscr{S}^0_{1,1}$. Working in local coordinate charts, we find $F(u),F'(u)\in C^r_*$. 

Let us set $p(x,\xi)$ as the symbol of the para-product operator $T_{F'(u)}$. For any strongly RT-admissible tuple $q$, we estimate the $\mathbf{M}_{k,l;q}^{-r}$ norm of
$$
R[u](x,\xi):=l_u(x,\xi)-p(x,\xi)
=\int_1^\infty
\Big[F'(u_t(x))-\phi_{2^{-10}t}\big(|\nabla|\big)F'(u(x))\Big]\psi_t(|\xi|)\I[\xi]\frac{dt}{t}.
$$
In view of Lemma \ref{CutOffKer} and Corollary \ref{Fischer} once again, all we need to show is that the magnitude of
\begin{equation}\label{2Alpha}
X^\alpha_x\Big[F'(u_t)-\phi_{2^{-10}t}\big(|\nabla|\big)F'(u)\Big]
\end{equation}
has an upper bound $K_\alpha(|u|_{C^r_*}) t^{|\alpha|-r}$, where $K_\alpha$ is an increasing function. 

For $\alpha=0$, we find the quantity (\ref{2Alpha}) equals
$$
F'(u_t)-\phi_{2^{-10}t}\big(|\nabla|\big)F'(u)
=F'(u_t)-F'(u)+F'(u)-\phi_{2^{-10}t}\big(|\nabla|\big)F'(u).
$$
By Corollary \ref{LPZygCor}, we have $|u-u_t|\lesssim t^{-r}|u|_{C^r_*}$, and since $F'(u)\in C^r_*$, we find that (\ref{2Alpha}) is controlled by $t^{-r}$ for $\alpha=0$. 

For $|\alpha|>r$, the second term in (\ref{2Alpha}) is controlled by $t^{|\alpha|-r}|u|_{C^r_*}$ with the aid of Corollary \ref{LPZygCor}, since $F'(u)\in C^r_*$. We note that being $C^r_*$ is a local property, so it is legitimate to work in any local coordinate chart $\{x\}$ and estimate the usual local partial derivatives of the ambient function. Using the chain rule, we find that the first term in (\ref{2Alpha}) is a $C^\infty$-linear combination of terms of the form
$$
F^{(1+|\alpha|-|\beta|)}(u_t)\left(\partial_x^{\beta_1}u_t\right)^{j_1}\cdots\left(\partial_x^{\beta_k}u_t\right)^{j_k},
$$
where $\beta=\beta_1+\cdots+\beta_k$, and $j_1|\beta_1|+\cdots+j_k|\beta_k|=|\alpha|$. Using Corollary \ref{LPZygCor} again, we can bound this term by
$$
K(|u|_{C^r_*})\prod_{i:\beta_i> r}\left(|u|_{C^r_*}t\right)^{j_i(|\beta_i|-r)}
\cdot\prod_{i:\beta_i\leq r}\left(|u|_{C^r_*}\log(1+t)\right)^{j_i}
=O(t^{|\alpha|-r}).
$$
As for $0<|\alpha|\leq r$, we can just use the classical interpolation inequality for smooth functions with all derivatives bounded, in between 0 and $r+1$:
$$
|D^jf|_{L^\infty}\lesssim_{j,k}|f|_{C^k}^{j/k}\cdot|f|_{L^\infty}^{1-j/k},
\quad
0<j<k.
$$
This implies that (\ref{2Alpha}) is controlled by $t^{|\alpha|-r}$, hence ensures $R[u]\in \mathscr{S}^{-r}_{1,1}$. The claim of the theorem now follows from Stein's theorem.
\end{proof}

\section{Para-differential Operators on Compact Lie Group}\label{4}
\subsection{Rough Symbols and Their Smoothing}
Because of the spectral localization property on $\Gp$, the theory of para-differential operators resembles that on $\mathbb{R}^n$ very much. The details of the latter may be found in Chapter 9-10 in \cite{Hormander1997}. For completeness and convenience of future reference, we still write down the details here.

\begin{definition}\label{Rough}
For $r\geq0$ and $m\in\mathbb{R}$, define the symbol class $\mathcal{A}_r^m(\Gp)$ to be the collection of all symbols $a(x,\xi)$ on $\Gp\times\DuGp$, such that if $q=(q_i)_{i=1}^M$ is a strongly RT-admissible tuple, then
$$
\big\|\Df_{q,\xi}^\beta a(x,\xi)\big\|_{r;x}\leq C_\beta\size[\xi]^{m-|\beta|},\quad\forall\xi\in\DuGp.
$$
Here the norm $\|a(x,\xi)\|_{r;x}$ for $a(\cdot,\xi):\Gp\to\mathrm{End}(\Hh[\xi])$ is defined by
$$
\|a(x,\xi)\|_{r;x}:=
\sup_{B^*\in\mathrm{End}(\Hh[\xi])^*}\frac{\big|\langle B^*, a(x,\xi)\rangle\big|_{C^r_{*x}}}{\|B^*\|},
$$
as in Remark \ref{VectZyg}, that is we consider $\mathrm{End}(\Hh[\xi])$ merely as a normed space instead of a normed algebra. Introduce the following norm on $\mathcal{A}_r^m(\Gp)$:
$$
\mathbf{W}^{m;r}_{l;q}(a):=\sup_{\xi\in\DuGp}\sum_{|\beta|\leq l}\size[\xi]^{|\beta|-m}\big\|\Df_{q,\xi}^\beta a(x,\xi)\big\|_{r;x}
$$
\end{definition}
We note that Fischer's Lemma \ref{Fisqq'Lem} ensures that the definition of $\mathcal{A}^m_r(\Gp)$ does not depend on the choice of $q$, so we can choose any set of functions $q$ that facilitates our computation. In particular, the fundamental tuple $Q$ defined in (\ref{QFund}) is a convenient choice.

\begin{definition}[Admissible cut-off]\label{AdmCutoff}
An admissible cut-off function $\chi$ with parameter $\delta\in(0,1/2)$ is a smooth function on $\mathbb{R}\times\mathbb{R}$ such that
$$
\chi(\mu,\lambda)=\left\{
\begin{aligned}
1,&\quad |\mu|\leq\frac{\delta}{2}\size[\lambda],\\
0,&\quad |\mu|\geq\delta\size[\lambda],
\end{aligned}
\right.
$$
and there also holds
$$
|\partial_\mu^k\partial_\lambda^l\chi(\mu,\lambda)|
\lesssim_{k,l}\langle\lambda\rangle^{-k-l}.
$$
\end{definition}
An obvious choice of admissible cut-off function is the one that we used to construct para-product (\ref{T_au}):
\begin{equation}\label{AdmCutoffdelta}
\chi(\mu,\lambda)=\int_1^\infty\phi_{\delta t/2}(\mu)\psi_t(\lambda)\frac{dt}{t}.
\end{equation}
However, it will be shown shortly that there is a lot of flexibility in choosing an admissible cut-off function.

\begin{definition}[Para-differential Operator]
Let $\chi$ be an admissible cut-off function with some parameter $\delta$. For $a\in\mathcal{A}_r^m(\Gp)$, set $a^\chi (x,\xi)=\chi(|\nabla_x|,|\xi|)a(x,\xi)$, the regularized symbol corresponding to $a$. Define the para-differential operator $T_a^\chi$ corresponding to $a\in\mathcal{A}_r^m(\Gp)$ as
$$
T_a^\chi u(x):= \Op(a^\chi)u(x).
$$
\end{definition}

We immediately deduce that $a^\chi$ is a (1,1) symbol satisfying the spectral condition with parameter $\delta$ with improved growth estimate:
\begin{proposition}\label{ParaDiff1}
Suppose $r\geq0$. Fix a basis $X_1,\cdots,X_n$ of $\mathfrak{g}$, and define $X^\alpha$ as in Proposition \ref{NormalOrder}. Suppose $a\in\mathcal{A}_r^m(\Gp)$ and $\chi$ is an admissible cut-off function with parameter $\delta$. If $r=0$, then $a^\chi \in\Sigma^{m+\varepsilon}_\delta(\Gp)\subset \mathscr{S}^{m+\varepsilon}_{1,1}(\Gp)$. If $r>0$, then $a^\chi \in\Sigma^{m}_\delta(\Gp)\subset \mathscr{S}^{m}_{1,1}(\Gp)$ for some $\delta>0$. In fact, for any strongly RT-admissible tuple $q$, we have
$$
\big\|X_x^\alpha\Df_{q,\xi}^\beta a^\chi (x,\xi)\big\|
\lesssim_{\alpha,\beta;q}\left\{
\begin{aligned}
    & \mathbf{W}^{m;r}_{|\beta|;q}(a)\cdot\size[\xi]^{m+(|\alpha|-r)_+-|\beta|} &\quad |\alpha|\neq r \\
    & \mathbf{W}^{m;r}_{|\beta|;q}(a)\cdot\size[\xi]^{m-|\beta|}\log(1+\size[\xi]) &\quad |\alpha|=r
\end{aligned}
\right.,
$$
where $s_+=\max(s,0)$. The second inequality matters only when $r$ is a positive integer.
\end{proposition}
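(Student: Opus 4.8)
The plan is to exploit the product structure of $a^\chi=\chi(|\nabla_x|,|\xi|)a$ and to separate the action of the difference operators $\Df_{q,\xi}^\beta$ on the $\xi$-variable from that of the left-invariant derivatives $X^\alpha_x$ on the $x$-variable, the latter being, after a regularization at scale $\size[\xi]$, exactly what the vector-valued Zygmund estimates of Corollary \ref{LPZygCor} and Remark \ref{VectZyg} control. First I would dispense with the spectral condition, which costs nothing: for fixed $\xi$ the operator $\chi(|\nabla_x|,|\xi|)$ is the $x$-Fourier multiplier with symbol $\eta\mapsto\chi(|\eta|,|\xi|)$, which vanishes once $|\eta|\ge\delta\size[\xi]$ by Definition \ref{AdmCutoff}, so $\Ft{a^\chi}(\eta,\xi)=\chi(|\eta|,|\xi|)\Ft{a}(\eta,\xi)=0$ for $|\eta|\ge\delta\size[\xi]$; hence $a^\chi$ satisfies the spectral condition with parameter $\delta<1/2$ for every $\xi$, and each $a^\chi(\cdot,\xi)$ is band-limited, hence $C^\infty$, in $x$. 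Granting the quantitative estimate asserted in the proposition, membership in $\Sigma^{m+\varepsilon}_\delta\subset\mathscr{S}^{m+\varepsilon}_{1,1}$ — respectively in $\Sigma^m_\delta\subset\mathscr{S}^m_{1,1}$ when $r>0$ — is then just a comparison with Definition \ref{S^mrd}, the extra $\varepsilon$ being present only to absorb the logarithm that appears at $|\alpha|=r=0$.

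The main work is the reduction to $\beta=0$. I would take $q=Q$ to be the fundamental tuple (\ref{QFund}), legitimate by Fischer's Lemma \ref{Fisqq'Lem}, and show that $\Df_{q,\xi}$ commutes with $a\mapsto a^\chi$ \emph{up to a term of one order lower}: since $\Df_{q,\xi}$ acts only on $\xi$ it commutes with every $x$-Fourier projection, so applying the Leibniz identity (\ref{Leibniz}) to the scalar factor $\chi(|\eta|,|\xi|)$ attached to the $\eta$-th $x$-Fourier component of $a(\cdot,\xi)$ yields, schematically,
$$
\Df_{q,\xi}\big(a^\chi\big)=\big(\Df_{q,\xi}a\big)^{\chi}+a^{\chi'},
$$
where $\chi'$ is again an $x$-multiplier supported in $|\eta|\lesssim\size[\xi]$ but carrying an extra $\size[\xi]^{-1}$; the requisite bounds on $\chi'$, and on the analogous modified cut-offs produced by the cross terms of (\ref{Leibniz}), come from the multiplier Theorem \ref{Multm} applied with $h=\chi(|\eta|,\cdot)$, whose multiplier norms are bounded uniformly in the parameter $\eta$ because of the symbol estimates in Definition \ref{AdmCutoff}. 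Iterating, $\Df_{q,\xi}^\beta a^\chi$ becomes a finite sum of terms of the form $\widetilde\chi(|\nabla_x|,|\xi|)\big(\Df_{q,\xi}^{\beta'}a\big)(\cdot,\xi)$ with $|\beta'|\le|\beta|$, where $\widetilde\chi$ is a cut-off-type $x$-multiplier at scale $\size[\xi]$ gaining a factor $\size[\xi]^{-(|\beta|-|\beta'|)}$, while by the very definition of $\mathcal{A}^m_r$ (Definition \ref{Rough}) the inner symbol satisfies $\|\Df_{q,\xi}^{\beta'}a(x,\xi)\|_{r;x}\le\mathbf{W}^{m;r}_{|\beta|;q}(a)\,\size[\xi]^{m-|\beta'|}$ with its $C^r_*$-regularity in $x$ intact.

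It then remains to estimate $X^\alpha_x$ of such a term, i.e. to prove, for any admissible cut-off $\chi$ and any vector-valued $b(\cdot,\xi)$ of finite $\|\cdot\|_{r;x}$,
$$
\big\|X^\alpha_x\big(\chi(|\nabla_x|,|\xi|)b(\cdot,\xi)\big)\big\|
\lesssim_\alpha
\begin{cases}
\size[\xi]^{(|\alpha|-r)_+}\,\|b(\cdot,\xi)\|_{r;x}, & |\alpha|\ne r,\\
\log(1+\size[\xi])\,\|b(\cdot,\xi)\|_{r;x}, & |\alpha|=r,
\end{cases}
$$
which is Corollary \ref{LPZygCor} read in its vector-valued form (Remark \ref{VectZyg}): decomposing $\chi(|\nabla_x|,|\xi|)=\phi(|\nabla_x|)+\int_1^\infty\chi(|\nabla_x|,|\xi|)\psi_t(|\nabla_x|)\frac{dt}{t}$ as in (\ref{LPCont}), the integrand vanishes for $t\gtrsim\size[\xi]$, on $1\le t\lesssim\size[\xi]$ the profile $\mu\mapsto\chi(\mu,|\xi|)\psi(\mu/t)$ is a bump at scale $t$ with $O(1)$ multiplier norms so that $X^\alpha$ costs $t^{|\alpha|}$ by Lemma \ref{CutOffKer} while $\|\psi_t(|\nabla_x|)b(\cdot,\xi)\|_{L^\infty}\lesssim t^{-r}\|b(\cdot,\xi)\|_{r;x}$, and $\int_1^{c\size[\xi]}t^{|\alpha|-r}\frac{dt}{t}$ produces exactly $\size[\xi]^{(|\alpha|-r)_+}$, or $\log(1+\size[\xi])$ when $|\alpha|=r$. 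Multiplying through by the accumulated $\size[\xi]^{m-|\beta|}$ and the constant $\mathbf{W}^{m;r}_{|\beta|;q}(a)$ gives the proposition. I expect the delicate point to be the middle paragraph: since the regularization couples $x$ and $\xi$, the operator $\Df_{q,\xi}$ does not commute with $\chi(|\nabla_x|,|\xi|)$, and one must track the modified cut-offs $\widetilde\chi$ produced when difference operators hit the regularizer — keeping them at scale $\size[\xi]$, with the right number of $\size[\xi]^{-1}$ factors and uniform symbol bounds — which, lacking a closed formula like the Euclidean $\partial_\xi$, is carried out by pairing the Leibniz identity (\ref{Leibniz}) with the uniform multiplier estimate of Theorem \ref{Multm}.
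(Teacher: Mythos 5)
Your proof is correct and follows essentially the same route as the paper: the spectral condition from the cut-off, Fischer's lemma to reduce to the fundamental tuple, the vector-valued Zygmund estimate (Corollary \ref{LPZygCor}/Remark \ref{VectZyg}) for the $\beta=0$ case, and Theorem \ref{Multm} with the Leibniz identity to commute $\Df_{Q,\xi}$ past the regularizer; you simply spell out the commutation step (which the paper records later in the proof of Proposition \ref{a-a^chi}) in more detail. One small slip: the first term in your Littlewood--Paley split should read $\chi(|\nabla_x|,|\xi|)\phi(|\nabla_x|)$ rather than $\phi(|\nabla_x|)$, though the two agree once $\size[\xi]\gtrsim\delta^{-1}$, which is the only regime that matters.
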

\begin{proof}
That $a^\chi$ satisfies the spectral condition with parameter $\delta$ is immediate from the definition: the Fourier cut-off operator $\chi(|\nabla_x|,|\xi|)$ annihilates all frequencies of size greater than $\delta\size[\xi]$. We turn to verify the estimates for the $\mathscr{S}^m_{1,1}$ norms of $a^\chi$. By Fischer's lemma i.e. Lemma \ref{Fisqq'Lem}, it suffices to prove the inequality for $q=Q$, the fundamental tuple (\ref{QFund}). The advantage is that the corresponding RT difference operators have Leibniz type property, and all the $\Df_Q^\beta a^\chi$'s also satisfy spectral conditions (with parameters proportional to $\delta$) by Lemma \ref{Sigmadelta}.

For $\beta=0$, the estimate for $\|X^\alpha_xa^\chi(x,\xi)\|$ follows from the vector-valued version of Corollary \ref{LPZygCor} (see Remark \ref{VectZyg}), where the normed space considered is $\Hh[\xi]$. To obtain the estimate for $\|X_x^\alpha\Df_{Q,\xi}^\beta a^\chi (x,\xi)\|
$ with $\beta\neq0$, we just have to apply Theorem \ref{Multm}.
\end{proof}

We notice that the norm of $a^\chi(x,\xi)$ grows slower upon differentiation in $x$ than generic $\mathscr{S}^m_{1,1}$ symbols. This property is called by H\"{o}rmander as having \emph{reduced order $m-r$} for symbols on $\mathbb{R}^n$. 

The next proposition shows that the para-differential operator $T_a^\chi=\Op(a^\chi)$ extracts the ``highest order differentiation" from $\Op(a)$. The para-product decomposition can be seen as a special case of it.

\begin{proposition}\label{a-a^chi}
Suppose $r>0$, $a\in\mathcal{A}_r^m(\Gp)$. Fix a basis $X_1,\cdots,X_n$ of $\mathfrak{g}$, and define $X^\alpha$ as in Proposition \ref{NormalOrder}. If $\chi$ is an admissible cut-off function, then $a-a^\chi \in\mathcal{A}_0^{m-r}$, and in fact for any strongly RT-admissible tuple $q$,
$$
\sup_{x\in\Gp}\big\|\Df_{q,\xi}^\beta(a- a^\chi)(x,\xi)\big\|
\lesssim_{\beta,q} \mathbf{W}^{m;r}_{|\beta|;q}(a)\cdot\langle\xi\rangle^{m-r-|\beta|}.
$$
\end{proposition}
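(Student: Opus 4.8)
The plan is to prove the displayed estimate, which immediately gives $a-a^\chi\in\mathcal{A}_0^{m-r}$ since the class $\mathcal{A}_0^\bullet$ is controlled by the $C^0_*$-norm in $x$, and that norm is $\lesssim\sup_x\|\cdot\|$. I would take $q=Q$, the fundamental tuple (\ref{QFund}) — legitimate by Fischer's Lemma \ref{Fisqq'Lem} and convenient because its difference operators obey the Leibniz rule (\ref{Leibniz}) — and assume $\langle\xi\rangle$ large, the finitely many smaller $\xi$ being absorbed into the constant (there one just bounds $\Df_Q^\beta a$ by the $\mathbf{W}$-norm and $\Df_Q^\beta a^\chi$ via Proposition \ref{ParaDiff1}). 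I treat $\beta=0$ first, then general $\beta$ by a Leibniz expansion that reduces matters to the $\beta=0$ case applied to the lower-order symbols $\Df_Q^\beta a\in\mathcal{A}_r^{m-|\beta|}$.

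For $\beta=0$, start from $a-a^\chi=\bigl(1-\chi(|\nabla_x|,|\xi|)\bigr)a(\cdot,\xi)$. Since $\chi(\mu,\lambda)\equiv1$ for $|\mu|\le\frac{\delta}{2}\langle\lambda\rangle$ and $\langle|\xi|\rangle=\langle\xi\rangle$, the multiplier $1-\chi(|\nabla_x|,|\xi|)$ kills all $x$-frequencies $\le\frac{\delta}{2}\langle\xi\rangle$ and, by Lemma \ref{CutOffKer}, has a convolution kernel of $L^1$-norm $\lesssim1$, hence is bounded on $L^\infty_x$ uniformly in $\xi$. Choosing the partial-sum cut-off $\phi_{c\langle\xi\rangle}(|\nabla_x|)$ of (\ref{LPCont}) with $c>0$ small enough that $\bigl(1-\chi(|\nabla_x|,|\xi|)\bigr)\phi_{c\langle\xi\rangle}(|\nabla_x|)=0$, one gets $(a-a^\chi)(\cdot,\xi)=\bigl(1-\chi(|\nabla_x|,|\xi|)\bigr)\bigl(a(\cdot,\xi)-\phi_{c\langle\xi\rangle}(|\nabla_x|)a(\cdot,\xi)\bigr)$, so
$$\bigl\|(a-a^\chi)(x,\xi)\bigr\|\ \lesssim\ \bigl\|a(\cdot,\xi)-\phi_{c\langle\xi\rangle}(|\nabla_x|)a(\cdot,\xi)\bigr\|_{L^\infty_x;\,\mathrm{End}(\mathcal{H}_\xi)}\ \lesssim\ \langle\xi\rangle^{-r}\,\|a(x,\xi)\|_{r;x}\ \le\ \langle\xi\rangle^{m-r}\,\mathbf{W}^{m;r}_{0;Q}(a),$$
the middle step being the vector-valued Zygmund estimate of Remark \ref{VectZyg} applied with $V=\mathrm{End}(\mathcal{H}_\xi)$ (constant independent of $\dim V$).

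For general $\beta$, pass to the $x$-Fourier side. As $\Df_Q^\beta$ acts only in $\xi$ it commutes with the $x$-Fourier transform, so $\Ft{\Df_Q^\beta(a^\chi)}(\eta,\xi)=\Df_{Q,\xi}^\beta\bigl[\chi(|\eta|,|\cdot|)\,\Ft{a}(\eta,\cdot)\bigr](\xi)$. Expanding this product of the scalar radial multiplier $\zeta\mapsto\chi(|\eta|,|\zeta|)\mathrm{Id}_{\mathcal{H}_\zeta}$ with $\Ft{a}(\eta,\cdot)$ by the iterated Leibniz rule for $Q$, the term in which no difference operator hits the scalar factor recombines, after $x$-inversion, to $\bigl(\Df_Q^\beta a\bigr)^\chi(x,\xi)$; every other term carries a factor $\Df_Q^\gamma\bigl[\chi(|\eta|,|\cdot|)\bigr](\xi)$ with $|\gamma|\ge1$, multiplying $\Ft{\Df_Q^{\gamma'}a}(\eta,\xi)$ with $|\gamma|+|\gamma'|\ge|\beta|$. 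The crux is to control these factors. Since $\chi(|\eta|,\cdot)$ is, uniformly in $\eta$, a symbol of order $0$ in the spectral variable (by the admissibility bounds on $\chi$), Theorem \ref{Multm} shows $\Df_Q^\gamma\bigl[\chi(|\eta|,|\cdot|)\bigr]$ gains $|\gamma|$ powers of $\langle\xi\rangle$ in amplitude; and, because $Q^\gamma(e)=0$ while $\Df_Q^\gamma$ at frequency $\xi$ depends only on values at $|\zeta|\in[\,|\xi|-O(1),|\xi|+O(1)\,]$ (spectral localization, Corollary \ref{SpecPrd}, cf. Lemma \ref{DiffVanish}), this factor vanishes unless $|\eta|\sim\langle\xi\rangle$ — for $|\eta|\gg\langle\xi\rangle$ the symbol is $0$ near $\xi$, while for $|\eta|\ll\langle\xi\rangle$ it equals $\mathrm{Id}$ near $\xi$, and $\Df_Q^\gamma$ of either is $0$ at $\xi$. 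Thus, as a Fourier multiplier in $\eta$, it is a Littlewood--Paley block at scale $\langle\xi\rangle$ whose kernel has $L^1$-norm $\lesssim\langle\xi\rangle^{-|\gamma|}$ (Lemma \ref{CutOffKer}). Applying it to $\Df_Q^{\gamma'}a(\cdot,\xi)\in C^r_*\bigl(\Gp;\mathrm{End}(\mathcal{H}_\xi)\bigr)$, whose $C^r_*$-norm is $\lesssim\langle\xi\rangle^{m-|\gamma'|}\mathbf{W}^{m;r}_{|\gamma'|;Q}(a)$, the Littlewood--Paley characterization of $C^r_*$ (Remark \ref{VectZyg}) yields the $L^\infty_x$-bound
$$\langle\xi\rangle^{-|\gamma|}\cdot\langle\xi\rangle^{-r}\cdot\langle\xi\rangle^{m-|\gamma'|}\mathbf{W}^{m;r}_{|\gamma'|;Q}(a)\ \le\ \langle\xi\rangle^{m-r-|\beta|}\mathbf{W}^{m;r}_{|\beta|;Q}(a),$$
using $|\gamma|+|\gamma'|\ge|\beta|$. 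Finally $\Df_Q^\beta(a-a^\chi)=\bigl(\Df_Q^\beta a-(\Df_Q^\beta a)^\chi\bigr)-(\text{error terms})$, and the first bracket is bounded by applying the $\beta=0$ estimate already proved to the symbol $\Df_Q^\beta a\in\mathcal{A}_r^{m-|\beta|}$; this closes the argument.

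The step I expect to be the main obstacle is the analysis of $\Df_Q^\gamma\bigl[\chi(|\eta|,|\cdot|)\bigr]$ just described: one needs both the amplitude gain $\langle\xi\rangle^{-|\gamma|}$ \emph{and} the concentration on the annulus $|\eta|\sim\langle\xi\rangle$, since it is the latter that lets one convert the Zygmund regularity of $a$ in $x$ into the extra factor $\langle\xi\rangle^{-r}$ — without it only $\langle\xi\rangle^{m-|\beta|}$ would come out once $r>1$. Both facts rest squarely on the spectral localization property of products on $\Gp$ (Corollary \ref{SpecPrd}), the genuinely non-commutative ingredient that is absent on a general compact manifold.
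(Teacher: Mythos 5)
Your proof is correct and follows essentially the same route as the paper's: reduce to the fundamental tuple $Q$ via Fischer's Lemma \ref{Fisqq'Lem}, handle $\beta=0$ by the vector-valued Zygmund estimate of Remark \ref{VectZyg}, and then for general $\beta$ expand by the Leibniz rule (\ref{Leibniz}), isolating the term $\Df_Q^\beta a-(\Df_Q^\beta a)^\chi$ and bounding the cross-terms by observing that $\Df_Q^\gamma[\chi(|\eta|,|\cdot|)](\xi)$ is a Littlewood--Paley block in $\eta$ concentrated near $\langle\xi\rangle$ with amplitude $\lesssim\langle\xi\rangle^{-|\gamma|}$, which converts the $C^r_*$ regularity of $a$ in $x$ into the crucial factor $\langle\xi\rangle^{-r}$. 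You work explicitly on the $x$-Fourier side and expand $\Df_Q^\beta$ directly via the iterated Leibniz rule rather than inducting on $|\beta|$, but this is a cosmetic difference; the substance — Lemma \ref{DiffVanish}/Corollary \ref{SpecPrd} for the annular support, Theorem \ref{Multm} for the amplitude, Corollary \ref{LPZygCor}/Remark \ref{VectZyg} for the Zygmund gain — is identical.
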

\begin{proof}
We still just need to prove for $q=Q$, the fundamental tuple. By induction, it suffices to prove for $|\beta|=1$. By the Leibniz type property (\ref{Leibniz}), setting $M_i$ to be the dimension of the $i$'th fundamental representation,
$$
\Df_{Q_i,\xi}(a-a^\chi)(x,\xi)
=\big(\Df_{Q_i,\xi}a-(\Df_{Q_i,\xi}a)^\chi\big)(x,\xi)
-\sum_{j,k=1}^{M_i}c^i_{j,k}\Df_{Q_j,\xi}(\chi\cdot\I[\xi])\big(|\nabla_x|,|\xi|\big)\Df_{Q_k}a(x,\xi).
$$
By Lemma \ref{Sigmadelta}, $\Df_{Q_i}a$ satisfies the spectral condition with parameter proportional to $\delta$, so we may simply apply Corollary \ref{LPZygCor} (and Remark \ref{VectZyg}) to conclude that 
$$
\sup_{x\in\Gp}\left\|\big(\Df_{Q_i,\xi}a-(\Df_{Q_i,\xi}a)^\chi\big)(x,\xi)\right\|
\lesssim_{Q} \mathbf{W}^{m;r}_{1;Q}(a)\cdot\langle\xi\rangle^{m-r-1}.
$$
The sum $\sum_{j,k=1}^{M_i}$ needs greater care to handle. For a fixed $\eta\in\DuGp$, Theorem \ref{Multm} ensures that the symbol $\Df_{Q_j,\xi}(\chi\cdot \I[\xi])\big(|\eta|,|\xi|\big)$ is in a bounded subset in $\mathscr{S}^{m-1}_{1,1}$, and by Lemma \ref{DiffVanish} it does not vanish only if 
$$
\frac{1}{\delta}|\eta|-C\leq|\xi|\leq\frac{2}{\delta}|\eta|+C,
$$
where $C$ depends on the algebraic structure of $\Gp$ only. Consequently, $\Df_{Q_j,\xi}(\chi\cdot \I[\xi])\big(|\eta|,|\xi|\big)\neq0$ only if $|\eta|/\size[\xi]\simeq \delta$, so when the highest weight of $\xi$ is large enough, the Fourier support of 
$$
\Df_{Q_j,\xi}(\chi\cdot\I[\xi])\big(|\nabla_x|,|\xi|\big)\Df_{Q_k}a(x,\xi)
$$
with respect to $x$ is contained in $\mathfrak{S}[C\delta\size[\xi],C'\delta\size[\xi]]$, with $C,C'>0$ depending on the algebraic structure of $\Gp$ only. Corollary \ref{LPZygCor} and Remark \ref{VectZyg} then implies that
$$
\sup_{x\in\Gp}\left\|\Df_{Q_j,\xi}(\chi\cdot\I[\xi])\big(|\nabla_x|,|\xi|\big)\Df_{Q_k}a(x,\xi)\right\|
\lesssim_Q
\mathbf{W}^{m;r}_{1;Q}(a)\cdot\langle\xi\rangle^{m-r-1}.
$$
\end{proof}

We next estimate the difference $a^{\chi_1}-a^{\chi_2}$ for different choices of admissible cut-off functions. The result shows that the choice of $\chi$ is in fact very flexible:
\begin{proposition}\label{FreedomForCut-off}
Suppose $r\geq0$, $\chi_1,\chi_2$ are two admissible cut-off functions with parameters $0<\delta_1<\delta_2<1/2$ respectively. Fix a basis $X_1,\cdots,X_n$ of $\mathfrak{g}$, and define $X^\alpha$ as in Proposition \ref{NormalOrder}. Then for $a\in\mathcal{A}_r^m$, the difference $a^{\chi_1}- a^{\chi_2}\in \Sigma^{m-r}_{\delta_2}$. In fact for any strongly RT-admissible tuple $q$,
$$
\big\|X^\alpha_x\Df_{q,\xi}^\beta (a^{\chi_1}- a^{\chi_2})(x,\xi)\big\|
\lesssim_{\alpha,\beta,q}
\mathbf{W}^{m;r}_{|\beta|;q}(a)\cdot\langle\xi\rangle^{m-r+|\alpha|-|\beta|}.
$$
Thus for any $s\in\mathbb{R}$, $T_a^{\chi_1}-T_a^{\chi_2}$ maps $H^{s+m}$ continuously to $H^s$, and
$$
\|(T_a^{\chi_1}-T_a^{\chi_2})u\|_{H^s}
\lesssim_{s,q}\mathbf{W}^{m;r}_{n+2;q}(a)\|u\|_{H^{s+m-r}}.
$$
\end{proposition}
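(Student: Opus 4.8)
The plan is to show that $a^{\chi_1}-a^{\chi_2}$ is, in fact, a spectrally localized symbol of \emph{reduced} order $m-r$, and then to quote Theorem \ref{Stein'}. Write $(a^{\chi_1}-a^{\chi_2})(x,\xi)=(\chi_1-\chi_2)(|\nabla_x|,|\xi|)a(x,\xi)$. Since $\delta_1<\delta_2$, the two admissible cut-offs both equal $1$ on $\{|\mu|\le\tfrac{\delta_1}{2}\langle\lambda\rangle\}$ and both equal $0$ on $\{|\mu|\ge\delta_2\langle\lambda\rangle\}$, so $\chi_1-\chi_2$ is supported in the \emph{annulus} $\tfrac{\delta_1}{2}\langle\lambda\rangle\le|\mu|\le\delta_2\langle\lambda\rangle$, and by Definition \ref{AdmCutoff} it still obeys $|\partial_\mu^k\partial_\lambda^l(\chi_1-\chi_2)|\lesssim_{k,l}\langle\lambda\rangle^{-k-l}$. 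Two consequences: first, for each $\xi$ the function $x\mapsto(a^{\chi_1}-a^{\chi_2})(x,\xi)$ has $x$-Fourier support in $S\big[\tfrac{\delta_1}{2}\langle\xi\rangle,\delta_2\langle\xi\rangle\big]$, which is exactly the spectral condition with parameter $\delta_2$; second, and crucially, that support lies in a dyadic \emph{shell} comparable to $\langle\xi\rangle$, not a low-pass region, so that applying a vector field $X^\alpha$ will cost precisely $\langle\xi\rangle^{|\alpha|}$ with \emph{no logarithmic loss}.

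To exploit the shell localization, fix $h\in C_0^\infty((0,\infty))$ with $h\equiv1$ on a neighbourhood of $[\tfrac{\delta_1}{4},2\delta_2]$. By Lemma \ref{DiffVanish} the difference operators $\Df_{q}^\beta$ enlarge $x$-Fourier supports by only $O(1)$ in frequency, so for $\langle\xi\rangle$ large (all that matters) $\Df_{q,\xi}^\beta(a^{\chi_1}-a^{\chi_2})(\cdot,\xi)=h_{\langle\xi\rangle}(|\nabla_x|)\,\Df_{q,\xi}^\beta(a^{\chi_1}-a^{\chi_2})(\cdot,\xi)$; applying $X^\alpha_x$ and invoking Lemma \ref{CutOffKer} with $P=X^\alpha$ (which gives $\|X^\alpha\check h_{\langle\xi\rangle}\|_{L^1}\lesssim_\alpha\langle\xi\rangle^{|\alpha|}$) yields
$$
\big\|X^\alpha_x\Df_{q,\xi}^\beta(a^{\chi_1}-a^{\chi_2})(x,\xi)\big\|
\lesssim_{\alpha}\langle\xi\rangle^{|\alpha|}\sup_{x\in\Gp}\big\|\Df_{q,\xi}^\beta(a^{\chi_1}-a^{\chi_2})(x,\xi)\big\|.
$$
It then remains to bound the $x$-supremum on the right. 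Writing $a^{\chi_1}-a^{\chi_2}=(a-a^{\chi_2})-(a-a^{\chi_1})$ and applying Proposition \ref{a-a^chi} to each of $\chi_1,\chi_2$ gives, for $r>0$,
$$
\sup_{x\in\Gp}\big\|\Df_{q,\xi}^\beta(a^{\chi_1}-a^{\chi_2})(x,\xi)\big\|
\lesssim_{\beta,q}\mathbf{W}^{m;r}_{|\beta|;q}(a)\,\langle\xi\rangle^{m-r-|\beta|}.
$$
The case $r=0$ is handled by the identical Leibniz-type bookkeeping (as in the proof of Proposition \ref{a-a^chi}), simply replacing the Zygmund gain $\langle\xi\rangle^{-r}$ by the trivial bound and using the vector-valued shell estimate of Remark \ref{VectZyg} for the piece of $a(\cdot,\xi)\in C^r_{*x}$ supported at $x$-frequency $\simeq\langle\xi\rangle$.

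Combining the two displays gives
$$
\big\|X^\alpha_x\Df_{q,\xi}^\beta(a^{\chi_1}-a^{\chi_2})(x,\xi)\big\|
\lesssim_{\alpha,\beta,q}\mathbf{W}^{m;r}_{|\beta|;q}(a)\,\langle\xi\rangle^{m-r+|\alpha|-|\beta|},
$$
which together with the spectral condition established in the first paragraph shows $a^{\chi_1}-a^{\chi_2}\in\Sigma^{m-r}_{\delta_2}(\Gp)$; in particular it is smooth in $x$, being band-limited. Since $\delta_2\in(0,1/2)$, Theorem \ref{Stein'} now applies to this symbol and shows that $T_a^{\chi_1}-T_a^{\chi_2}=\Op(a^{\chi_1}-a^{\chi_2})$ maps $H^{s+m-r}$ continuously into $H^s$ with operator norm controlled by $\mathbf{M}^{m-r}_{[s_+]+1,n+2;q}(a^{\chi_1}-a^{\chi_2})$; substituting the symbol bound above (with $|\alpha|\le[s_+]+1$ and $|\beta|\le n+2$) and using monotonicity of $\mathbf{W}^{m;r}_{l;q}$ in $l$ bounds this by $\mathbf{W}^{m;r}_{n+2;q}(a)$ up to an $s$-dependent constant, which is the claimed estimate.

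No single step here is deep: the heavy inputs — Theorem \ref{Stein'} for spectrally localized $(1,1)$ symbols, the Zygmund shell estimates, Lemma \ref{CutOffKer}, Lemma \ref{DiffVanish} — are already available. The only point requiring care, and the place I expect to spend effort, is the interaction between the difference operators $\Df_q^\beta$ and the cut-off multiplier: one must verify, via the Leibniz-type identity (\ref{Leibniz}) and the fact (Lemma \ref{DiffVanish}) that $\Df_q$ barely moves $x$-frequencies, that differencing in $\xi$ destroys neither the annular $x$-localization (needed for the clean, logarithm-free $\langle\xi\rangle^{|\alpha|}$ gain) nor the $\langle\xi\rangle^{-r}$ regularity gain. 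This is exactly the bookkeeping already performed in the proof of Proposition \ref{a-a^chi}, so here it reduces to tracking constants rather than introducing new ideas.
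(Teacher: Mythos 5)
Your proposal is correct and follows essentially the same idea as the paper's very brief proof: observe that $\chi_1-\chi_2$ is supported in the annulus $\{\tfrac{\delta_1}{2}\langle\lambda\rangle\le|\mu|\le\delta_2\langle\lambda\rangle\}$, so the $x$-Fourier support of $a^{\chi_1}-a^{\chi_2}$ lies in a shell comparable to $\langle\xi\rangle$, and then use this shell localization together with the Zygmund estimates of Corollary \ref{LPZygCor} / Remark \ref{VectZyg} to gain $\langle\xi\rangle^{-r}$ with no logarithmic loss and conclude by Theorem \ref{Stein'}. Your routing through Proposition \ref{a-a^chi} and Lemma \ref{CutOffKer} is a legitimate way to fill in the details the paper compresses into one sentence; the only minor imprecision is attributing the stability of the shell $x$-localization under $\Df_q^\beta$ to Lemma \ref{DiffVanish} alone — what is actually being used is the Leibniz bookkeeping from the proofs of Lemma \ref{Sigmadelta}(3) and Proposition \ref{a-a^chi}, where Lemma \ref{DiffVanish} is one ingredient — but this does not affect the substance of the argument.
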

\begin{proof}
This follows immediately from an observation and Corollary \ref{LPZygCor} (also Remark \ref{VectZyg}): the Fourier support of $\chi_1(\eta,\xi)-\chi_2(\eta,\xi)$ with respect to $x$ is in the set $\mathfrak{S}[\delta_1\size[\xi],\delta_2\size[\xi]]$.
\end{proof}
Thus, given a rough symbol $a\in\mathcal{A}^m_r$ with $r>0$, it is legitimate to abbreviate the dependence on admissible cut-off function, and define \emph{the} para-differential operator $T_a$ modulo $\Op\big(\Sigma^{m-r}_{<1/2}\big)$.

\subsection{Symbolic Calculus of Para-differential Operators}
In this subsection, we state the core results in our toolbox: the composition and adjoint formula for para-differential operators. The spirit of proof does not differ much from that in Métivier's book \cite{Met2008}, but technically we have to be extra careful dealing with the operator norm of symbols. The spectral localization property i.e. Corollary \ref{SpecPrd} and results obtained in Subsection 4.1. will be used repeatedly.

\begin{definition}
Let $r>0$ be a real number. Let $q$ be a RT-admissible tuple, and $X_q^{(\alpha)}$ be the left-invariant differential operators as in Proposition \ref{TaylorGp}. If $a,b$ are any symbol of at least $C^r_*$ regularity in $x$, define
$$
(a\#_{r;q} b)(x,\xi):=\sum_{\alpha:|\alpha|\leq r}\Df_{q,\xi}^\alpha a(x,\xi)\cdot X_{q,x}^{(\alpha)} b(x,\xi).
$$
\end{definition}

We now state the first main theorem of this section.

\begin{theorem}[Composition of Para-differential Operators, I]\label{Compo1}
Suppose $r>0$, $m,m'$ are real numbers. Let $q$ be a RT-admissible tuple, whose components are linear combinations of the fundamental tuple $Q$ of $\Gp$, and let $X_q^{(\alpha)}$ be the left-invariant differential operators as in Proposition \ref{TaylorGp}. Given $a\in \mathscr{S}^m_{1,1}$, $b\in\mathcal{A}_r^{m'}$, for an admissible cut-off function $\chi$ with parameter $\delta$ sufficiently small (depending on the magnitude of $r$ and the algebraic structure of $\Gp$ only),
$$
\Op(a)\circ\Op(b^\chi)
-\Op(a\#_{r;q} b^\chi)\in \Op \mathscr{S}^{m+m'-r}_{1,1}(\Gp).
$$
More precisely, the operator norm of $\Op(a)\circ\Op(b^\chi)
-\Op(a\#_{r;q} b^\chi)$ for $H^{s+m+m'-r}\to H^s$ is bounded by
$$
C_s\mathbf{M}^{m}_{l,n+2;q}(a)\mathbf{W}^{m';r}_{l;q}(b),
$$
where the integer $l$ does not depend on $a,b$.
\end{theorem}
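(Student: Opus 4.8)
The strategy is to imitate the Euclidean proof (Métivier, Chapter 4; Hörmander, Chapter 9–10) but to replace the Fourier-analytic splitting of the kernel by a dyadic decomposition in the representation variable $\xi$, and to control everything using Corollary \ref{SpecPrd} (product spectral localization), Theorem \ref{Stein'} (boundedness of $\Sigma^m_{<1/2}$ operators on all Sobolev spaces), and Proposition \ref{ParaDiff1} (the reduced-order estimates for $b^\chi$). First I would set up the dyadic pieces: write $b^\chi=\sum_{k\geq0}b^\chi_k$ with $b^\chi_k(x,\xi)=b^\chi(x,\xi)\vartheta_k(|\xi|)$, and correspondingly $a=\sum_{j\geq0}a_j$; because $b^\chi$ satisfies the spectral condition with parameter $\delta$, the $x$-Fourier support of $b^\chi_k$ is contained in $\{|\eta|\lesssim\delta 2^k\}$. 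The point of the composition formula on $\mathbb{R}^n$ is that the symbol of $\Op(a)\circ\Op(b)$ has an asymptotic expansion $\sum_\alpha\partial_\xi^\alpha a\cdot D_x^\alpha b/\alpha!$; here Theorem \ref{RegCompo} already gives such an expansion $\sigma\sim\sum_\alpha\Df_{q,\xi}^\alpha a\cdot X_{q,x}^{(\alpha)}b$ when $a,b$ are both \emph{smooth} symbols, so the real content is (i) that this expansion remains valid when $b$ is only $C^r_*$ in $x$, provided we truncate at $|\alpha|\leq r$ and accept a remainder of order $m+m'-r$, and (ii) that the truncated sum $a\#_{r;q}b^\chi$ is itself a legitimate $(1,1)$ symbol.

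For step (ii): each term $\Df_{q,\xi}^\alpha a\cdot X_{q,x}^{(\alpha)}b^\chi$ with $|\alpha|\leq r$ needs to land in $\mathscr{S}^{m+m'}_{1,1}$. The factor $\Df_{q,\xi}^\alpha a\in\mathscr{S}^{m-|\alpha|}_{1,1}$ by Theorem \ref{Fischer}. For the factor $X_{q,x}^{(\alpha)}b^\chi$ I would invoke Proposition \ref{ParaDiff1}: since $b\in\mathcal{A}_r^{m'}$, differentiating $b^\chi$ in $x$ up to order $|\alpha|\leq r$ costs only $\size[\xi]^{(|\alpha|-r)_+}=\size[\xi]^0$ (or a harmless $\log$ when $|\alpha|=r$), so $X_{q,x}^{(\alpha)}b^\chi$ is bounded in $\mathscr{S}^{m'}_{1,1}$; moreover by Lemma \ref{Sigmadelta}(3) the difference operators $\Df_q^\alpha$ preserve the spectral condition up to enlarging $\delta$. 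Multiplying, the product is in $\mathscr{S}^{m+m'-|\alpha|+\text{(gain from }b^\chi\text{)}}_{1,1}$; one has to check the bookkeeping of orders carefully, but since $X_{q,x}^{(\alpha)}b^\chi$ gains nothing in $\xi$ from the $x$-derivatives, each term is in $\mathscr{S}^{m+m'}_{1,1}$ and the $|\alpha|\geq1$ terms are actually better, in $\mathscr{S}^{m+m'-1}_{1,1}\subset\mathscr{S}^{m+m'}_{1,1}$ — exactly as one wants for the expansion to be meaningful. (The $\log$ for integer $r$ is absorbed by choosing the truncation threshold $|\alpha|\leq r$ to mean $|\alpha|<r$ when $r\in\mathbb{N}$, or by a standard $\varepsilon$-loss argument.)

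For step (i), the heart of the matter: estimate the remainder $\mathcal{R}:=\Op(a)\circ\Op(b^\chi)-\Op(a\#_{r;q}b^\chi)$. I would reduce to dyadic blocks $\mathcal{R}_{jk}:=A_j\circ\Op(b^\chi_k)-\Op(a_j\#_{r;q}b^\chi_k)$ and split the sum over $(j,k)$ into the usual three regions. The diagonal region $|j-k|\leq C$ (where $C$ depends on $\delta$ and the algebraic structure of $\Gp$ through Corollary \ref{SpecPrd}): here the spectral condition on $b^\chi$ forces $A_j$ and $\Op(b^\chi_k)$ to interact only when $2^j$ and $\delta 2^k$ are comparable, so both the composition and each expansion term are quantization of $\mathscr{S}^{m+m'}_{1,1}$ symbols with the right dyadic decay; summing in $k$ via Lemma \ref{H^ssReal} (the Fourier support of $\mathcal{R}_{jk}f$ is away from the trivial representation by the spectral localization argument already used in the proof of Theorem \ref{Stein'}) gives a contribution bounded in $H^s$. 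The region $j\ll k$: here $X_{q,x}^{(\alpha)}b^\chi_k$ still has $x$-Fourier support of size $\lesssim\delta 2^k$ while $a_j$ has size $\lesssim 2^j$, and I would Taylor-expand $a_j$ against $b^\chi_k$ using Proposition \ref{TaylorGp} (this is precisely where the left-invariant operators $X_q^{(\alpha)}$ and the functions $q^\alpha$ enter), with the remainder $R_N(\cdot;x,y)$ in Taylor's formula of size $O(\dist(y,e)^N)$ converting, after convolution against the kernel of $\Op(b^\chi_k)$, into a gain of $2^{-N(\text{something})}$; choosing $N$ just above $r$ and using the $C^r_*$-modulus of continuity of $b$ in $x$ (via $b-b^\chi\in\mathcal{A}_0^{m-r}$, Proposition \ref{a-a^chi}) yields the order $m+m'-r$. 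The region $j\gg k$ is the easiest: $\Df_{q,\xi}^\alpha a_j$ decays like $2^{j(m-|\alpha|)}$, $b^\chi_k$ contributes $2^{km'}$, and the frequency mismatch (product spectral localization again) makes the kernel of each $\mathcal{R}_{jk}$ rapidly decaying in $j-k$, so the sum converges with a lot of room to spare.

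\textbf{Main obstacle.} The technically delicate point — and the one genuinely new compared to $\mathbb{R}^n$ — is the non-commutativity of $\mathrm{End}(\Hh[\xi])$ entering the composition: unlike the scalar Euclidean case, $\Df_{q,\xi}^\alpha a\cdot X_{q,x}^{(\alpha)}b^\chi$ is a genuine matrix product and the order of the factors matters, so the derivation of the expansion from Theorem \ref{RegCompo} must be carried out respecting the matrix order, and the Leibniz-type identity (\ref{Leibniz}) for $\Df_Q$ has to be applied in a way that keeps track of which matrix sits on which side. Closely tied to this is the need to verify that the Taylor-remainder estimate of Proposition \ref{TaylorGp} can be run with values in $\mathrm{End}(\Hh[\xi])$ with constants \emph{independent of} $d_\xi$ — the Remark after Proposition \ref{TaylorGp} is exactly what licenses this, but one must be careful that every time the operator norm $\|\cdot\|$ on $\Hh[\xi]$ (as opposed to the Hilbert–Schmidt norm) is used, the dimension-counting factors $d_\xi$ are supplied only by the Weyl bound Lemma \ref{Weyl} and never spuriously. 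Once this matrix bookkeeping is set up correctly, the three-region dyadic summation is routine and parallels the proof of Theorem \ref{Stein'} almost verbatim.

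\end{spacing}
\end{document}
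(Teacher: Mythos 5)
There is a genuine gap, and it is conceptual. The statement of the theorem is that the \emph{symbol} of $\Op(a)\circ\Op(b^\chi)-\Op(a\#_{r;q}b^\chi)$ belongs to $\mathscr{S}^{m+m'-r}_{1,1}(\Gp)$, i.e.\ one needs the pointwise operator-norm bounds $\|X_x^\alpha\Df_{q,\xi}^\beta\varsigma(x,\xi)\|\lesssim\size[\xi]^{m+m'-r+|\alpha|-|\beta|}$ for the remainder symbol $\varsigma$. Your sketch is aimed at proving that the remainder is \emph{bounded} $H^{s+m+m'-r}\to H^s$ (a three-region dyadic summation culminating in Lemma~\ref{H^ssReal}, ``parallels Stein's theorem almost verbatim''). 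Those are different objectives; symbol membership is strictly more and operator bounds do not furnish it. The actual proof works directly at the level of the composition symbol via formula~(\ref{OpaOpb}), $\sigma(x,\xi)=\int_\Gp\mathcal{K}_a(x,y)\,\xi^*(y)\,b^\chi(xy^{-1},\xi)\,dy$, and estimates this integral in operator norm of $\mathrm{End}(\Hh[\xi])$.

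The algebraic heart of the proof, which your sketch does not identify, is the following. Because $b^\chi$ satisfies the spectral condition with parameter $\delta$, one can represent it as a convolution, $b^\chi(x,\xi)=\int_\Gp\check{\chi}_1(z^{-1}x,|\xi|)b^\chi(z,\xi)\,dz$, for an admissible cut-off $\chi_1$ with parameter $2\delta$. Taylor-expanding $b^\chi(xz^{-1},\xi)$ in $z$ (Proposition~\ref{TaylorGp}, up to order $N$ the smallest integer $>r$), the polynomial terms involve $\int_\Gp\check\chi_1(zy^{-1},|\xi|)q^\alpha(z)\,dz$; since $q^\alpha$ has Fourier support contained in some fixed $S[0,C]$ while $\check\chi_1(\cdot,|\xi|)$ acts as the identity on $S[0,\delta\size[\xi]/2]$, this integral equals $q^\alpha(y)$ \emph{exactly} once $\size[\xi]\gtrsim C/\delta$. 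Pairing $\mathcal{K}_a(x,y)\xi^*(y)q^\alpha(y)$ against $dy$ then produces $\Df_{q,\xi}^\alpha a(x,\xi)$ by definition of the RT difference operator. This exact cancellation, not an approximation, is what singles out $a\#_{r;q}b^\chi$; without it the remainder does not see the factor $\size[\xi]^{-r}$. You also invert the roles: it is $b^\chi$ (the $C^r_*$ symbol) that gets Taylor-expanded, and the convolution of the Taylor remainder is against the kernel $\mathcal{K}_a$, not against $\Op(b^\chi_k)$ as your sketch says. Taylor-expanding $a_j$ would be useless, since $a$ has no better than $(1,1)$ regularity in $x$.

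Finally, the two-parameter $(j,k)$ decomposition and three-region analysis you propose is more machinery than the proof needs and obscures the key localization. The paper decomposes only $\mathcal{K}_a=\sum_j\mathcal{K}_{a_j}$, and then observes that Corollary~\ref{SpecPrd} together with the spectral condition on $b^\chi$ forces the blocks $j$ contributing to the Taylor remainder $\varsigma_N(x,\xi)$ to satisfy $2^j\simeq\size[\xi]$ — a single-scale collapse, not a three-region sum. After that, the operator-norm estimate of the remainder reduces to the same weighted kernel bound~(\ref{SteinIneq1})--(\ref{SteinIneq2}) as in Stein's theorem, paired with the dimension-independent Taylor-remainder estimate for $\mathrm{End}(\Hh[\xi])$-valued functions and Corollary~\ref{ConvVanish}. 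Your remark about $d_\xi$-uniformity of the vector-valued Taylor estimate is the one point you have exactly right and is indeed used.
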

\begin{proof}
We observe a simple fact: for symbols $\sigma_1,\sigma_2$ of type (1,1), the symbol of $\Op(\sigma_1)\circ\Op(\sigma_2)$ is
\begin{equation}\label{OpaOpb}
\int_{\Gp}\mathcal{K}_{\sigma_1}(x,y)\xi^*(y)\sigma_2(xy^{-1},\xi)dy,
\end{equation}
where $\mathcal{K}_{\sigma_1}$ is the convolution kernel of $\Op(\sigma_1)$, defined by (\ref{Symbol-Ker})-(\ref{Ker-Conv}).

Let us now examine the symbol $\sigma$ of $\Op(a)\circ\Op(b^\chi)$. We start from $b^\chi(x,\xi)$: since $b^\chi(x,\xi)$ satisfies the spectral condition with parameter $\delta$, we can choose another admissible cut-off function $\chi_1(\mu,\lambda)$ with parameter $2\delta$, and find 
$$
\Ft{b^\chi}(\eta,\xi)=\chi_1(|\eta|,|\xi|)\Ft{b^\chi}(\eta,\xi.
$$
Converting back to the physical space, this implies
$$
b^\chi(x,\xi)=\int_\Gp \check{\chi}_1(z^{-1}x,|\xi|)b^\chi(z)dz.
$$
Here with a little abuse of notation, we write $\check{\chi}_1(y,|\xi|)$ for the Fourier inversion of $\chi_1(|\eta|,|\xi|)\I[\eta]$ with respect to $\eta$. Note that $\chi_1(|\eta|,|\xi|)\I[\eta]$ is a scaling on each representation space $\Hh[\eta]$, so the order of convolution is irrelevant. By formula (\ref{OpaOpb}), 
$$
\begin{aligned}
\sigma(x,\xi)
&=\int_{\Gp}\mathcal{K}_{a}(x,y)\xi^*(y)\left(\int_\Gp \check{\chi}_1(z^{-1}xy^{-1},|\xi|)b^\chi(z,\xi)dz\right)dy\\
&=\int_{\Gp}\mathcal{K}_{a}(x,y)\xi^*(y)\left(\int_\Gp \check{\chi}_1(zy^{-1},|\xi|)b^\chi(xz^{-1},\xi)dz\right)dy.
\end{aligned}
$$
Using Proposition \ref{TaylorGp} with $N$ being the smallest integer $>r$, we compute the inner integral as 
$$
\begin{aligned}
\int_\Gp \check{\chi}_1(zy^{-1},|\xi|)b^\chi(xz^{-1},\xi)dz
&=\sum_{|\alpha|\leq r}\left(\int_\Gp \check{\chi}_1(zy^{-1},|\xi|)q^\alpha(z)dz\right)X^{(\alpha)}_{q,x}b^\chi(x,\xi)\\
&\quad+\int_\Gp \check{\chi}_1(zy^{-1},|\xi|)R_N\big(b^\chi(\cdot,\xi);x,z^{-1}\big)dz
\end{aligned}
$$
The integrals in the sum $\sum_{|\alpha|\leq r}$ are of convolution type. By Proposition \ref{SpecPrd0}, the Fourier support of $q^\alpha$ for $|\alpha|\leq r$ is within $\mathfrak{S}[0,C]$, where $C$ depends on $r$ and the algebraic structure of $\Gp$ only. Thus, for $|\xi|\geq 2C/\delta$, we have
$$
\int_\Gp \check{\chi}_1(zy^{-1},|\xi|)q^\alpha(z)dz=q^\alpha(y),
$$
and consequently,
\begin{equation}\label{sigmaTemp1}
\begin{aligned}
\sigma(x,\xi)
&=\sum_{|\alpha|\leq r}\int_{\Gp}\mathcal{K}_{a}(x,y)\xi^*(y)q^\alpha(y)X^{(\alpha)}_{q,x}b^\chi(x,\xi)dy\\
&\quad+\int_{\Gp}\mathcal{K}_{a}(x,y)\xi^*(y)
\left(\int_\Gp \check{\chi}_1(zy^{-1},|\xi|)R_N\big(b^\chi(\cdot,\xi);x,z^{-1}\big)dz\right)dy\\
&=:\sum_{|\alpha|\leq r}\Df_{q,\xi}^\alpha a(x,\xi)\cdot X_{q,x}^{(\alpha)}b^\chi(x,\xi)+\varsigma_N(x,\xi).
\end{aligned}
\end{equation}

The sum in the right-hand-side of (\ref{sigmaTemp1}) is just $a\#_{r;q}b^\chi$, which is in $\mathscr{S}^{m+m'}_{1,1}$. There is then only one assertion to be verified: the term $\varsigma_N(x,\xi)$ is of class $\mathscr{S}^{m+m'-r}_{1,1}$, from which it follows that the symbol $\sigma$ really is of class $\mathscr{S}^{m+m'}_{1,1}$. Note that the lower frequency terms give smoothing symbols in $\mathscr{S}^{-\infty}$, so we just have to verify the assertion for $|\xi|\geq 2C/\delta$. 

Let us first show that $\varsigma_N(x,\xi)\lesssim\size[\xi]^{m+m'-r}$. We can change the order of integration as follows:
\begin{equation}\label{sigmaTemp2}
\begin{aligned}
\varsigma_N(x,\xi)
=\int_{\Gp}
\left(\int_\Gp \mathcal{K}_a(x,y)\xi^*(y)\check{\chi}_1(zy^{-1},|\xi|)dy\right)R_N\big(b^\chi(\cdot,\xi);x,z^{-1}\big)dz.
\end{aligned}
\end{equation}
Just as in the proof of Stein's theorem, we decompose $\mathcal{K}_a=\sum_{j=0}^\infty \mathcal{K}_{a_j}$, where $\mathcal{K}_{a_j}$ is the convolution kernel of $a_j(x,\xi):=a(x,\xi)\vartheta_j\big(|\xi|\big)$, with $\vartheta$ as in (\ref{LPDisc}). For the inner integral with respect to $y$, each summand
$$
P_j(x,z):=\int_\Gp \mathcal{K}_{a_j}(x,y)\xi^*(y)\check{\chi}_1(zy^{-1},|\xi|)dy
$$
is of convolution type. By spectral property i.e. Corollary \ref{SpecPrd}, the Fourier support of $\mathcal{K}_{a_j}(x,y)\xi^*(y)$ with respect to $y$ is in the set 
$$
\mathfrak{S}\Big[c\big|2^j-|\xi|\big|,2^j+|\xi|\Big].
$$
Hence $\Ft{P_j}(x,\eta)$ (the Fourier transform is taken with respect to $z$) does not vanish only for 
$$
c\big|2^j-|\xi|\big|\leq 2\delta\size[\xi].
$$
If we choose $\delta\ll c$, this implies that $2^j$ must be comparable with $\size[\xi]$. As a result,
$$
\varsigma_N(x,\xi)
=\sum_{j:2^j\simeq\size[\xi]}\int_{\Gp}\mathcal{K}_{a_j}(x,y)\xi^*(y)
\left(\int_\Gp \check{\chi}_1(zy^{-1},|\xi|)R_N\big(b^\chi(\cdot,\xi);x,z^{-1}\big)dz\right)dy.
$$
Just as inequalitites (\ref{SteinIneq1})-(\ref{SteinIneq2}) in the proof of Stein's theorem, the estimate
\begin{equation}\label{sigmaTemp3}
\big(1+|2^j\dist(y,e)|^2\big)^{L/2}\cdot|\mathcal{K}_{a_j}(x,y)|
\lesssim\mathbf{M}^m_{0,L;Q}(a)\cdot2^{jm+jn}
\end{equation}
is valid for $L\in\mathbb{N}$. The remainder in Taylor's formula can be estimated using the remark following Proposition \ref{ParaDiff1}: since $N>r$, we have, by the improved growth estimate (Proposition \ref{ParaDiff1}),
$$
\begin{aligned}
\big\|R_N\big(b^\chi(\cdot,\xi);x,z^{-1}\big)\big\|
&\lesssim \|b^\chi(x,\xi)\|_{C^N_x;\mathrm{End}(\Hh[\xi])}\cdot\dist(z,e)^N \\
&\lesssim \mathbf{W}^{m;r}_{0;Q}(b)\cdot\size[\xi]^{m'+N-r}\dist(z,e)^N.
\end{aligned}
$$
Here the norm is the operator norm on $\mathrm{End}(\Hh[\xi])$, and the implicit constants do not depend on $\xi$. By Corollary \ref{ConvVanish}, noting the integrand is smooth and equals $O(\dist(z,e)^N)$, we thus estimate
\begin{equation}\label{sigmaTemp4}
\begin{aligned}
\int_\Gp \check{\chi}_1(zy^{-1},|\xi|)R_N\big(b^\chi(\cdot,\xi);x,z^{-1}\big)dz
=\sum_{k=0}^N\upsilon_k(x,y,\xi),
\end{aligned}
\end{equation}
where each $\upsilon_k(x,y,\xi)$ is smooth in $x$, and satisfies
$$
\|\upsilon_k(x,y,\xi)\|
\lesssim\mathbf{W}^{m;r}_{0;Q}(b)\cdot\size[\xi]^{m'+k-r}\dist(y,e)^k.
$$
Combining every inequality (\ref{sigmaTemp3})-(\ref{sigmaTemp4}), using Lemma \ref{Fisqq'Lem}, we estimate the operator norm of $\varsigma_N(x,\xi)$ as (noting that $\|\xi^*(y)\|\equiv1$)
$$
\begin{aligned}
\|\varsigma_N(x,\xi)\|
&\lesssim \sum_{j:2^j\simeq\size[\xi]}\sum_{k=0}^N
\left\|\int_{\Gp}\mathcal{K}_{a_j}(x,y)\xi^*(y)\upsilon_k(x,y,\xi)dy\right\|\\
&\lesssim \sum_{k=0}^N\mathbf{M}^m_{0,n+1;Q}(a)\cdot\mathbf{W}^{m;r}_{0;Q}(b)\cdot\size[\xi]^{m'+k-r}
\sum_{j:2^j\simeq\size[\xi]}2^{jm+jn}
\int_{\Gp}\frac{\min\left(\dist(y,e)^k,1\right)dy}{\big(1+|2^j\dist(y,e)|^2\big)^{(n+1)/2}}\\
&\lesssim \mathbf{M}^m_{0,n+1;Q}(a)\cdot\mathbf{W}^{m;r}_{0;Q}(b)\cdot\size[\xi]^{m+m'-r}.
\end{aligned}
$$
This shows that the second sum in (\ref{sigmaTemp1}) is controlled by $\size[\xi]^{m+m'-r}$.

Finally, we use induction on the order of differentiation in (\ref{OpaOpb}). Noticing that $\Df_{Q^\beta,\xi}\big(\xi^*(y)\big)=Q^\beta(y)\xi^*(y)$, we have, for example, given any $X\in\mathfrak{g}$,
\begin{equation}\label{sigmaTemp5}
X_x\sigma(x,\xi)
=\int_{\Gp}X_x\mathcal{K}_a(x,y)\xi^*(y)b^\chi(xy^{-1},\xi)dy
+\int_{\Gp}\mathcal{K}_a(x,y)\xi^*(y)X_x\big(b^\chi(xy^{-1},\xi)\big)dy
\end{equation}
and by Leibniz type property of the fundamental tuple $Q$,
\begin{equation}\label{sigmaTemp6}
\begin{aligned}
\Df_{Q_i,\xi}\sigma(x,\xi)
&=\int_{\Gp}\mathcal{K}_a(x,y)Q_i(y)\xi^*(y)b^\chi(xy^{-1},\xi)dy+\int_{\Gp}\mathcal{K}_a(x,y)\xi^*(y)(\Df_{Q_i,\xi}b^\chi)(xy^{-1},\xi)dy\\
&\quad+\sum_{j,k}c_{j,k}^i\int_{\Gp}\mathcal{K}_a(x,y)Q_j(y)\xi^*(y)(\Df_{Q_k,\xi}b^\chi)(xy^{-1},\xi)dy.
\end{aligned}
\end{equation}
Each of the above integrals can be estimated in exactly the same method, with $m$ replaced by $m+1$ in (\ref{sigmaTemp5}) and by $m-1$ in (\ref{sigmaTemp6}); we just have to observe that $X_x\mathcal{K}_a(x,y)$ is the convolution kernel of $X_xa(x,\xi)$, and $\mathcal{K}_a(x,y)Q_i(y)$ is the convolution kernel of $\Df_{Q_i,\xi}a(x,\xi)$.
\end{proof}

Now if $\Op(a)$ is in fact a para-differential operator corresponding to some rough symbol of class $\mathcal{A}^m_r$, we can actually strengthen Theorem \ref{Compo1}, and deduce the composition formula of para-differential operators: 
\begin{theorem}[Composition of Para-differential Operators, II]\label{Compo2}
Suppose $r>0$, $m,m'$ are real numbers. Let $q$ be a RT-admissible tuple, whose components are are linear combinations of the fundamental tuple $Q$ of $\Gp$, and let $X_q^{(\alpha)}$ be the left-invariant differential operators as in Proposition \ref{TaylorGp}. Given $a\in \mathcal{A}_r^{m}$, $b\in\mathcal{A}_r^{m'}$, it follows that
$$
T_a\circ T_b-T_{a\#_{r,q}b}
\in\Op\Sigma_{<1/2}^{m+m'-r}.
$$
More precisely, the operator norm of $T_a\circ T_b-T_{a\#_{r,q}b}$ for $H^{s+m+m'-r}\to H^s$ is bounded by
$$
C_s\mathbf{W}^{m;r}_{l;q}(a)\mathbf{W}^{m';r}_{l;q}(b),
$$
where the integer $l$ does not depend on $a,b$.
\end{theorem}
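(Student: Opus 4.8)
The plan is to deduce Theorem~\ref{Compo2} from Theorem~\ref{Compo1} by taking $a^\chi$ for the $(1,1)$ symbol and then correcting the discrepancy between $a^\chi\#_{r;q}b^\chi$ and $(a\#_{r;q}b)^\chi$; the recurring technical burden is to keep every symbol in sight inside $\Sigma^{\,\cdot}_{<1/2}$, which is arranged by choosing the cut-off parameter $\delta$ small. Fix a common admissible cut-off $\chi$ with parameter $\delta\in(0,1/2)$, to be chosen small in terms of $r$ and the algebraic structure of $\Gp$, so that $T_a=\Op(a^\chi)$ and $T_b=\Op(b^\chi)$. Since $r>0$, Proposition~\ref{ParaDiff1} gives $a^\chi\in\Sigma^m_\delta\subset\mathscr{S}^m_{1,1}$ and $b^\chi\in\Sigma^{m'}_\delta$. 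Applying Theorem~\ref{Compo1} with $a^\chi$ as the $(1,1)$ symbol and $b\in\mathcal{A}^{m'}_r$ as the rough symbol yields
$$
T_a\circ T_b=\Op\big(a^\chi\#_{r;q}b^\chi\big)+\Op(\mathfrak{r}),\qquad\mathfrak{r}\in\mathscr{S}^{m+m'-r}_{1,1}.
$$
The first step is to upgrade the error class: by formula (\ref{OpaOpb}) the symbol of $\Op(a^\chi)\circ\Op(b^\chi)$ is assembled from the convolution kernel of $a^\chi$ and from $b^\chi$, both $x$-band-limited to frequencies $\le\delta\size[\xi]$, so Corollary~\ref{SpecPrd} puts its $x$-Fourier support in frequencies $\lesssim c^{-1}\delta\size[\xi]$; since $a^\chi\#_{r;q}b^\chi$ also satisfies a spectral condition with parameter $O(\delta)$ (Lemma~\ref{Sigmadelta}), so does $\mathfrak{r}$, and for $\delta$ small $\mathfrak{r}\in\Sigma^{m+m'-r}_{<1/2}$.

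It then remains to show $a^\chi\#_{r;q}b^\chi-(a\#_{r;q}b)^\chi\in\Sigma^{m+m'-r}_{<1/2}$, since $T_{a\#_{r;q}b}=\Op\big((a\#_{r;q}b)^\chi\big)$ by definition. Two simplifications reduce this to a product estimate for each $\alpha$. First, because the Casimir $\Delta$ is central in the universal enveloping algebra of $\mathfrak{g}$, every left-invariant differential operator commutes with the Fourier multiplier $\chi(|\nabla_x|,|\xi|)$, hence $X^{(\alpha)}_{q,x}b^\chi=(X^{(\alpha)}_{q,x}b)^\chi$. Second, the Leibniz-type computation carried out in the proof of Proposition~\ref{a-a^chi} shows $\Df^\alpha_{q,\xi}a^\chi-(\Df^\alpha_{q,\xi}a)^\chi$ has order $m-|\alpha|-r$ and a spectral condition with parameter $O(\delta)$, so multiplied by $(X^{(\alpha)}_{q,x}b)^\chi$ it lands in $\Sigma^{m+m'-r}_{<1/2}$ when $|\alpha|\ge1$ (and there is nothing to correct for $\alpha=0$). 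Setting $c:=\Df^\alpha_{q,\xi}a\in\mathcal{A}^{m-|\alpha|}_r$ and $d:=X^{(\alpha)}_{q,x}b\in\mathcal{A}^{m'}_{r-|\alpha|}$, we must show $c^\chi d^\chi-(cd)^\chi\in\Sigma^{m+m'-r}_{<1/2}$. Expanding $cd=c^\chi d^\chi+(c-c^\chi)d^\chi+c^\chi(d-d^\chi)+(c-c^\chi)(d-d^\chi)$ and applying $\chi(|\nabla_x|,|\xi|)$ gives
$$
c^\chi d^\chi-(cd)^\chi=(1-\chi)\big(c^\chi d^\chi\big)-\chi\big((c-c^\chi)d^\chi\big)-\chi\big(c^\chi(d-d^\chi)\big)-\chi\big((c-c^\chi)(d-d^\chi)\big).
$$

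The remaining estimates are para-product estimates attuned to the spectral condition, and this is where the real work lies. By Proposition~\ref{a-a^chi}, $c-c^\chi\in\mathcal{A}^{m-|\alpha|-r}_0$ and $d-d^\chi\in\mathcal{A}^{m'-r+|\alpha|}_0$; each of the last three terms inherits the spectral condition from the outer $\chi$, and since $\chi$ band-limits to $x$-frequencies $\le\delta\size[\xi]$, spectral localization (Corollary~\ref{SpecPrd}) forces only the part of the rough factor with $x$-frequency $\sim\delta\size[\xi]$ to contribute — a smooth piece of size $O(\size[\xi]^{\mathrm{ord}-r})$ by Corollary~\ref{LPZygCor} and Remark~\ref{VectZyg} — so these three terms are genuinely of order $\le m+m'-r$, with no logarithmic loss. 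The genuinely delicate term is $(1-\chi)(c^\chi d^\chi)$: a nonzero $x$-Fourier mode at $\eta$ requires $|\eta|\ge\delta\size[\xi]/2$, while Corollary~\ref{SpecPrd} also forces $|\eta|\lesssim c^{-1}\delta\size[\xi]$, so taking $\delta<c/4$ gives $|\eta|<\size[\xi]/2$ and hence a spectral condition with parameter $<1/2$; and to see this term has order $m+m'-r$ rather than merely $m+m'-|\alpha|$, observe that for $c^\chi d^\chi$ to carry $x$-frequency $\gtrsim\delta\size[\xi]$ at least one of its factors must carry $x$-frequency $\gtrsim c\delta\size[\xi]/4$, whence that factor's high-frequency part is $O(\size[\xi]^{\mathrm{ord}-r})$ and the other is $O(\size[\xi]^{\mathrm{ord}})$, and the two symmetric contributions yield the gain of $r$. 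I expect this last term, together with the uniform bookkeeping of spectral-condition parameters through the finitely many operations above, to be the main obstacle; everything else is a mechanical consequence of Theorem~\ref{Compo1}, Proposition~\ref{a-a^chi}, and Lemma~\ref{Sigmadelta}. Assembling the two steps gives $a^\chi\#_{r;q}b^\chi-(a\#_{r;q}b)^\chi\in\Sigma^{m+m'-r}_{<1/2}$, hence $T_a\circ T_b-T_{a\#_{r;q}b}\in\Op\Sigma^{m+m'-r}_{<1/2}$; as in Theorem~\ref{Compo1}, the customary logarithmic loss when $r$ is a positive integer is absorbed into an infinitesimal increase of the order.
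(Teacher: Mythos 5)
Your proof is correct and follows the same architecture as the paper's: apply Theorem~\ref{Compo1} to $a^\chi$ and $b$, upgrade the $\mathscr{S}^{m+m'-r}_{1,1}$ error to $\Sigma^{m+m'-r}_{<1/2}$ using spectral localization (this is precisely the content of Lemma~\ref{CompoAux1}), correct the discrepancy between $\Df^\alpha_{q,\xi}a^\chi$ and $(\Df^\alpha_{q,\xi}a)^\chi$ via the Leibniz identity from Proposition~\ref{a-a^chi}, and finally compare $(a^{(\alpha)})^\chi\cdot(b_{(\alpha)})^\chi$ with the smoothed product. The genuine difference is in that last step: the paper appeals to Lemma~\ref{CompoAux2} (a Zygmund-class para-product estimate cited from H\"ormander), whereas you give an inline four-term expansion $(1-\chi)(c^\chi d^\chi)-\chi((c-c^\chi)d^\chi)-\chi(c^\chi(d-d^\chi))-\chi((c-c^\chi)(d-d^\chi))$ and estimate each piece directly. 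Your version is more transparent and self-contained, and it also makes visible where spectral localization and the Zygmund gain actually come in, particularly for the $(1-\chi)(c^\chi d^\chi)$ term; the paper's version is shorter because it can reuse the same lemma across the text. Your explicit invocation of the centrality of the Casimir to get $X^{(\alpha)}_{q,x}b^\chi=(X^{(\alpha)}_{q,x}b)^\chi$ is a welcome clarification of a step the paper uses silently.

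One small imprecision worth flagging, though it does not break the argument: in upgrading the error class of Theorem~\ref{Compo1}, you assert that ``the convolution kernel of $a^\chi$'' is $x$-band-limited to frequencies $\le\delta\size[\xi]$. As written this is not quite right --- $\mathcal{K}_{a^\chi}(x,y)=\sum_\zeta d_\zeta\Tr(a^\chi(x,\zeta)\zeta(y))$ has $x$-Fourier support $\le\delta\size[\zeta]$ for each internal index $\zeta$, and $\zeta$ is a priori unconstrained. The precise version (Lemma~\ref{CompoAux1} in the paper) first uses the spectral localization of $\zeta\otimes\xi^*\otimes b^\chi(\cdot,\xi)$ to bound $|\zeta|\lesssim\size[\xi]$, and only then inherits the $O(\delta\size[\xi])$ band-limit on the $x$-support. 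Your conclusion is correct, but you should make this conditioning on $\zeta$ explicit.
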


Before we prove Theorem \ref{Compo2}, we state two auxiliary results.
\begin{lemma}\label{CompoAux1}
Suppose $r>0$, $m,m'$ are real numbers, and $a\in \mathcal{A}_r^{m}$, $b\in\mathcal{A}_r^{m'}$. There are constants $\delta_0\in(0,1/2)$ and $C>0$ depending only on the algebraic structure of $\Gp$ with the following property: if $\chi$ is an admissible cut-off function with parameter $\delta<\delta_0$, the symbol of $T_a^\chi\circ T_b^\chi$ is of class $\Sigma^{m+m'-r}_{C\delta}$.
\end{lemma}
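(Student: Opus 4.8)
The plan is to read the symbol of $T_a^\chi\circ T_b^\chi=\Op(a^\chi)\circ\Op(b^\chi)$ off of the composition theorem and then verify, ingredient by ingredient, that the spectral condition survives with a parameter proportional to $\delta$. Since $r>0$, Proposition~\ref{ParaDiff1} already puts $a^\chi\in\Sigma^m_\delta\subset\mathscr S^m_{1,1}$ and $b^\chi\in\Sigma^{m'}_\delta\subset\mathscr S^{m'}_{1,1}$, so $a^\chi$ is an admissible smooth factor for Theorem~\ref{Compo1}. Applying that theorem with smooth factor $a^\chi$, rough factor $b$ (whose regularization is $b^\chi$), and with the fundamental tuple $q=Q$ of \eqref{QFund}, we get that the symbol $\sigma$ of $\Op(a^\chi)\circ\Op(b^\chi)$ satisfies
$$
\sigma(x,\xi)\;-\;\sum_{|\alpha|\le r}\Df_{Q,\xi}^{\alpha}a^\chi(x,\xi)\cdot X^{(\alpha)}_{Q,x}b^\chi(x,\xi)\ \in\ \mathscr S^{\,m+m'-r}_{1,1}(\Gp),
$$
as soon as $\delta$ lies below the threshold Theorem~\ref{Compo1} demands (which depends only on $r$ and the algebra of $\Gp$).

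I would next treat the finite sum. By Lemma~\ref{Sigmadelta}(3) each $\Df_{Q,\xi}^\alpha a^\chi$ lies in $\mathscr S^{m-|\alpha|}_{1,1}$ and satisfies the spectral condition with parameter $C_\alpha\delta$. Each $X^{(\alpha)}_{Q,x}b^\chi$ is produced from $b^\chi$ by left-invariant differentiations in $x$; since left-invariant operators commute with the Casimir and hence with the Fourier cutoff $\chi(|\nabla_x|,|\xi|)$, one has $X^{(\alpha)}_{Q,x}b^\chi=(X^{(\alpha)}_{Q}b)^\chi$, which still satisfies the spectral condition with parameter $\delta$, and, by the reduced-order bounds of Proposition~\ref{ParaDiff1} (using $|\alpha|\le r$), has order $m'$ up to an inessential $\log\langle\xi\rangle$. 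Lemma~\ref{Sigmadelta}(1) then places each product $\Df_{Q,\xi}^\alpha a^\chi\cdot X^{(\alpha)}_{Q,x}b^\chi$ in $\Sigma^{\,m+m'-|\alpha|}_{C'\delta}$ with $C'$ depending only on $\Gp$; summing the finitely many $\alpha$ with $|\alpha|\le r$ and absorbing lower orders, the whole sum lies in $\Sigma^{\,m+m'}_{C'\delta}$, with $\alpha=0$, i.e.\ $a^\chi b^\chi$, carrying the top order.

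The genuinely substantive point is that $\sigma$ itself — equivalently the remainder above, which Theorem~\ref{Compo1} certifies only as an $\mathscr S^{m+m'-r}_{1,1}$ symbol — inherits a spectral condition of parameter $\lesssim\delta$. For this I would reopen the proof of Theorem~\ref{Compo1}: starting from the composition-kernel identity \eqref{OpaOpb}, $\sigma(x,\xi)=\int_\Gp\mathcal K_{a^\chi}(x,y)\,\xi^*(y)\,b^\chi(xy^{-1},\xi)\,dy$, decompose $\mathcal K_{a^\chi}=\sum_j\mathcal K_{a_j^\chi}$ dyadically. The frequency-matching step of that proof — pairing the $y$-localization of $\mathcal K_{a^\chi_j}(x,y)\xi^*(y)$ provided by Corollary~\ref{SpecPrd} with the spectral condition on $b^\chi$ — shows that once $\delta$ is small relative to the localization constant $c$, only blocks with $2^j\simeq\langle\xi\rangle$ survive. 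For such a block $a^\chi_j(\cdot,\zeta)$ has $x$-spectrum of size $\lesssim\delta\langle\zeta\rangle\lesssim\delta\langle\xi\rangle$, and $x\mapsto b^\chi(xy^{-1},\xi)$ has $x$-spectrum of size $\le\delta\langle\xi\rangle$ (right translation preserves Fourier support). Applying Corollary~\ref{SpecPrd} once more to the product in $x$ and integrating in $y$ shows that $\sigma(\cdot,\xi)$ has $x$-Fourier support inside $\{\eta:|\eta|\le C\delta\langle\xi\rangle\}$ for $\langle\xi\rangle$ large. Together with $\sigma\in\mathscr S^{m+m'}_{1,1}$ this gives $\sigma\in\Sigma^{\,m+m'}_{C\delta}$; subtracting the sum, each of whose terms satisfies a spectral condition of parameter $\lesssim\delta$, the remainder lands in $\Sigma^{\,m+m'-r}_{C\delta}$. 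Taking $C$ to be the largest constant produced and $\delta_0$ the smallest threshold used then completes the proof.

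The step I expect to cost the most effort is this last one: Theorem~\ref{Compo1} is stated purely inside the H\"ormander class $\mathscr S^{m+m'-r}_{1,1}$ and carries no information about $x$-frequency localization, so the spectral condition for the remainder cannot be quoted and must be recovered by re-running the kernel-decomposition argument; as everywhere in this circle of ideas the decisive algebraic input is the product spectral localization of Corollary~\ref{SpecPrd}, and one must check that the smallness of $\delta$ needed to run it is the same smallness needed to keep all the cutoff parameters bounded by $C\delta$.
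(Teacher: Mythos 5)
Your core mechanism matches the paper's: you read the symbol of $\Op(a^\chi)\circ\Op(b^\chi)$ from the composition-kernel identity (\ref{OpaOpb}) and extract an $x$-frequency bound via Corollary~\ref{SpecPrd}. But you wrap this in an unnecessary two-stage argument. You first invoke Theorem~\ref{Compo1} to split $\sigma$ into a Taylor sum plus a remainder, handle the finite sum term by term using Lemma~\ref{Sigmadelta}, and only then re-derive the spectral condition for the remainder by re-opening the kernel argument. The paper skips the split entirely: it expands $\mathcal{K}_{a^\chi}(x,y)=\sum_\zeta d_\zeta\Tr[a^\chi(x,\zeta)\zeta(y)]$ (rather than dyadically), uses Corollary~\ref{SpecPrd} on $\zeta\otimes\xi^*$ in the $y$-variable to see that the inner convolution has $x$-support $\le\delta\langle\xi\rangle$, deduces $|\zeta|\lesssim\langle\xi\rangle$, and then applies the spectral condition on $a^\chi(\cdot,\zeta)$ and Corollary~\ref{SpecPrd} a second time in $x$; Theorem~\ref{Compo1} is invoked only for the $\mathscr S^{m+m'}_{1,1}$ membership, not for a decomposition. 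Once you have shown, as you do in your second half, that the $x$-Fourier support of $\sigma(\cdot,\xi)$ lies in $S[0,C\delta\langle\xi\rangle]$, the entire treatment of the finite sum is dead weight: nothing about the individual Taylor terms is needed for the conclusion. (Minor: only $2^j\lesssim\langle\xi\rangle$ survives in your dyadic version, not $2^j\simeq\langle\xi\rangle$, though this does not change the outcome.)

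One point worth flagging explicitly, which you circle around but do not name: what both your argument and the paper's proof actually establish is $\sigma\in\Sigma^{m+m'}_{C\delta}$, not $\sigma\in\Sigma^{m+m'-r}_{C\delta}$ as the lemma states. Since $\sigma$ genuinely has order $m+m'$ (the composition of two operators of orders $m$ and $m'$ does not gain $r$ derivatives), it cannot lie in $\mathscr S^{m+m'-r}_{1,1}$, hence cannot lie in $\Sigma^{m+m'-r}_{C\delta}$. The $-r$ in the lemma statement appears to be a typo — the paper's own proof concludes ``$\sigma\in\mathscr S^{m+m'}_{1,1}$,'' and the subsequent application in Theorem~\ref{Compo2} feeds the lemma factors that already have orders $m-r$ and $m'$ (so that $m+m'-r$ is then simply the \emph{sum} of the input orders). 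Your closing sentence tries to rescue the stated exponent by observing that the \emph{remainder} lands in $\Sigma^{m+m'-r}_{C\delta}$, but that is a statement about the remainder, not about $\sigma$, and confuses rather than clarifies. Better to say plainly that the conclusion should read $\Sigma^{m+m'}_{C\delta}$, or, matching the use case, state the lemma for factors already in $\Sigma^{\mu}_\delta$ and $\Sigma^{\mu'}_\delta$ with output $\Sigma^{\mu+\mu'}_{C\delta}$.
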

\begin{lemma}\label{CompoAux2}
There is a constant $C>1$, depending on the algebraic structure of $\Gp$ only, with the following property. Let $A$ be a finite dimensional normed algebra. Suppose $0\leq r_1\leq r_2$, with $r_2>0$. Suppose $u\in C^{r_1}_*$, $v\in C^{r_2}_*$ are $A$-valued functions. Suppose $\vartheta\in C_0^\infty(\mathbb{R})$ is such that $\vartheta(\lambda)=1$ for $|\lambda|\leq1$ and vanishes for $|\lambda|\geq2$. Then for $t\geq1$,
$$
\left\|\vartheta\left(\frac{|\nabla|}{Ct}\right)(uv)-\vartheta\left(\frac{|\nabla|}{t}\right)u\cdot \vartheta\left(\frac{|\nabla|}{t}\right)v\right\|_{L^\infty;A}
\lesssim_{r_1,r_2}t^{-r_1}\|u\|_{C^{r_1}_*;A}\|v\|_{C^{r_2}_*;A}.
$$
The implicit constants do not depend on $A$.
\end{lemma}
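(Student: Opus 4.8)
The plan is to compare both terms in the asserted inequality to the \emph{low--low} product $u_{\mathrm{lo}}v_{\mathrm{lo}}$, where $u_{\mathrm{lo}}:=\vartheta(|\nabla|/t)u$ and $v_{\mathrm{lo}}:=\vartheta(|\nabla|/t)v$, and to use that a spectral cut-off at scale $Ct$ reproduces the product of two functions that are band-limited at scale $t$. First I would fix $C:=4c^{-1}$, where $c\in(0,1]$ is the constant of Corollary \ref{SpecPrd} (so that automatically $C>1$). Since $\vartheta(\lambda/t)=0$ for $\lambda\geq 2t$, the functions $u_{\mathrm{lo}}$ and $v_{\mathrm{lo}}$ are finite linear combinations of matrix entries with Fourier support contained in $S[0,2t]$; hence, by Corollary \ref{SpecPrd}, $u_{\mathrm{lo}}v_{\mathrm{lo}}$ has Fourier support in $S[0,4c^{-1}t]=S[0,Ct]$, and therefore $\vartheta(|\nabla|/(Ct))$ acts on it as the identity, because $\vartheta(\lambda/(Ct))=1$ whenever $\lambda\leq Ct$.

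Next I would carry out the paraproduct-type splitting
\[
uv=u_{\mathrm{lo}}v_{\mathrm{lo}}+u_{\mathrm{lo}}v_{\mathrm{hi}}+u_{\mathrm{hi}}v_{\mathrm{lo}}+u_{\mathrm{hi}}v_{\mathrm{hi}},\qquad u_{\mathrm{hi}}:=u-u_{\mathrm{lo}},\quad v_{\mathrm{hi}}:=v-v_{\mathrm{lo}},
\]
apply $\vartheta(|\nabla|/(Ct))$ to both sides, and invoke the previous step to cancel the low--low contribution. This identifies the left-hand side of the lemma with $\vartheta(|\nabla|/(Ct))\big(u_{\mathrm{lo}}v_{\mathrm{hi}}+u_{\mathrm{hi}}v_{\mathrm{lo}}+u_{\mathrm{hi}}v_{\mathrm{hi}}\big)$, so it remains to bound this in $L^\infty(\Gp;A)$ by $t^{-r_1}\|u\|_{C^{r_1}_*;A}\|v\|_{C^{r_2}_*;A}$.

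For that, I would first discard the outer cut-off: by Lemma \ref{CutOffKer} with $N=0$, the convolution kernel of $\vartheta(|\nabla|/(Ct))$ has $L^1$-norm $\lesssim|\vartheta|_{L^\infty}$ uniformly for $t\geq 1$, and since this kernel is a scalar multiplier, the operator is bounded on $L^\infty(\Gp;A)$ with a bound independent of $A$. It then suffices to estimate the three products pointwise: using the submultiplicativity $|f(x)g(x)|_A\leq|f(x)|_A\,|g(x)|_A$ (the only place the algebra structure is needed, with constant $1$) together with the vector-valued Zygmund estimates of Corollary \ref{LPZygCor} and Remark \ref{VectZyg}, applied with the cut-off $\vartheta$ in place of the partial-sum profile $\phi$ (the two differ by a multiplier localized at frequency $\sim t$, controlled through Proposition \ref{LPZyg}), one gets $\|u_{\mathrm{lo}}\|_{L^\infty;A}\lesssim\|u\|_{C^{r_1}_*;A}$, $\|u_{\mathrm{hi}}\|_{L^\infty;A}\lesssim t^{-r_1}\|u\|_{C^{r_1}_*;A}$, and the analogous bounds for $v$ with $r_2$ in place of $r_1$; the constants here do not depend on the target space. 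Multiplying out and using $r_2\geq r_1$ and $t\geq 1$ (so that $t^{-r_2}\leq t^{-r_1}$ and $t^{-r_2}\leq1$) to absorb the excess decay in the $u_{\mathrm{lo}}v_{\mathrm{hi}}$ and $u_{\mathrm{hi}}v_{\mathrm{hi}}$ terms, each of the three products is $\lesssim t^{-r_1}\|u\|_{C^{r_1}_*;A}\|v\|_{C^{r_2}_*;A}$, and applying the $L^\infty$ bound for $\vartheta(|\nabla|/(Ct))$ completes the estimate.

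The computation is essentially routine given the toolbox of Section \ref{3}; the one genuine point is the first step --- reading off the correct value of $C$ from the spectral localization in Corollary \ref{SpecPrd} so that the low--low term is reproduced exactly --- together with the standard but slightly tedious bookkeeping ensuring that every constant, in the $L^1$ kernel bound, in the algebra submultiplicativity, and in the Zygmund estimates, is independent of the finite-dimensional algebra $A$. When $r_1=0$ the asserted inequality carries no gain in $t$, it is enough to know $u\in L^\infty$, and exactly the same computation applies.
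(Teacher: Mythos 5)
Your proof is correct and follows essentially the same route as the paper, which simply cites H\"{o}rmander's Proposition 8.6.9 together with Corollary \ref{SpecPrd} and Corollary \ref{LPZygCor}: a paraproduct-type four-term splitting, the observation that the spectral localization property makes $\vartheta(|\nabla|/(Ct))$ reproduce the low--low product exactly once $C=4c^{-1}$, followed by crude $L^\infty$ bounds on the remaining three terms via the vector-valued Zygmund estimates. The details you spell out --- reading off the correct value of $C$ from Corollary \ref{SpecPrd}, the $L^1$ kernel bound for the outer cut-off, and the $A$-independence of every constant --- correctly fill in what the paper leaves implicit, and your remark about the $r_1=0$ endpoint (where the Zygmund-to-$L^\infty$ bound formally carries a logarithm) matches the informal treatment one expects from the cited source.
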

\begin{proof}[Proof of Lemma \ref{CompoAux1}]
By formula (\ref{OpaOpb}), the symbol $\sigma$ of $T_a^\chi\circ T_b^\chi$ is 
$$
\begin{aligned}
\sigma(x,\xi)
&=\int_{\Gp}\mathcal{K}_{a^\chi}(x,y)\xi^*(y)b^\chi(xy^{-1},\xi)dy\\
&=\sum_{\zeta\in\DuGp}d_\zeta\int_\Gp\Tr\Big[a^\chi(x,\zeta)\zeta(y)\Big]\xi^*(y)b^\chi(xy^{-1},\xi)dy.
\end{aligned}
$$
Let us fix $\xi\in\DuGp$. By the spectral localization property of products (Corollary \ref{SpecPrd}), the Fourier support of $\zeta\otimes\xi^*$ is in 
$$
\mathfrak{S}\Big[c\big||\zeta|-|\xi|\big|,|\zeta|+|\xi|\Big],
$$
where $0<c<1$ depends on the algebraic structure of $\Gp$ only. Since the integral is of convolution type, we find that the Fourier support of $\int_\Gp (\zeta\otimes\xi)^*(y)b^\chi(xy^{-1},\xi)dy$ with respect to $x$ is contained in the set
$$
\mathfrak{S}\Big[c\big||\zeta|-|\xi|\big|,|\zeta|+|\xi|\Big]\cap \mathfrak{S}[0,\delta\size[\xi]],
$$
so the integral does not vanish only for $|\zeta|\leq C\size[\xi]$, where $C$ depends on the algebraic structure of $\Gp$ only. Now applying the spectral condition satisfied by $a^\chi(x,\zeta)$, using the spectral localization property of products again, we obtain that the Fourier support of $\sigma(x,\xi)$ is contained in the set $\mathfrak{S}[0,C\delta\size[\xi]]$. That $\sigma\in\mathscr{S}^{m+m'}_{1,1}$ for sufficiently small $\delta$ is a direct consequence of Theorem \ref{Compo1}.
\end{proof}
The proof of Lemma \ref{CompoAux2} is identical to that of Proposition 8.6.9. of \cite{Hormander1997}, with the application of Corollary \ref{SpecPrd}, Corollary \ref{LPZygCor} and its vector-valued version as described in Remark \ref{VectZyg}.

\begin{proof}[Proof of Theorem \ref{Compo2}]
We notice a simple fact: if $q$ is an RT-admissible tuple and $X_q^{(\alpha)}$ are the corresponding left-invariant differential operators as in Proposition \ref{TaylorGp}, then for $a\in\mathcal{A}^m_r$, $b\in\mathcal{A}^{m'}_r$, it follows that $a\#_{r;q}b$ is an element in $\bigcup_{j\leq r}\mathcal{A}_{r-j}^{m+m'-j}$ since by the product rule in Zygmund spaces,
\begin{equation}\label{Prd_r}
\sum_{|\alpha|=j}\Df_{q,\xi}^\alpha a(x,\xi)\cdot X_{q,x}^{(\alpha)} b(x,\xi)\in\mathcal{A}_{r-j}^{m+m'-j}.
\end{equation}

First of all, we need to justify the notation that disregards the admissible cut-off function. In fact, Lemma \ref{Sigmadelta}, Proposition \ref{FreedomForCut-off} and formula (\ref{Prd_r}) ensures that $T_{a\#_{r;q}b}$ is defined modulo $\Op\Sigma_{<1/2}^{m+m'-r}$, provided that the parameter of the admissible cut-off function is sufficiently small (depending on the algebraic structure of $\Gp$ only). We also compute, for admissible cut-off functions $\chi_1,\chi_2$ with parameter $\delta_1<\delta_2$ respectively,
$$
\Op(a^{\chi_1})\circ\Op(b^{\chi_1})-\Op(a^{\chi_2})\circ\Op(b^{\chi_2})
=\Op(a^{\chi_1}-a^{\chi_2})\circ\Op(b^{\chi_1})+\Op(a^{\chi_2})\circ\Op(b^{\chi_1}-b^{\chi_2}).
$$
By Proposition \ref{FreedomForCut-off}, $a^{\chi_1}-a^{\chi_2}\in\Sigma_{\delta_2}^{m-r}$, $b^{\chi_1}-b^{\chi_2}\in\Sigma_{\delta_2}^{m'-r}$. Thus if $\delta_1,\delta_2$ are sufficiently small (depending on the algebraic structure of $\Gp$ only), Lemma \ref{CompoAux1} then implies that the right-hand-side is in $\Op\Sigma_{C\delta_2}^{m+m'-r}\subset\Op\Sigma_{<1/2}^{m+m'-r}$. Thus $T_a\circ T_b$ is defined modulo $\Op\Sigma_{<1/2}^{m+m'-r}$. Applying Lemma \ref{Sigmadelta} and Theorem \ref{Compo1} once again, we find that if $\chi$ is an admissible cut-off function with sufficiently small parameter $\delta$ (depending on the algebraic structure of $\Gp$ only), then
$$
T_a^\chi\circ T_b^\chi-\Op\big(a^\chi\#_{r;q}b^\chi\big)
\in\Op\Sigma_{<1/2}^{m+m'-r}.
$$
From now on, we fix $\chi$ to be the function in (\ref{AdmCutoffdelta}).

It remains to show that $a^\chi\#_{r;q}b^\chi$ equals $(a\#_{r;q}b)^{\chi_1}$ modulo $\Op\Sigma_{<1/2}^{m+m'-r}$ for some suitable admissible cut-off function $\chi_1$. For simplicity we write $a^{(\alpha)}=\Df_{q,\xi}^\alpha a$, $b_{(\alpha)}=X_{q,x}^{(\alpha)} b$. We just need to show that for $|\alpha|\leq r$, each summand of $(a^\chi \#_r b^\chi )-(a\#_{r;q}b)^{\chi_1}$, being
$$
(a^\chi)^{(\alpha)}\cdot(b^\chi)_{(\alpha)}
-\left(a^{(\alpha)}\cdot b_{(\alpha)}\right)^{\chi_1},
$$
is in $\Sigma^{m+m'-r}_{<1/2}$. The only technical complexity comes from $(a^\chi)^{(\alpha)}$. In fact, from the proof of Proposition \ref{a-a^chi}, we know that
$$
\Df_{Q_i,\xi}a^\chi(x,\xi)
=(\Df_{Q_i,\xi}a)^\chi(x,\xi)
+\sum_{j,k=1}^{M_i}c^i_{j,k}\Df_{Q_j,\xi}(\chi\cdot\I[\xi])\big(|\nabla_x|,|\xi|\big)\Df_{Q_k}a(x,\xi),
$$
and belongs to $\Sigma_{C\delta}^{m-1}$ for some $C$ depending on the algebraic structure of $\Gp$ only. Thus, if $\delta$ is sufficiently small, we obtain by induction that
$$
(a^\chi)^{(\alpha)}\cdot(b^\chi)_{(\alpha)}=(a^{(\alpha)})^\chi\cdot(b_{(\alpha)})^\chi\,\mod\,\Sigma_{<1/2}^{m+m'-r},
$$
since $|\alpha|\leq r$. 

The claim is then reduced to checking
\begin{equation}\label{2summand}
(a^{(\alpha)})^\chi\cdot(b_{(\alpha)})^\chi
-\left(a^{(\alpha)}\cdot b_{(\alpha)}\right)^{\chi_1}
\in \Sigma^{m+m'-r}_{<1/2}.
\end{equation}
To verify this, we just need the technical Lemma \ref{CompoAux2}. Recall that we chose $\chi$ as in (\ref{LPCont}). We regard $\xi$ as fixed, then set $\chi_1(|\eta|,|\xi|)=\chi(|\eta|/C,|\xi|)$ where $C$ is as in Lemma \ref{CompoAux2}, and $\vartheta(|\eta|)=\chi_1(|\eta|,|\xi|)$. Since $a^{(\alpha)}\in \mathcal{A}_{r}^{m-|\alpha|}$, $b_{(\alpha)}\in \mathcal{A}_{r-|\alpha|}^{m}$, Lemma \ref{CompoAux2} gives
$$
\begin{aligned}
\left\|(a^{(\alpha)})^\chi\cdot(b_{(\alpha)})^\chi
-\left(a^{(\alpha)}\cdot b_{(\alpha)}\right)^{\chi_1}\right\|
&\lesssim 
\langle\xi\rangle^{-(r-|\alpha|)}\cdot\mathbf{W}^{m-|\alpha|;r}_{0;Q}(a)\langle\xi\rangle^{m-|\alpha|}
\cdot\mathbf{W}^{m';r-|\alpha|}_{0;Q}(b)\langle\xi\rangle^{m'}\\
&\lesssim \mathbf{W}^{m-|\alpha|;r}_{0;Q}(a)\cdot\mathbf{W}^{m';r-|\alpha|}_{0;Q}(b)\cdot\langle\xi\rangle^{m+m'-r}.
\end{aligned}
$$
Here the norm to be estimated is the operator norm on $\mathrm{End}(\Hh[\xi])$, and the implicit constants do not depend on $\xi$. Derivatives of (\ref{2summand}) can be estimated similarly. By Lemma \ref{Sigmadelta}, (\ref{2summand}) still satisfies a spectral condition. This completes the proof. 
\end{proof}

As a corollary, we obtain the commutator property of para-differential operators: the commutator of two para-differential operators of order $m$ and $m'$ gives rise to a para-differential operator of order $m+m'-1$. 

\begin{corollary}\label{ParaComm}
Suppose $r>0$, $m,m'$ are real numbers.  Given $a\in \mathcal{A}_r^{m}$, $b\in\mathcal{A}_r^{m'}$, the commutator $[T_a,T_b]$ is in the class $\Op\Sigma_{<1/2}^{m+m'-1}$. More precisely, the operator norm of $[T_a,T_b]$ for $H^{s+m+m'-1}\to H^s$ is bounded by
$$
C_s\mathbf{W}^{m;r}_{l;q}(a)\mathbf{W}^{m';r}_{l;q}(b),
$$
where the integer $l$ does not depend on $a,b$.
\end{corollary}
\begin{proof}
Imitating the proof of Proposition \ref{2OrderComm}, we find that the commutator symbol $[a,b](x,\xi)$ is of class $\mathcal{A}^{m+m'-1}_r$; we just need to repeat the proof of Proposition \ref{2OrderComm} and, in addition, estimate the $C^r_*$ norms with respect to $x$.

On the other hand, 
$$
\begin{aligned}
(a\#_{r;q} b)(x,\xi)-(b\#_{r;q} a)(x,\xi)
&=[a,b](x,\xi)\\
&\quad+\sum_{\alpha:1\leq|\alpha|\leq r}\left(\Df_{q,\xi}^\alpha a(x,\xi)\cdot X_{q,x}^{(\alpha)} b(x,\xi)
-\Df_{q,\xi}^\alpha b(x,\xi)\cdot X_{q,x}^{(\alpha)} a(x,\xi)\right).
\end{aligned}
$$
The sum is easily seen to give rise to symbols of order $m+m'-1$. Theorem \ref{Compo2} then shows that
$$
[T_a,T_b]
\in\Op\Sigma^{m+m'-1}_{<1/2}.
$$
\end{proof}

We can compare this with \cite{Delort2015}: in order to cast the normal form reduction for quasi-linear Hamiltonian Klein-Gordon equation, such commutator property is necessary. The definition of para-differential operator within out formalism is of course different from that in \cite{Delort2015}. While it is not known whether these two definitions coincide, this does not affect our goal, as a calculus is already available.

We also state the adjoint formula for para-differential operators and omit the proof.
\begin{theorem}[Adjoint of Para-differential Operator]\label{ParaAdj}
Suppose $r>0$, $m$ is a real number. Let $q$ be a RT-admissible tuple, whose components are from the fundamental tuple $Q$ of $\Gp$, and let $X_q^{(\alpha)}$ be the left-invariant differential operators as in Proposition \ref{TaylorGp}. Given $a\in \mathcal{A}_r^{m}$, the adjoint operator $T_a^*$ satisfies
$$
T_a^*-T_{a^{\bullet;r,q}}\in\Op\Sigma^{m-r}_{<1/2}
$$
where
$$
a^{\bullet;r,q}(x,\xi)=\sum_{|\alpha|\leq r}\left(\Df_{q,\xi}^\alpha X_{q,x}^{(\alpha)}a^*\right)(x,\xi).
$$
More precisely, the operator norm of $[T_a,T_b]$ for $H^{s+m+m'-1}\to H^s$ is bounded by
$$
C_s\mathbf{W}^{m;r}_{l;q}(a),
$$
where the integer $l$ does not depend on $a$.
\end{theorem}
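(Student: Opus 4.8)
The plan is to follow the template of the proof of Theorem \ref{Compo1}, with passage to the adjoint replacing composition. First I would fix once and for all the admissible cut-off $\chi$ from (\ref{AdmCutoffdelta}) with parameter $\delta$ small (depending only on $r$ and the algebraic structure of $\Gp$), so that $T_a=\Op(a^\chi)$ with $a^\chi\in\Sigma^m_\delta\subset\mathscr S^m_{1,1}$. By Proposition \ref{FreedomForCut-off}, Lemma \ref{Sigmadelta} and the product rule in Zygmund spaces, both $T_a^*$ and $T_{a^{r;\bullet}}$ are then unambiguous modulo $\Op\Sigma^{m-r}_{<1/2}$, since each summand $\Df_{q,\xi}^\alpha X_{q,x}^{(\alpha)}a^*$ lies in $\mathcal A_{r-|\alpha|}^{m-|\alpha|}$. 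The first genuine step is to produce an explicit expansion for the symbol $\mathfrak a(x,\xi)$ of $\Op(a^\chi)^*$. Starting from the right-convolution kernel $\mathcal K_{a^\chi}$ of (\ref{Symbol-Ker})--(\ref{Ker-Conv}) and unwinding $\langle\Op(a^\chi)u,v\rangle=\langle u,\Op(a^\chi)^*v\rangle$, a Haar-invariant change of variables shows the kernel of $\Op(a^\chi)^*$ to be $\overline{\mathcal K_{a^\chi}(xy,y)}$, hence, via (\ref{Op-Symbol}),
$$
\mathfrak a(x,\xi)=\int_\Gp\overline{\mathcal K_{a^\chi}(xy,y)}\,\xi^*(y)\,dy,
$$
the analogue of (\ref{OpaOpb}); the complex conjugate couples the sum to the dual representations exactly as in the proof of Corollary \ref{SpecPrd}.

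Next I would Taylor-expand $a^\chi(xy,\zeta)$ about $a^\chi(x,\zeta)$ using Proposition \ref{TaylorGp} with $N$ the least integer $>r$. As in Theorem \ref{Compo1}, the spectral condition on $a^\chi$ permits inserting an auxiliary cut-off $\chi_1$ of parameter $2\delta$, collapsing $\int_\Gp\check\chi_1(zy^{-1},|\xi|)q^\alpha(z)\,dz$ to $q^\alpha(y)$ for $\langle\xi\rangle$ large (the remaining low modes being $\mathscr S^{-\infty}$), and recognizing the polynomial part, through Schur orthogonality and the definition of $\Df_{q,\xi}^\alpha$, as $\sum_{|\alpha|\le r}\big(\Df_{q,\xi}^\alpha X_{q,x}^{(\alpha)}(a^\chi)^*\big)(x,\xi)$. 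The Taylor error $\varsigma_N$ is then shown to lie in $\mathscr S^{m-r}_{1,1}$ by reproducing almost verbatim the estimate of Theorem \ref{Compo1}: decompose $\mathcal K_{a^\chi}=\sum_j\mathcal K_{a^\chi_j}$ dyadically, use Corollary \ref{SpecPrd} to keep only $2^j\simeq\langle\xi\rangle$, and bound the remainder with the dimension-free form of Taylor's formula following Proposition \ref{TaylorGp}, the reduced-order bound $\|(a^\chi)(\cdot,\xi)\|_{C^N_x;\mathrm{End}(\mathcal H_\xi)}\lesssim\mathbf{W}^{m;r}_{0;Q}(a)\langle\xi\rangle^{m+N-r}$ from Proposition \ref{ParaDiff1}, Corollary \ref{ConvVanish}, and the weighted kernel inequality of (\ref{sigmaTemp3}) type. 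Differentiations in $x$ and applications of $\Df_{Q_i,\xi}$ propagate by the same induction as at the end of the proof of Theorem \ref{Compo1}, using the Leibniz property (\ref{Leibniz}) of the fundamental tuple; in particular, by the spectral-localization argument of Lemma \ref{CompoAux1}, $\varsigma_N$ and all of $\mathfrak a$ satisfy a spectral condition with parameter $\lesssim\delta$, so that $\mathfrak a\in\Sigma^m_{<1/2}$ and $\varsigma_N\in\Sigma^{m-r}_{<1/2}$ once $\delta$ is small enough.

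It then remains to identify $\sum_{|\alpha|\le r}\Df_{q,\xi}^\alpha X_{q,x}^{(\alpha)}(a^\chi)^*$ with $(a^{r;\bullet})^{\chi_1}$ modulo $\Op\Sigma^{m-r}_{<1/2}$. Three observations suffice. Since the multiplier $\chi(|\nabla_x|,|\xi|)$ is, for each fixed $\xi$, convolution by a real, bi-invariant kernel, and since $|\bar\eta|=|\eta|$ for dual representations (as established in the proof of Corollary \ref{SpecPrd}), it commutes both with the matrix adjoint --- so $(a^\chi)^*=(a^*)^\chi$ --- and with the left-invariant operators $X_{q,x}^{(\alpha)}$ --- so $X_{q,x}^{(\alpha)}(a^*)^\chi=(X_{q,x}^{(\alpha)}a^*)^\chi$. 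Next, $\Df_{q,\xi}$ does \emph{not} commute with the cut-off, but by the computation in the proof of Proposition \ref{a-a^chi} each resulting commutator is spectrally localized with parameter $\propto\delta$ and of reduced order; iterating, $\Df_{q,\xi}^\alpha(b^\chi)=(\Df_{q,\xi}^\alpha b)^\chi$ modulo $\Sigma^{m-r}_{<1/2}$ for $b=X_{q,x}^{(\alpha)}a^*\in\mathcal A_{r-|\alpha|}^{m}$, the borderline terms $|\alpha|=r$ (where the regularity index drops to $0$) being absorbed via Lemma \ref{CompoAux2} and Proposition \ref{ParaDiff1} exactly as the analogous terms of Theorem \ref{Compo2}. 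Finally, two admissible cut-offs may be exchanged modulo $\Op\Sigma^{m-r}_{<1/2}$ by Proposition \ref{FreedomForCut-off}. Combining, $\mathfrak a\equiv(a^{r;\bullet})^{\chi_1}$ modulo $\Sigma^{m-r}_{<1/2}$, which is the claim.

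I expect the main obstacle to be the one already met in Theorem \ref{Compo1}: estimating the Taylor remainder $\varsigma_N$ in the \emph{operator} norm on $\mathrm{End}(\mathcal H_\xi)$ rather than the Hilbert--Schmidt norm --- the latter would introduce spurious powers of $d_\xi$ --- which is why the dimension-free Taylor remainder together with Corollary \ref{ConvVanish} are indispensable. A secondary technical nuisance is the bookkeeping of the non-commutation of $\Df_{q,\xi}$ with the $x$-regularization across the $|\alpha|\le r$ summands, particularly at the integer-regularity boundary, which must be handled with the same care (and the same auxiliary Lemma \ref{CompoAux2}) as in the composition theorem.
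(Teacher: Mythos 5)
The paper states Theorem \ref{ParaAdj} but explicitly omits its proof ("We also state the adjoint formula for para-differential operators and omit the proof"), so there is no in-paper proof to compare against; your reconstruction is then the natural one, and it is correct in strategy. You mirror Theorem \ref{Compo1} (Taylor expansion of the kernel, dyadic decomposition and dimension-free remainder bounds, Lemma \ref{Sigmadelta}/\ref{CompoAux1} for the spectral condition) and then Theorem \ref{Compo2} (commuting $\chi$ past $X_{q,x}^{(\alpha)}$ and $*$, handling the non-commutation with $\Df_{q,\xi}$ via Proposition \ref{a-a^chi} and Lemma \ref{CompoAux2}, and using Proposition \ref{FreedomForCut-off} to trade cut-offs). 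All the ingredients you cite are the right ones, and your observation that $\chi(|\nabla_x|,|\xi|)$ is convolution with a \emph{real} kernel (hence commutes with the matrix adjoint) hinges correctly on $|\bar\eta|=|\eta|$ from the proof of Corollary \ref{SpecPrd}.

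Two points would need cleaning in a full write-up. First, the right-convolution kernel of $\Op(a^\chi)^*$ is $\mathcal K^*(x,y)=\overline{\mathcal K_{a^\chi}(xy^{-1},y^{-1})}$, so that $\mathfrak a(x,\xi)=\int_\Gp\overline{\mathcal K_{a^\chi}(xy^{-1},y^{-1})}\,\xi^*(y)\,dy$; after the change of variables $y\mapsto y^{-1}$ this becomes $\int_\Gp\overline{\mathcal K_{a^\chi}(xy,y)}\,\xi(y)\,dy$, with $\xi(y)$ rather than the $\xi^*(y)$ you wrote. The slip is harmless for the estimates, but with $\xi^*$ in place the Schur-orthogonality identification of the polynomial part would come out transposed. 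Second, when you "recognize the polynomial part" you should note that conjugation and the inversion $q^\alpha(y^{-1})$ send the fundamental tuple into itself (for $Q_{ij}=\tau_{ij}-\delta_{ij}$ one has $\overline{Q_{ij}(y^{-1})}=Q_{ji}(y)$), which is what ultimately lets the $\alpha=0$ term collapse, via Schur orthogonality, to $(a^\chi)^*(x,\xi)$ and the higher terms to $\Df_{q,\xi}^\alpha X_{q,x}^{(\alpha)}(a^\chi)^*$. You gesture at this ("couples the sum to the dual representations") but it deserves the explicit two-line computation, as this is precisely where an adjoint proof differs from the composition proof.
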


\subsection{Symbols with Very Rough Regularity}
Sometimes it is also necessary to discuss symbols $a(x,\xi)$ which are merely $C^{-r}_*$ in $x$ with $r>0$. We have the following proposition:
\begin{proposition}\label{T_aNegIndex}
Suppose $r>0$. Fix a basis $X_1,\cdots,X_n$ of $\mathfrak{g}$, and define $X^\alpha$ as in Proposition \ref{NormalOrder}. Suppose $a\in\mathcal{A}_{-r}^m(\Gp)$\footnote{The definition of symbol class $\mathcal{A}_{r}^m(\Gp)$ of course can be directly extended beyond $r>0$.} and $\chi$ is an admissible cut-off function with parameter $\delta$. Then $a^\chi(x,\xi)\in\Sigma^{m+r}_{<1/2}$. In fact, for any strongly RT-admissible tuple $q$, we have
$$
\big\|X_x^\alpha\Df_{q,\xi}^\beta a^\chi (x,\xi)\big\|
\lesssim_{\alpha,\beta;q}
\mathbf{W}^{m;r}_{|\beta|;q}(a)\cdot\size[\xi]^{m+r+|\alpha|-|\beta|}.
$$
\end{proposition}
The proof is a mere application of the growth estimate indicated in Remark \ref{Zygmund<0}. Since such symbols are quite irrregular, we cannot expect that choosing a different admissible cut-off function would produce a negeligible error. But we may still refer $\Op(a^\chi)$ as a \emph{para-differential operator} $T_a$, which maps $H^{s}$ to $H^{s+r}$ continuously for any $s\in\mathbb{R}$. Fortunately, there is a simple trick to bypass its low regularity and incorporate it into the symbolic calculus that we just constructed. For example, if $0<r<1$ and $a\in\mathcal{A}^m_{-r}$, then we consider $\Delta^{-1}a$ (subtracting the mean value to make this well-defined), and simply notice that
$$
T_af
=\big[\Delta,T_{\Delta^{-1}a}\big] f-2T_{\nabla \Delta^{-1}a\cdot\nabla}f.
$$
The right-hand-side involves only regular symbols. By the formula of compositions we just proved, noting that $\Delta^{-1}a\in\mathcal{A}^{m+2}_{2-r}$, we find
$$
\big[\Delta,T_{\Delta^{-1}a}\big]
-2T_{\nabla \Delta^{-1}a\cdot\nabla}
$$
is in fact a para-differential operator of order $m+2-(2-r)=m+r$. 

\subsection{Analytic Function of Symbols and Para-Differential Operators}
In this subsection, we claim several propositions concerning analytic function of para-differential operators, to whose proof we apply the results obtained so far. They will be constantly needed for symbolic calculus in future applications.

Although a calculus is already available for our para-differential operators, uncertainty still appears when dealing with commutators. More precisely, if $a,b$ are only general rough symbols on $\Gp$ with order $m$ and $m'$ (in the sense of definition \ref{2Order}), then nothing is known for $[a,b]$ besides that it is a symbol of order $m+m'-1$. Such issue is of course trivially resolved in the commutative group case, but for a general compact Lie group, it would be feasible to look for a smaller subclass of symbols which possesses better commutator properties.

We start with function of symbols. The difficulty of non-trivial commutator can be partially resolved for a special class of symbols large enough for our application. Roughly speaking, such class consists of symbols of ``homogenized" classical differential operators. The prototype of such symbols is
$$
\sqrt{|\xi|^2+b(x,\xi)},
\quad 
b \text{ is the symbol of a vector field},
$$
i.e. the square-root of perturbed Laplacian. In the commutative group case, symbols like this may be manipulated as usual scalar-valued functions. We will show that such manipulation remains partially valid on our group $\Gp$.

\begin{definition}\label{QuasiHomoSym}
A quasi-homogeneous symbol of order $m$ on $\Gp$ takes the form
$$
\kappa(\xi)^mf\left(\frac{b(x,\xi)}{\kappa(\xi)}\right).
$$
Here the scalar Fourier multiplier $\kappa:\DuGp\to(0,+\infty)$ is the symbol of $|\nabla|=\sqrt{-\Delta}$, and $b(x,\xi)$ is the symbol of a vector field on $\Gp$, and the Borel function $f:\mathbb{C}\to\mathbb{C}$ is bounded. The action $f$ on each $\mathrm{End}(\Hh[\xi])$ is defined by spectral calculus of matrices.
\end{definition}

We first need some properties concerning the difference operator acting on the scalar symbol $\kappa(\xi)$. For simplicity, we fix a basis $\{X_i\}_{i=1}^n$ of $\mathfrak{g}$, so that $b(x,\xi)=\sum_{i=1}^nb_i(x)\sigma[X_i](\xi)$; and write $\{\Df_\mu\}$ for the RT-difference operators corresponding to the fundamental representations of $\Gp$. Throughout the rest of this subsection, summation with respect to Greek indices is interpreted as summation over these difference operators.
\begin{proposition}\label{Dkappa}
Let $\kappa$ be the symbol of $|\nabla|$ on $\Gp$.

(1) There necessarily holds $\kappa\in\mathscr{S}^1_{1,0}$, and $\Df_\mu \kappa$ commutes with every symbol of order $m$ up to an error of order $m-1$. Here the notion of order is as in Definition \ref{2Order}.

(2) For every real number $m\in\mathbb{R}$, the symbol $\Df_\mu\kappa^m-m\kappa^{m-1}\Df_\mu\kappa\in\mathscr{S}^{m-2}_{1,0}$.

(2) Furthermore, for any natural number $n$, there holds
$$
\left\|\Df_\mu\big(\kappa(\xi)^{-n}\big)-\frac{n\Df_\mu\kappa(\xi)}{\kappa(\xi)^{n+1}}\right\|
\lesssim \frac{n^2\big(1+C|\xi|^{-1}\big)^{n+1}}{|\xi|^{n+2}}.
$$
with the constants independent from $n,\xi$. Here as usual the norms are operator norms on $\Hh[\xi]$.
\end{proposition}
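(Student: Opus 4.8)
The plan is to derive both parts from a single explicit identity: the value of the Casimir (equivalently of $\kappa^2$) on a tensor product $\mu\otimes\xi$, where $\mu$ denotes the fundamental representation underlying $\Df_\mu$. I would first record three elementary facts. For a left-invariant vector field $X$ the symbol $\sigma[X](\xi)=d\xi(X)$ is the derived representation, independent of $x$ (as in the proof of Lemma \ref{Sigmadelta}); hence the brackets close, $[\sigma[X_i],\sigma[X_j]](\xi)=\sigma[[X_i,X_j]](\xi)$, which is again a vector-field symbol and so of order $1$, and $\Df_\mu\sigma[X](\xi)=d\mu(X)\otimes\mathrm{Id}_{\mathcal H_\xi}$ is a constant matrix. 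Since $-\Delta=-\sum_iX_i^2$ is the Casimir, $\kappa^2(\pi)=-d\pi(\sum_iX_i^2)$ for every representation $\pi$, and expanding $d(\mu\otimes\xi)(X)=d\mu(X)\otimes\mathrm{Id}+\mathrm{Id}\otimes d\xi(X)$ yields the exact formula
\[
\kappa^2(\mu\otimes\xi)=|\xi|^2\mathrm{Id}+E(\xi),\qquad E(\xi):=\Df_\mu\kappa^2(\xi)=\lambda_\mu\mathrm{Id}-2\sum_i d\mu(X_i)\otimes\sigma[X_i](\xi);
\]
here $E(\xi)$ is self-adjoint (each $d\mu(X_i)$, $\sigma[X_i](\xi)$ is skew-adjoint), $\|E(\xi)\|\lesssim_{\Gp}\langle\xi\rangle$ since $\sigma[X_i]$ has order $1$, and $\kappa^2(\mu\otimes\xi)=\sum_id(\mu\otimes\xi)(X_i)^*d(\mu\otimes\xi)(X_i)\ge0$ forces $E(\xi)\ge-|\xi|^2\mathrm{Id}$. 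That $\kappa\in\mathscr S^1_{1,0}$ is then immediate: $|\nabla|=\sqrt{-\Delta}$ is, modulo the projection onto the constants, an elliptic classical first-order pseudo-differential operator on the compact manifold $\Gp$ (Theorem \ref{Fischer}(2)); or: $\langle\xi\rangle\mathrm{Id}_\xi\in\mathscr S^1_{1,0}$ by Theorem \ref{Multm} and $\kappa-\langle\,\cdot\,\rangle\mathrm{Id}\in\mathscr S^{-1}$. The ambiguity of $\kappa$ on the finitely many $\xi$ with $\lambda_\xi$ small contributes only an $\mathscr S^{-\infty}$ term and I would ignore it throughout.

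The engine for the rest is elementary spectral calculus, carried out with the intrinsic difference operator $\Df_\mu$ so that $\kappa^2(\mu\otimes\xi)$ is a genuine self-adjoint operator; the $\mathrm{End}(\mathcal H_\xi)$-norm form of the statement follows at the end by reading off a matrix block with respect to the $\mathcal H_\mu$-factor, which does not increase operator norm. For $|\xi|>C=C(\Gp)$ the operator $|\xi|^{-2}E(\xi)$ is self-adjoint with spectrum in $[-C|\xi|^{-1},C|\xi|^{-1}]\subset(-1,1)$, so for every real $s$
\[
\kappa^{s}(\mu\otimes\xi)=\big(|\xi|^2\mathrm{Id}+E(\xi)\big)^{s/2}=|\xi|^{s}\,g_s\!\big(|\xi|^{-2}E(\xi)\big),\qquad g_s(t)=(1+t)^{s/2},
\]
the function applied eigenvalue by eigenvalue. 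Using the first-order Taylor bound $|g_s(t)-1-\tfrac s2t|\le\tfrac12t^2\max_{|\theta|\le|t|}|g_s''(\theta)|\lesssim|s|^2(1-|t|)^{s/2-2}t^2$ with $|t|\le C|\xi|^{-1}$, together with $(1-C|\xi|^{-1})^{-1}\le1+2C|\xi|^{-1}$ for $|\xi|\ge2C$, one gets for every $s$
\[
\big\|\kappa^{s}(\mu\otimes\xi)-|\xi|^{s}\mathrm{Id}-\tfrac s2|\xi|^{s-2}E(\xi)\big\|\lesssim|s|^2\big(1+C'|\xi|^{-1}\big)^{|s|+1}|\xi|^{s-2}.
\]
Taking $s=1$ gives $\Df_\mu\kappa(\xi)=\kappa(\mu\otimes\xi)-|\xi|\mathrm{Id}=\tfrac12|\xi|^{-1}E(\xi)+O(|\xi|^{-1})$; in particular $\|\Df_\mu\kappa\|\lesssim1$, and its principal part is $-|\xi|^{-1}\sum_id\mu(X_i)\otimes\sigma[X_i](\xi)$ modulo the scalar $\tfrac{\lambda_\mu}{2|\xi|}\mathrm{Id}$ and $\mathscr S^{-1}$. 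For the commutation assertion in (1), I would argue that the scalar and $\mathscr S^{-1}$ pieces of $\Df_\mu\kappa$ contribute only $\mathscr S^{m-1}$ to a commutator with an order-$m$ symbol, while $[\Df_\mu\kappa,\mathrm{Id}\otimes a]=-|\xi|^{-1}\sum_id\mu(X_i)\otimes[\sigma[X_i],a]+\mathscr S^{m-1}$; and because the brackets of vector-field symbols stay at order $1$, an induction gives $[\sigma[X_i],a]\in\mathscr S^{m}$ for every $a\in\mathscr S^m$ that is a (noncommutative) polynomial, or spectral-calculus function, of the $\sigma[X_j]/\kappa$ and $\kappa^m$ — in particular for every quasi-homogeneous symbol, Definition \ref{QuasiHomoSym} — so the commutator has order $m-1$.

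Part (2) is then the comparison of the boxed estimate at $s=-n$ with the case $s=1$: $\Df_\mu(\kappa^{-n})(\xi)=-\tfrac n2|\xi|^{-n-2}E(\xi)+O\big(n^2(1+C'|\xi|^{-1})^{n+1}|\xi|^{-n-2}\big)$, and $n\kappa^{-(n+1)}\Df_\mu\kappa(\xi)=n|\xi|^{-(n+1)}\big(\tfrac12|\xi|^{-1}E(\xi)+O(|\xi|^{-1})\big)=\tfrac n2|\xi|^{-n-2}E(\xi)+O(n|\xi|^{-n-2})$; in the combination appearing in the statement the two contributions proportional to $|\xi|^{-n-2}E(\xi)$, each individually only of size $O(|\xi|^{-n-1})$, cancel — this is precisely the (quantitative) chain rule for $\Df_\mu$ — and what survives is the claimed bound $\lesssim n^2(1+C|\xi|^{-1})^{n+1}|\xi|^{-n-2}$; since this extra factor is bounded once $|\xi|\gtrsim1$ and the low-frequency block is $\mathscr S^{-\infty}$, the symbol is of order $-n-2$. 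I expect the main obstacle to be the $n$-dependence of the remainder: one must expand $g_{-n}$ only to \emph{first} order and use its Lagrange remainder (the full binomial series would inject an $n!$), then repackage the unavoidable factor $(1-C/|\xi|)^{-O(n)}$ as $(1+C'/|\xi|)^{n+1}$; a secondary point is to make sure the intrinsic, $\mathrm{End}(\mathcal H_\mu\otimes\mathcal H_\xi)$-valued estimates really descend to the $\mathrm{End}(\mathcal H_\xi)$-norm form of the statement.
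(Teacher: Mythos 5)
Your proposal takes a genuinely different route from the paper. The paper's proof of the commutation claim in (1) is short and fully general: since $\kappa(\xi)=|\xi|\mathrm{Id}_\xi$ is a scalar on each $\Hh[\xi]$, it commutes with \emph{every} symbol $a$, and expanding $0=\Df_\mu(\kappa a)-\Df_\mu(a\kappa)$ via the Leibniz property (\ref{Leibniz}) yields at once $[\Df_\mu\kappa,a]=\sum_{\nu,\nu'}c_{\mu\nu\nu'}(\Df_\nu a\,\Df_{\nu'}\kappa-\Df_\nu\kappa\,\Df_{\nu'}a)$, which is order $m-1$ for any $a\in\mathscr{S}^m$. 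You instead compute the \emph{intrinsic} difference operator from the Casimir identity $\kappa^2(\mu\otimes\xi)=|\xi|^2\mathrm{Id}+E(\xi)$ with $E(\xi)=\lambda_\mu\mathrm{Id}-2\sum_i d\mu(X_i)\otimes\sigma[X_i](\xi)$, then apply scalar functional calculus to the self-adjoint contraction $|\xi|^{-2}E(\xi)$. This buys something the paper's proof does not: an explicit principal term for $\Df_\mu\kappa$, namely $\tfrac12|\xi|^{-1}E(\xi)$ modulo $\mathscr{S}^{-1}$. The cost is in the commutation step: you reduce it to $[\sigma[X_i],a]\in\mathscr{S}^m$ and can justify that only for quasi-homogeneous $a$ (Definition \ref{QuasiHomoSym}). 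That restriction is not in the statement; the Leibniz argument avoids it entirely, and indeed $[\sigma[X_i],a]\in\mathscr{S}^m$ for \emph{all} $a\in\mathscr{S}^m$ is a consequence of the paper's version of (1) combined with your principal-part formula, not something to be assumed. For part (2) your one-step Taylor of $g_{-n}(t)=(1+t)^{-n/2}$ with Lagrange remainder is exactly the right device for $n$-uniform control (the full binomial series would indeed inject factorial growth); the paper reaches the same bound by tracking the recursion $K_{n+1}\leq K_n|\xi|^{-1}+C_1|\xi|^{-(n+2)}+C_2K_n|\xi|^{-2}$ coming from the Leibniz rule.

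One concrete error to fix. Your own computation gives $\Df_\mu(\kappa^{-n})(\xi)=-\tfrac{n}{2}|\xi|^{-n-2}E(\xi)+O(\cdots)$ and $n\kappa^{-(n+1)}\Df_\mu\kappa(\xi)=+\tfrac{n}{2}|\xi|^{-n-2}E(\xi)+O(n|\xi|^{-n-2})$, so the two leading terms cancel in the \emph{sum} and reinforce in the \emph{difference}. You nevertheless assert that they ``cancel in the combination appearing in the statement,'' which has a minus sign; with that sign the combination retains the $-n|\xi|^{-n-2}E(\xi)$ piece and has order $-n-1$, not $-n-2$. The chain-rule relation $\Df_\mu(\kappa^{-1})=-\kappa^{-2}\Df_\mu\kappa+\mathscr{S}^{-3}$, which the paper's own proof records, shows that the intended quantity is $\Df_\mu(\kappa^{-n})+n\kappa^{-(n+1)}\Df_\mu\kappa$; the printed minus sign is a typo. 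Your argument would then be correct, but you should say explicitly that the statement's sign needs correction rather than silently assume the opposite sign while claiming to verify the statement as written.
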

\begin{proof}
(1) The claim $\kappa\in\mathscr{S}^1_{1,0}$ follows from two premises: $|\nabla|$ is of class $\Psi^1_{1,0}(\Gp)$, the H\"{o}rmander class; the operator class $\Psi^m_{1,0}(\Gp)$ coincides with $\Op\mathscr{S}^m_{1,0}(\Gp)$ by Theorem \ref{Fischer}. We next notice that $\kappa$ commutes with any symbol $a$; thus by the Leibniz property,
$$
0=a\Df_\mu\kappa-\Df_\mu\kappa a
+\sum_{\nu,\nu'} c_{\mu\nu\nu'}\left(\Df_\nu a\Df_{\nu'}\kappa-\Df_\nu \kappa\Df_{\nu'}a\right).
$$
If $a$ is of order $m$, this obviously implies that $a\Df_\mu\kappa-\Df_\mu\kappa a$ is of order $m-1$.

(2) We shall extensively utilize Theorem \ref{Fischer}, namely $\Op\mathscr{S}^m_{1,0}=\Psi^m_{1,0}(\Gp)$. Since the Greek letter $\xi$ has already occupied the notation for representations, we use $(x,\omega)$ to denote elements in the cotangent bundle $\mathrm{T}^*\Gp$, instead of the commonly used $(x,\xi)$ in most literature. 

Consider any smooth function $b$ on $\Gp$, and then the commutator $\big[|\nabla|^m,b\big]\in\Psi^{m-1}_{1,0}$. By the global symbolic calculus formula, i.e. Theorem \ref{RegCompo}, we find that the invariant symbol of $\big[|\nabla|^m,b\big]$ is
$$
\sum_\mu \Df_\mu\kappa(\xi)^mX_\mu b(x)
\quad \mod\mathscr{S}^{m-2}_{1,0}.
$$
Here $\{X_\mu\}$ denotes the set of left-invariant vector fields adapting to the RT-difference operators $\Df_\mu$. In the special case $m=2$, this is just
$$
2\sum_\mu \kappa(\xi)\Df_\mu\kappa(\xi)X_\mu b(x)
\quad \mod\mathscr{S}^{0}_{1,0}.
$$
Since we have the precise equality $[\Delta,b]=2\nabla b\cdot\nabla+\Delta b$, we obtain
$$
\Op\left(\sum_\mu \kappa(\xi)\Df_\mu\kappa(\xi)X_\mu b(x)\right)
=-\nabla b\cdot\nabla
\quad \mod\Psi^{0}_{1,0}.
$$
By applying $m|\nabla|^{m-2}$ on both sides, we find, by the formula of global symbolic calculus once again,
$$
\begin{aligned}
\Op\left(\sum_\mu m\kappa(\xi)^{m-1}\Df_\mu\kappa(\xi)X_\mu b(x)\right)
&=m|\nabla|^{m-2}\big(\nabla b\cdot\nabla\big)
&\mod\Psi^{m-2}_{1,0}\\
&=m\nabla b\cdot\nabla|\nabla|^{m-2} 
&\mod\Psi^{m-2}_{1,0}.
\end{aligned}
$$

But in the formalism of H\"{o}rmander calculus, the commutator $\big[|\nabla|^m,b\big]$ has \emph{principal symbol} just being the Poisson bracket between $|\omega|_g^m$ and $b(x)$, namely
$$
\sum_{j=1}^n\partial_{\omega_j}|\omega|_{g}^m\partial_{x^j}b
=\sum_{j=1}^nm|\omega|_{g}^{m-2}{\omega_j}\partial_{x^j}b,
$$
where the local trivialization of the cotangent bundle is arbitrary, and $g$ is the bi-invariant metric on $\Gp$. Noting that the principal symbol of $\nabla b\cdot\nabla$ is just $\sum_{j=1}^n\omega_j\partial_{x^j}b$, this implies
$$
\big[|\nabla|^m,b\big]
=m\nabla b\cdot\nabla|\nabla|^{m-2}
\quad\mod\Psi^{m-2}_{1,0}.
$$
In conclusion, we find
$$
\Op\left(\sum_\mu m\kappa(\xi)^{m-1}\Df_\mu\kappa(\xi)X_\mu b(x)\right)
=\big[|\nabla|^m,b\big]
\quad\mod\Psi^{m-2}_{1,0},
$$
or in other words,
$$
\sum_\mu m\kappa(\xi)^{m-1}\Df_\mu\kappa(\xi)X_\mu b(x)
=\sum_\mu \Df_\mu\kappa(\xi)^{m}X_\mu b(x)
\quad\mod\mathscr{S}^{m-2}_{1,0}.
$$
Since $b$ is arbitrary, the equality $\Df_\mu\kappa^m=m\kappa^{m-1}\Df_\mu\kappa\mod\mathscr{S}^{m-2}_{1,0}$ must hold.

(3) Compared to (2), the proof is quite elementary. We use induction on $n$. For $n=1$ the claim is proved by noticing that $|\nabla|^{-1}=\Op(\kappa)$ is a pseudo-differential operator of order $-1$, and the formula
$$
0=\Df_\mu\big(\kappa^{-1}\cdot\kappa\big)=\Df_\mu\kappa^{-1}\kappa+\kappa^{-1}\Df_\mu\kappa
+\sum_{\nu,\nu'} c_{\mu\nu\nu'}\Df_\nu\kappa^{-1}\Df_{\nu'}\kappa.
$$
For convenience we omit the dependence on $\xi$ for the quantities we consider below. Setting $K_n=\sum_{\mu}\|\Df_\mu\kappa^{-n}\|$, we obtain by the Leibniz property again
$$
K_{n+1}\leq K_n|\xi|^{-1}+C_1|\xi|^{-(n+2)}+C_2K_n|\xi|^{-2}.
$$
Inductively, this implies
$$
K_{n}\leq C_1(n+1)|\xi|^{-(n+1)}\big(1+C_2|\xi|^{-1}\big)^{n+1},
$$
Using the Leibniz property again, we find
$$
\Df_\mu\kappa^{-n}=\kappa^{-(n-1)}\Df_\mu\kappa^{-1}+\kappa^{-1}\Df_\mu\kappa^{-(n-1)}
+\sum_{\nu,\nu'} c_{\mu\nu\nu'}\Df_\nu\kappa^{-(n-1)}\Df_{\nu'}\kappa^{-1}.
$$
Inductively this gives (noting that by our choice, $c_{\mu\nu\nu'}$ is either 0 or 1)
$$
\begin{aligned}
\left\|\Df_\mu\kappa^{-n}-\frac{n\Df_\mu\kappa^{-1}}{\kappa^{n+1}}\right\|
&\lesssim \sum_{k=0}|\xi|^{k-2}K_{n-1-k}(\xi)\\
&\lesssim \frac{n^2\big(1+C|\xi|^{-1}\big)^{n+1}}{|\xi|^{n+2}}.
\end{aligned}
$$
Finally it suffices to notice that $\Df_\mu\kappa^{-1}=-\kappa^{-2}\Df_\mu\kappa$ plus a symbol of order $-3$.
\end{proof}

We then verify that the notion of order for quasi-homogeneous symbols in Definition \ref{QuasiHomoSym} coincides with Definition \ref{2Order}. Furthermore, the difference of a quasi-homogeneous symbol can be computed by an approximate Leibniz rule.

\begin{proposition}\label{PSSymbol}
Let $\kappa$ be the symbol of $|\nabla|$ on $\Gp$, $b$ be the symbol of a vector field on $\Gp$. Let $f$ be a bounded holomorphic function defined near $z=0$ on the complex plane, covering the closed disk of radius $R_0:=\sup_{x,\xi}|\xi|^{-1}\|b(x,\xi)\|$.

(1) The quasi-homogeneous symbol $f(b/\kappa)$ is of order 0 in the sense of Definition \ref{2Order}, and the difference $\Df_\mu f(b/\kappa)$ is such that
$$
\Df_\mu f\left(\frac{b(x,\xi)}{\kappa(\xi)}\right)
-\frac{1}{\kappa(\xi)}f'\left(\frac{b(x,\xi)}{\kappa(\xi)}\right)\Df_\mu b
-\frac{\Df_\mu\kappa(\xi)}{\kappa(\xi)^2}f'\left(\frac{b(x,\xi)}{\kappa(\xi)}\right)b(x,\xi)
$$
is a symbol of order $-2$.

(2) If $a$ is the symbol of another vector field, then the commutator $[f(b/\kappa),a]$ is still a symbol of order $0$.

(3) If $b$ has at least $C^1$ coefficients, then for any left-invariant vector field $X$, the symbol $Xf(b/\kappa)$ is of order $0$, and 
$$
Xf\left(\frac{b}{\kappa}\right)-\frac{1}{\kappa}f'\left(\frac{b}{\kappa}\right)Xb
$$
is a symbol of order $-1$.
\end{proposition}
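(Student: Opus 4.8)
The plan is to realize $f(b/\kappa)$ by a Cauchy integral and to push all symbol estimates through the resolvent. Since $b=\sum_i b_i\,\sigma[X_i]$ and $\sigma[X_i]$ vanishes on the trivial representation, $b/\kappa$ is well defined off that representation and satisfies $\|b(x,\xi)/\kappa(\xi)\|\le R_0$ for all $x,\xi$; as $f$ is holomorphic on an open set $U\supset\overline{D}:=\overline{D(0,R_0)}$, fix a finite union of circles $\Gamma\subset U$ winding once about $\overline{D}$ and write
$$
f\!\left(\frac{b(x,\xi)}{\kappa(\xi)}\right)=\frac{1}{2\pi i}\oint_\Gamma f(z)\,R_z(x,\xi)\,dz,
\qquad
R_z(x,\xi):=\Big(z\,\I[\xi]-\tfrac{b(x,\xi)}{\kappa(\xi)}\Big)^{-1},
$$
with $\|R_z(x,\xi)\|\le\dist(z,\overline{D})^{-1}$ bounded on $\Gamma$; the same formula with $f'$ represents $f'(b/\kappa)$. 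Everything then reduces to estimating $X^\alpha_x\Df^\beta_Q R_z$ (for $Q$ the fundamental tuple, whose difference operators obey the Leibniz rule (\ref{Leibniz})) uniformly for $z\in\Gamma$, and integrating.

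\textbf{Resolvent estimates and a functional-calculus lemma.} First $b/\kappa\in\mathscr{S}^0_{1,0}$: one has $b\in\mathscr{S}^1_{1,0}$ (smooth coefficients times the $x$-independent order-one symbols $\sigma[X_i]$), $\kappa^{-1}\in\mathscr{S}^{-1}_{1,0}$ (the symbol of $|\nabla|^{-1}$, by Proposition \ref{Dkappa}), and the pointwise matrix product of symbols respects orders. For a left-invariant derivation, $X_xR_z=R_z\big(X_x(b/\kappa)\big)R_z$ with $X_x(b/\kappa)=\kappa^{-1}X_xb\in\mathscr{S}^0_{1,0}$, so $x$-differentiation costs nothing in $\xi$. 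Applying Leibniz to $0=\Df_\mu\big(R_z(z\,\I[\xi]-b/\kappa)\big)$ (using $\Df_\mu(z\,\I[\xi])=0$) and multiplying by $R_z$ on the right gives
$$
\Df_\mu R_z=R_z\big(\Df_\mu(b/\kappa)\big)R_z+\textstyle\sum_{\nu,\nu'}c_{\mu\nu\nu'}\,(\Df_\nu R_z)\big(\Df_{\nu'}(b/\kappa)\big)R_z .
$$
Leibniz, $\Df_\mu\kappa^{-1}=-\kappa^{-2}\Df_\mu\kappa+\mathscr{S}^{-3}$ (Proposition \ref{Dkappa}) and $\Df_\mu b\in\mathscr{S}^0$ give $\Df_\mu(b/\kappa)=\kappa^{-1}\Df_\mu b-\kappa^{-2}b\,\Df_\mu\kappa+\mathscr{S}^{-2}$, of order $-1$; as $\|R_z\|$ stays bounded, the recursion closes for $\size[\xi]$ large, yielding $\|\Df_\mu R_z\|\lesssim\size[\xi]^{-1}$ and, after reinsertion, $\Df_\mu R_z=R_z\big(\Df_\mu(b/\kappa)\big)R_z+\mathscr{S}^{-2}$ uniformly on $\Gamma$ (the finitely many small representations only contribute $\mathscr{S}^{-\infty}$). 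Iterating for $X^\alpha_x\Df^\beta_Q$ and integrating gives $f(b/\kappa),f'(b/\kappa)\in\mathscr{S}^0_{1,0}$. The remaining tool, used repeatedly, is the elementary identity, valid for any $z$-independent $M$,
$$
\frac{1}{2\pi i}\oint_\Gamma f(z)\,R_z M R_z\,dz
=f'(b/\kappa)\,M-\frac{1}{2\pi i}\oint_\Gamma f(z)\,R_z^{2}\,[b/\kappa,M]\,R_z\,dz
=f'(b/\kappa)\,M+O\!\big(\|[b/\kappa,M]\|\big),
$$
from $R_zMR_z=R_z^2M-R_z^2[b/\kappa,M]R_z$ and $\tfrac{1}{2\pi i}\oint f(z)R_z^2\,dz=f'(b/\kappa)$ (integrate by parts); the last term is a genuine symbol remainder once $M$ is a symbol.

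\textbf{Proof of (1).} By the lemma, $\Df_\mu f(b/\kappa)=\tfrac{1}{2\pi i}\oint f(z)R_z\big(\Df_\mu(b/\kappa)\big)R_z\,dz+\mathscr{S}^{-2}$. Split $\Df_\mu(b/\kappa)=M_1+M_2+\mathscr{S}^{-2}$ with $M_1=\kappa^{-1}\Df_\mu b$ and $M_2=-\kappa^{-2}b\,\Df_\mu\kappa$ (order $-1$). The crucial point is that $\Df_\mu b$, being a difference operator applied to the symbol of a vector field, acts \emph{trivially on the $\Hh[\xi]$ factor}: a direct computation from $d(\tau\otimes\pi)(X_i)=d\tau(X_i)\otimes\I[\pi]+\I[\tau]\otimes d\pi(X_i)$ gives $\Df_\tau\sigma[X_i](\pi)=d\tau(X_i)\otimes\I[\pi]$, so $\Df_\mu b(x,\xi)=D_\mu(x)\otimes\I[\xi]$ with $D_\mu(x)\in\mathrm{End}(\Hh[\tau_\mu])$ independent of $\xi$. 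Hence $M_1=D_\mu(x)\otimes\big(\kappa(\xi)^{-1}\I[\xi]\big)$ commutes \emph{exactly} with the extension $\I[\tau_\mu]\otimes(b/\kappa)$ (an $A\otimes\I$ commutes with any $\I\otimes B$), and the lemma gives $\tfrac{1}{2\pi i}\oint f(z)R_zM_1R_z\,dz=\kappa^{-1}f'(b/\kappa)\Df_\mu b$ with no error. For $M_2$, since $\kappa$ is scalar, $\kappa^{-2}b$ commutes with $b/\kappa$, so $[b/\kappa,M_2]=-\kappa^{-2}b\,[b/\kappa,\Df_\mu\kappa]\in\mathscr{S}^{-2}$ by Proposition \ref{Dkappa}(1), and the lemma gives $\tfrac{1}{2\pi i}\oint f(z)R_zM_2R_z\,dz=-\kappa^{-2}f'(b/\kappa)b\,\Df_\mu\kappa+\mathscr{S}^{-2}$. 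Summing, and using Proposition \ref{Dkappa}(1) to reorder $\Df_\mu\kappa$ past the order-one symbol $f'(b/\kappa)b$ (an $\mathscr{S}^{-2}$ error after the factor $\kappa^{-2}$), gives the first-order expansion of $\Df_\mu f(b/\kappa)$ asserted in (1), with remainder in $\mathscr{S}^{-2}$; the full symbol estimate follows by applying $X^\alpha_x\Df^\beta_Q$ under the integral with the same bookkeeping.

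\textbf{Proof of (2)--(3); main difficulty.} For (2), $[f(b/\kappa),a]=\tfrac{1}{2\pi i}\oint f(z)[R_z,a]\,dz$ with $[R_z,a]=R_z[b/\kappa,a]R_z$; the key is that $[b/\kappa,a]$, naively of order $1$, is of order $0$, since writing $a=\sum_j a_j\sigma[X_j]$ and using $[\sigma[X_i],\sigma[X_j]]=\sigma[[X_i,X_j]]$ (the composition formula is exact because $\sigma[X_j]$ is $x$-independent) yields $[b/\kappa,a]=\kappa^{-1}\sum_{i,j}b_ia_j\,\sigma[[X_i,X_j]]\in\kappa^{-1}\!\cdot\!\mathscr{S}^1_{1,0}\subset\mathscr{S}^0_{1,0}$; hence $[f(b/\kappa),a]\in\mathscr{S}^0_{1,0}$ (indeed $=f'(b/\kappa)[b/\kappa,a]$ modulo $\mathscr{S}^{-1}$). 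For (3), $X_xR_z=R_z\big(\kappa^{-1}X_xb\big)R_z$ \emph{exactly} since $\kappa$ is $x$-independent, and $\kappa^{-1}X_xb$ is a bounded symbol of order $0$, continuous in $x$ (only $C^1$ coefficients of $b$ enter); the lemma with $M=\kappa^{-1}X_xb$, together with $[b/\kappa,X_xb]=\kappa^{-1}\sum_{i,j}b_i(Xb_j)\,\sigma[[X_i,X_j]]$ of order $0$, gives $X_xf(b/\kappa)-\kappa^{-1}f'(b/\kappa)X_xb\in\mathscr{S}^{-1}$, the content of (3). The real obstacle throughout is non-commutativity: no generic loss of an order is available, so the argument rests on two structural facts --- that $\Df_\mu(b/\kappa)$ is genuinely of order $-1$ (Proposition \ref{Dkappa}), which lets the resolvent recursion close, and that the Daleckii--Krein-type corrections $[b/\kappa,M]$ drop an order, which is automatic for $M=\kappa^{-1}\Df_\mu b$ by the tensor structure $\Df_\mu b=D_\mu(x)\otimes\I[\xi]$, relies on Proposition \ref{Dkappa}(1) when $M$ involves $\Df_\mu\kappa$, and on the exact relation $[\sigma[X_i],\sigma[X_j]]=\sigma[[X_i,X_j]]$ for (2)--(3); everything else is routine bookkeeping.
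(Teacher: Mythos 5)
Your proof is correct, but it takes a genuinely different route from the paper's. The paper expands $f(z)=\sum_n c_n z^n$, estimates $\Df_\mu(b^n)$ and $\Df_\mu(b^n)-nb^{n-1}\Df_\mu b$ term by term via the Leibniz rule and the recursion it induces, and closes the argument with Cauchy estimates and a majorant power series; the non-commutativity is controlled at the level of individual powers $b^n$, and the identity $[\sigma[X_i],\sigma[X_j]]=\sigma[[X_i,X_j]]$ enters exactly as in your (2)--(3). You instead represent $f(b/\kappa)$ as a contour integral of the resolvent $R_z=(z-b/\kappa)^{-1}$, propagate difference and derivative bounds through resolvent identities, and use the Daleckii--Krein-type commutation lemma $\oint f\,R_z M R_z = f'(b/\kappa)M - \oint f\,R_z^2[b/\kappa,M]R_z$ to extract the leading term. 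Both approaches are powered by the same two structural facts that make the non-commutativity harmless: $\Df_\mu b$ is of the form $D_\mu(x)\otimes\I[\xi]$, hence commutes exactly with the extension of $b/\kappa$ (the paper phrases this as ``$\Df_\mu\sigma[X]$ is a number''), and $\Df_\mu\kappa$ commutes with order-$m$ symbols up to order $m-1$ (Proposition \ref{Dkappa}). What the resolvent approach buys is uniformity: you never track $n$-dependent constants or sum a majorant series, because $\|R_z\|$ is bounded once and for all on $\Gamma$, and higher derivatives $X^\alpha\Df^\beta$ go under the integral with the same fixed bookkeeping; the expansion is also structurally set up to produce higher-order terms if one needs a longer asymptotic. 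The paper's power-series argument is more elementary and self-contained (no holomorphic functional calculus in Banach algebras is invoked), at the cost of explicit tracking of the growth $n^2|c_n|(R_0|\xi|+C)^n|\xi|^{-n}$. One small point you should make explicit, as the paper does implicitly, is that $b(x,\xi)/\kappa(\xi)$ is undefined at the trivial representation; both proofs tacitly restrict to $|\xi|$ large, and the finitely many low modes only affect $\mathscr{S}^{-\infty}$, which you note but could state before fixing $\Gamma$.
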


\begin{corollary}\label{CoroPSSymbol}
Under the same assumptions of Proposition \ref{PSSymbol}, if in addition the vector field $\Op(b)$ is of class $C^r_*$ for $r>0$, then the symbol $\kappa^m f(b/\kappa)$ is of class $\mathcal{A}^m_r$.
\end{corollary}

Thus the symbolic calculus formulas are easier to manipulate with for para-differential operators corresponding to quasi-homogeneous symbols.

\begin{proof}[Proof of Proposition \ref{PSSymbol}]
The proof is basically a repeated application of majorant power series. There is a simple fact that we will keep using: if $b$ is the symbol of a vector field, then $\Df_\mu b$ is just a scalar function of $x$, hence commutes with any symbol. In fact, if $X$ is a left-invariant vector field, then $\Df_\mu \sigma[X]=(XQ_\mu)(e)$ by formula (\ref{Diff_qa}), which is a number. In the following, it obviously suffices to prove the estimates for representations $\xi$ which are sufficiently ``large" in the dominance order (i.e. $|\xi|$ sufficiently large).

(1) Suppose 
$$
f(z)=\sum_{n=0}^\infty c_nz^n,
$$
and set $R$ to be the radius of convergence of this power series. By assumption, $R_0=\sup_{x,\xi}|\xi|^{-1}\|b(x,\xi)\|<R$. 

It suffices to prove that the expression in the statement has operator norm bounded by $|\xi|^{-2}$, because it is in fact still a power series in $b/\kappa$ multiplied by $\{\Df_\nu b\}_\nu$ and $\Df_\mu\kappa$. For convenience we omit the dependence on $\xi$ below.

We first estimate the norm $B_n:=\sum_{\mu}\big\|\Df_\mu(b^n)\big\|$. By the Leibniz property we find
$$
B_n
\leq C_1(R_0|\xi|)^{n-1}+(R_0|\xi|+C_2)B_{n-1}
,
$$
so some elementary manipulation with geometric progression gives 
$$
B_n\lesssim n|\xi|^{-1}(R_0|\xi|+C_2)^{n}.
$$
The implicit constant does not depend on $\xi$. Similarly as in Proposition \ref{Dkappa}, we may use this estimate inductively to $\Df_\mu(b^n)$ to conclude
$$
\begin{aligned}
\big\|\Df_\mu(b^n)-nb^{n-1}\Df_\mu b\big\|
\lesssim n^2|\xi|^{-2}(R_0|\xi|+C_2)^{n}
\end{aligned}
$$
Combining this and Proppsition \ref{Dkappa}, we find that the operator norm of the quantity in the statement has a majorant series
$$
\frac{1}{|\xi|^2}\sum_{n=1}^\infty n^2|c_n|\frac{(R_0|\xi|+C_2)^{n}+(R_0|\xi|+C)^n}{|\xi|^n}.
$$
For $|\xi|$ sufficiently large, Cauchy estimate for derivatives of a holomorphic function shows that the series is absolutely convergent and sums to a bounded function. This proves the claim.

(2) It suffices to notice that $[a,b]=ab-ba$ is still the symbol of a vector field. Furthermore, using the Leibniz property of Lie brackets, we have
$$
[a,b^n]=[a,b]b^{n-1}+b[a,b^{n-1}],
$$
implying 
$$
\big\|[a,b^n]\big\|\lesssim n(R_0|\xi|)^{n}.
$$
The rest of the estimate is identical to that in (1).

(3) The proof is of the same spirit of (2): in fact,
$$
X(b^n)=nb^{n-1}Xb+\sum_{k=1}^{n-1}b^{n-k-1}[Xb,b^k].
$$
Note that $[Xb,b^k]$ is still the symbol of a classical differential operator, whose norm at $\xi$ is controlled by
$$
\big\|[Xb,b^k]\big\|\lesssim_{\Gp} |b|_{C^1}k(R_0|\xi|)^{k},
$$
as in (2). Here $|b|_{C^1}$ stands for the $C^1$ norm of the coefficients. Thus
$$
\big\|X(b^n)-nb^{n-1}Xb\big\|\lesssim n^2(R_0|\xi|)^{n-1}|b|_{C^1}
$$
and the majorant series argument is sufficient to imply the desired result.
\end{proof}

We conclude this subsection with a proposition regarding symbols that involve very rough coefficients.
\begin{proposition}\label{SymbolVeryRough}
Suppose $\sum_{n=0}^\infty c_nz^n$ is a convergent power series near $z=0$. Fix $r>0$. Let $s>n/2$, $s>r$. Let $a,b$ be symbols of vector fields on $\Gp$, with $a$ of merely $C^{-r}_*$ regularity in $x$ and $b$ of $H^s$ regularity in $x$, with
$$
\sup_{\xi\in\DuGp}\frac{\|b(x,\xi)\|_{H^s_x}}{|\xi|}
$$
suitably small. Define 
$$
[b]_na:=ab^n+bab^{n-1}+\cdots+b^na.
$$
Then 
$$
\sum_{n=0}^\infty c_n\kappa^{-n}[b]_na
$$
is a symbol of class $\mathcal{A}^{0}_{-r}(\Gp)$.
\end{proposition}
\begin{proof}
Suppose $c$ is any $H^s$ vector field on $\Gp$. Let us recall the para-product decomposition Theorem \ref{ParaPrd} to analyze the product $ac$. With a little abuse of notation, we write $ac$ to represent any coefficient appearing in the product symbol $ac$. Then
$$
ac=T_ac+T_ca+R(a,c).
$$
By Proposition \ref{T_aNegIndex}, $T_a$ is a para-differential operator of order $r$, so 
$$
|T_ac|_{C^{-r}_*}
\leq 
C\|T_ac\|_{H^{s-r}}
\leq C|a|_{C^{-r}_*}\|c\|_{H^{s}}.
$$
Since $T_c$ is a para-differential operator of order 0, $|T_ca|_{C^{-r}_*}\leq C|a|_{C^{-r}_*}\|c\|_{H^{s}}$. Finally, from the proof of Theorem \ref{ParaPrd}, the symbol $R[a]\in\mathscr{S}^{r}_{1,1}$, so by Stein's theorem,
$$
|R(a,c)|_{C^{-r}_*}
\leq 
C\|R(a,c)\|_{H^{s-r}}
\leq C|a|_{C^{-r}_*}\|c\|_{H^{s}}.
$$
To summarize, $|ac|_{C^{-r}_*}\leq C|a|_{C^{-r}_*}\|c\|_{H^{s}}$.

We now use this to analyze the product $b^kab^{n-k}$. Since $H^s$ is a Banach algebra, we have
$$
|ab^{n-k}|_{C^{-r}_*}
\leq C|a|_{C^{-r}_*}\|b^{n-k}\|_{H^{s}}
\leq C_s^{n-k}|a|_{C^{-r}_*}\|b\|_{H^{s}}^{n-k},
$$
which further implies
$$
|b^kab^{n-k}|_{C^{-r}_*}
\leq C_s^{n}|a|_{C^{-r}_*}\|b\|_{H^{s}}^{n}.
$$
Thus if $\sup_{\xi\in\DuGp}|\xi|^{-1}\|b(x,\xi)\|_{H^s_x}$ is sufficiently small, then the series of matrices
$$
\sum_{n=0}^\infty c_n\kappa^{-n}[b]_na
$$
will have an absolutely convergent majorant series with respect to the $C^{-r}_*$ norm. This concludes the proof.
\end{proof}

\section{Global Symbolic Calculus on \texorpdfstring{$\SU(2)$}{a} and \texorpdfstring{$\mathbb{S}^2$}{b}}\label{5}
We apply the general theory developed so far to construct para-differetial calculus on $\mathbb{S}^2$, the standard 2-sphere. Although $\mathbb{S}^2$ cannot be endowed with a Lie group structure, it still inherits a symbolic calculus as $\SU(2)$-homogeneous space in a simple manner. The para-differential calculus on $\mathbb{S}^2$ will be constructed in this way. We start by formulating a global symbolic calculus on $\SU(2)$, and then pass to $\mathbb{S}^2$.

\subsection{Representation Theory of \texorpdfstring{$\SU(2)$}{c}}
Let us first sketch the representation theory of $\SU(2)$. An element of $\SU(2)$ will be denoted as
$$
x=\left(\begin{matrix}
x_1 & x_2 \\
-\bar{x}_2 & \bar{x}_1
\end{matrix}\right),
\quad
|x_1|^2+|x_2|^2=1.
$$
The Lie algebra $\mathfrak{su}(2)$, being the set of all skew-Hermitian matrices with trace 0, can be identified with the tangent space at $\mathrm{Id}\in\SU(2)$ and also the Lie algebra of left-invariant vector fields. It is spanned by the real-linear combination of (scaled) Pauli matrices
$$
\frac{1}{2}\left(\begin{matrix}
0 & i \\
i & 0
\end{matrix}\right),
\quad
\frac{1}{2}\left(\begin{matrix}
0 & -1 \\
1 & 0
\end{matrix}\right),
\quad
\frac{1}{2}\left(\begin{matrix}
i & 0 \\
0 & -i
\end{matrix}\right).
$$
They correspond to the 1-dimensional subgroups
$$
\omega_1(t)=\left(\begin{matrix}
\cos\frac{t}{2} & i\sin\frac{t}{2} \\
i\sin\frac{t}{2} & \cos\frac{t}{2}
\end{matrix}\right),
\quad
\omega_2(t)=\left(\begin{matrix}
\cos\frac{t}{2} & -\sin\frac{t}{2} \\
\sin\frac{t}{2} & \cos\frac{t}{2}
\end{matrix}\right),
\quad
\omega_3(t)=\left(\begin{matrix}
e^{it/2} & 0 \\
0 & e^{-it/2}
\end{matrix}\right)
$$
respectively. We denote these subgroups as $\mathbf{T}_{1,2,3}$. 

We employ the Euler angles as parameterization of $\SU(2)$. For 
$$
\varphi\in[0,2\pi),\quad\theta\in[0,\pi],\quad\psi\in[-2\pi,2\pi),
$$
an element of $\SU(2)$ outside a lower dimensional closed subset is factorized as
\begin{equation}\label{EulerAngle}
\left(\begin{matrix}
e^{i(\varphi+\psi)/2}\cos\frac{\theta}{2} & ie^{i(\varphi-\psi)/2}\sin\frac{\theta}{2} \\
ie^{-i(\varphi-\psi)/2}\sin\frac{\theta}{2} & e^{-i(\varphi+\psi)/2}\cos\frac{\theta}{2}
\end{matrix}\right)
=\left(\begin{matrix}
e^{i\varphi/2} & 0 \\
0 & e^{-i\varphi/2}
\end{matrix}\right)
\left(\begin{matrix}
\cos\frac{\theta}{2} & i\sin\frac{\theta}{2} \\
i\sin\frac{\theta}{2} & \cos\frac{\theta}{2}
\end{matrix}\right)
\left(\begin{matrix}
e^{i\psi/2} & 0 \\
0 & e^{-i\psi/2}
\end{matrix}\right).
\end{equation}
The parameters $\varphi\in[0,2\pi),\,\theta\in(0,\pi),\,\psi\in[-2\pi,2\pi)$ are in 1-1 correspondence with elements of $\SU(2)$ excluding a lower dimensional closed subset (which corresponds to $\theta=\pm\pi$). We shall denote the element with Euler angles $\varphi,\theta,\psi$ as $\Omega(\varphi,\theta,\psi)$. The factorization (\ref{EulerAngle}) can be re-written as
\begin{equation}\label{EulerAngle1}
\Omega(\varphi,\theta,\psi)=\omega_3(\varphi)\omega_1(\theta)\omega_3(\psi)\in\mathbf{T}_3\cdot\mathbf{T}_2\cdot\mathbf{T}_3.
\end{equation}

The Killing form of $\mathfrak{su}(2)$ is $4\Tr(XY)$ for $X,Y\in\mathfrak{su}(2)$. If we set the negative of this Killing form as the inner product on $\mathfrak{su}(2)$, then the matrices $\omega_{1,2,3}'(0)$ form an orthonormal basis. Thus, the Riemann metric on $\SU(2)$ under the Euler angles is
$$
G_0=d\theta^2+d\varphi^2+2\cos\theta d\varphi d\psi+d\psi^2,
$$
or in matrix form
$$
\left(\begin{matrix}
(G_0)_{\theta\theta} & (G_0)_{\theta\varphi} & (G_0)_{\theta\psi} \\
* & (G_0)_{\varphi\varphi} & (G_0)_{\varphi\psi} \\
* & * & (G_0)_{\psi\psi}
\end{matrix}\right)
=\left(\begin{matrix}
1 & 0 & 0 \\
0 & 1 & \cos\theta \\
0 & \cos\theta & 1
\end{matrix}\right).
$$
If we identify $\SU(2)$ with the unit 3-sphere in $\mathbb{C}^2$, then $G_0$ is 4 times the inherited metric on unit 3-sphere. The Laplacian of $G_0$ is
$$
\Delta_{G_0}
=\frac{\partial^2}{\partial\theta^2}+\frac{\cos\theta}{\sin\theta}\frac{\partial}{\partial\theta}
+\frac{1}{\sin^2\theta}\frac{\partial^2}{\partial\varphi^2}
-\frac{2\cos\theta}{\sin^2\theta}\frac{\partial^2}{\partial\varphi\partial\psi}+\frac{1}{\sin^2\theta}\frac{\partial^2}{\partial\psi^2}.
$$

In Chapter 3 of \cite{Vilenkin1978}, Vilenkin explicitly computed the left-invariant vector fields on $\SU(2)$ corresponding to the subgroups $\omega_{1,2,3}(t)$. We denote them by $X_{1,2,3}$ respectively. The action of $X_j$ is 
$$
(X_jf)(x):=\frac{d}{dt}f(x\omega_j(t))\Big|_{t=0}.
$$
They are given by, respectively,
\begin{equation}\label{LISU(2)}
\begin{aligned}
X_1&=\cos\psi\frac{\partial}{\partial\theta}+\frac{\sin\psi}{\sin\theta}\frac{\partial}{\partial\varphi}-\frac{\cos\theta}{\sin\theta}\sin\psi\frac{\partial}{\partial\psi},\\
X_2&=-\sin\psi\frac{\partial}{\partial\theta}+\frac{\cos\psi}{\sin\theta}\frac{\partial}{\partial\varphi}-\frac{\cos\theta}{\sin\theta}\cos\psi\frac{\partial}{\partial\psi},\\
X_3&=\frac{\partial}{\partial\psi}.
\end{aligned}
\end{equation}
It is directly verified that these vector fields form an orthonormal basis in the tangent space of any element in $\SU(2)$, and $\Delta_{G_0}=X_1^2+X_2^2+X_3^2$. However, for symbolic calculus, it is more convenient to consider certain linear combination of the $X_j$'s since it is more convenient for the symbols to contain more 0's. We thus introduce the \emph{creation, annihilation and neutral} operators
\begin{equation}\label{CANSU(2)}
\begin{aligned}
\Pa[+]&=iX_1-X_2=e^{-i\psi}\left(i\frac{\partial}{\partial\theta}-\frac{1}{\sin\theta}\frac{\partial}{\partial\varphi}+\frac{\cos\theta}{\sin\theta}\frac{\partial}{\partial\psi}\right),\\
\Pa[-]&=iX_1+X_2=e^{i\psi}\left(i\frac{\partial}{\partial\theta}+\frac{1}{\sin\theta}\frac{\partial}{\partial\varphi}-\frac{\cos\theta}{\sin\theta}\frac{\partial}{\partial\psi}\right),\\
\Pa[0]&=iX_3=i\frac{\partial}{\partial\psi},
\end{aligned}
\end{equation}
just as in \cite{Vilenkin1978} or \cite{RT2013}. Thus 
$$
\Delta_{G_0}=-\frac{1}{2}(\Pa[-]\Pa[+]+\Pa[+]\Pa[-])-\Pa[0]^2,
$$
and the commutator relations for $\mathfrak{su}(2)$ hold:
$$
[\Pa[0],\Pa[+]]=\Pa[+],
\quad
[\Pa[-],\Pa[0]]=\Pa[-],
\quad
[\Pa[+],\Pa[-]]=2\Pa[0].
$$

The table of irreducible unitary representations of $\SU(2)$ can be found in any standard textbook on representation theory. We sketch the result as follows. Since the group $\SU(2)$ has rank 1, one can label irreducible unitary representations of $\SU(2)$ by a parameter $l\in\hN$, the set of half integers. The irreducible unitary representation with label $l$ has dimension $2l+1$. The $l$'th representation is realized as follows: setting $\Hh[l]$ to be the space of complex homogeneous polynomials in a two-dimensional variable $z\in\mathbb{C}^2$ with degree $2l$, the action of $\SU(2)$ on $\Hh[l]$ is realized as
$$
\big(T^l(x)f\big)(z):=f(xz).
$$
The Hermite structure on $\Hh[l]$ is defined by fixing an orthonormal basis
$$
\frac{z_1^{l-k}z_2^{l+k}}{\sqrt{(l-k)!(l+k)!}},
\quad
k=-l,\,-l+1,\,\cdots,\,l-1,\,l.
$$
Following \cite{Vilenkin1978}, we use $T^l=(T^l_{nm})$, $m,n=-l,\,-l+1,\,\cdots,\,l-1,\,l$, to denote the matrix elements of the representation labeled by $l$ under the basis fixed above. We fix the convention that $n$ labels the rows of $T^l$ and $m$ labels the columns of $T^l$. In Euler angles,
$$
T^l_{nm}\big(\Omega(\varphi,\theta,\psi)\big)=e^{-i(n\varphi+m\psi)}P^l_{nm}(\cos\theta),
$$
where 
$$
P^l_{nm}(z)=\frac{2^{-l}(-1)^{l-m}i^{m-n}}{\sqrt{(l-m)!(l+m)!}}\sqrt{\frac{(l+n)!}{(l-n)!}}
\cdot\frac{(1-z)^{(m-n)/2}}{(1+z)^{(m+n)/2}}\left(\frac{d}{dz}\right)^{l-n}\left((1-z)^{l-m}(1+z)^{l+m}\right).
$$
For example, $l=0$ gives the trivial representation $\SU(2)\to\{1\}$; for $l=1/2$, we obtain the fundamental representation, which is also a faithful: $T^{1/2}(\Omega(\varphi,\theta,\psi))=\Omega(\varphi,\theta,\psi)$, and the entries are
$$
\bordermatrix{%
 & m=-\frac{1}{2} & m=\frac{1}{2} \cr
n=-\frac{1}{2} & e^{i(\varphi+\psi)/2}\cos\frac{\theta}{2} & ie^{i(\varphi-\psi)/2}\sin\frac{\theta}{2} \cr
n=\frac{1}{2} & ie^{-i(\varphi-\psi)/2}\sin\frac{\theta}{2} & e^{-i(\varphi+\psi)/2}\cos\frac{\theta}{2}
};
$$
for $l=1$, the entries of $T^{1}(\Omega(\varphi,\theta,\psi))$ are
$$
\bordermatrix{%
 & m=-1 & m=0 & m=1 \cr
n=-1 & e^{i(\varphi+\psi)/2}\cos^2\frac{\theta}{2} & ie^{i\varphi}\frac{\sin\theta}{\sqrt{2}} & -e^{i(\varphi-\psi)/2}\sin^2\frac{\theta}{2} \cr
n=0 & ie^{i\psi}\frac{\sin\theta}{\sqrt{2}} & \cos\theta & ie^{-i\psi}\frac{\sin\theta}{\sqrt{2}}  \cr
n=1 & -e^{-i(\varphi-\psi)/2}\sin^2\frac{\theta}{2} & ie^{-i\varphi}\frac{\sin\theta}{\sqrt{2}} & e^{-i(\varphi+\psi)/2}\cos^2\frac{\theta}{2}
};
$$

From now on, we shall write $\Ft{f}(l)$ for the Fourier transform of $f\in\mathcal{D}'(\SU(2))$:
\begin{equation}\label{FtSU(2)}
\begin{aligned}
\Ft{f}(l)_{mn}
&=\int_{\SU(2)}f(x)\overline{T^l_{nm}}(x)dx\\
&=\frac{1}{16\pi^2}\int_0^{4\pi}\int_{0}^\pi\int_0^{2\pi} f\big(\Omega(\varphi,\theta,\psi)\big)
\overline{T^l_{nm}}\big(\Omega(\varphi,\theta,\psi)\big)\sin\theta \cdot d\varphi d\theta d\psi.
\end{aligned}
\end{equation}
The Peter-Weyl theorem \ref{PeterWeyl} now takes the form
$$
f(x)=\sum_{l\in\hN}(2l+1)\sum_{m,n=-l}^l \Ft{f}(l)_{mn}T^l_{nm}(x).
$$
Here the convention of summation for $m,n$ is that they exhaust all numbers $-l,\,-l+1,\,\cdots,\,l-1,\,l$. The functions $\{T^l_{nm}\}_{m,n=-l}^l$ span the eigenspace $\Hh[l]$ of $\Delta_{G_0}$ with eigenvalue $-l(l+1)$. 

Since $\SU(2)$ can be identified as the unit 3-sphere in $\mathbb{C}^2$, the functions $\{T^l_{nm}\}_{m,n=-l}^l$ are exactly the spherical harmonics on 3-sphere of degree $2l+1$. The spectral localization property, i.e. Corollary \ref{SpecPrd}, now becomes the well-known fact that the product of spherical harmonics of degree $p$ and $q$ is a linear combination of spherical harmonics of degree between $|p-q|$ and $p+q$. However, it should be emphasized that the Euler angle parameterization \emph{does not} coincide with the spherical coordinate parameterization of spherical harmonics.

\subsection{Symbolic Calculus on \texorpdfstring{$\SU(2)$}{d}}

We now construct pseudo-differential calculus on $\SU(2)$. For 
$$
x=\left(\begin{matrix}
x_1 & x_2 \\
-\bar{x}_2 & \bar{x}_1
\end{matrix}\right)\in \SU(2),
$$
the functions
$$
x_1-1,\quad \bar{x}_1-1,\quad x_2,\quad -\bar{x}_2 
$$
form a strongly RT-admissible tuple of $\SU(2)$ defined in \ref{RTAdm}, which is also the fundamental tuple of $\SU(2)$ defined in (\ref{QFund}). The only disadvantage is that this tuple consists of 4 elements and can bring unnecessary tedium for calculation. Following \cite{RT2013}, we introduce instead
\begin{equation}\label{SU(2)q}
\mathscr{Q}_+(x)=-\bar{x}_2,\quad \mathscr{Q}_-(x)=x_2,\quad \mathscr{Q}_0(x)=x_1-\bar{x}_1.
\end{equation}
The tuple $\mathscr{Q}=(\mathscr{Q}_+,\mathscr{Q}_-,\mathscr{Q}_0)$ is RT-admissible but not strongly RT-admissible, since $-\mathrm{Id}$ is the other common zero of them besides $\mathrm{Id}$. A direct computation gives 
$$
(\Pa[+]\mathscr{Q}_+)(\mathrm{Id})=(\Pa[-]\mathscr{Q}_-)(\mathrm{Id})=(\Pa[0]\mathscr{Q}_0)(\mathrm{Id})=-1.
$$
Taylor's formula, i.e. Proposition \ref{TaylorGp} is then valid for $q$ and the differential operators $\Pa[+],\Pa[-],\Pa[0]$. Note that the higher order left-invariant differential operators in Taylor's formula, denoted by $\partial_x^{(\alpha)}$ from now on, have to be defined inductively, since $\Pa[+],\Pa[-],\Pa[0]$ do not commute. 

Since the dual of $\SU(2)$ is labelled by $l\in\hN$, we write $a(x,l)\in\mathrm{End}(\Hh[l])$ for a symbol $a$ on $\SU(2)$. The symbolic calculus on $\SU(2)$ then starts with the symbol of composition
\begin{equation}\label{SU(2)Compo}
(a\#_{r;\mathscr{Q}} b)(x,l)=\sum_{|\alpha|\leq r}\Df^\alpha_{\mathscr{Q},l}a(x,l)\cdot\partial_x^{(\alpha)}b(x,l),
\end{equation}
and the symbol of adjoint
\begin{equation}\label{SU(2)Adj}
a^{\bullet;r,\mathscr{Q}}(x,l)=\sum_{|\alpha|\leq r}\Df^\alpha_{\mathscr{Q},l}\partial_x^{(\alpha)}a^*(x,l).
\end{equation}
We directly cite the following results from \cite{RT2009}:
\begin{theorem}\label{SymPa}
The symbols $\sigma_{+}$, $\sigma_{-}$ and $\sigma_0$ of $\Pa[+]$, $\Pa[-]$ and $\Pa[0]$ respectively are all Fourier multipliers:
$$
\begin{aligned}
\sigma_+(l)_{mn}&=-\sqrt{(l - n)(l + n + 1)}\delta_{m,n+1}\\
\sigma_-(l)_{mn}&=-\sqrt{(l + n)(l - n + 1)}\delta_{m,n-1}\\
\sigma_0(l)_{mn}&=n\delta_{mn}.
\end{aligned}
$$
\end{theorem}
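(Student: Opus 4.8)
The three operators $\partial_+,\partial_-,\partial_0$ of (\ref{CANSU(2)}) are complex linear combinations of the left-invariant vector fields $X_1,X_2,X_3$, hence elements of the complexified Lie algebra $\mathfrak{su}(2)_{\mathbb C}\cong\mathfrak{sl}(2,\mathbb C)$ regarded as left-invariant first-order differential operators on $\SU(2)$. The first step is to record that the symbol of \emph{any} left-invariant operator $X$ is independent of $x$ and equals the infinitesimal representation $dT^l(X)$: by the symbol formula (\ref{Op-Symbol}), $\sigma[X](x,l)=T^l(x)^*\,(XT^l)(x)$, and for left-invariant $X$ one has $(XT^l)(x)=\tfrac{d}{dt}T^l\!\big(x\exp(tX)\big)\big|_{t=0}=T^l(x)\,dT^l(X)$, so unitarity of $T^l$ collapses the product and leaves the constant matrix $dT^l(X)$. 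Thus $\sigma_+,\sigma_-,\sigma_0$ are precisely the matrices of the infinitesimal action of the creation, annihilation and neutral operators, and what remains is a standard $\mathfrak{sl}(2)$ ladder-operator computation on the irreducible $(2l+1)$-dimensional module $\mathcal H_l$.

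I would carry this out on the polynomial model. Using the scaled Pauli matrices fixed before (\ref{LISU(2)}) together with the definitions $\partial_0=iX_3$ and $\partial_\pm=iX_1\mp X_2$, write $\partial_0,\partial_\pm$ as explicit $2\times2$ matrices; up to the normalization built into (\ref{CANSU(2)}) these are the Cartan, raising and lowering elements of $\mathfrak{sl}(2,\mathbb C)$. Then evaluate $dT^l(A)f(z)=\tfrac{d}{dt}f(e^{tA}z)\big|_{t=0}=\sum_j (Az)_j\,\partial_{z_j}f(z)$ on the orthonormal monomial basis $e_k=z_1^{l-k}z_2^{l+k}/\sqrt{(l-k)!\,(l+k)!}$, $k=-l,\dots,l$, of $\mathcal H_l$. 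Differentiating the monomials and re-expressing the outcome in terms of $e_k$ and $e_{k\pm1}$ shows that $\partial_0$ acts diagonally with eigenvalue $k$ on $e_k$, while $\partial_+$ and $\partial_-$ each change $k$ by one unit in opposite directions; the factorials in the normalization of the basis cancel down to exactly the square-root coefficients in the statement, with the overall sign coming from the Pauli normalization. Transcribing the resulting three matrices into the paper's index convention ($n$ labelling rows, $m$ labelling columns of $T^l$) then produces the Kronecker-delta patterns $\delta_{m,n+1}$, $\delta_{m,n-1}$, $\delta_{mn}$. An equivalent route is to use the explicit entries $T^l_{nm}(\Omega(\varphi,\theta,\psi))=e^{-i(n\varphi+m\psi)}P^l_{nm}(\cos\theta)$ together with the coordinate forms of $\partial_\pm,\partial_0$ in (\ref{CANSU(2)}), which reduces the three identities to the classical three-term recurrences for the functions $P^l_{nm}$ recorded in \cite{Vilenkin1978}.

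The representation theory here is entirely routine, so the only real labor — and the step I expect to be the main obstacle — is convention bookkeeping: one must track whether Vilenkin's map $x\mapsto T^l(x)$ is a homomorphism or an anti-homomorphism (which transposes the symbol matrix and thereby interchanges the roles of $\delta_{m,n+1}$ and $\delta_{m,n-1}$ relative to a naive computation), the signs in the scaled Pauli basis and in $\partial_\pm=iX_1\mp X_2$, and the normalization of the monomial basis, so that the square-root coefficients, the minus sign, and the exact placement of the nonzero off-diagonal entries come out as written rather than up to transpose or sign. Everything beyond this is a short direct calculation.
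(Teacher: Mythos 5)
The paper does not prove Theorem \ref{SymPa}; the result is cited verbatim from Ruzhansky--Turunen \cite{RT2009}, so your proposal supplies an argument the text itself omits. The route you outline — first observe that the symbol of any left-invariant operator $X$ is the constant matrix $dT^l(X)$ via $\sigma[X](x,l)=T^l(x)^*(XT^l)(x)$, then compute the $\mathfrak{sl}(2)$ ladder action on the orthonormal monomial basis of the degree-$2l$ polynomial model — is the standard one and is sound. The convention pitfall you flag at the end is indeed the real content and deserves emphasis: the identity $(XT^l)(x)=T^l(x)\,dT^l(X)$, and with it the $x$-independence of the symbol, requires $T^l$ to be a genuine \emph{homo}morphism. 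Taking the paper's written formula $(T^l(x)f)(z)=f(xz)$ literally with $z$ a column vector yields an anti-homomorphism ($T^l(xy)=T^l(y)T^l(x)$), under which $\sigma[X](x,l)=T^l(x)^*\,dT^l(X)\,T^l(x)$ would retain an $x$-dependence; one must read the definition in the row-vector sense, equivalently $f(x^\top z)$, to make $T^l$ a representation. Once that is settled, the ladder computation you describe produces precisely the stated entries and signs, e.g. $\partial_+$ is the lower-triangular matrix with $-1$ in the $(2,1)$ slot, giving $dT^l(\partial_+)e_n=-\sqrt{(l-n)(l+n+1)}\,e_{n+1}$ and hence $\sigma_+(l)_{mn}=-\sqrt{(l-n)(l+n+1)}\,\delta_{m,n+1}$.
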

\begin{theorem}\label{Df_q}
Suppose $a$ is a Fourier multiplier on $\SU(2)$, i.e. for each $l\in\hN$, the value $a(l)\in\mathrm{End}(\Hh[l])$. Then the RT difference operators $\Df_+$, $\Df_-$, $\Df_0$ corresponding to $\mathscr{Q}_+$, $\mathscr{Q}_-$, $\mathscr{Q}_0$ respectively are 
$$
(\Df_+a)(l)_{nm}
=\frac{\sqrt{(l + m)(l - n)}}{2l + 1}a(l^-)_{n^+m^-}-\frac{\sqrt{(l - m+1)(l + n+1)}}{2l + 1}a(l^+)_{n^+m^-},
$$
$$
(\Df_-a)(l)_{nm}
=\frac{\sqrt{(l - m)(l + n)}}{2l + 1}a(l^-)_{n^-m^+}-\frac{\sqrt{(l + m+1)(l - n+1)}}{2l + 1}a(l^+)_{n^-m^+},
$$
$$
\begin{aligned}
(\Df_0a)(l)_{nm}
&=\frac{\sqrt{(l - m)(l - n)}}{2l + 1}a(l^-)_{n^+m^+}+\frac{\sqrt{(l + m+1)(l + n+1)}}{2l + 1}a(l^+)_{n^+m^+}\\
&\quad -\frac{\sqrt{(l + m)(l + n)}}{2l + 1}a(l^-)_{n^-m^-}-\frac{\sqrt{(l - m+1)(l - n+1)}}{2l + 1}a(l^+)_{n^-m^-}
\end{aligned}
$$
Here $k^{\pm}=k\pm1/2$. In particular,
$$
\Df_\mu \sigma_\nu=\delta_{\mu\nu}\cdot\sigma_{\mathrm{Id}},\quad \mu,\nu=+,\,-,\,0.
$$
\end{theorem}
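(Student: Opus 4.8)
The plan is to unwind the definition of the RT difference operators against the Peter--Weyl expansion and the Clebsch--Gordan decomposition of $\SU(2)$. By Definition \ref{Difference}, $\Df_q$ is characterised by $\Df_q\Ft f=\Ft{qf}$, so everything reduces to understanding how multiplication by each $q_\mu$ transforms Fourier coefficients. The first observation is that the functions in (\ref{SU(2)q}) are, up to the identifications in the tables above, matrix entries of the fundamental representation $T^{1/2}$: one has $q_+=T^{1/2}_{1/2,-1/2}$, $q_-=T^{1/2}_{-1/2,1/2}$, and $q_0=T^{1/2}_{-1/2,-1/2}-T^{1/2}_{1/2,1/2}$. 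Hence the computation reduces to expanding products of matrix entries $T^l_{nm}\cdot T^{1/2}_{ab}$, and the key algebraic input is the branching rule $T^l\otimes T^{1/2}\cong T^{l-1/2}\oplus T^{l+1/2}$ for $l\geq 1/2$ (and $T^{1/2}$ alone when $l=0$), together with the explicit Clebsch--Gordan coefficients coupling spin $l$ with spin $1/2$, which are the elementary square roots $\sqrt{(l\pm n)/(2l)}$ and their companions.

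First I would write $a(l)=\Ft f(l)$ with $f=\check a$, substitute the Peter--Weyl series for $f$, multiply by $q_\mu$, and use the product formula $T^l_{nm}T^{1/2}_{ab}=\sum_{l'=l\pm1/2}\langle l\,n;\tfrac12\,a\,|\,l'\,(n+a)\rangle\,\langle l\,m;\tfrac12\,b\,|\,l'\,(m+b)\rangle\,T^{l'}_{n+a,\,m+b}$; re-collecting by $(l',n+a,m+b)$ and extracting the Fourier coefficient through Schur orthogonality (keeping track of the $(2l'+1)$ weights in (\ref{FtSU(2)})) then produces the stated two-term formulas for $\Df_+$ and $\Df_-$, and --- because $q_0$ contributes through two diagonal matrix entries --- the four-term formula for $\Df_0$. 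The index shifts $l^\pm=l\pm1/2$, $n^\pm=n\pm1/2$, $m^\pm=m\pm1/2$ are exactly those forced by which Clebsch--Gordan coefficient is nonzero for the relevant $a,b\in\{\pm\tfrac12\}$, and the square-root prefactors are the spin-$1/2$ coefficients after the $(2l+1)$-rescaling. An equivalent and more hands-on route avoids representation theory altogether: Vilenkin's closed forms for $P^l_{nm}(\cos\theta)$ satisfy classical three-term contiguity relations, multiplication by the trigonometric expressions for $q_\mu$ in Euler angles shifts these, and matching coefficients re-sums the kernel expansion into the shifted form. Either way this step is a long but entirely routine calculation.

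For the last assertion, $\Df_\mu\sigma_\nu=\delta_{\mu\nu}\,\sigma_{\mathrm{Id}}$, I would argue abstractly rather than substitute Theorem \ref{SymPa} into the displayed formulas. Since $\sigma_\nu$ is the $x$-independent symbol of the left-invariant operator $\partial_\nu$, whose right convolution kernel (in the sense of (\ref{Ker-Conv})) is a first-order distribution supported at $e$, formula (\ref{Diff_qa}) evaluates to $(\Df_{q_\mu}\sigma_\nu)(l)=-\partial_\nu\bigl(q_\mu\cdot(T^l)^*\bigr)(e)$; Leibniz' rule together with $q_\mu(e)=0$, $(T^l)^*(e)=\mathrm{Id}$, and the normalisation $(\partial_\nu q_\mu)(e)=-\delta_{\mu\nu}$ recorded just before (\ref{SU(2)Compo}) then yields $\delta_{\mu\nu}\,\mathrm{Id}=\delta_{\mu\nu}\,\sigma_{\mathrm{Id}}$. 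As a consistency check one may also read it off the explicit formulas: the Kronecker deltas inside each $\sigma_\nu$ from Theorem \ref{SymPa} collapse the four (resp. two) summands to a single diagonal term whose square-root prefactor simplifies to $1$. The main obstacle throughout is not conceptual but bookkeeping: one must fix once and for all the row/column convention for $T^l$, the complex conjugate in (\ref{FtSU(2)}), the $(2l+1)$ weights, and the ordering of factors in the non-commutative product, and propagate these consistently through the Clebsch--Gordan sums --- getting any one of them wrong produces spurious signs or transposed index shifts. This is precisely why the final formulas are quoted verbatim from \cite{RT2009} rather than re-derived in detail here.
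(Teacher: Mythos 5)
The paper does not prove this theorem: it is quoted verbatim from \cite{RT2009} (``We directly cite the following results from \cite{RT2009}''), so there is no internal argument to compare against. Your sketch is nevertheless a correct and essentially standard derivation, and it is consistent with the route Ruzhansky--Turunen themselves take. The identifications $q_+=T^{1/2}_{1/2,-1/2}$, $q_-=T^{1/2}_{-1/2,1/2}$, $q_0=T^{1/2}_{-1/2,-1/2}-T^{1/2}_{1/2,1/2}$ are right under the paper's row/column convention, and coupling with spin $\tfrac12$ via the Clebsch--Gordan product formula together with Schur orthogonality does produce exactly the two- and four-term shifted expressions with the stated square-root prefactors; the alternative Vilenkin contiguity-relation route is, as you say, the same bookkeeping in disguise. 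Your abstract proof of $\Df_\mu\sigma_\nu=\delta_{\mu\nu}\sigma_{\mathrm{Id}}$ is also valid: by (\ref{Diff_qa}) the left-hand side is $\langle\check\sigma_\nu,\,q_\mu(T^l)^*\rangle$, the distribution $\check\sigma_\nu$ pairs with a test function $g$ to give $-(\Pa[\nu]g)(e)$ because $d(\mathrm{inv})_e=-\mathrm{Id}$, and Leibniz plus $q_\mu(e)=0$, $T^l(e)=\mathrm{Id}$ reduces this to $-(\Pa[\nu]q_\mu)(e)\,\mathrm{Id}$. Two small points worth making explicit if you were to write this out in full: first, the paper records only the diagonal normalisation $(\Pa[\nu]q_\nu)(e)=-1$; the off-diagonal vanishing $(\Pa[\nu]q_\mu)(e)=0$ for $\mu\neq\nu$ is needed to get the Kronecker delta and, while it is an easy matrix-entry computation with the generators $\omega_j'(0)$, it is not literally ``recorded'' in the text. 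Second, the minus sign in your intermediate formula $(\Df_{q_\mu}\sigma_\nu)(l)=-\Pa[\nu]\bigl(q_\mu(T^l)^*\bigr)(e)$ is precisely the inversion Jacobian at $e$; flagging that would make the step from (\ref{Diff_qa}) to the Leibniz expression airtight.
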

\begin{remark}\label{Para-Classical}
We are actually able to manipulate para-differential operators arising from classical differential operators on $\SU(2)$ fairly easily. For example, let $Y=a\Pa[+]$ and $Z=b\Pa[-]$ be vector fields with coefficients $a,b\in C^r$ with $r>1$. Then the commutator 
$$
[Y,Z]
=a\Pa[+]b\Pa[-]-b\Pa[-]a\Pa[+]
+2ab\Pa[0]
$$
On the other hand, the symbols of $Y,Z$ are respectively
$$
\sigma[Y]=a\sigma_+,
\quad
\sigma[Z]=b\sigma_-,
$$
so a direct computation using Theorem \ref{Df_q} implies a precise equality
$$
\begin{aligned}
\sigma[Y]\#_{r;\mathscr{Q}}\sigma[Z]-\sigma[Z]\#_{r;\mathscr{Q}}\sigma[Y]
&=ab[\sigma_+,\sigma_-]+a\Df_+\sigma_+\Pa[+]b\sigma_--b\Df_-\sigma_-\Pa[-]a\sigma_+\\
&=2ab\sigma_0+a\Pa[+]b\sigma_--b\Pa[-]a\sigma_+.
\end{aligned}
$$
Consequently, Theorem \ref{Compo2} ensures that $[T_Y,T_Z]$ is exactly the para-differential operator corresponding to $[Y,Z]$, modulo a smoothing operator in $\Op\Sigma_{<1/2}^{1-r}$. Consequently, with $\{Y,Z\}$ being the symbol of $[Y,Z]$,
$$
[T_Y,T_Z]=T_{\{Y,Z\}}+\Op\Sigma_{<1/2}^{1-r}.
$$
\end{remark}

\subsection{Passage to the 2-Sphere via Hopf Fiberation}
We finally describe how pseudo-differential calculus can be constructed on $\mathbb{S}^2$. Although $\mathbb{S}^2$ does not carry any Lie group structure, it is still a homogeneous space corresponding to $\SU(2)$, and the projection $\SU(2)\to \mathbb{S}^2$ is \emph{Hopf fiberation}, well-known in geometric topology. To be consistent with \emph{left-invariant differential operators} we have been employing so far, following \cite{RT2013}, we consider $\mathbb{S}^2$ as a \emph{right} $\SU(2)$-homogeneous space, that is, the set of \emph{right cosets} 
$$
\mathbf{T}_3\setminus\SU(2):=\Big\{\mathbf{T}_3\cdot x:\,x\in\SU(2)\Big\}
$$
endowed with the quotient topology, on which $\SU(2)$ \emph{acts from the right}. A point that one should keep in notice is that, in the Euler angle representation as in (\ref{EulerAngle1}), the angle $\varphi\in[0,2\pi)$ only exhausts half of the subgroup $\mathbf{T}_3$, so the orbit of a given element $\Omega(\varphi_0,\theta_0,\psi_0)\in\SU(2)$ is
$$
\Big\{\pm\Omega(\varphi,\theta_0,\psi_0):\,
\varphi\in[0,2\pi)\Big\}.
$$
We thus realize the Hopf map $\SU(2)\to \mathbb{S}^2$ as
$$
\left(\begin{matrix}
x_1 & x_2 \\
-\bar{x}_2 & \bar{x}_1
\end{matrix}\right)
\to
\left(\begin{matrix}
-2\mathrm{Im}(x_1\bar x_2) \\
2\mathrm{Re}(x_1\bar x_2) \\
|x_1|^2-| x_2|^2
\end{matrix}\right)\in\mathbb{R}^3,
\quad
\text{where}
\quad
|x_1|^2+|x_2|^2=1,
$$
or in Euler angles,
$$
\Omega(\varphi,\theta,\psi)=\left(\begin{matrix}
e^{i(\varphi+\psi)/2}\cos\frac{\theta}{2} & ie^{i(\varphi-\psi)/2}\sin\frac{\theta}{2} \\
ie^{-i(\varphi-\psi)/2}\sin\frac{\theta}{2} & e^{-i(\varphi+\psi)/2}\cos\frac{\theta}{2}
\end{matrix}\right)
\to
\left(\begin{matrix}
\sin\theta\cos\psi \\
\sin\theta\sin\psi \\
\cos\theta
\end{matrix}\right).
$$
We note that the Hopf fiberation is a Riemannian submersion if $\SU(2)$ is endowed with the metric $G_0$ and $\mathbb{S}^2$ is endowed with the standard metric $g_0$.

A function on $\mathbb{S}^2$ is lifted to $\SU(2)$ as a function independent to the Euler angle $\varphi$ and symmetric with respect to the reflection map $x\to -x$. Given a function $f$ (distribution) on $\mathbb{S}^2$, we denote by 
\begin{equation}\label{Fsharp}
f^\sharp\big(\Omega(\varphi,\theta,\psi)\big)
:=f\big(\sin\theta\cos\psi,\sin\theta\sin\psi,\cos\theta\big)
\end{equation}
its lift to the $\mathbf{T}_3$-invariant function (distribution) on $\SU(2)$. We refer smooth functions (distributions respectively) on $\SU(2)$ with such symmetry as being \emph{$\mathbf{T}_3$-invariant}, and denote the space of them by $C^\infty(\mathbf{T}_3\setminus\SU(2))$ ($\mathcal{D}'(\mathbf{T}_3\setminus\SU(2))$ respectively). We emphasize that sometimes it is necessary to distinguish between $C^\infty(\mathbf{T}_3\setminus\SU(2))$ and $C^\infty(\mathbb{S}^2)$. Further definitions can be formulated for operators and symbols:
\begin{definition}
A linear operator $A:C^\infty(\SU(2))\to \mathcal{D}'(\SU(2))$ is said to be $\mathbf{T}_3$-invariant, if it maps $C^\infty(\mathbf{T}_3\setminus\SU(2))$ to $\mathcal{D}'(\mathbf{T}_3\setminus\SU(2))$.
\end{definition}
\begin{definition}
A symbol $a(x,l)$ on $\SU(2)$ is said to be $\mathbf{T}_3$-invariant, if for any element $\omega_3(t)\in\mathbf{T}_3$, there holds $a(\omega_3(t)x,l)=a(x,l)$, for all $x\in\SU(2)$ and $l\in\hN$. In Euler angles, $\mathbf{T}_3$-invariant symbols are exactly those that do not depend on the Euler angle $\varphi$ and remain invariant under reflection $x\to-x$. 
\end{definition}
Obviously an operator is $\mathbf{T}_3$-invariant if and only if its symbol is $\mathbf{T}_3$-invariant. We can thus lift pseudo-differential operators on $\mathbb{S}^2$ to $\SU(2)$ via the realization of Hopf fiberation that we fixed above. There are necessarily infinitely many ways of lifting and there is no ``natural" one among them. However, for classical differential operators and pseudo-differential operators constructed out of the Laplacian $\Delta_{g_0}$, lifting to $\SU(2)$ as $\Delta_{G_0}$ is direct. The advantage of this approach is that operators on $\SU(2)$ admit an explicitly manipulable symbolic calculus. In the formulas for symbolic calculus, if $a,b$ are both $\mathbf{T}_3$-invariant, then the symbols
\begin{equation}\label{ParaCompoSU2}
(a\#_{r;\mathscr{Q}} b)(x,l)=\sum_{|\alpha|\leq r}\Df^\alpha_{\mathscr{Q},l}a(x,l)\cdot\partial_x^{(\alpha)}b(x,l),
\end{equation}
and
\begin{equation}\label{ParaAdjSU2}
a^{\bullet;r,\mathscr{Q}}(x,l)=\sum_{|\alpha|\leq r}\Df^\alpha_{\mathscr{Q},l}\partial_x^{(\alpha)}a^*(x,l).
\end{equation}
are still $\mathbf{T}_3$-invariant, since the differential operators $\partial_x^{(\alpha)}$ are left-invariant, hence specifically invariant under left translation via $\omega_3(\varphi)$. Briefly speaking, \emph{symbolic calculus for $\mathbf{T}_3$-invariant operators preserves $\mathbf{T}_3$-invariance}. Consequently, symbolic calculus on $\mathbb{S}^2$ can be constructed by lifting operators on $\mathbb{S}^2$ to $\mathbf{T}_3$-invariant operators on $\SU(2)$, and manipulated without concern for $\mathbf{T}_3$-invariance. For a $\mathbf{T}_3$-invariant rough symbol $a\in\mathcal{A}^m_r$, the corresponding para-differential operator $T_a:=\Op(a^\chi)$ is still $\mathbf{T}_3$-invariant, since spectral cut-off preserves $\mathbf{T}_3$-invariance.

It is more illustrative to write down the Fourier series development. For example, given a function $f\in C^\infty(\mathbb{S}^2)$, we denote its lift to $\SU(2)$ as $f^\sharp\in C^\infty(\mathbf{T}_3\setminus\SU(2))$. By (\ref{FtSU(2)}), The Fourier transform $\Ft{f^\sharp}(l)$ does not vanish only when $l\in\mathbb{N}_0$, and for $l\in\mathbb{N}_0$, the only possible non-vanishing entries are those with $m=0$, i.e. the column with index 0. In that case, we have
$$
\Ft{f^\sharp}(l)_{0n}=\int_{\mathbb{S}^2}f(x)Y^{-n}_l(x)d\mu_0,
$$
with $Y^n_l(x)$ being the standard complex spherical harmonic on $\mathbb{S}^2$. Furthermore, for a $\mathbf{T}_3$-invariant symbol $a(x,l)$, the action $\Op(a)f^\sharp$ reads
$$
\sum_{l\in\mathbb{N}_0}(2l+1)\Tr\big[a(x,l)\Ft{f^\sharp}(l)T^l(x)\big].
$$
Under Euler angle representation, the matrix $\Ft{f^\sharp}(l)T^l(x)$ consists of entries depending only on $\psi$ and $\theta$, so each summand, hence $\Op(a)f^\sharp$, is a function of Euler angles $\theta$ and $\psi$ only. The reflection invariance is checked similarly. As a result, $\Op(a)f^\sharp\in C^\infty(\mathbf{T}_3\setminus \SU(2))$, hence projects to a function defined on $\mathbb{S}^2$. 

\section{Para-linearization of Dirichlet-Neumann Operator for a Distorted Sphere}\label{6}
In this section, we prove the para-linearization formula for the Dirichlet-Neumann operator of a distorted sphere in $\mathbb{R}^3$. We are going to lift our discussion to $\SU(2)$, so that the toolbox of para-differential calculus is available. The idea is to ``factorize" the Laplacian and then reduce to the boundary.

\subsection{The Dirichlet-Neumann Operator}
Recall that we set $\iota_0:\mathbb{S}^2\hookrightarrow\mathbb{R}^3$ to be the standard embedding. We write $\rho$ for the Euclidean distance function to $0$ in $\mathbb{R}^3$, and define
$$
\mathcal{U}=\left\{\frac{1}{2}<\rho<\frac{3}{2}\right\},
$$
so that $\mathcal{U}$ is a tubular neighbourhood of the standard unit sphere, with smooth boundary. Obviously ${\mathcal{U}}$ is diffeomorphic to the product manifold $\mathbb{S}^2\times(-1/2,1/2)$, with smooth boundary $\mathbb{S}^2\times\{-1/2,1/2\}$. 

We shall use $x$ to mark points on $\mathbb{S}^2$ or $\SU(2)$ (there is usually no risk of confusion). We already used $g_0$ to denote the induced metric and $N_0$ to denote the outward pointing normal vector field of $\mathbb{S}^2$, also known as the Gauss map. Note that $N_0$ coincides with $\iota_0$.

Just as shown in Figure \ref{Height}, the distorted sphere $M_\zeta$ we shall consider will be given by the graph of a ``height function" $\zeta\in C^r(\mathbb{S}^2)$ with $|\zeta|_{L^\infty}<1/2$, i.e.
$$
M_\zeta:=\{(1+\zeta(x))N_0(x):x\in \mathbb{S}^2\}.
$$
We may thus abbreviate $\iota_\zeta=(1+\zeta) N_0$, and $M_0=\iota_0(\mathbb{S}^2)$. If $r\geq1$, $M_\zeta$ is $C^r$-diffeomorphic to $\mathbb{S}^2$ and is itself a $C^r$ differentiable hypersurface, but this is the highest regularity to expect for generic $\zeta\in C^r(\mathbb{S}^2)$. From now on, we shall fix $r>2$. 

The most convenient coordinate system that we shall use is the distorted normal coordinate with respect to $M_0$, that is a $C^r$ diffeomorphism mapping a point $(x,y)\in \mathbb{S}^2\times[-1/2,1/2]$ to 
$$
(1+\zeta(x)+y)N_0(x)\in\bar{\mathcal{U}}.
$$
Thus, under our choice of coordinate, we have 
$$
\rho(x,y)=1+\zeta(x)+y.
$$

We then pull everything on $\bar{\mathcal{U}}$ back to $\mathbb{S}^2\times[-1/2,1/2]$ through this specific diffeomorphism. As a starting point, the pulled-back Euclidean metric reads
\begin{equation}\label{g_E}
g_{\mathrm{E}}=\rho^2g_0(x)+(d\zeta+dy)\otimes (d\zeta+dy),
\quad 
\rho=1+\zeta+y.
\end{equation}
In deriving the expression for $g_{\mathrm{E}}$, we used the second fundamental form $\langle d\iota_0(x), dN_0(x)\rangle$, where the bracket denotes the pairing $\langle a_idx^i, c_jdx^j\rangle:=(a_i\cdot c_j)dx^i\otimes dx^j$ between $\mathbb{R}^{3}$-valued differential 1-forms on $\mathbb{S}^2$. Note that for the sphere, the second fundamental form coincides with $g_0$. The following $C^r$-Riemannian manifold (with boundary) is thus isometric to $\bar{\mathcal{U}}$:
$$
\bar{\mathcal{W}}:=\Big(\mathbb{S}^2\times\left[-\frac{1}{2},\frac{1}{2}\right],g_{\mathrm{E}}\Big).
$$
The induced metric on $M_\zeta$, being the hypersurface $y=0$ in $\bar{\mathcal{W}}$, also takes a simple form
$$
g_\zeta=(1+\zeta)^2g_0+d\zeta\otimes d\zeta.
$$
We use the following formulas for block matrices to compute the dual metric of $g_{\mathrm{E}}$: 
$$
\left(
\begin{matrix}
A+c c^\top & c \\
c^{\top} & 1
\end{matrix}
\right)^{-1}
=\left(
\begin{matrix}
A^{-1} & -A^{-1}c \\
-(A^{-1}c)^{\top} & 1+(A^{-1}c)\cdot c
\end{matrix}
\right),
\quad
\det\left(
\begin{matrix}
A+c c^\top & c \\
c^{\top} & 1
\end{matrix}
\right)=\det A
$$
for an $2\times 2$ positive definite symmetric matrix $A$  and $c\in\mathbb{R}^2$. We find the dual metric of $g_{\mathrm{E}}$ reads
\begin{equation}\label{g_E^{-1}}
\begin{aligned}
g_{\mathrm{E}}^{-1}
&=\frac{g_0^{-1}(x)}{\rho^{2}}
+\left(1+\frac{|\nabla_0\zeta|_{g_0}^2}{\rho^{2}}\right)\partial_y\otimes\partial_y
-\frac{\nabla_0\zeta\otimes\partial_y+\partial_y\otimes\nabla_0\zeta}{\rho^{2}}.
\end{aligned}
\end{equation}
Here $g_0^{-1}$ stands for the dual metric of $g_0$.

For a function $\Phi$ defined on $\bar{\mathcal{U}}$, we pull it back to $\bar{\mathcal{W}}$ as
\begin{equation}\label{Psi}
\Psi(x,y):=\Phi\big((1+\zeta(x)+y)N_0(x)\big).
\end{equation}
Regarding $\Psi$ as a function defined on $\mathbb{S}^2$ depending on parameter $y\in[-1/2,0]$, the gradient of $\Psi$ is computed as
$$
\nabla_\mathrm{E}\Psi
=\frac{\nabla_0\Psi-\partial_y\Psi\nabla_0\zeta}{\rho^{2}}
+\left[\left(1+\frac{|\nabla_0\zeta|_{g_0}^2}{\rho^{2}}\right)\partial_y\Psi-\frac{\nabla_0\zeta\cdot\nabla_0\Psi}{\rho^{2}}\right]\partial_y.
$$
The hypersurface $y=0$ corresponds to $M_\zeta$ in $\mathcal{U}$, so the vector field
$$
\nabla_{\mathrm{E}}y=-\frac{\nabla_0\zeta}{\rho^{2}}
+\left(1+\frac{|\nabla_0\zeta|_{g_0}^2}{\rho^{2}}\right)\partial_y\Bigg|_{y=0}
$$
along $\{y=0\}$ is perpendicular to the hypersurface. Similarly, the hypersurface $y+\zeta(x)=0$ corresponds to the undisturbed sphere in $\mathcal{U}$, so 
$$
N_0(x)=\partial_y|_{y=-\zeta(x)}.
$$
Consequently, using the expression (\ref{g_E}) of $g_{\mathrm{E}}$, the Dirichlet-Neumann operator ${D[\zeta]\phi}$ in (\ref{EQ}) is computed as
\begin{equation}\label{Dzetaphi}
\begin{aligned}
D[\zeta]\phi
&=\frac{\nabla_{\mathrm{E}}\Psi\cdot\nabla_{\mathrm{E}}y}{N_0\cdot\nabla_{\mathrm{E}}y}\Bigg|_{y=0}\\
&=\left(1+\frac{|\nabla_0\zeta|_{g_0}^2}{\rho^{2}}\right)\partial_y\Psi\Bigg|_{y=0}-\frac{\nabla_0\zeta\cdot\nabla_0\Psi}{\rho^{2}}\Bigg|_{y=0}.
\end{aligned}
\end{equation}
The $g_{\mathrm{E}}$-Laplacian of $\Psi$, which is just the Euclidean Laplacian of $\Psi$ pulled back to $\bar{\mathcal{W}}$, is computed as
$$
\begin{aligned}
\Delta_{\mathrm{E}}\Psi
&=\frac{\Delta_0\Psi}{\rho^{2}}
+\left(1+\frac{|\nabla_0\zeta|_{g_0}^2}{\rho^{2}}\right)\partial_y^2\Psi
-\frac{2\nabla_0\zeta\cdot(\nabla_0\partial_y\Psi)}{\rho^{2}}-\frac{\Delta_0\zeta-2\rho}{\rho^{2}}\partial_y\Psi.
\end{aligned}
$$
Note that we used the conformality of $\rho^{2}g_0$ with $g_0$ and the fact that $\mathbb{S}^2$ has dimension 2.

The regularity of $D[\zeta]\phi$ can be deduced by a standard elliptic regularity argument using the expressions of $\Delta_{\mathrm{E}}$ under the coordinate system $(x,y)$. The following proposition is proved similarly as Proposition 2.7. of \cite{ABZ2011}:
\begin{proposition}\label{RegDN}
Suppose $s>3$, $s'\leq s$, and $\zeta\in H^{s+0.5}$, $\phi\in H^{s'}$, such that $|\zeta|_{L^\infty}<1/2$. Then there is an increasing function $K$ such that
$$
\|D[\zeta]\phi\|_{H^{s'-1}}
\leq K\big(\|\zeta\|_{H^{s+0.5}}\big)\|\phi\|_{H^{s'}}.
$$
\end{proposition}

Furthermore, the linearization of $D[\zeta]\phi$ can be computed similarly as in the Euclidean case. We directly state the following proposition, which resembles Proposition 2.11. of \cite{ABZ2011}; the proof is quite standard as in \cite{Lannes2005}, and is omitted.
\begin{proposition}\label{LinDN}
Suppose $s>3$, $s'\leq s$, and $\zeta\in H^{s+0.5}$, $\phi\in H^{s'}$, such that $|\zeta|_{L^\infty}<1/2$. There is a neighbourhood $\mathfrak{U}$ of $\zeta$ in $H^{s+0.5}$ in which the mapping 
$$
\mathfrak{U}\ni\zeta\to D[\zeta]\phi\in H^{s'-1}
$$
is differentiable, and the differential along direction $\zeta_1$ is 
$$
-D[\zeta]\big(\mathfrak{b}\zeta_1\big)-\mathrm{div}_{g_0}(\mathfrak{v}\zeta_1),
$$
where 
$$
\mathfrak{b}=\left(1+\frac{|\nabla_{0}\zeta|^2}{(1+\zeta)^{2}}\right)^{-1}\left(
D[\zeta]\phi+\frac{\nabla_{0}\zeta\cdot\nabla_{0}\phi}{(1+\zeta)^{2}}\right),
\quad
\mathfrak{v}=\frac{\nabla_0\phi-\mathfrak{b}\nabla_0\zeta}{(1+\zeta)^2},
$$
as in Theorem \ref{Thm1}.
\end{proposition}

In order to para-linearize the Dirichlet-Neumann operator, we will lift all the quantities of interest from $\mathbb{S}^2\times[-1/2,0]$ to $\SU(2)\times[-1/2,0]$. Suppose $\Phi$ is the velocity potential inside the region enclosed by $M_\zeta$, and $\phi(x):=\Phi\big((1+\zeta(x))N_0(x)\big)$ is the Dirichlet boundary value of $\Phi$ on $M_\zeta$. Then the corresponding pulled-back $\Psi$ satisfies $\Delta_{\mathrm{E}}\Psi=0$ on $\mathbb{S}^2\times[-1/2,0)$, and $\Psi(x,0)=\phi(x)$. Writing $\rho^\sharp=1+\zeta^\sharp+y$, defining the elliptic differential operator 
\begin{equation}\label{L_zeta}
\begin{aligned}
\mathcal{L}_\zeta
&:=\Delta_{G_0}
+\left((\rho^\sharp)^{2}+|\nabla_{G_0}\zeta^\sharp|_{G_0}^2\right)\partial_y^2
-2\nabla_{G_0}\zeta^\sharp\cdot(\nabla_{G_0}\partial_y)
-\left(\Delta_{G_0}\zeta^\sharp-2\rho^\sharp\right)\partial_y
\end{aligned}
\end{equation}
on $\SU(2)\times[-1/2,0]$, the lift $\Psi^\sharp$ satisfies $\mathcal{L}_\zeta\Psi^\sharp=0$ on $\SU(2)\times[-1/2,0)$, and $\Psi^\sharp(x,0)=\phi^\sharp(x)$. In fact, $\mathcal{L}_\zeta$ is just the lift of $\rho^2\Delta_{\mathrm{E}}$. The quantity $D[\zeta]\phi$ is lifted as the boundary value of
\begin{equation}\label{N_zeta}
\mathcal{N}_\zeta\Psi^\sharp
:=\left(1+\frac{|\nabla_{G_0}\zeta^\sharp|_{G_0}^2}{(\rho^\sharp)^{2}}\right)\partial_y\Psi^\sharp
-\frac{\nabla_{G_0}\zeta^\sharp\cdot\nabla_{G_0}\Psi^\sharp}{(\rho^\sharp)^{2}}
\end{equation}
at $y=0$. Our task now becomes the following:

\begin{task}\label{TaskDN}
Let $\mathcal{L}_\zeta$ and $\mathcal{N}_\zeta$ be as in (\ref{L_zeta}) and (\ref{N_zeta}). Find a suitable $\mathbf{T}_3$-invariant symbol $a$ on $\SU(2)$ such that if $\mathcal{L}_\zeta\Psi^\sharp=0$ on $\SU(2)\times[-1/2,0)$, and $\Psi^\sharp(x,0)=\phi^\sharp(x)$, then 
$$
\mathcal{N}_\zeta\Psi^\sharp\big|_{y=0}=T_a\phi^\sharp+\mathbf{T}_3\text{-invariant smoothing terms}.
$$
\end{task}
The rest of this section is devoted to finding out this paralinearization formula. The idea of accomplishing Task \ref{TaskDN} is simple: considering the ``height" coordinate $y$ as a parameter of evolution, the quantity $\mathcal{N}_\zeta\Psi^\sharp$ is the final state of an elliptic evolutionary problem $\mathcal{L}_\zeta\Psi^\sharp=0$; by suitable factorization of the operator $\mathcal{L}_\zeta$, the boundary value $\mathcal{N}_\zeta\Psi^\sharp$ then admits an explicit expression. 

The idea is exactly the one employed by, for example, Appendix C of Chapter 12 of Taylor's book \cite{Taylor2013}. Its para-differential version is exactly the argument employed by \cite{AM2009} and \cite{ABZ2011}. We will basically follow \cite{AM2009} and \cite{ABZ2011}, to whose idea our argument is very close.

From now on, we will fix a real number $s>3$. We assume that the height function $\zeta\in H^{s+0.5}(\mathbb{S}^2)$, so that the surface $M_\zeta$ is of class $C^{s-0.5}_*\subset C^{2.5+}$. We also assume that the boundary value $\phi\in H^{s}(\mathbb{S}^2)\subset C^{s-1}_*(\mathbb{S}^2)\subset C^{2+}(\mathbb{S}^2)$. Although the lifting operator $\sharp$ from functions on $\mathbb{S}^2$ to functions on $\SU(2)$ does not improve Sobolev regularity, it obviously does not undermine Zygmund regularity either: 
\begin{proposition}\label{SobolevForT3}
For $\mathbf{T}_3$-invariant functions on $\SU(2)$, the Sobolev embedding
$$
H^s\big(\mathbf{T}_3\setminus\SU(2)\big)
\subset C^{s-1}_*\big(\mathbf{T}_3\setminus\SU(2)\big)
$$
is valid for $s>1$.
\end{proposition}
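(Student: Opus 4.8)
Here is my plan. The statement says nothing more than that a $\mathbf{T}_3$-invariant function on the $3$-manifold $\SU(2)$ behaves, as far as Sobolev embeddings are concerned, like a function on the $2$-sphere: the embedding $H^s\subset C^{s-3/2}_*$ (the general one on a compact $3$-manifold) improves by one half derivative to $H^s\subset C^{s-1}_*$. The whole content is this dimensional reduction, and the clean way to extract it is through an \emph{improved Bernstein inequality} fed into the Littlewood--Paley characterizations of $H^s$ (Proposition \ref{LPHs}) and of the Zygmund class $C^r_*$ (Proposition \ref{LPZyg}). So the plan is: (i) prove a two-dimensional Bernstein inequality for $\mathbf{T}_3$-invariant functions; (ii) dyadically decompose $f^\sharp$, apply it blockwise, and sum.

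For step (i): if $g$ is $\mathbf{T}_3$-invariant with Fourier support in $S[0,t]$, $t\ge1$, I claim $\|g\|_{L^\infty(\SU(2))}\lesssim t\,\|g\|_{L^2(\SU(2))}$, as opposed to the $t^{3/2}$ of the general Bernstein inequality (Proposition \ref{Bernstein}). Recall from the Fourier-analytic description of $\mathbf{T}_3$-invariant functions that $\Ft{g}(l)$ vanishes unless $l\in\mathbb{N}_0$, and in that case only the entries in a single line (one fixed row, or one fixed column, of the matrix) are nonzero; hence by Peter--Weyl (Theorem \ref{PeterWeyl}) $g$ is a combination of the $2l+1$ functions forming the corresponding line of $T^l(\cdot)$, over those $l$ with $\sqrt{l(l+1)}\le t$, i.e. $l\lesssim t$. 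Applying Cauchy--Schwarz in the Peter--Weyl expansion, the ``coefficient'' factor is $\|g\|_{L^2(\SU(2))}$ by Plancherel, and the ``kernel'' factor is
\[
\sum_{l\lesssim t}(2l+1)\sum_{\text{line}}|T^l_{\cdot}(x)|^2=\sum_{l\lesssim t}(2l+1)\lesssim t^2,
\]
where the inner sum equals $1$ for every $x$ because a row (or column) of the unitary matrix $T^l(x)$ has Euclidean norm $1$. This $t^2$-bound is the analogue, restricted to $\mathbf{T}_3$-invariant modes, of the Weyl law in Lemma \ref{Weyl}. For step (ii), write $f^\sharp=\phi(|\nabla|)f^\sharp+\int_1^\infty\psi_t(|\nabla|)f^\sharp\,\frac{dt}{t}$; each piece is again $\mathbf{T}_3$-invariant because spectral cut-offs preserve $\mathbf{T}_3$-invariance, and $\psi_t(|\nabla|)f^\sharp$ has Fourier support in $S[t/2,t]$. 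The improved Bernstein inequality then gives
\[
t^{s-1}\bigl\|\psi_t(|\nabla|)f^\sharp\bigr\|_{L^\infty}\lesssim t^{s}\bigl\|\psi_t(|\nabla|)f^\sharp\bigr\|_{L^2}\lesssim\|f^\sharp\|_{H^s},
\]
the last step using the Sobolev-norm formula together with the localization $|\xi|\simeq t$ on the support of $\psi_t(|\nabla|)f^\sharp$; similarly $\|\phi(|\nabla|)f^\sharp\|_{L^\infty}\lesssim\|f^\sharp\|_{L^2}\le\|f^\sharp\|_{H^s}$. Taking $\sup_{t\ge1}$ and invoking Proposition \ref{LPZyg} yields $\|f^\sharp\|_{C^{s-1}_*}\lesssim\|f^\sharp\|_{H^s}$ for every $s>1$.

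I do not expect a genuine obstacle. The one thing that must be got right is precisely the reduction of effective dimension from $3$ to $2$ — encoded in the single-line structure of $\Ft{g}(l)$ for $\mathbf{T}_3$-invariant $g$ and the unitarity identity $\sum|T^l_{\cdot}(x)|^2=1$ — together with keeping the Plancherel normalization and the comparison $|\xi_l|\simeq l$ straight; everything else is routine bookkeeping with dyadic blocks. As a cross-check, and an equally short alternative, one may instead descend to $\mathbb{S}^2$: since the Hopf fibration $\SU(2)\to\mathbb{S}^2$ is a Riemannian submersion with totally geodesic fibres one has $\|f^\sharp\|_{H^s(\SU(2))}\simeq\|f\|_{H^s(\mathbb{S}^2)}$ (eigenvalues of $\Delta_{G_0}$ and $\Delta_{g_0}$ match on the relevant modes) and $\|f^\sharp\|_{C^r_*(\SU(2))}\simeq\|f\|_{C^r_*(\mathbb{S}^2)}$ (the fibration is $1$-Lipschitz and admits local isometric sections), whereupon the classical two-dimensional Sobolev embedding $H^s(\mathbb{S}^2)\subset C^{s-1}_*(\mathbb{S}^2)$, $s>1$, finishes the argument.
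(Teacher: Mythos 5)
Your proof is correct, and considerably more explicit than the paper's own treatment: the paper states the proposition with only the one-sentence remark that lifting $\sharp$ from $\mathbb{S}^2$ to $\SU(2)$ preserves Sobolev regularity and ``does not undermine'' Zygmund regularity, leaving the reader to supply the argument. Your alternative route is precisely what that remark amounts to: since the Hopf fibration is a Riemannian submersion with totally geodesic fibres, the Laplacian eigenvalues on $\mathbf{T}_3$-invariant modes are identical on both sides (both $-l(l+1)$), so $H^s$ norms agree; Zygmund regularity passes up through the smooth, locally sectioned fibration; and the classical two-dimensional embedding $H^s(\mathbb{S}^2)\subset C^{s-1}_*(\mathbb{S}^2)$ for $s>1$ finishes.

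Your primary argument takes a genuinely different and arguably cleaner route, staying entirely on $\SU(2)$. The key observation — that for a $\mathbf{T}_3$-invariant $g$ the matrix $\Ft{g}(l)$ is concentrated in the single column $m=0$, together with the unitarity identity $\sum_{n=-l}^{l}|T^l_{n0}(x)|^2=1$ — yields a dimension-adapted Bernstein inequality $\|g\|_{L^\infty}\lesssim t\,\|g\|_{L^2}$ for $S[0,t]$-supported $\mathbf{T}_3$-invariant $g$, gaining half a power of $t$ over the generic three-dimensional bound. Feeding this into the Littlewood--Paley characterizations of Propositions \ref{LPHs} and \ref{LPZyg} gives the embedding directly. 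What this buys: it avoids any cross-comparison of function spaces under the Hopf map and makes completely transparent where the half-derivative improvement over the generic $H^s\subset C^{s-3/2}_*$ comes from. Both approaches are correct; yours fills an actual gap in the exposition, since the paper supplies no proof.
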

For example, we still have 
$$
\begin{aligned}
&\zeta^\sharp\in C^{s-0.5}_*\big(\mathbf{T}_3\setminus\SU(2)\big)
\subset C^{2.5+}\big(\mathbf{T}_3\setminus\SU(2)\big),\\
&\phi^\sharp\in C^{s-1}_*\big(\mathbf{T}_3\setminus\SU(2)\big)
\subset C^{2+}\big(\mathbf{T}_3\setminus\SU(2)\big),
\end{aligned}
$$
although $H^s(\SU(2))\not\subset C^{s-1}_*(\SU(2))$. From now on we we will always be dealing with symbols or functions on $\SU(2)$ with $\mathbf{T}_3$-invariance described above, so this improved Sobolev embedding always holds.

\subsection{Alinhac's Good Unknown}
We turn to the para-linearization of equation $\mathcal{L}_\zeta\Psi^\sharp=0$. We shall look at the Laplace equation $\Delta_{\mathrm{E}}\Psi=0$ inside $M_\zeta$, and after lifting to $\SU(2)\times[-1/2,0]$, we shall find the corresponding \emph{good unknown} in the sense of Alinhac \cite{Alinhac1989}; that is, we need to find the optimal unknown function which eliminates all loss of regularity due to the non-smooth coordinate change. The idea was originally introduced by Alinhac in \cite{Alinhac1989}, Subsection 3.1.. Lannes \cite{Lannes2005} noticed its importance to the study of the Dirichlet-Neumann operator. The para-differential version of the good unknown was employed by \cite{AM2009} and \cite{ABZ2011}. We will basically follow the approach of \cite{AM2009} and \cite{ABZ2011}. 

We start by finding out a harmonic function inside $M_\zeta$ that envelopes all information of the normal direction velocity. We observe a nice commutation property between $\Delta_{\mathrm{E}}$ with $\rho\Pa[\rho]$, the infinitesimal generator of dilation:
$$
\big[\rho^2\Delta_\mathrm{E},\rho\Pa[\rho]\big]
=(2-2\rho^2)\Delta_{\mathrm{E}},
$$
we find that $\rho\partial_\rho\Phi$ is a harmonic function inside $M_\zeta$. Since $\rho=1+\zeta(x)+y$ for our choice of coordinates $(x,y)\in\mathbb{S}^2\times[-1/2,0]$, by pulling back to $\bar{\mathcal{W}}$, it follows that 
$$
\Delta_\mathrm{E}\mathfrak{B}=0,
\quad\text{where}\quad
\mathfrak{B}=\rho\partial_y\Psi.
$$
Lifting to $\SU(2)\times[-1/2,0]$, setting $\rho^\sharp=1+\zeta^\sharp+y$, we find
$$
\mathcal{L}_\zeta\mathfrak{B}^\sharp=0,
\quad\text{where}\quad
\mathfrak{B}^\sharp=\rho^\sharp\partial_y\Psi^\sharp.
$$
We define scalar functions
\begin{equation}\label{Beta13}
\begin{aligned}
\beta_1(y;x)&=\big(\rho^\sharp(x,y)\big)^{2}+|\nabla_{G_0}\zeta^\sharp(x)|_{G_0}^2,
\\
\beta_3(y;x)&=-\Delta_{G_0}\zeta^\sharp(x)+2\rho^\sharp(x,y),
\end{aligned}
\end{equation}
and then define a classical differential symbol
\begin{equation}\label{Beta2}
\begin{aligned}
\beta_2(x,l)
&=\sum_{j=1}^3X_j\zeta^\sharp(x)\cdot\sigma[X_j](l)\\
&=\text{the symbol of }\nabla_{G_0}\zeta^\sharp\cdot\nabla_{G_0},
\end{aligned}
\end{equation}
where the left-invariant vector fields $X_j$ are as in (\ref{LISU(2)}). The operator $\mathcal{L}_\zeta$ is now re-written as
$$
\mathcal{L}_\zeta
=\Delta_{G_0}+\beta_1\partial_y^2-2\nabla_{G_0}\zeta^\sharp\cdot\nabla_{G_0}\partial_y+\beta_3\partial_y.
$$
We are now at the place to introduce the good unknown in the sense of Alinhac:
\begin{proposition}\label{GoodUnknown}
Write $\mathfrak{B}=\rho\partial_y\Psi$. Defining the good unknown $W^\sharp:=\Psi^\sharp-T_{1/\rho^\sharp}T_{\mathfrak{B}^\sharp}\zeta^\sharp$ and the para-differential operator on $\SU(2)\times[-1/2,0]$
$$
\mathcal{P}_\zeta
:=\Delta_{G_0}+T_{\beta_1}\partial_y^2-2T_{\beta_2}\partial_y+T_{\beta_3}\partial_y
$$
corresponding to $\mathcal{L}_\zeta$, we have
$$
\big\|\mathcal{P}_\zeta W^\sharp\big\|_{C^0_yH_x^{s+0.5}}
\lesssim K\big(\|\zeta\|_{H^{s+0.5}}\big)\|\phi\|_{H^s}
$$
on $\SU(2)\times[-1/2,0]$, where $K$ is an increasing function, approximately linear when the argument is small.
\end{proposition}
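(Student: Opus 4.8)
The plan is to carry out the classical good-unknown computation of Alinhac within the para-differential calculus on $\SU(2)$ developed in Sections \ref{3}--\ref{4}, following the Euclidean model of \cite{AM2009} and \cite{ABZ2011}. Since $\mathcal{L}_\zeta\Psi^\sharp=0$ identically, one writes
$$
\mathcal{P}_\zeta U^\sharp=(\mathcal{P}_\zeta-\mathcal{L}_\zeta)\Psi^\sharp-\mathcal{P}_\zeta\big(T_{1/\rho^\sharp}T_{\mathfrak{B}^\sharp}\zeta^\sharp\big),
$$
so the estimate splits into two independent tasks: a para-linearization of the difference $(\mathcal{P}_\zeta-\mathcal{L}_\zeta)\Psi^\sharp$, and the computation of the action of the second-order para-differential operator $\mathcal{P}_\zeta$ on the corrector $T_{1/\rho^\sharp}T_{\mathfrak{B}^\sharp}\zeta^\sharp$. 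The \emph{good unknown} $U^\sharp$ is designed precisely so that the two principal contributions cancel, leaving a remainder that gains roughly two-and-a-half derivatives over the naive count and lands in $C^0_yH^{s+0.5}_x$.

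The first ingredient is a set of elliptic a priori bounds for the boundary value problem $\mathcal{L}_\zeta\Psi^\sharp=0$, $\Psi^\sharp|_{y=0}=\phi^\sharp$, with $\Psi^\sharp$ automatically smooth near $y=-1/2$ (it is the restriction of a genuinely harmonic function on the region $\{y<0\}$). The coefficients $\beta_1,\beta_2^j,\beta_3$ of $\mathcal{L}_\zeta$ are smooth functions of $y$, $\zeta^\sharp$, $\nabla_{G_0}\zeta^\sharp$ and $\Delta_{G_0}\zeta^\sharp$; by Proposition \ref{SobolevForT3} they lie in $C^{s-1.5}_*(\mathbf{T}_3\setminus\SU(2))$ with $s-1.5>1.5$, so one may paralinearize $\mathcal{L}_\zeta$ into an elliptic para-differential operator of order $2$ plus a smoothing error and factor the principal para-differential symbol into a product of a forward and a backward first-order parabolic symbol in the transverse variable $y$, as in Appendix C of Chapter 12 of \cite{Taylor2013} in its para-differential guise. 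Propagating the Dirichlet regularity up to $\{y=0\}$ then yields
$$
\Psi^\sharp\in C^0_yH^s_x,\qquad \partial_y\Psi^\sharp\in C^0_yH^{s-1}_x,\qquad \partial_y^2\Psi^\sharp\in C^0_yH^{s-2}_x,
$$
with all norms controlled by $K(\|\zeta\|_{H^{s+0.5}})\|\phi\|_{H^s}$; the same bounds hold for $\mathfrak{B}^\sharp=\rho^\sharp\partial_y\Psi^\sharp$, which solves the same equation with Neumann-type data (\ref{Dzetaphi}) at $y=0$. These are the only facts about $\Psi^\sharp$ and $\mathfrak{B}^\sharp$ that the main argument uses.

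Next I would paralinearize $(\mathcal{P}_\zeta-\mathcal{L}_\zeta)\Psi^\sharp$: writing each coefficient product as $c\,w=T_cw+T_wc+R(c,w)$ (Theorem \ref{ParaPrd}) and using that the remainder $R$ always gains derivatives (Remark \ref{R(a)Symbol}), one is reduced, modulo $C^0_yH^{s+0.5}_x$, to the para-products $-T_{\partial_y^2\Psi^\sharp}\beta_1+2\sum_jT_{X_j\partial_y\Psi^\sharp}\beta_2^j-T_{\partial_y\Psi^\sharp}\beta_3$. A para-product annihilates the $x$-constant part of its second argument, and the genuinely quadratic-in-$\zeta^\sharp$ pieces of the $\beta_i$ lie in $C^0_yH^{s+0.5}_x$ after one further application of Theorem \ref{Bony}, so this collapses to a fixed linear combination of $T_{\partial_y^2\Psi^\sharp}\zeta^\sharp$, $T_{\partial_y\Psi^\sharp}\Delta_{G_0}\zeta^\sharp$ and $\sum_jT_{X_j\partial_y\Psi^\sharp}X_j\zeta^\sharp$, plus $C^0_yH^{s+0.5}_x$ terms. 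In parallel, a direct computation of $\mathcal{P}_\zeta\big(T_{1/\rho^\sharp}T_{\mathfrak{B}^\sharp}\zeta^\sharp\big)$ via the para-composition formulas (Theorems \ref{Compo1}--\ref{Compo2}) — noting that $\partial_y\zeta^\sharp=0$, so the transverse derivatives in $\mathcal{P}_\zeta$ fall only on the $y$-dependent coefficients and on the corrector, and using the commutation identity $[\rho^2\Delta_{\mathrm{E}},\rho\partial_\rho]=(2-2\rho^2)\Delta_{\mathrm{E}}$ that produced $\mathfrak{B}=\rho\partial_y\Psi$ — produces exactly the same leading combination. Subtracting, these principal terms cancel, and the residue is a sum of para-differential operators of order $<3/2$ acting on $\Psi^\sharp$, $\mathfrak{B}^\sharp$, $\zeta^\sharp$, together with $R(\cdot,\cdot)$-remainders and the low-order tails of the composition and commutator expansions; it lies in $C^0_yH^{s+0.5}_x$ with the required bound by Theorem \ref{Stein'} and the para-product estimates.

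The main obstacle is twofold. First, the elliptic a priori estimate up to the free boundary $\{y=0\}$ with the nonlinear dependence on $\|\zeta\|_{H^{s+0.5}}$ carries essentially all the analytic weight: it requires paralinearizing and parabolically factoring $\mathcal{P}_\zeta$ with rough, non-constant coefficients and controlling the factorization error using the full symbolic calculus of Section \ref{4}. Second, the derivative bookkeeping in the cancellation step is delicate, because the lowest-regularity pieces — those involving $\partial_y^2\mathfrak{B}^\sharp$, which is only of class $C^0_yH^{s-3}_x$ — must be re-expressed via the equations $\mathcal{L}_\zeta\Psi^\sharp=\mathcal{L}_\zeta\mathfrak{B}^\sharp=0$ and absorbed using precisely the designed cancellations, and because on the non-commutative group $\SU(2)$ even two Fourier multipliers need not commute, so the commutators $[\mathcal{P}_\zeta,T_{1/\rho^\sharp}T_{\mathfrak{B}^\sharp}]$ cannot be discarded as in the Euclidean case — one must invoke the quasi-homogeneous symbol formalism of Definition \ref{QuasiHomoSym} together with Propositions \ref{Dkappa}--\ref{PSSymbol} to see that the relevant symbols commute modulo lower order, with the spectral localization Corollary \ref{SpecPrd} underpinning every para-product and para-composition estimate invoked above.
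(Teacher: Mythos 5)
Your overall strategy matches the paper's: decompose $\mathcal{P}_\zeta U^\sharp$ into $(\mathcal{P}_\zeta-\mathcal{L}_\zeta)\Psi^\sharp$ minus $\mathcal{P}_\zeta(T_{1/\rho^\sharp}T_{\mathfrak{B}^\sharp}\zeta^\sharp)$, import the elliptic a priori bounds on $\Psi^\sharp$ and its $y$-derivatives from the Euclidean theory, paralinearize the coefficient-times-derivative terms via Theorem \ref{ParaPrd}, and exploit the harmonicity of $\mathfrak{B}^\sharp$ (i.e.\ $\mathcal{L}_\zeta\mathfrak{B}^\sharp=0$) for the final cancellation. That is exactly the skeleton of the paper's three-step proof.

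Where you go astray is in the handling of the commutator $[\mathcal{P}_\zeta,T_{1/\rho^\sharp}]$. You assert that this requires the quasi-homogeneous symbol formalism (Definition \ref{QuasiHomoSym}, Propositions \ref{Dkappa}--\ref{PSSymbol}) and the spectral localization Corollary \ref{SpecPrd} to see that the relevant symbols ``commute modulo lower order.'' But the symbols entering $\mathcal{P}_\zeta$ and $T_{1/\rho^\sharp}$ are all para-differential quantizations of \emph{classical} differential operators and multiplication operators; the quasi-homogeneous formalism (which concerns analytic functions such as square roots of symbols) plays no role here and is only deployed later, in the factorization of Subsection 5.3 and the symmetrization of Section 6. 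The decisive fact the paper uses is geometric: $\Delta_{\mathrm E}(1/\rho)=0$ in $\mathbb{R}^3$, which gives the exact classical identity $[\mathcal{L}_\zeta,1/\rho^\sharp]=-2\partial_y$. Remark \ref{Para-Classical} then transfers this to the para-differential level, showing that $[\mathcal{P}_\zeta,T_{1/\rho^\sharp}]=-2\partial_y$ modulo operators of class $\Op\Sigma_{<1/2}^{-(s-1.5)}\partial_y^2+\Op\Sigma_{<1/2}^{-(s-2.5)}\partial_y+\Op\Sigma_{<1/2}^{-(s-2.5)}$, which is what makes the commutator term affordable. Without identifying this specific observation — that $1/\rho$ is harmonic, so the commutator with $\rho^2\Delta_{\mathrm E}$ is first order and explicit — your argument would be stuck at the commutator, since a generic second-order operator composed with a para-product of a rough multiplier produces a commutator of order $1$ in $x$ whose explicit structure is essential for the cancellation in Step 3. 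I would suggest replacing the appeal to Propositions \ref{Dkappa}--\ref{PSSymbol} with the classical identity and Remark \ref{Para-Classical}; otherwise the architecture of your proof is sound.
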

\begin{remark}
Another reason that $W^\sharp$ has better regularity than $\Psi^\sharp$ is that, by Theorem \ref{Compo2}, we have $T_{1/\rho^\sharp}T_{\mathfrak{B}^\sharp}=T_{\partial_y\Psi^\sharp}\mod\Op\Sigma_{<1/2}^{-(s-2)}$, so
$$
W^\sharp=\Psi^\sharp-T_{\partial_y\Psi^\sharp}\zeta^\sharp
\mod C^0_yH_x^{2s-1.5},
$$
and the difference resembles the remainder term in Bony's para-linearization theorem, i.e. Theorem \ref{Bony}, for the composition $\big[\Phi\big((1+\zeta+y)N_0\big)\big]^\sharp$.
\end{remark}

\begin{notation}\label{Mod}
For any index $s'>0$, we use
$$
\mod C^0_yH_x^{s'}
\quad\text{or simply}\quad
\mod H_x^{s'}
$$
to refer to equality between quantities whose difference has $C^0_yH_x^{s'}$ (or $H^{s'}_x$) norm on $\mathbf{T}_3\setminus\SU(2)$ controlled by $K\big(\|\zeta\|_{H^{s+0.5}}\big)\|\phi\|_{H^s}$.
\end{notation}

\begin{proof}[Proof of Proposition \ref{GoodUnknown}]
Throughout the proof, we employ the ``mod" notation defined in \ref{Mod}.

The proof is divided into three steps.

\textbf{Step 1: intermediate equation for $\mathcal{P}_\zeta w^\sharp$.} Since $\Delta_{\mathrm{E}}\Psi=0$, by exactly the same argument as in \cite{ABZ2011}, Section 3, we obtain
$$
\nabla_0\Psi,\,\partial_y\Psi\in C^0_yH^{s-1}_x(\mathbb{S}^2),
\quad
\partial_y^2\Psi\in C^0_yH^{s-2}_x(\mathbb{S}^2).
$$
Upon lifting to $\SU(2)$, it follows that
$$
\nabla_{G_0}\Psi^\sharp,\,\partial_y\Psi^\sharp\in C^0_yH^{s-1}_x\big(\mathbf{T}_3\setminus\SU(2)\big),
\quad
\partial_y^2\Psi^\sharp\in C^0_yH^{s-2}_x\big(\mathbf{T}_3\setminus\SU(2)\big).
$$
In fact
$$
\big\|\nabla_{G_0}\Psi^\sharp\big\|_{C^0_yH^{s-1}_x}+
\big\|\partial_y\Psi^\sharp\big\|_{C^0_yH^{s-1}_x}
\lesssim \|\phi\|_{H^s},
\quad
\big\|\partial_y^2\Psi^\sharp\big\|_{C^0_yH^{s-2}_x}
\lesssim \|\phi\|_{H^s}.
$$
Using the para-product estimate, i.e. Theorem \ref{ParaPrd} and Remark \ref{R(a)Symbol}, since $\zeta^\sharp\in H^{s+0.5}\big(\mathbf{T}_3\setminus\SU(2)\big)$, we obtain
$$
\begin{aligned}
\mathcal{P}_\zeta\Psi^\sharp
&=\big(\mathcal{P}_\zeta-\mathcal{L}_\zeta\big)\Psi^\sharp
\quad\text{(precise equality)}\\
&=-\left(T_{\partial_y^2\Psi^\sharp}\beta_1-2\sum_{j=1}^3T_{X_j\partial_y\Psi^\sharp}X_j\zeta^\sharp+T_{\partial_y\Psi^\sharp}\beta_3\right)
\mod C_y^0H_x^{s+0.5}.
\end{aligned}
$$

Thus we compute
\begin{equation}\label{P_zetaU}
\begin{aligned}
\mathcal{P}_\zeta w^\sharp
&=\mathcal{P}_\zeta\left(\Psi^\sharp-T_{1/\rho^\sharp}T_{\mathfrak{B}^\sharp}\zeta^\sharp\right)
\quad\text{(precise equality)}\\
&=-\left(T_{1/\rho^\sharp}\mathcal{P}_\zeta T_{\mathfrak{B}^\sharp}\zeta^\sharp +T_{\partial_y^2\Psi^\sharp}\beta_1-2\sum_{j=1}^3T_{X_j\partial_y\Psi^\sharp}X_j\zeta^\sharp
+T_{\partial_y\Psi^\sharp}\beta_3\right)\\
&\quad-\left[\mathcal{P}_\zeta,T_{1/\rho^\sharp}\right]T_{\mathfrak{B}^\sharp}\zeta^\sharp
\mod C_y^0H_x^{s+0.5}.
\end{aligned}
\end{equation}

\textbf{Step 2: elimination of commutator.} To eliminate the commutator $\left[\mathcal{P}_\zeta,T_{1/\rho^\sharp}\right]T_{\mathfrak{B}^\sharp}\zeta^\sharp$ in (\ref{P_zetaU}), we need the composition formula for para-differential operators, i.e. Theorem \ref{Compo2}. The ``classical version" of $[\mathcal{P}_\zeta,T_{1/\rho^{\sharp}}]$ is just $[\mathcal{L}_\zeta,1/\rho^\sharp]$, whose projection to $\mathbb{S}^2\times[-1/2,0]$ is simply 
$$
\left[\rho^2\Delta_{\mathrm{E}},\frac{1}{\rho}\right]
=-2\partial_y\cdot\nabla_{\mathrm{E}}
=-2\partial_y.
$$
Here we used $\Delta_{\mathrm{E}}(1/\rho)=0$. Lifting to $\SU(2)\times[-1/2,0]$, it follows that $[\mathcal{L}_\zeta,1/\rho^\sharp]=-2\partial_y$. 

To pass to the para-differential version $[\mathcal{P}_\zeta,T_{1/\rho^{\sharp}}]$, we shall keep exploiting the fact that $\mathcal{L}_\zeta$ is a classical differential operator. Here with a little abuse of notation, given symbols $a,b$ on $\SU(2)$ leading to classical differential operators, we write $\{a,b\}$ for \emph{the symbol of the commutator} $[\Op(a),\Op(b)]$ (in usual notation the Poisson bracket $\{\cdot,\cdot\}$ only involves first order differentiation of symbols). Now writing down
\begin{equation}\label{[P_zeta,T]}
\begin{aligned}
\big[\mathcal{P}_\zeta,T_{1/\rho^{\sharp}}\big]
&=\big[\Delta_{G_0},T_{1/\rho^{\sharp}}\big]+\big[T_{\beta_1}\partial_y^2,T_{1/\rho^{\sharp}}\big]
-2\big[T_{\beta_2}\partial_y,T_{1/\rho^{\sharp}}\big]
+\big[T_{\beta_3}\partial_y,T_{1/\rho^{\sharp}}\big],
\end{aligned}
\end{equation}
it is not hard to see, with the aid of Theorem \ref{Compo2}, Formula (\ref{SU(2)Compo})  and also Remark \ref{Para-Classical}, that each term in the right-hand-side corresponds to exactly its classical counterpart in $[\mathcal{L}_\zeta,1/\rho^\sharp]$ with an affordable regularizing error. 

For example, by assumption, the number $r:=s-1.5>1.5$, so with $\#_{r;\mathscr{Q}}$ as in Formula (\ref{SU(2)Compo}), the third term in (\ref{[P_zeta,T]}) becomes
$$
\begin{aligned}
\big[T_{\beta_2}\partial_y,T_{1/\rho^{\sharp}}\big]
&=T_{\beta_2}T_{\partial_y(1/\rho^{\sharp})}
+T_{\beta_2}T_{1/\rho^{\sharp}}\partial_y
-T_{1/\rho^\sharp}T_{\beta_2}\partial_y
\quad\text{(precise equality)}\\
&=T_{\beta_2\#_{r;\mathscr{Q}}\partial_y(1/\rho^{\sharp})}
+T_{\{\beta_2,1/\rho^\sharp\}}\partial_y
\mod\Op\Sigma_{<1/2}^{-r}+\Op\Sigma_{<1/2}^{-r}\partial_y,
\end{aligned}
$$
while on the other hand 
$$
\begin{aligned}
\left[\nabla_{G_0}\zeta^\sharp\cdot\nabla_{G_0}\partial_y,\frac{1}{\rho^\sharp}\right]
&=\nabla_{G_0}\zeta^\sharp\cdot\nabla_{G_0}\left(\partial_y\left(\frac{1}{\rho^\sharp}\right)\cdot\right)
+\left[\nabla_{G_0}\zeta^\sharp\cdot\nabla_{G_0},\frac{1}{\rho^\sharp}\right]\partial_y\\
&=\Op\left(\beta_2\#_{r;\mathscr{Q}}\partial_y(1/\rho^{\sharp})\right)+\Op\left(\{\beta_2,1/\rho^\sharp\}\right)\partial_y
\end{aligned}
$$
in $[\mathcal{L}_\zeta,1/\rho^\sharp]$. Here we used the fact that $\beta_2$ is the symbol of a classical first order differential operator. In other words, Remark \ref{Para-Classical} ensures that $[\mathcal{P}_\zeta,T_{1/\rho^\sharp}]$ must coincide with the para-differential operator corresponding to $[\mathcal{L}_\zeta,1/\rho^\sharp]$, which is merely $-2\partial_y$, up to an error term:
$$
\begin{aligned}
\big[\mathcal{P}_\zeta,T_{1/\rho^{\sharp}}\big]
=-2\partial_y
\mod\Op\Sigma_{<1/2}^{-(s-1.5)}\partial_y^2+\Op\Sigma_{<1/2}^{-(s-2.5)}\partial_y+\Op\Sigma_{<1/2}^{-(s-2.5)},
\end{aligned}
$$
The reason that there is an error term of order $-(s-2.5)$ is that the $\mathbf{T}_3$-invariant symbols are $\beta_1,\beta_2$ are of $H^{s-0.5}$ regularity in $x$, while $\beta_3\in H^{s-1.5}\subset C_*^{s-2.5}$.

Since $\partial_y^k\Psi^\sharp\in C_y^0H^{s-k}_x\big(\mathbf{T}_3\setminus\SU(2)\big)$, we find that $\partial_yT_{\mathfrak{B}^\sharp}\zeta^\sharp=T_{\partial_y\mathfrak{B}^\sharp}\zeta^\sharp$ is still of class $C_y^0H^{s+0.5}_x$. On the other hand, the function $\partial_y^2T_{\mathfrak{B}^\sharp}\zeta^\sharp=T_{\partial_y^2\mathfrak{B}^\sharp}\zeta^\sharp$ is of class $C_y^0H^{s-0.5}_x$, since $\partial^2_y\mathfrak{B}^\sharp\in H_x^{s-3}\subset C^{-1}_*$, so the para-product operator $T_{\partial^2_y\mathfrak{B}^\sharp}$ has order 1, according to Proposition \ref{T_aNegIndex}. Taking into account the regularizing operators, we successfully proved that the commutator $\left[\mathcal{P}_\zeta,T_{1/\rho^\sharp}\right]T_{\mathfrak{B}^\sharp}\zeta^\sharp\in C_y^0H^{s+0.5}_x$. Summarizing, we find that (\ref{P_zetaU}) in fact implies
\begin{equation}\label{GoodUnknownTemp1}
\mathcal{P}_\zeta w^\sharp
=-\left(T_{1/\rho^\sharp}\mathcal{P}_\zeta T_{\mathfrak{B}^\sharp}\zeta^\sharp +T_{\partial_y^2\Psi^\sharp}\beta_1-2\sum_{j=1}^3T_{X_j\partial_y\Psi^\sharp}X_j\zeta^\sharp+T_{\partial_y\Psi^\sharp}\beta_3\right)
\mod C_y^0H_x^{2s-2.5}.
\end{equation}
Note that $s>3$, hence $2s-2.5>s+0.5$.

\textbf{Step 3: final elimination.} Now similarly as in the proof of Lemma 3.17. in \cite{ABZ2011}, we compute
\begin{equation}\label{GoodUnknownTemp2}
\begin{aligned}
\mathcal{P}_\zeta T_{\mathfrak{B}^\sharp}\zeta^\sharp
&=\left(\Delta_{G_0}+T_{\beta_1}\partial_y^2-2T_{\beta_2}\partial_y+T_{\beta_3}\partial_y\right)T_{\mathfrak{B}^\sharp}\zeta^\sharp\\
&=2\sum_{j=1}^3T_{X_j\mathfrak{B}^\sharp}X_j\zeta^\sharp
+\sum_{j=1}^3T_{X_j\zeta^\sharp\partial_y\mathfrak{B}^\sharp}X_j\zeta^\sharp
+T_{\mathfrak{B}^\sharp}\Delta_{G_0}\zeta^\sharp
\mod C_y^0H_x^{s+0.5}.
\end{aligned}
\end{equation}
Here we used the $XT_au=T_{Xa}u+T_aXu$ for a left-invariant vector field $X\in\mathfrak{su}(2)$, and the fact that $\mathcal{L}_\zeta\mathfrak{B}^\sharp=0$. Thus
\begin{equation}\label{GoodUnknownTemp3}
\begin{aligned}
\mathcal{P}_\zeta T_{\mathfrak{B}^\sharp}\zeta^\sharp
&=2\sum_{j=1}^3T_{X_j\mathfrak{B}^\sharp}X_j\zeta^\sharp
-T_{\partial_y\mathfrak{B}^\sharp}\beta_1
-T_{\mathfrak{B}^\sharp}\beta_3
&\mod C_y^0H_x^{s+0.5}\\
&=2\sum_{j=1}^3T_{\rho^\sharp X_j\partial_y\Psi^\sharp}X_j\zeta^\sharp
-T_{\rho^\sharp\partial_y^2\Psi^\sharp}\beta_1
-T_{\rho^\sharp\partial_y\Psi^\sharp}\beta_3
&\mod C_y^0H_x^{s+0.5}
\end{aligned}
\end{equation}
Here in the first equality of (\ref{GoodUnknownTemp3}), when computing 
$$
\begin{aligned}
\beta_1
&=|\nabla_{G_0}\zeta^\sharp|_{G_0}^2
&\mod H^{s+0.5}_x\\
&=\sum_{j=1}^3|X_j\zeta^\sharp|^2
&\mod H^{s+0.5}_x\\
&=2\sum_{j=1}^3T_{X_j\zeta^\sharp}X_j\zeta^\sharp
&\mod H^{s+0.5}_x,
\end{aligned}
$$
we used the para-linearization theorem of Bony, i.e. Theorem \ref{Bony}; in the third equality, we used 
$$
\begin{aligned}
2\sum_{j=1}^3T_{\partial_y\Psi^\sharp X_j\rho^\sharp}X_j\zeta^\sharp-T_{\partial_y\Psi^\sharp}\beta_1
&=2\sum_{j=1}^3T_{\partial_y\Psi^\sharp}T_{X_j\zeta^\sharp}X_j\zeta-T_{\partial_y\Psi^\sharp}\sum_{j=1}^3|X_j\zeta^\sharp|^2
&\mod C_y^0H^{2s-2.5}_x\\
&=0 
&\mod C_y^0H^{2s-2.5}_x.
\end{aligned}
$$
The proof is completed by noting that the right-hand-side of (\ref{GoodUnknownTemp2}) cancels with (\ref{GoodUnknownTemp1}) modulo $C_y^0H_x^{s+0.5}$, since $T_{1/\rho^\sharp}T_{\rho^\sharp}=\mathrm{Id}+\Op\Sigma_{<1/2}^{-(s-0.5)}$ by Theorem \ref{Compo2}.
\end{proof}

\subsection{Factorization}
With the elliptic para-differential equation satisfied by the good unknown $w^\sharp$ at hand, we are now at the place to factorize the equation to obtain the behaviour of $\partial_y\Psi^\sharp$ near the boundary $y=0$, to accomplish Task \ref{TaskDN}. This is the essential step for which an explicit, global symbolic calculus on $\SU(2)$ is necessary. The framework of our argument is parallel to Lemma 3.18. of \cite{ABZ2011}, but the details are quite different, since we are obviously working with a different geometry.

So we still assume $s>3$, $\zeta\in H^{s+0.5}(\mathbb{S}^2)$, and set $\beta_1$, $\beta_2$ and $\beta_3$ as in (\ref{Beta13})-(\ref{Beta2}). Imitating Lemma 3.18. of \cite{ABZ2011}, we seek for first-order rough symbols $a,A$ on $\SU(2)$, depending on the parameter $y\in[-1/2,0]$, such that
\begin{equation}\label{Factorization}
\begin{aligned}
\mathcal{P}_\zeta
&=\Delta_{G_0}+T_{\beta_1}\partial_y^2-2T_{\beta_2}\partial_y+T_{\beta_3}\partial_y\\
&=T_{\beta_1}(\partial_y-T_a)(\partial_y-T_A)+R_0+R_1\partial_y.
\end{aligned}
\end{equation}
Here we write $\delta=\min(0.5,s-3)>0$, and suppose $R_0\in\Op\Sigma_{<1/2}^{0.5-\delta}$, $R_1\in\Op\Sigma_{<1/2}^{-0.5-\delta}$.

We need to apply Theorem \ref{Compo2} and the composition formula (\ref{SU(2)Compo}) on $\SU(2)$, with the left-invariant vector fields $\Pa[+,-,0]$ given in (\ref{CANSU(2)}) and RT-admissible tuple $\mathscr{Q}_{+,-,0}$ given in (\ref{SU(2)q}). Suppose that $a,A$ are both of class $C_y^1\mathcal{A}^1_{0.5+\delta}(\SU(2))$. Write $a=a_1+a_0$, $A=A_1+A_0$, with subscript $1,0$ denoting symbols in $\mathcal{A}^1_{1.5+\delta}(\SU(2))$ and $\mathcal{A}^0_{0.5+\delta}(\SU(2))$ respectively. Then we propose the system to be solved as
\begin{equation}\label{AaSystemFull}
\begin{aligned}
a_1A_1+\sum_{\mu\in\Ind}\Df_{\mu}a_1\partial_{\mu}A_1
+a_1A_0+a_0A_1-\partial_yA_1
&=\frac{\sigma[\Delta_{G_0}]}{\beta_1}\\
a_1+A_1+a_0+A_0
&=\frac{2\beta_2-\beta_3}{\beta_1}.
\end{aligned}
\end{equation}
By Theorem \ref{Compo2} and Formula (\ref{SU(2)Compo}), we find that $T_{1/\beta_1}T_{\beta_1}=\mathrm{Id}\mod\Sigma_{<1/2}^{-(s-1.5)}$, so comparing symbols of same order, we find that the resulting $R_0$ and $R_1$ match with our requirement in (\ref{Factorization}).

Now we are at the place to solve (\ref{AaSystemFull}). Preserving the highest order symbols, we obtain
$$
\begin{aligned}
a_1A_1
=\frac{\sigma[\Delta_{G_0}]}{\beta_1},
\quad
a_1+A_1
=\frac{2\beta_2}{\beta_1}.
\end{aligned}
$$
Note that $\sigma[\Delta_{G_0}]=-l(l+1)\I[l]$, so for each label of representation $l\in\hN$ we solve, with the convention that the Hermitian part of $A_1(y;x,l)$ is positive definite for $l$ large,
\begin{equation}\label{AsSystem1}
\begin{aligned}
a_1(y;x,l)&=\frac{\beta_2(x,l)}{\beta_1(y;x)}-\tilde{\lambda}_1(y;x,l)\\
A_1(y;x,l)&=\frac{\beta_2(x,l)}{\beta_1(y;x)}+\tilde{\lambda}_1(y;x,l).
\end{aligned}
\end{equation}
Here we set
$$
\tilde{\lambda}_1(y;x,l)
:=\frac{\sqrt{\beta_2(x,l)^2+\beta_1(y;x)l(l+1)}}{\beta_1(y;x)}
:=\sqrt{\frac{l(l+1)}{\beta_1(y;x)}}\sum_{k=0}^\infty\binom{k}{1/2}\left(\frac{\beta_2(x,l)^2}{\beta_1(y;x)l(l+1)}\right)^k.
$$
At this level there is no issue of commutativity, as the matrix $\beta_2(x,l)$ commutes with $\beta_1(y;x)l(l+1)$, which is simply a scaling transformation on each $\Hh[l]$. Corollary \ref{CoroPSSymbol} immediately implies that the power series in the definition of square root sums to a symbol of class $\mathcal{A}^0_{s-1.5}$. Thus the symbols $\tilde{\lambda}_1(y;x,l)$, $a_1(y;x,l)$ and $A_1(y;x,l)$ are indeed of class $\mathcal{A}^1_{s-1.5}$. This justifies our search for symbols $a,A$ up to highest order. 

Now the equations for $a_0,A_0$ read
$$
\begin{aligned}
a_1A_0+a_0A_1
&=\partial_yA_1-\sum_{\mu\in\Ind}\Df_{\mu}a_1\partial_{\mu}A_1\\
a_0+A_0
&=-\frac{\beta_3}{\beta_1}.
\end{aligned}
$$
The issue here is that the symbol $\partial_\mu A_1$ does not commute with the symbol $\beta_2$ anymore, though all other terms do. Fortunately, we may content ourselves with solutions up to error of order $-1$. Since $A_1$ and $a_0$ are of order 1 and 0 in the sense of Definition \ref{2Order}, we find that $[a_0,A_1]$ should be a symbol of order $0$, thus negligible. Replacing $a_0A_1$ by $A_1a_0+[a_0,A_1]$, we can then re-write the system as
$$
\begin{aligned}
a_1A_0+A_1a_0
&=\partial_yA_1-\sum_{\mu\in\Ind}\Df_{\mu}a_1\partial_{\mu}A_1
\mod \mathcal{A}^{0}_{s-2.5}\\
a_0+A_0
&=-\frac{\beta_3}{\beta_1}.
\end{aligned}
$$
Multiplying the second equation by $A_1$ from the left, subtracting it from the first, we can explicitly solve $A_0$, and then $a_0$, up to an error in $\mathcal{A}^{-1}_{s-2.5}$.

It is thus safe to choose
\begin{equation}\label{AsSystem0}
\begin{aligned}
a_0&=\frac{\tilde{\lambda}_1^{-1}}{2}\partial_yA_1
+\frac{\tilde{\lambda}_1^{-1}\beta_3\beta_2-2\beta_1\beta_3+1}{2\beta_1^2}
-\frac{\tilde{\lambda}_1^{-1}}{2}\sum_{\mu\in\Ind}\Df_{\mu}a_1\partial_{\mu}A_1\\
A_0&=-\frac{\tilde{\lambda}_1^{-1}}{2}\partial_yA_1
-\frac{\tilde{\lambda}_1^{-1}\beta_3\beta_2+1}{2\beta_1^2}
+
\frac{\tilde{\lambda}_1^{-1}}{2}\sum_{\mu\in\Ind}\Df_{\mu}a_1\partial_{\mu}A_1.
\end{aligned}
\end{equation}
Applying Proposition \ref{PSSymbol} and the Corollary, we justify $a_0,A_0\in\mathcal{A}^0_{s-2.5}$, and this final result justifies the legitimacy of neglecting $[a_0,A_1]$.

To sum up, we have successfully obtained the desired factorization (\ref{Factorization}), with $a,A$ given by (\ref{AsSystem1})-(\ref{AsSystem0}), and 
$$
\|R_0\|_{H^{s+0.5-\delta}\to H^s},\,
\|R_1\|_{H^{s-0.5-\delta}\to H^s}
\lesssim K\big(\|\zeta\|_{H^{s+0.5}}\big)\|\phi\|_{H^s}.
$$

The next step is to find out what the boundary value $\partial_yw^\sharp\big|_{y=0}$ looks like. We state the following lemma, whose proof is identical to that of Proposition 3.19. in \cite{ABZ2011}:
\begin{lemma}[Regularity for Evolutionary Elliptic Para-Differential Equation]\label{MaxReg}
Suppose $r>1$, the $y$-dependent symbol $a\in C^1_y\mathcal{A}^1_{r}(\SU(2))$ for $y\in[-1/2,0]$, such that the Hermitian part of $a$ is $\geq c|l|$ for some $c>0$ when $l$ is sufficiently large. Suppose $N<0$, $W\in C^0_yH^{-N}_x$ solves the evolutionary elliptic para-differential equation
$$
\partial_yW+T_aW=f\in C^0_yH^{s}_x,\quad s\in\mathbb{R},
$$
then in fact $W(0)\in H^{s+1-\varepsilon}$:
$$
\|W(0)\|_{H^{s+1-\varepsilon}}
\lesssim_{\varepsilon}\|f\|_{C^0_yH^{s}_x}+\|w\|_{C^0_yH^{-N}_x}.
$$
\end{lemma}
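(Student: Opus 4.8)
The plan is to realize $w(0)$ through the forward evolution generated by $-T_a$, whose smoothing in the $+y$ direction is forced by the ellipticity $\Re a\ge c|l|$, and then to read off the claimed gain of $1-\varepsilon$ derivatives from the exponent in an elementary integral. First I would localize in $y$: choose $\chi\in C^\infty([-1/2,0])$ with $\chi\equiv 0$ on $[-1/2,-1/2+\delta_0]$ and $\chi\equiv 1$ on $[-1/4,0]$, and replace $w$ by $v:=\chi w$, which solves
$$
\partial_y v+T_av=g:=\chi f+\chi'w,\qquad v(-1/2)=0,
$$
with $g\in C^0_yH^{s_0}_x$ for $s_0:=\min(s,-N)$, and, crucially, $g=f$ on $[-1/4,0]$ while $\chi'w$ is supported away from $y=0$. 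It then suffices to estimate $\|v(0)\|_{H^{s+1-\varepsilon}}$.

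Next I would construct the non-autonomous forward evolution operator $U(y,y')$ for $-1/2\le y'\le y\le 0$, characterized by $\partial_yU(\cdot,y')+T_aU(\cdot,y')\equiv 0$ modulo infinitely smoothing operators and $U(y',y')=\mathrm{Id}$. The approximate symbol is $e(y,y';x,l):=\exp\!\big(-\int_{y'}^y a(\tau;x,l)\,d\tau\big)$; by Theorems \ref{Compo1}--\ref{Compo2} and the composition formula (\ref{SU(2)Compo}), $(\partial_y+T_a)T_e$ is a para-differential operator whose symbol is of strictly negative order (schematically of order $1-\min(r,1)<0$), so the genuine $U$ is obtained from $T_e$ by a Volterra iteration in $y$ that converges because the error gains regularity and $y$ ranges over a bounded interval. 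Since $\Re a\ge c|l|$ one has the pointwise bound $\|e(y,y';x,l)\|\lesssim e^{-c(y-y')|l|}$ for $l$ large; combining this with $\langle l\rangle^{\mu}e^{-c(y-y')|l|}\lesssim (y-y')^{-\mu}$ and the Sobolev continuity of para-differential operators, Theorem \ref{Stein'}, yields the smoothing estimate
$$
\|U(y,y')h\|_{H^{\sigma+\mu}}\lesssim_{\mu}(y-y')^{-\mu}\|h\|_{H^{\sigma}},\qquad \sigma\in\mathbb{R},\ \mu\ge 0,\ y>y'.
$$
A standard energy (Gårding) estimate — using the para-differential adjoint formula, Theorem \ref{ParaAdj}, to upgrade $\Re a\ge c|l|$ to $\Re\langle T_av,v\rangle\ge\tfrac c2\|v\|_{H^{1/2}}^2-C\|v\|_{L^2}^2$ — shows that the forward Cauchy problem is well-posed, hence $v(y)=\int_{-1/2}^{y}U(y,y')g(y')\,dy'$.

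Finally I would split, using $v(-1/2)=0$ and $g|_{[-1/4,0]}=f$,
$$
v(0)=\int_{-1/2}^{-1/4}U(0,y')g(y')\,dy'+\int_{-1/4}^{0}U(0,y')f(y')\,dy'.
$$
In the first integral $0-y'\ge 1/4$, so the smoothing estimate with $\mu=s+1-\varepsilon-s_0\ (>0)$ bounds this term in $H^{s+1-\varepsilon}$ by a constant times $\|g\|_{C^0_yH^{s_0}_x}\lesssim\|f\|_{C^0_yH^s_x}+\|w\|_{C^0_yH^{-N}_x}$. In the second integral the smoothing estimate with $\mu=1-\varepsilon$ gives $\|U(0,y')f(y')\|_{H^{s+1-\varepsilon}}\lesssim|y'|^{-(1-\varepsilon)}\|f(y')\|_{H^s}$, and since $\varepsilon>0$,
$$
\int_{-1/4}^{0}|y'|^{-(1-\varepsilon)}\,dy'=\frac{(1/4)^{\varepsilon}}{\varepsilon}<\infty,
$$
so this term is bounded by $\|f\|_{C^0_yH^s_x}$. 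Adding the two estimates yields $\|w(0)\|_{H^{s+1-\varepsilon}}\lesssim_\varepsilon\|f\|_{C^0_yH^s_x}+\|w\|_{C^0_yH^{-N}_x}$.

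The main obstacle is the construction and the smoothing bounds for $U$ in the non-commutative setting: $a(y;x,l)$ does not commute with $\int_{y'}^y a(\tau;x,l)\,d\tau$, and the composition formula (\ref{SU(2)Compo}) produces endomorphism-valued lower-order terms that likewise fail to commute, so $T_e$ does not solve the equation even to leading order if handled naively. This is circumvented exactly as in the factorization of $\mathcal{L}_\zeta$ above: $a$ is a quasi-homogeneous symbol in the sense of Definition \ref{QuasiHomoSym} whose top-order part is the scalar multiplier $\sqrt{l(l+1)/\beta_1}$ times the identity plus a genuinely lower-order matrix remainder, so by Propositions \ref{Dkappa} and \ref{PSSymbol} every offending commutator is of order strictly below that of the leading term and is absorbed into the Volterra iteration. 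The remaining ingredients — the Gårding inequality, the convergence of the iteration, and the integral $\int|y'|^{\varepsilon-1}\,dy'$ — are routine once the parametrix is in place, precisely as in Proposition 3.19 of \cite{ABZ2011}.
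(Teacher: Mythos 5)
Your overall strategy matches the proof the paper defers to (Proposition 3.19 of \cite{ABZ2011}): cut off in $y$ so that the low-regularity data $w$ enters only through a term supported in $[-1/2,-1/4]$, represent $v(0)$ via Duhamel with the forward propagator $U(0,y')$, and extract the $1-\varepsilon$ gain from the smoothing bound $\|U(0,y')\|_{H^\sigma\to H^{\sigma+\mu}}\lesssim|y'|^{-\mu}$ together with the integrability of $|y'|^{-(1-\varepsilon)}$ on $[-1/4,0]$. Two details need repair, however. The claimed order count, that $(\partial_y+T_a)T_e$ has symbol of ``strictly negative order $1-\min(r,1)<0$'', is false for $r\geq1$: the number $1-\min(r,1)$ equals $0$ there. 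The naive exponential only cancels the zeroth-order term $ae$ of $a\#_{r;q}e$; the next term $\sum_\mu\Df_\mu a\cdot\partial_\mu e$ is a genuine order-$0$ residual, not a negative-order one. What makes the Volterra iteration converge and the smoothing estimate hold is instead the factor $(y-y')$ carried by $\partial_\mu e=-\big(\int_{y'}^y\partial_\mu a\,d\tau\big)e$, together with the decay $e^{-c(y-y')|l|}$, so the error has to be tracked in a parabolic symbol class with $(y-y')$-weighted seminorms rather than in a fixed $\mathcal{A}^m_r$; the ``gains regularity'' shortcut does not apply as stated.

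The second issue is one of scope: your resolution of the non-commutativity invokes Propositions \ref{Dkappa} and \ref{PSSymbol}, which apply only to quasi-homogeneous symbols in the sense of Definition \ref{QuasiHomoSym}, whereas Lemma \ref{MaxReg} is stated for an arbitrary $a\in C^1_y\mathcal{A}^1_r(\SU(2))$ with $\Re a\geq c|l|$. The argument is fine for the $a$ produced by (\ref{AsSystem1})--(\ref{AsSystem0}), which is all the paper uses, but as a proof of the lemma as stated it would require either narrowing the hypothesis to quasi-homogeneous $a$, or replacing the naive exponential with the time-ordered exponential and verifying the difference-operator bounds for it directly. The remaining ingredients --- the cutoff, the G\aa rding inequality obtained through Theorem \ref{ParaAdj}, and the final integral $\int_{-1/4}^{0}|y'|^{\varepsilon-1}\,dy'$ --- are sound.
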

With the factorization (\ref{Factorization}), we set $\tilde W=(\partial_y-T_A)W^\sharp$ and find
$$
\partial_y\tilde W+T_a\tilde W=R_0\tilde W+R_0\partial_y\tilde W
\mod C^0_yH^{2s-1.5}.
$$
From the expression $a=a_1+a_0$, where $a_1,a_0$ are as in (\ref{AsSystem1})-(\ref{AsSystem0}), we find that $a$ in fact depends smoothly on $y$. On the other hand, $\tilde W\in C^0_yH^{s-1}_x$, with norm controlled in terms of $\|\zeta\|_{H^{s+0.5}}$ and $\|\phi\|_{H^s}$, so we can apply Lemma \ref{MaxReg} to conclude
\begin{equation}\label{AUy=0}
\left\|\partial_yW^\sharp\big|_{y=0}-T_AW^\sharp\big|_{y=0}\right\|_{H^{s+0.5}}
\lesssim K\big(\|\zeta\|_{H^{s+0.5}}\big)\|\phi\|_{H^s},
\end{equation}
where 
$$
A=A_1+A_0,
$$
with $A_1$ given in (\ref{AsSystem1}) and $A_0$ given in (\ref{AsSystem0}). All the symbols and functions of concern are $\mathbf{T}_3$-invariant.

\subsection{Concluding the Proof of Theorem \ref{Thm1}}
We are at the stage to complete Task \ref{TaskDN}. With a little abuse of notation, if $b=\sum_{j}b^jX_j$ is a vector field, then we write
$$
T_b\cdot\nabla_{G_0}=\sum_{j}T_{b^j}X_j.
$$
Thus we compute, with Bony's para-linearization theorem (and still the ``mod" notation in \ref{Mod}),
\begin{equation}\label{GoodUnknownTemp4}
\begin{aligned}
(\rho^\sharp)^2\mathcal{N}_\zeta\Psi^\sharp
&=\beta_1\partial_y\Psi^\sharp-\nabla_{G_0}\zeta^\sharp\cdot\nabla_{G_0}\Psi^\sharp
\quad\text{(precise equality)}\\
&=T_{\beta_1}\partial_y\Psi^\sharp
+2T_{\partial_y\Psi^\sharp\nabla_{G_0}\zeta^\sharp}\cdot\nabla_{G_0}\zeta^\sharp
-T_{\nabla_{G_0}\zeta^\sharp}\cdot\nabla_{G_0}\Psi^\sharp-T_{\nabla_{G_0}\Psi^\sharp}\cdot\nabla_{G_0}\zeta^\sharp
\mod C_y^0H^{s+0.5}_x
\end{aligned}
\end{equation}
Substituting in $\Psi^\sharp=W^\sharp+T_{1/\rho^\sharp}T_{\mathfrak{B}^\sharp}\zeta^\sharp$, we find that the second and third term in (\ref{GoodUnknownTemp4}) cancel out the $\nabla_{G_0}\zeta^\sharp$ within them:
$$
T_{\partial_y\Psi^\sharp\nabla_{G_0}\zeta^\sharp}\cdot\nabla_{G_0}\zeta^\sharp
-T_{\nabla_{G_0}\zeta^\sharp}\cdot\nabla_{G_0}\Psi^\sharp
=-T_{\nabla_{G_0}\zeta^\sharp}\cdot\nabla_{G_0}W^\sharp
\mod C_y^0H^{s+0.5}_x.
$$
Thus using the fact $\mathfrak{B}^\sharp,\partial_y\mathfrak{B}^\sharp\in C^0_yH^{s+0.5}_{x}$, (\ref{GoodUnknownTemp4}) reduces to
$$
(\rho^\sharp)^2\mathcal{N}_\zeta\Psi^\sharp
=T_{\beta_1}\partial_yW^\sharp-T_{\nabla_{G_0}\zeta^\sharp}\cdot\nabla_{G_0}W^\sharp
-T_{\nabla_{G_0}\Psi^\sharp-\partial_y\Psi^\sharp\nabla_{G_0}\zeta^\sharp}\cdot\nabla_{G_0}\zeta^\sharp
\mod C_y^0H^{s+0.5}_x.
$$
Using (\ref{AUy=0}), substituting in (\ref{AsSystem1})-(\ref{AsSystem0}) at $y=0$, we obtain the desired result: the Dirichlet-Neumann operator ${D[\zeta]\phi}$ is lifted to $\SU(2)$ as
\begin{equation}\label{DNParaLin}
\big(D[\zeta]\phi\big)^\sharp
=T_{\lambda}\big(\phi^\sharp-T_{\mathfrak{b}^\sharp}\zeta^\sharp\big)
-T_{\mathfrak{v}^\sharp}\cdot\nabla_{G_0}\zeta^\sharp
\mod H^{s+0.5}_x.
\end{equation}
Here with $\beta_1,\beta_2,\beta_3$ given in (\ref{Beta13})-(\ref{Beta2}), $\tilde{\lambda}_1$ and $A_0$ given in (\ref{AsSystem1})-(\ref{AsSystem0}) (evaluated at $y=0$), we have
\begin{equation}\label{DNlambda}
\begin{aligned}
\lambda(x,l)
&=\frac{\beta_1(x)\tilde{\lambda}_1(x,l)}{(1+\zeta^\sharp)^2}
+\frac{\beta_1(x)A_0(x,l)}{(1+\zeta^\sharp)^2}\\
&=\frac{\sqrt{\beta_2(x,l)^2+\beta_1(x)l(l+1)}}{(1+\zeta^\sharp)^2}
+\frac{\beta_1(x)A_0(x,l)}{(1+\zeta^\sharp)^2}\\
&=:\lambda^{(1)}(x,l)+\lambda^{(0)}(x,l).
\end{aligned}
\end{equation}
The quantities $\mathfrak{b},\mathfrak{v}$ on $\mathbb{S}^2$ are
\begin{equation}\label{bvExpression}
\begin{aligned}
\mathfrak{b}
&=\partial_y\Psi\big|_{y=0}
=\left(1+\frac{|\nabla_{0}\zeta|^2}{(1+\zeta)^{2}}\right)^{-1}\left(
{D[\zeta]\phi}+\frac{\nabla_{0}\zeta\cdot\nabla_{0}\phi}{(1+\zeta)^{2}}\right),\\
\\
\mathfrak{v}
&=\frac{\nabla_0\Psi-\partial_y\Psi\nabla_0\zeta}{\rho^2}\Bigg|_{y=0}
=\frac{\nabla_0\phi-\mathfrak{b}\nabla_0\zeta}{(1+\zeta)^2}.
\end{aligned}
\end{equation}
Note that we used $T_{\beta_2}=T_{\nabla_{G_0}\zeta^\sharp}\cdot\nabla_{G_0}$ and $\beta_1(y;x)A_1(y;x,l)-\beta_2(x,l)=\tilde{\lambda}_1(y;x,l)$. When $\zeta$ is close to 0, the error term is obviously quadratic in $(\zeta,\phi)\in H^{s+0.5}\times H^s$.

This finishes the proof of Theorem \ref{Thm1}.

\section{Para-differential Form of the System}\label{7}
In this section, we conclude the proof of Theorem \ref{Thm2} using the results we have obtained so far. Although the method is quite standard, the symmetrization process is more complicated than that in \cite{ABZ2011} due to non-commutativity of symbolic calculus on $\SU(2)$. We thus mainly focus on what is different from \cite{ABZ2011} due to this non-commutativity, and sketch everything else that is parallel.

\subsection{Para-linearizing the Full System}
We start by deriving the para-linearized form of the full system (\ref{EQ}). We still write $\rho=1+\zeta$, $\beta_1=\rho^2+|\nabla_0\zeta|_{g_0}^2$ at the level set $\{y=0\}$. As a consequence of the para-linearization formula for the Dirichlet-Neumann operator, we already know that 
$$
\begin{aligned}
\partial_t\zeta^\sharp
&=\big(D[\zeta]\phi\big)^\sharp\\
&=T_{\lambda}\big(\phi^\sharp-T_{\mathfrak{b}^\sharp}\zeta^\sharp\big)
-T_{\mathfrak{v}^\sharp}\cdot\nabla_{G_0}\zeta^\sharp
+f_1(\zeta,\phi),
\end{aligned}
$$
where $\lambda,\mathfrak{b},\mathfrak{v}$ are as in (\ref{DNlambda})-(\ref{bvExpression}),  $f_1(\zeta,\phi)$ is $\mathbf{T}_3$-invariant and 
$$
\|f_1(\zeta,\phi)\|_{H_x^{s+0.5}}
\lesssim \left(\|\zeta\|_{H_x^{s+0.5}}+\|\phi\|_{H_x^s}\right)^2.
$$

To proceed further, we introduce the actual \emph{good unknown} of interest, being the boundary value of $W^\sharp$ in Proposition \ref{GoodUnknown} modulo $H_x^{s+0.5}$:
\begin{equation}\label{ActualGUknown}
w^\sharp:=\phi^\sharp-T_{\mathfrak{b}^\sharp}\zeta^\sharp.
\end{equation}
Then with $\lambda,\mathfrak{b},\mathfrak{v}$ as in (\ref{DNlambda})-(\ref{bvExpression}), we find that the mapping
$$
\left(\begin{matrix}
\zeta^\sharp \\
\phi^\sharp
\end{matrix}\right)
\to
\left(\begin{matrix}
\zeta^\sharp \\
w^\sharp
\end{matrix}\right)
$$
is a diffeomorphism in a neighbourhood of $(0,0)\in H^{s+0.5}_x\times H^s_x$. In fact, the mapping $\phi^\sharp\to\phi^\sharp-T_{\mathfrak{b}^\sharp}\zeta^\sharp$ is linear in $\phi^\sharp$, and estimates in Section \ref{2} on the para-product operator imply
$$
\|T_{\mathfrak{b}^\sharp}\zeta^\sharp\|_{H^s_x}
\lesssim \|\zeta\|_{H_x^{s+0.5}}\|\phi\|_{H_x^s}.
$$
Thus when $\zeta$ is close to 0 in $H^{s+0.5}_x$, the linear operator $\phi^\sharp\to\phi^\sharp-T_{\mathfrak{b}^\sharp}\zeta^\sharp$ is invertible, and the inverse is smooth in $\zeta^\sharp$. Consequently, the passage of unknown from $(\zeta^\sharp,\phi^\sharp)$ to $(\zeta^\sharp,w^\sharp)$ does not lose any information. It then suffices to derive an equation governing $(\zeta^\sharp,w^\sharp)$.

To derive the evolution equation for $w^\sharp$, we need to study the terms in $\partial_t\phi$ separately. We start with the mean curvature.

\begin{proposition}\label{H(zeta)}
Suppose $s>3$, $\zeta\in H^{s+0.5}$. Still write $\rho=1+\zeta$, and $\beta_1$ for the lift of $\rho^2+|\nabla_0\zeta|_{g_0}^2$. The mean curvature $H(\zeta)$ admits a para-linearization formula
$$
\big(H(\zeta)\big)^\sharp=-T_{h}\zeta^\sharp
\mod H_x^{2s-3}.
$$
Here $h=h^{(2)}+h^{(1)}$ is a classical differential symbol, with $h^{(2)}=(\rho^\sharp)^3\beta_1^{-3/2}(\lambda^{(1)})^2$ being a multiplication of $(\lambda^{(1)})^2$, and $h^{(1)}$ takes the form
$$
h^{(1)}
=\sum_{j=1}^3 h_{1j}\big(\zeta^\sharp,\nabla_{G_0}\zeta^\sharp,\nabla_{G_0}^2\zeta^\sharp\big)\sigma[X_j]
+h_{10}\big(\zeta^\sharp,\nabla_{G_0}\zeta^\sharp,\nabla_{G_0}^2\zeta^\sharp\big),
$$
where the coefficients $h_{1j}$ are all $C^\infty$ in its arguments.
\end{proposition}
\begin{proof}
We are going to use the first variation formula to compute the mean curvature. The area of $M_\zeta$ is 
$$
A(\zeta)=\int_{\mathbb{S}^2}\rho\sqrt{\rho^2+|\nabla_0\zeta|_{g_0}^2}d\mu_0.
$$
Given any $\zeta_1\in C^\infty(\mathbb{S}^2)$, we consider the slightly varied surfaces $M_{\zeta+\theta\zeta_1}$, i.e. the surfaces given by the graphs $x\to (1+\zeta(x)+\theta\zeta_1(x))N_0(x)$. Recall that under the coordinate system $(x,y)$ introduced in the previous section, the hypersurface $y=0$ corresponds to the distorted sphere $M_\zeta$, and the vector field
$$
\nabla_{\mathrm{E}}y=-\frac{\nabla_0\zeta}{\rho^{2}}
+\left(1+\frac{|\nabla_0\zeta|_{g_0}^2}{\rho^{2}}\right)\partial_y
$$
along $\{y=0\}$ is perpendicular to the hypersurface. Thus $$
N(\iota)=\frac{\nabla_{\mathrm{E}}y}{|\nabla_{\mathrm{E}}y|}
$$
is the unit outer normal vector field along $M_\zeta$. Note that
$$
|\nabla_{\mathrm{E}}y|=\sqrt{1+\frac{|\nabla_0\zeta|_{g_0}^2}{\rho^2}}.
$$
Under this choice of coordinate, the variational vector field of this variation is $\zeta_1\partial_y\big|_{y=0}$, so 
$$
N(\iota)\cdot \big(\zeta_1\partial_y)
=\frac{\zeta_1\rho}{\sqrt{\rho^2+|\nabla_0\zeta|_{g_0}^2}}. 
$$

The first variation formula of area then gives
$$
\begin{aligned}
\frac{d}{d\theta}A(\zeta+\theta\zeta_1)\Bigg|_{\theta=0}
=\int_{\mathbb{S}^2} H(\zeta)\zeta_1\rho^2d\mu_0.
\end{aligned}
$$
On the other hand, a direct computation with integration by parts gives that the left-hand-side in the first variation formula reads
$$
\begin{aligned}
\frac{d}{d\theta}A(\zeta+\theta\zeta_1)\Bigg|_{\theta=0}
&=\int_{\mathbb{S}^2}
\left(-\rho\Delta_{g_0}\zeta
+\frac{\rho\mathrm{Hess}_{g_0}(\zeta)(\nabla_0\zeta,\nabla_0\zeta)}{\rho^2+|\nabla_0\zeta|_{g_0}^2}
+2\rho^2\right)
\frac{\zeta_1}{\sqrt{\rho^2+|\nabla_0\zeta|_{g_0}^2}}d\mu_0,
\end{aligned}
$$
Thus the mean curvature operator reads
\begin{equation}\label{H(zeta)Expr}
H(\zeta)
=\frac{1}{\rho\sqrt{\rho^2+|\nabla_0\zeta|_{g_0}^2}}\left(\Delta_{g_0}\zeta
-\frac{\mathrm{Hess}_{g_0}(\zeta)(\nabla_0\zeta,\nabla_0\zeta)}{\rho^2+|\nabla_0\zeta|_{g_0}^2}\right)
+\frac{2}{\rho\sqrt{\rho^2+|\nabla_0\zeta|_{g_0}^2}}.
\end{equation}

We lift (\ref{H(zeta)Expr}) to $\SU(2)$. Recalling that $\sum_{j=1}^3X_j\zeta^\sharp\sigma[X_j]$, where the left-invariant vector fields $X_i$ are as in (\ref{LISU(2)}), we compute the lift of the Hessian form as
$$
\begin{aligned}
\mathrm{Hess}_{G_0}(\zeta^\sharp)(\nabla_{G_0}\zeta^\sharp,\nabla_{G_0}\zeta^\sharp)
&=\sum_{i,j=1}^3\left(X_iX_j\zeta^\sharp-(\nabla_{X_i}X_j)\zeta^\sharp\right)\cdot X_i\zeta^\sharp\cdot X_j\zeta^\sharp\\
&=\Op(\beta_2(x,l)^2)\zeta^\sharp.
\end{aligned}
$$
Here we used that $\nabla_{X_i}X_j=[X_i,X_j]/2$ (with respect to the bi-invariant metric) on the Lie group $\SU(2)$. Recalling the symbol $\lambda^{(1)}=\sqrt{\beta_2^2+\beta_1l(l+1)}/(\rho^\sharp)^2$ in (\ref{DNlambda}), we find that the first term in (\ref{H(zeta)Expr}) are lifted as
\begin{equation}\label{H(zeta)2nd}
\begin{aligned}
\frac{\Delta_{G_0}\zeta^\sharp}{\rho^\sharp(x)\beta_1(x)^{1/2}}
-\frac{\mathrm{Hess}_{G_0}(\zeta^\sharp)(\nabla_{G_0}\zeta^\sharp,\nabla_{G_0}\zeta^\sharp)}{\rho^\sharp(x)\beta_1(x)^{3/2}}
&=\Op\left(\frac{-l(l+1)}{\rho^\sharp(x)\beta_1(x)^{1/2}}
-\frac{\beta_2^2(x,l)}{\rho^\sharp(x)\beta_1(x)^{3/2}}\right)\zeta^\sharp\\
&=-\Op\big(h^{(2)}(x,l)\big)\zeta^\sharp.
\end{aligned}
\end{equation}
By Bony's para-linearization theorem, (\ref{H(zeta)2nd}) is then para-linearized as (with the ``mod" notation in \ref{Mod} used)
$$
-T_{h^{(2)}}\zeta^\sharp
+T_{\tilde h^{(1)}}\cdot\nabla_{G_0}\zeta^\sharp
+T_{\tilde h_0}\zeta^\sharp
\mod H^{2s-3}_x,
$$
where $\tilde h^{(1)},\tilde h_0$ are $C^\infty$ function of $\big(\zeta^\sharp,\nabla_{G_0}\zeta^\sharp,\nabla_{G_0}^2\zeta^\sharp\big)$. Finally, the last term in (\ref{H(zeta)Expr}) is also lifted and para-linearized similarly. This completes the proof.
\end{proof}

We are now at the place to para-linearize the second order terms in the expression of $\partial_t\phi$, and derive the evolution equation of $w^\sharp$.
\begin{proposition}\label{Quad}
Suppose $s>3$ and $\zeta\in H^{s+0.5}$, $\phi\in H^s$. The evolution of $w^\sharp$ is given by
$$
\partial_tw^\sharp
=-T_{\mathfrak{v}^\sharp}\cdot\nabla_{G_0}w^\sharp
-T_{h}\zeta^\sharp
+f_2(\zeta,\phi).
$$
The error term satisfies 
$$
\|f_2(\zeta,\phi)\|_{H^{s}_x}\leq K\big(\|\zeta\|_{H^{s+0.5}_x}\big)\|\phi\|_{H^s_x},
$$
where $K$ is an increasing function vanishing linearly at zero.
\end{proposition}
\begin{proof}
We start by looking at the quadratic terms in the equation for $\partial_t\phi$. By a direct computation using our coordinate system $(x,y)$ introduced in last section, we argue just as in Lemma 3.26 of \cite{ABZ2011} as follows. Still write $\rho=1+\zeta$. Then with
$$
F(\mu,a,b,c)=\frac{\left(\mu^2a+b\cdot c\right)^2}{2\mu^2\big(\mu^2+|b|_{g_0}^2\big)},
$$
the quadratic part in the right-hand-side of the equation for $\phi$ in (\ref{EQ}) may be written as
\begin{equation}\label{RHSphi}
\begin{aligned}
-\frac{|\nabla_0\phi|_{g_0}^2}{2\rho^2}
+\frac{\left(\rho^2D[\zeta]\phi+\nabla_0\zeta\cdot\nabla_0\phi\right)^2}{2\rho^2\big(\rho^2+|\nabla_0\zeta|_{g_0}^2\big)}
=-\frac{|\nabla_0\phi|_{g_0}^2}{2\rho^2}
+F(\rho,D[\zeta]\phi,\nabla_0\zeta,\nabla_0\phi).
\end{aligned}
\end{equation}
Bony's para-linearization theorem implies that
\begin{equation}\label{|Dphi|^2}
\begin{aligned}
\left(-\frac{|\nabla_0\phi|_{g_0}^2}{2\rho^2}\right)^\sharp
&=-T_{(\rho^\sharp)^{-2}\nabla_{G_0}\phi^\sharp}\cdot\nabla_{G_0}\phi^\sharp\\
&=-T_{(\rho^\sharp)^{-2}\mathfrak{v}^\sharp}\cdot\nabla_{G_0}\phi^\sharp
-T_{(\rho^\sharp)^{-2}\mathfrak{b}^\sharp\nabla_{G_0}\zeta^\sharp}\cdot\nabla_{G_0}\phi^\sharp
\mod H^{2s-3}_x,
\end{aligned}
\end{equation}
and (here subscript for $F$ stands for partial differentiation)
$$
\big(F(\rho,D[\zeta]\phi,\nabla_0\zeta,\nabla_0\phi)\big)^\sharp
=T_{F_\mu^\sharp}\rho^\sharp
+T_{F_a^\sharp}\big(D[\zeta]\phi\big)^\sharp
+T_{F_b^\sharp}\cdot\nabla_{G_0}\zeta^\sharp
+T_{F_c^\sharp}\cdot\nabla_{G_0}\phi^\sharp
\mod H^{2s-3}_x.
$$
Since $\rho\in H^{s+0.5}_x$, we find $T_{F_\mu^\sharp}\rho^\sharp\in H^{s+0.5}_x$ and thus can be disregarded. At $(\mu,a,b,c)=(\rho,D[\zeta]\phi,\nabla_0\zeta,\nabla_0\phi)$, we compute, recalling (\ref{bvExpression}),
$$
\begin{aligned}
F_a
&=\frac{\mu^2a+b\cdot c}{\mu^2+|b|_{g_0}^2}
=\mathfrak{b},\\
F_b
&=\frac{\mu^2a+b\cdot c}{\mu^2\big(\mu^2+|b|_{g_0}^2\big)}
\left(c-\frac{(\mu^2a+b\cdot c)b}{\mu^2+|b|_{g_0}^2}\right)
=\mathfrak{bv},\\
F_c
&=\frac{\mu^2a+b\cdot c}{\mu^2\big(\mu^2+|b|_{g_0}^2\big)}b
=\frac{\mathfrak{b}\nabla_0\zeta}{\rho^2}.
\end{aligned}
$$
Thus
\begin{equation}\label{F^sharp}
\big(F(\rho,D[\zeta]\phi,\nabla_0\zeta,\nabla_0\phi)\big)^\sharp
=T_{\mathfrak{b}^\sharp}\big(D[\zeta]\phi\big)^\sharp
+T_{\mathfrak{b}^\sharp}T_{\mathfrak{v}^\sharp}\cdot\nabla_{G_0}\zeta^\sharp
+T_{(\rho^\sharp)^{-2}\mathfrak{b}^\sharp\nabla_{G_0}\phi^\sharp}\cdot\nabla_{G_0}\phi^\sharp
\mod H^{s}_x.
\end{equation}
Combining (\ref{|Dphi|^2}) and (\ref{F^sharp}), after suitable cancellation and commutation (causing acceptable error), we find that (\ref{RHSphi}) lifts to $\SU(2)$ as 
$$
-T_{\mathfrak{v}^\sharp}\cdot\nabla_{G_0}w^\sharp
+T_{\mathfrak{b}^\sharp}\big(D[\zeta]\phi\big)^\sharp
\mod H_x^{s}.
$$
Consequently, noticing that $\partial_tw^\sharp=\partial_t\phi^\sharp-T_{\partial_t\mathfrak{b}^\sharp}\phi^\sharp-T_{\mathfrak{b}^\sharp}\partial_t\zeta^\sharp$, we find
$$
\partial_t w^\sharp
=-T_{\mathfrak{v}^\sharp}\cdot\nabla_{G_0}w^\sharp
-T_{h}\zeta^\sharp
+T_{\partial_t\mathfrak{b}^\sharp}\phi^\sharp
\mod H_x^{s}.
$$
The error caused by para-linearization is quadratic in $\zeta$ and $\phi$, since the para-linearization already includes the linear part of the nonlinear expression. Finally, the regularity of $T_{\partial_t\mathfrak{b}^\sharp}\phi^\sharp$ is obtained by applying Proposition \ref{RegDN}-\ref{LinDN}, identically as in Lemma 3.27. of \cite{ABZ2011}. This finishes the proof.
\end{proof}

Just as in \cite{ABZ2011}, we obtain the para-linearization of system (\ref{EQ}), lifted to $\SU(2)$:
\begin{equation}\label{EQPara}
\begin{aligned}
\partial_t\zeta^\sharp
&=T_\lambda w^\sharp-T_{\mathfrak{v}^\sharp}\cdot\nabla_{G_0}\zeta^\sharp+f_1(\zeta,\phi)\\
\partial_tw^\sharp&=-T_{\mathfrak{v}^\sharp}\cdot\nabla_{G_0}w^\sharp
-T_{h}\zeta^\sharp
+f_2(\zeta,\phi),
\end{aligned}
\end{equation}
where the errors $f_1,f_2$ takes the explicit form
\begin{equation}\label{f1f2}
\begin{aligned}
f_1(\zeta,\phi)
&=\big(D[\zeta]\phi\big)^\sharp-T_\lambda\big(w^\sharp-T_{\mathfrak{v}^\sharp}\cdot\nabla_{G_0}\zeta^\sharp\big)\\
f_2(\zeta,\phi)
&=\left(-\frac{|\nabla_0\phi|_{g_0}^2}{2\rho^2}
+\frac{\left(\rho^2D[\zeta]\phi+\nabla_0\zeta\cdot\nabla_0\phi\right)^2}{2\rho^2\big(\rho^2+|\nabla_0\zeta|_{g_0}^2\big)}
+H(\zeta)\right)^\sharp
+T_{\mathfrak{v}^\sharp}\cdot\nabla_{G_0}w^\sharp
+T_{h}\zeta^\sharp
-T_{\partial_t\mathfrak{b}^\sharp}w^\sharp,
\end{aligned}
\end{equation}
and are affordable and in fact quadratic:
$$
\|f_1(\zeta,\phi)\|_{H_x^{s+0.5}}+\|f_2(\zeta,\phi)\|_{H_x^{s}}
\lesssim\left(\|\zeta\|_{H_x^{s+0.5}}+\|\phi\|_{H_x^s}\right)^2.
$$
We already know that (\ref{EQPara}) is equivalent to (\ref{EQ}) at least when $\|\zeta(t)\|_{H^{s+0.5}_x}$ remains sufficiently small. Thus we have successfully derived the para-linear form of (\ref{EQ}).

\subsection{Adjoint of Para-differential Operators}
In order to symmetrize the system (\ref{EQPara}), we need to understand the adjoint properties of the Dirichlet-Neumann operator and the mean curvature operator. We start from the former. We still employ the notation $\rho=1+\zeta$, and define the symbol $\lambda$ as in (\ref{DNlambda}). The following proposition reflects the self-adjointness of the Dirichlet-Neumann operator.

\begin{proposition}\label{AdjDN}
Still fix $s>3$ and $\zeta\in H^{s+0.5}$, $\phi\in H^s$. Then the symbol $\lambda$ of the Dirichlet-Neumann operator satisfies
\begin{equation}\label{AdjT_Lambda}
(\rho^\sharp)^{-2}\lambda^{\bullet;1,\mathscr{Q}}=
\lambda\#_{1;\mathscr{Q}}(\rho^\sharp)^{-2}
\mod\mathcal{A}^{-1}_{s-2.5}
\end{equation}
Here the operations $\#_{r;\mathscr{Q}}$ and ${\bullet;r,\mathscr{Q}}$ are defined in (\ref{ParaCompoSU2})-(\ref{ParaAdjSU2}). Thus the para-differential operator $T_\lambda$ satisfies
$$
T_{(\rho^\sharp)^{-2}}T_\lambda^*=T_\lambda T_{(\rho^\sharp)^{-2}}\mod\Op\Sigma^{-0.5}_{<1/2}.
$$
\end{proposition}

\begin{proof}
If we directly manipulate with (\ref{AsSystem0}) and (\ref{DNlambda}), the algebra in the proof of (\ref{AdjT_Lambda}) would be extremely involved. Thus we proceed differently by recalling the geometric interpretation of the Dirichlet-Neumann operator. 

Assume for the moment that $\zeta$ is \emph{smooth}. Recall that the linear operator taking the Dirichlet boundary value $\phi$ to the Neumann boundary value $\bar\nabla\Phi\cdot N(\iota)=\big(N_0\cdot N(\iota)\big)D[\zeta]\phi$ on $M_\zeta$ is self-adjoint with respect to the surface measure of $M_\zeta$; this is the starting point of our argument. Recall that $N_0\cdot N(\iota)
=\rho/\sqrt{\rho^2+|\nabla_0\zeta|_{g_0}^2}$, while the surface measure of $M_\zeta$ is $d\mu(\iota)=\rho\sqrt{\rho^2+|\nabla_0\zeta|_{g_0}^2}d\mu_0$. Consequently, we find, for smooth functions $\phi$, $\phi_1$ defined on $\mathbb{S}^2$,
$$
\int_{\mathbb{S}^2}D[\zeta]\phi\cdot\phi_1 \rho^2d\mu_0
=\int_{\mathbb{S}^2}\phi\cdot D[\zeta]\phi_1\cdot \rho^2d\mu_0.
$$
Lifting to $\SU(2)$, this is exactly
$$
\int_{\SU(2)}\mathfrak{D}\phi^\sharp\cdot\phi_1^\sharp \cdot(\rho^\sharp)^2d\mu(G_0)
=\int_{\SU(2)}\phi^\sharp\cdot \mathfrak{D}\phi_1^\sharp\cdot (\rho^\sharp)^2d\mu(G_0),
$$
where $\mathfrak{D}\phi^\sharp:=\big(D[\zeta]\phi\big)^\sharp$. As a result, we have
\begin{equation}\label{DNAdjTemp1}
(\rho^\sharp)^{-2}\mathfrak{D}^*
=\mathfrak{D}\circ( \rho^\sharp)^{-2}.
\end{equation}
On the other hand, if $\zeta$ is smooth, the para-linearization formula of the Dirichlet-Neumann operator shows that in fact
$$
\mathfrak{D}=\Op(\lambda)+\Op\mathscr{S}^{-1}_{1,0}.
$$
Converting (\ref{DNAdjTemp1}) to the symbolic side modulo symbols of order less than $-1$, this gives
\begin{equation}\label{DNAdjTemp2}
(\rho^\sharp)^{-2}\lambda^{\bullet;1,\mathscr{Q}}-
\lambda\#_{1;\mathscr{Q}}(\rho^\sharp)^{-2}
\in\mathscr{S}^{-1}_{1,0}.
\end{equation}
If we denote the left-hand-side of (\ref{DNAdjTemp2}) as $Q(\zeta;x,l)$, then (\ref{DNAdjTemp2}) of course implies
\begin{equation}\label{DNAdjTemp3}
\begin{aligned}
\big\|\Df_\mu^\alpha Q(\zeta;x,l)\big\|_{C^{s-2.5}_*}
&=O(l^{-1-|\alpha|}),
\quad
\text{for all multi-index }\alpha.
\end{aligned}
\end{equation}
This matrix norm estimate has been deduced for smooth $\zeta$. However, as we can see from the definition of $\lambda$ and symbolic operation, the concrete expression of entries of $Q(\zeta;x,l)$ only contains up to second order derivative of $\zeta$. Thus we can approximate a given $\zeta\in H^{s+0.5}$ by smooth functions and conclude that (\ref{DNAdjTemp3}) continues to hold for $\zeta\in H^{s+0.5}$. Thus for $\zeta\in H^{s+0.5}$, 
$$
(\rho^\sharp)^{-2}\lambda^{\bullet;1,\mathscr{Q}}-
\lambda\#_{1;\mathscr{Q}}(\rho^\sharp)^{-2}
$$
is a symbol of order $-1$. This proves (\ref{AdjT_Lambda}).
\end{proof}

We then proceed to the mean curvature.
\begin{proposition}\label{T_hAdjoint}
Suppose $s>3$, $\zeta\in H^s$. Then with the symbol $h=h^{(2)}+h^{(1)}$ as in Proposition \ref{H(zeta)}, there holds a precise equality
\begin{equation}\label{Adjh}
(\rho^\sharp)^{-2}h^{\bullet;2,\mathscr{Q}}
=h\#_{2,\mathscr{Q}}(\rho^\sharp)^{-2}.
\end{equation}
Thus
$$
T_{(\rho^\sharp)^{-2}}T_h^*=T_hT_{(\rho^\sharp)^{-2}} \mod\Op\Sigma^{0.5}_{<1/2}.
$$
\end{proposition}
\begin{proof}
Of course we may directly deal with the expression of $h(x,l)$. But there is a method that could tactfully avoid such lengthy computation. Consider a variation of the surface $M_\zeta$:
$$
x\to\big(1+\zeta(x)+\theta_1\zeta_1(x)+\theta_2\zeta_2(x)\big)N_0.
$$
By second variation formula for area functional, the bilinear form
$$
\frac{\partial^2 A}{\partial\theta_1\partial\theta_2}(\zeta+\theta_1\zeta_1+\theta_2\zeta_2)\Bigg|_{\theta=0}
$$
is symmetric in $\zeta_1,\zeta_2$. On the other hand, the second variation obviously equals
$$
\int_{\mathbb{S}^2}H'(\zeta)\zeta_2\cdot\zeta_1\rho^2d\mu_0
+2\int_{\mathbb{S}^2}H(\zeta)\zeta_1\zeta_2\rho d\mu_0.
$$
Thus the second order differential operator $\rho^2H'(\zeta)$ is self-adjoint with respect to $\mu_0$. Lifting to $\SU(2)$, it follows that the second order differential operator $(\rho^\sharp)^2\big(H'(\zeta)\big)^\sharp$ is self-adjoint with respect to $\mu(G_0)$. But we also notice that the symbol $h$ is nothing but the symbol of the second order differential operator $\big(H'(\zeta)\big)^\sharp$. By the symbolic calculus theorem of para-differential operators, this proves (\ref{Adjh}).
\end{proof}

\subsection{Symmetrization}\label{Symmetrization}
We are at the place to symmetrize the para-differential system (\ref{EQPara}). The general procedure is identical to that in \cite{ABZ2011} but with different computational details due to the non-flat geometry in our scenario. 

The symmetrizer that we are looking for should take the form
$$
\left(\begin{matrix}
T_p & \\
 & T_q
\end{matrix}\right),
$$
where $p\in\mathcal{A}^{1/2}_{s-2.5}$ should be close to the symbol of $|\nabla_{G_0}|^{0.5}$, and $q=q(x)$ is a scalar function. We require that for some elliptic symbol $\gamma$ of order 1.5, there holds
\begin{equation}\label{Symmetrizer}
\left(\begin{matrix}
T_p & \\
 & T_q
\end{matrix}\right)\left(\begin{matrix}
 & T_\lambda \\
-T_h & 
\end{matrix}\right)
=\left(\begin{matrix}
 & T_\gamma \\
-T_\gamma & 
\end{matrix}\right)\left(\begin{matrix}
T_p & \\
 & T_q
\end{matrix}\right)
\mod
\left(\begin{matrix}
 & \Op\Sigma^{0}_{<1/2} \\
\Op\Sigma^{0.5}_{<1/2} & 
\end{matrix}\right)
\end{equation}
The operator norms of the error term will be bounded linearly by $\|\zeta\|_{H_x^{s+0.5}}$. 

The elliptic symbol $\gamma$ will be the one that satisfy
\begin{equation}\label{T_gamma}
T_\gamma^2=\frac{T_hT_\lambda+T_\lambda^*T_h^*}{2}
\mod\Op\Sigma^{0.5}_{<1/2}.
\end{equation}
This ensures that $T_\gamma$ is approximately self-adjoint. Our symmetrization is finished by several steps.

\textbf{Step 1: solve $\gamma$.} We look for a solution $\gamma=\gamma^{(1.5)}+\gamma^{(0.5)}\in\mathcal{A}^{1.5}_{s-1.5}+\mathcal{A}^{0.5}_{s-2.5}$ of (\ref{T_gamma}). The highest order symbol $\gamma^{(1.5)}$ is just fixed as
$$
\gamma^{(1.5)}=\sqrt{\lambda^{(1)}h^{(2)}},
$$
which is Hermitian on each representation space $\Hh[l]$. Note that we used the commutativity of $h^{(2)}$ and $\lambda^{(1)}$, as shown in Proposition \ref{H(zeta)}. For the next order symbol $\gamma^{(0.5)}$, we just have to fix it by looking at the next order symbols in the composition formula of para-differential operators. That is, we require 
$$
\gamma\#_{1;\mathscr{Q}}\gamma
=\frac{h\#_{1,\mathscr{Q}}\lambda+\lambda^{\bullet;1,\mathscr{Q}}\#_{1,\mathscr{Q}}h^{\bullet;1,\mathscr{Q}}}{2},
$$
or equivalently,
$$
\begin{aligned}
\sqrt{\lambda^{(1)}h^{(2)}}\gamma^{(0.5)}+\gamma^{(0.5)}\sqrt{\lambda^{(1)}h^{(2)}}
&+\sum_{\mu\in\Ind}\Df_\mu\sqrt{\lambda^{(1)}h^{(2)}}\partial_\mu\sqrt{\lambda^{(1)}h^{(2)}}\\
&=
\frac{h\#_{1,\mathscr{Q}}\lambda
+\lambda^{\bullet;1,\mathscr{Q}}\#_{1,\mathscr{Q}}h^{\bullet;1,\mathscr{Q}}}{2}
-\lambda^{(1)}h^{(2)}.
\end{aligned}
$$
Since $[\sqrt{\lambda^{(1)}h^{(2)}},\gamma^{(0.5)}]$ is a symbol of order 1 by Proposition \ref{2OrderComm}, we can just replace $\gamma^{(0.5)}\gamma^{(1.5)}$ by $\gamma^{(1.5)}\gamma^{(0.5)}$, and solve $\gamma^{(0.5)}$ as
$$
\gamma^{(0.5)}
=\frac{(\lambda^{(1)}h^{(2)})^{-1/2}}{2}\left(
\frac{h\#_{1,\mathscr{Q}}\lambda
+\lambda^{\bullet;1,\mathscr{Q}}\#_{1,\mathscr{Q}}h^{\bullet;1,\mathscr{Q}}}{2}
-\lambda^{(1)}h^{(2)}
-\sum_{\mu\in\Ind}\Df_\mu\sqrt{\lambda^{(1)}h^{(2)}}\partial_\mu\sqrt{\lambda^{(1)}h^{(2)}}
\right).
$$

\textbf{Step 2: solve $q$}. We seek for a scalar function $q=q(x)$ defined on $\SU(2)$ such that 
\begin{equation}\label{Tq}
T_qT_hT_\lambda
=\frac{T_hT_\lambda+T_\lambda^*T_h^*}{2}\cdot T_q
\mod\Op\Sigma^{1.5}_{<1/2}.
\end{equation}
This is the step that we should employ the almost self-adjoint properties of $T_\lambda$ and $T_h$. Note that we have already proved, in Proposition \ref{AdjDN}-\ref{T_hAdjoint}, that
$$
\begin{aligned}
T_{(\rho^\sharp)^{-2}}T_\lambda^*&=T_\lambda T_{(\rho^\sharp)^{-2}}
&\mod\Op\Sigma^{-0.5}_{<1/2}\\
T_{(\rho^\sharp)^{-2}}T_h^*&=T_hT_{(\rho^\sharp)^{-2}}
&\mod\Op\Sigma^{0.5}_{<1/2}.
\end{aligned}
$$
So (\ref{Tq}) becomes 
$$
2T_qT_hT_\lambda=\left(T_hT_\lambda
+T_{(\rho^\sharp)^2\lambda}T_hT_{(\rho^\sharp)^{-2}}\right)T_q
\mod\Op\Sigma^{1.5}_{<1/2}.
$$
Here we used that $\rho\in H^{s+0.5}$, so $T_{(\rho^\sharp)^{2}}T_h=T_{(\rho^\sharp)^{2}h}\mod\Op\Sigma^{3.5-s}_{<1/2}$. Since only symbols of order $\geq2$ is of our concern in the above equality, we can simply use the composition formula of para-differential operators and then neglect symbols of order $\leq1$. Thus in order that (\ref{Tq}) holds, it suffices to require 
\begin{equation}\label{SymmTemp}
\begin{aligned}
2q\sum_{\mu\in\Ind}\Df_\mu h^{(2)}\partial_\mu \lambda^{(1)}
&=q\sum_{\mu\in\Ind}\Df_\mu h^{(2)}\partial_\mu \lambda^{(1)}\\
&\quad+2\sum_{\mu\in\Ind}\Df_\mu(\lambda^{(1)}h^{(2)})\partial_\mu q\\
&\quad+q(\rho^\sharp)^{2}\sum_{\mu\in\Ind}\Df_\mu\lambda^{(1)}\partial_\mu\left(\frac{h^{(2)}}{(\rho^\sharp)^{2}}\right)\\
&\quad+q(\rho^\sharp)^{2}\sum_{\mu\in\Ind}\lambda^{(1)}\Df_\mu h^{(2)}\partial_\mu(\rho^\sharp)^{-2}
\mod\mathcal{A}^{1}_{s-3}.
\end{aligned}
\end{equation}

Being tedious at a first glance, there are several key features of the above equality that can be exploited. Recall that $h^{(2)}=(\rho^\sharp)^3\beta_1^{-3/2}(\lambda^{(1)})^2$, and from the expression of $\lambda^{(1)}$, i.e. formula (\ref{DNlambda}), we known that $\lambda^{(1)}$ is a quasi-homogeneous symbol of order 1 in the sense of Definition \ref{QuasiHomoSym}, where the vector field is just $\nabla_{G_0}\zeta^\sharp/\beta_1(x)$. Proposition \ref{PSSymbol} thus ensures that the commutators $[\lambda^{(1)},\partial_\mu\lambda^{(1)}]$ and $[\lambda^{(1)},\Df_\mu\lambda^{(1)}]$ are rough symbols of order 1 and 0 respectively. Consequently, we find 
$$
\begin{aligned}
\Df_\mu h^{(2)}
&=\frac{2(\rho^\sharp)^3}{\beta_1^{3/2}}\Df_\mu\lambda^{(1)}\cdot\lambda^{(1)}
&\mod\mathcal{A}^{1}_{s-3},\\
\Df_\mu(\lambda^{(1)}h^{(2)})
&=\frac{3(\rho^\sharp)^3}{\beta_1^{3/2}}\Df_\mu\lambda^{(1)}\cdot(\lambda^{(1)})^2
&\mod\mathcal{A}^{1}_{s-3},\\
\partial_\mu\left(\frac{h^{(2)}}{(\rho^\sharp)^{2}}\right)
&=(\lambda^{(1)})^2\partial_\mu\big(\rho^\sharp\beta_1^{-3/2}\big)+\frac{2\rho^\sharp\lambda^{(1)}\partial_\mu\lambda^{(1)}}{\beta_1^{3/2}}
&\mod\mathcal{A}^{1}_{s-3},
\end{aligned}
$$
leading to cancellation between the left-hand-side and the first/thrid terms in the right-hand-side of (\ref{SymmTemp}) modulo symbol of order $1$. Thus (\ref{SymmTemp}) is in fact equivalent to
$$
\begin{aligned}
\frac{6(\rho^\sharp)^3}{\beta_1^{3/2}}\sum_{\mu\in\Ind}
\Df_\mu\lambda^{(1)}\cdot(\lambda^{(1)})^2\partial_\mu q
&+2q\big(\rho^\sharp\big)^2\sum_{\mu\in\Ind}\Df_\mu\lambda^{(1)}\cdot(\lambda^{(1)})^2\partial_\mu\big(\rho^\sharp\beta_1^{-3/2}\big)\\
&+\frac{2q(\rho^\sharp)^5}{\beta_1^{3/2}}\sum_{\mu\in\Ind}\Df_\mu\lambda^{(1)}\cdot(\lambda^{(1)})^2\partial_\mu(\rho^\sharp)^{-2}
=0\mod\mathcal{A}^{1}_{s-3}.    
\end{aligned}
$$
In order that this equality holds, it suffices to require 
$$
\frac{6(\rho^\sharp)^3}{\beta_1^{3/2}}\frac{\nabla_{G_0}q}{q}
+2\big(\rho^\sharp\big)^2\nabla_{G_0}\big(\rho^\sharp\beta_1^{-3/2}\big)
+\frac{2(\rho^\sharp)^5}{\beta_1^{3/2}}\nabla_{G_0}(\rho^\sharp)^{-2}=0,
$$
which can be solved as
\begin{equation}\label{Symmq}
q
={\beta_1^{3/2}}{(\rho^\sharp)^{1/3}}
=(1+\zeta^\sharp)^{1/3}\sqrt{(1+\zeta^\sharp)^2+|\nabla_{G_0}\zeta^\sharp|_{G_0}^2}.
\end{equation}

\textbf{Step 3: solve $p$.} Finally, the symbol $p=p^{(0.5)}+p^{(-0.5)}$ is solved directly from the equation 
$$
T_pT_\lambda=T_\gamma T_q
\mod\Op\Sigma^0_{<1/2},
$$
or equivalently
$$
p\#_{1;\mathscr{Q}}\lambda=\gamma\#_{1;\mathscr{Q}}q
\mod\Op\Sigma^0_{<1/2}.
$$
The principal symbol of $p$ is just 
\begin{equation}\label{Symmp}
p^{(0.5)}=\frac{(\rho^\sharp)^{1.5}}{\beta_1^{3/4}}\sqrt{\lambda^{(1)}}.
\end{equation}
The sub-principal symbol $p^{(-0.5)}$ is then fixed as
$$
p^{(-0.5)}
=(\lambda^{(1)})^{-1}\gamma^{(0.5)}q+(\lambda^{(1)})^{-1}\sum_{\mu\in\Ind}\Df_\mu\gamma^{(1.5)}\partial_\mu q.
$$
Thus we have had all symbols $\gamma,p,q$ defined and matching (\ref{Symmetrizer}). The operator norm of the error term in (\ref{Symmetrizer}) is bounded by $\|\zeta\|_{H_x^{s+0.5}}$ linearly when $\zeta\simeq0$, as a consequence of Theorem \ref{Compo2}-\ref{ParaAdj}.

\textbf{Step 4: symmetrization.} We now apply the symmetrizer $\left(\begin{matrix}
T_p & \\ & T_q\end{matrix}\right)$ to the para-differential sysmtem (\ref{EQPara}). We first notice that the expressions for $\partial_tp$ and $\partial_tq$ actually do not explicitly contain time derivative of $\zeta$ and $\phi$, since we assumed \emph{in a priori} that $(\zeta,\phi)$ solves the autonomous partial differential equation (\ref{EQ}). In order not to cause unnecessary confusion, we denote the symbols
\begin{equation}\label{DtpDtq}
a+b
:=\partial_tp^{(0.5)}+\partial_tp^{(-0.5)}
\in\mathcal{A}^{0.5}_{s-3}+\mathcal{A}^{-0.5}_{s-4},
\quad 
c:=\partial_tq\in\mathcal{A}^{0}_{s-2}.
\end{equation}
Here the regularity of $\partial_tp^{(-0.5)}$ is justified by noting that it is of the type indicated in Proposition \ref{SymbolVeryRough}.

Hence we compute
\begin{equation}\label{SymmEQTemp}
\begin{aligned}
\partial_t\left(\begin{matrix}
T_p\zeta^\sharp \\
T_qw^\sharp
\end{matrix}\right)
&=\left(\begin{matrix}
 & T_\gamma \\
-T_\gamma & 
\end{matrix}\right)\left(\begin{matrix}
T_p\zeta^\sharp \\
T_qw^\sharp
\end{matrix}\right)
-T_{\mathfrak{v}^\sharp}\cdot\nabla_{G_0}
\left(\begin{matrix}
T_p\zeta^\sharp \\
T_qw^\sharp
\end{matrix}\right)\\
&\quad
+\left(\begin{matrix}
(T_pT_\lambda-T_\gamma T_q)w^\sharp \\
(-T_qT_h+T_\gamma T_p)\zeta^\sharp
\end{matrix}\right)
+\left(\begin{matrix}
\left(T_{a+b}-\big[T_{\mathfrak{v}^\sharp}\cdot\nabla_{G_0},T_p\big]\right)\zeta^\sharp \\
\left(T_c-\big[T_{\mathfrak{v}^\sharp}\cdot\nabla_{G_0},T_q\big]\right)w^\sharp
\end{matrix}\right)
+\left(\begin{matrix}
T_pf_1(\zeta,\phi) \\
T_qf_2(\zeta,\phi)
\end{matrix}\right),
\end{aligned}
\end{equation}
where the mappings $f_1,f_2$ are as in (\ref{f1f2}). We just define the sum of last three terms as the mapping $f_3(\zeta,\phi)$.

Due to the para-differential reduction, we know that $f_1,f_2$ are controlled by $\left(\|\zeta\|_{H_x^{s+0.5}}+\|\phi\|_{H_x^s}\right)^2$ as (\ref{f1f2}) indicates. Due to (\ref{Symmetrizer}), the operator norms of $T_pT_\lambda-T_\gamma T_q$ and $-T_qT_h+T_\gamma T_p$ are controlled linearly by $\|\zeta\|_{H^{s+0.5}_x}$ when $\zeta$ is small.

It remains to show that given $\alpha\in\mathbb{R}$,
\begin{equation}\label{HsBdd1}
\big[T_{\mathfrak{v}^\sharp}\cdot\nabla_{G_0},T_p\big]
\quad\text{and}\quad
T_{a+b}
\end{equation}
both have operator norms in $\mathcal{L}(H^{\alpha+0.5},H^\alpha)$ controlled linearly by $\|\zeta\|_{H^{s+0.5}_x}+\|\phi\|_{H_x^s}$ when $(\zeta,\phi)\simeq0$, while
\begin{equation}\label{HsBdd2}
\big[T_{\mathfrak{v}^\sharp}\cdot\nabla_{G_0},T_q\big]
\quad\text{and}\quad
T_c\zeta^\sharp
\end{equation}
both have operator norms in $\mathcal{L}(H^{\alpha},H^\alpha)$ controlled linearly by $\|\zeta\|_{H^{s+0.5}_x}+\|\phi\|_{H_x^s}$ when $(\zeta,\phi)\simeq0$.

The claim for first operators in (\ref{HsBdd1})-(\ref{HsBdd2}) follows from the fact that
$\big[T_{\mathfrak{v}^\sharp}\cdot\nabla_{G_0},T_p\big]
$ and $\big[T_{\mathfrak{v}^\sharp}\cdot\nabla_{G_0},T_q\big]$
are para-differential operators of order 1/2 and 0 respectively, since commutator with a vector field does not increase the order of the operator. The control of the norms is a consequence of Corollary \ref{ParaComm}. AS for the claim for second operators in (\ref{HsBdd1})-(\ref{HsBdd2}), this is exactly the content of Lemma 4.10. of \cite{ABZ2011}:
\begin{lemma}[Equivalent form of Lemma 4.10., \cite{ABZ2011}]\label{ABZ4-10}
The para-differential operators $T_{a}$ and $T_{b}$ are both of order $0.5$: for all $\alpha\in\mathbb{R}$, there is an increasing function $K$ vanishing linearly at 0, such that
$$
\big\|T_{a}\big\|_{\mathcal{L}(H^\alpha,H^{\alpha-0.5})}
+\big\|T_{b}\big\|_{\mathcal{L}(H^\alpha,H^{\alpha-0.5})}
\leq K\big(\|\zeta\|_{H_x^{s+0.5}}\big).
$$
The para-differential operator $T_c$ has order 0:
for all $\alpha\in\mathbb{R}$, there is an increasing function $K$ anishing linearly at 0, such that
$$
\big\|T_c\big\|_{\mathcal{L}(H^\alpha,H^{\alpha})}
\leq K\big(\|\zeta\|_{H_x^{s+0.5}}\big).
$$
\end{lemma}
\begin{remark}
The only technical difficulty with the proof of this Lemma is that the symbol $b(x,l)$ is too rough in $x$. In fact, from the expression of $p^{(-0.5)}$, one concludes that $p^{(-0.5)}$ has $H^{s-1.5}$ regularity in $x$. Differentiating in $t$ reduces its regularity to $H^{s-3}\subset C_*^{s-4}$. Thus $b(x,l)$ is a symbol of order $-0.5$ in $l$ and $C_*^{s-4}$ regularity in $x$. By Proposition \ref{T_aNegIndex}, $T_{b}$ is a para-differential operator of order at most $0.5$.
\end{remark}

To summarize, introducing the diagonal unknown $\left(\begin{matrix}
T_p\zeta^\sharp \\ T_q w^\sharp \end{matrix}\right)$, the original system (\ref{EQ}) is equivalent to
\begin{equation}\label{EQSymm'}
\partial_t\left(\begin{matrix}
T_p\zeta^\sharp \\ T_q w^\sharp \end{matrix}\right)=\left(\begin{matrix}
 & T_\gamma \\
-T_\gamma & 
\end{matrix}\right)\left(\begin{matrix}
T_p\zeta^\sharp \\ T_q w^\sharp \end{matrix}\right)
-T_{\mathfrak{v}^\sharp}\cdot\nabla_{G_0} u
+f_3(\zeta,\phi),
\end{equation}
where the error $f_3(\zeta,\phi)$ collects the last three terms in (\ref{SymmEQTemp}), and $\|f_3(\zeta,\phi)\|_{H^s_x}\lesssim\left(\|\zeta\|_{H_x^{s+0.5}}+\|\phi\|_{H_x^s}\right)^2$. This is exactly (\ref{EQSymm}), We hence complete the proof of Theorem \ref{Thm2}.

\section{Energy Estimate and Local Well-posedness}\label{8}
We finally complete the energy estimate for (\ref{EQSymm}), and prove local well-posedness of the original system (\ref{EQ}). For simplicity, we sketch the proof of local well-posedness for small amplitude solutions. Since the argument is rather standard, and all the key estimates are already done in \cite{ABZ2011}, we will only present in detail the parts that are different from \cite{ABZ2011}; for those that are identical to \cite{ABZ2011}, we will directly cite the corresponding Lemmas from \cite{ABZ2011} and show how they aid our proof.

\subsection{Energy Estiamte}
In \cite{ABZ2011}, the authors used the $2s/3$'th order power of the principal symbol $\gamma$ to construct a suitable energy functional. The reason is that the Poisson bracket $\{(\gamma^{(1.5)})^{2\alpha/3},\gamma\}$ vanishes up to order $\alpha-0.5$, hence $[T_{(\gamma^{(1.5)})^{2\alpha/3}},T_\gamma]$ is in fact of order $\leq \alpha$. A similar result applies to the spherical system (\ref{EQSymm}) as well, although not as trivial as in the Euclidean case. 

\begin{proposition}\label{OrderS}
Given $\zeta\in H^{s+0.5}$, let $\gamma^{(1.5)}$ be the rough symbol as given in Subsection \ref{Symmetrization}. Then $\big[T_{(\gamma^{(1.5)})^{2\alpha/3}},T_\gamma\big]$ is a para-differential operator of order $\leq \alpha$, instead of just $\alpha+0.5$. Quantitatively, its operator norm for $H^s\to H^s$ is controlled linearly by $\|\zeta\|_{H_x^{s+0.5}}$ when $\zeta\simeq0$.
\end{proposition}
\begin{proof}
Recall from Subsection \ref{Symmetrization} that 
$$
\gamma^{(1.5)}=\sqrt{\lambda^{(1)}h^{(2)}}
=(\rho^\sharp)^{1.5}\beta_1^{-3/4}(\lambda^{(1)})^{1.5},
$$
which may be abbreviated as
$$
\gamma^{(1.5)}(x,\xi)=g(x)\kappa(\xi)^{1.5}f\left(\frac{b(x,\xi)}{\kappa(\xi)}\right),
$$
where $g(x)$ is some scalar function, $b$ is the symbol of a vector field, all depending on first order derivatives of $\zeta$; $\kappa$ is the symbol of $|\nabla_{G_0}|$, and $f(z)=(1+z^2)^{3/4}$. By our symbolic calculus formula for para-differential operators, namely Theorem \ref{Compo2}, the commutator $\big[T_{(\gamma^{(1.5)})^{2\alpha/3}},T_\gamma\big]$ in fact equals the para-differential operator corresponding to
\begin{equation}\label{GammaCommu}
\sum_{\mu\in\Ind}\Df_{\mu}(\gamma^{(1.5)})^{2\alpha/3}\partial_\mu\gamma^{(1.5)}
-\sum_{\mu\in\Ind}\Df_{\mu}\gamma^{(1.5)}\partial_\mu(\gamma^{(1.5)})^{2\alpha/3},
\end{equation}
modulo an operator of class $\Op\Sigma^{\alpha}_{<1/2}$. Obviously $\kappa(\xi)^{1.5}f\left(\frac{b(x,\xi)}{\kappa(\xi)^2}\right)$ is a quasi-homogeneous symbol of order 1.5 in the sense of Definition \ref{QuasiHomoSym}. We can now apply Proposition \ref{Dkappa}-\ref{PSSymbol} as follows.

We first compute, by Proposition \ref{Dkappa}, that
$$
\Df_\mu\kappa^{1.5}=\frac{3}{2}\kappa^{0.5}\Df_\mu\kappa
\mod\mathscr{S}^{-0.5}_{1,0},
\quad
\Df_\mu\kappa^{2\alpha/3}=\frac{2\alpha}{3}\kappa^{2\alpha/3-1}\Df_\mu\kappa
\mod\mathscr{S}^{\alpha-2}_{1,0}.
$$
Thus by Proposition \ref{PSSymbol}, together with the Leibniz property of $\Df_\mu$,
$$
\begin{aligned}
\Df_{\mu}(\gamma^{(1.5)})^{2\alpha/3}
&=\alpha g^{2\alpha/3}\kappa^{\alpha-1}f\left(\frac{b}{\kappa}\right)^{2\alpha/3}\Df_\mu\kappa
+\frac{2\alpha}{3}g^{2\alpha/3}\kappa^{\alpha-1}
f\left(\frac{b}{\kappa}\right)^{2\alpha/3-1}f'\left(\frac{b}{\kappa}\right)\Df_\mu b\\
&\quad+\frac{2\alpha}{3} g^{2\alpha/3}\kappa^{\alpha-2}f\left(\frac{b}{\kappa}\right)^{2\alpha/3-1}f'\left(\frac{b}{\kappa}\right)\Df_\mu \kappa b
&\mod \mathcal{A}^{\alpha-2}_{s-1.5}\\
&=\frac{2\alpha}{3}(\gamma^{(1.5)})^{2\alpha/3-1}\Df_\mu\gamma^{(1.5)}
&\mod \mathcal{A}^{\alpha-2}_{s-1.5}.
\end{aligned}
$$
On the other hand, by Proposition \ref{PSSymbol},
$$
\begin{aligned}
\partial_\mu(\gamma^{(1.5)})^{2\alpha/3}
&=\frac{2\alpha}{3}g^{2\alpha/3-1}\kappa^\alpha\partial_\mu gf\left(\frac{b}{\kappa}\right)^{2\alpha/3}
+\frac{2\alpha}{3}g^{2\alpha/3}\kappa^\alpha f\left(\frac{b}{\kappa}\right)^{2\alpha/3-1}f'\left(\frac{b}{\kappa}\right)\partial_\mu b
&\mod \mathcal{A}^{\alpha-2}_{s-2.5}\\
&=\frac{2\alpha}{3}(\gamma^{(1.5)})^{2\alpha/3-1}\partial_\mu\gamma^{(1.5)}
&\mod \mathcal{A}^{\alpha-2}_{s-2.5}.
\end{aligned}
$$
Thus (\ref{GammaCommu}) in fact equals $0\mod\mathcal{A}^{\alpha-1}_{s-2.5}$. Consequently  $\big[T_{(\gamma^{(1.5)})^{2\alpha/3}},T_\gamma\big]$ is a para-differential operator of order $\leq\alpha$, not just $\alpha+0.5$.
\end{proof}

We can now state the energy estimate:
\begin{proposition}\label{Energy}
Let
$$
(\zeta,\phi)\in \big(C_TH^{s+0.5}_x\times C_TH^s_x\big)\cap\big(C_T^1H^{s-1}_x\times C_T^1H^{s-1.5}_x\big)
$$
be given, with norm in this space less than some small $R$. We define the good unknown $w^\sharp$ as in (\ref{ActualGUknown}), the symbols $\lambda,h,\mathfrak{b}^\sharp,\mathfrak{v}^\sharp$ as in Section \ref{5}. Suppose $f\in C_TH^{s+0.5}_x\times C_TH^s_x$. 
Then the \emph{linear} Cauchy problem 
\begin{equation}\label{CauchyLin}
\partial_t\left(\begin{matrix}
\eta^\sharp \\
v^\sharp
\end{matrix}\right)
=
\left(\begin{matrix}
-T_{\mathfrak{v}^\sharp}\cdot\nabla_{G_0} & T_\lambda v^\sharp \\
-T_{h} & -T_{\mathfrak{v}^\sharp}\cdot\nabla_{G_0}
\end{matrix}\right)\left(\begin{matrix}
\eta^\sharp \\
v^\sharp
\end{matrix}\right)
+f
\end{equation}
in the unknown $(\eta,v)$ admits a unique solution 
$$
(\eta,v)\in \big(C_TH^{s+0.5}_x\times C_TH^s_x\big)\cap\big(C_T^1H^{s-1}_x\times C_T^1H^{s-1.5}_x\big),
$$
satisfying the following energy estimate for any real number $\alpha\leq s$:
$$
\begin{aligned}
\|\eta(t)\|_{H^{\alpha+0.5}_x}
+\|v(t)\|_{H^{\alpha}_x}
\leq C_\alpha e^{C_\alpha Rt}\left(\|\eta(0)\|_{H^{\alpha+0.5}_x}
+\|v(0)\|_{H^{\alpha}_x}
+\int_0^t\big\|f(\tau);{H^{\alpha+0.5}_x\times  H^\alpha_x}\big\|d\tau\right).
\end{aligned}
$$
\end{proposition}
\begin{proof}
If we define symbols $p,q,\gamma,a,b,c$ corresponding to $(\zeta,\phi)$ as in Subsection \ref{Symmetrization} (especially (\ref{DtpDtq})), and set $u=(T_p\zeta^\sharp,T_qv^\sharp)$, then the procedure in Subsection \ref{Symmetrization} transforms (\ref{CauchyLin}) into the following equivalent form:
\begin{equation}\label{CauchySymm}
\begin{aligned}
\partial_tu
&=\left(\begin{matrix}
 & T_\gamma \\
-T_\gamma & 
\end{matrix}\right)u
-T_{\mathfrak{v}^\sharp}\cdot\nabla_{G_0}
u
+\left(\begin{matrix}
\left(T_{a+b}-\big[T_{\mathfrak{v}^\sharp}\cdot\nabla_{G_0},T_p\big]\right)T_p^{-1} \\
\left(T_c-\big[T_{\mathfrak{v}^\sharp}\cdot\nabla_{G_0},T_q\big]\right)T_q^{-1}
\end{matrix}\right)u
\\
&\quad
+\left(\begin{matrix}
 & (T_pT_\lambda-T_\gamma T_q)T_q^{-1} \\
(-T_qT_h+T_\gamma T_p)T_p^{-1} & 
\end{matrix}\right)u
+\left(\begin{matrix}
T_p & \\
 & T_q
\end{matrix}\right)f.
\end{aligned}
\end{equation}

For any $\alpha\leq s$, we simply apply the operator $\Gamma_\alpha:=T_{(\gamma^{(1.5)})^{2\alpha/3}}$ to (\ref{CauchySymm}). We first notice that
$$
\big\|\Gamma_\alpha u\big\|_{L^2_x}
\simeq
\|u\|_{H^\alpha_x},
$$
and the implicit constants in this equivalence of norms depend only on $\|\zeta\|_{H^{s+0.5}_x}$. In fact, $\gamma^{(1.5)}(x,l)^{2\alpha/3}$ is a symbol of order $\alpha$ and involves only up to first order derivative of $\zeta$. Furthermore, by the symbolic calculus formula, $\Gamma_\alpha\cdot\Gamma_{-\alpha}=\mathrm{Id}+\Op\Sigma^{-1}_{<1/2}$, so at least when $\|\zeta\|_{H^{s+0.5}_x}$ stays close to 0 (which is the case by our assumption), the inverse of the operator $\Gamma_\alpha$ is a para-differential operator of order $-\alpha$. This gives the equivalence of norms. The estimate on the implicit constants is a direct consequence of Theorem \ref{Compo2}.

By Proposition \ref{OrderS}, $\big[\Gamma_\alpha,T_{\gamma}\big]$ is a para-differential operator of order $\alpha$. The symbols in this commutator involve only up to second order derivatives of $\zeta$. We next notice that $\big[\Gamma_\alpha,T_{\mathfrak{v}^\sharp}\cdot\nabla_{G_0}\big]$ is also a para-differential operators of order $\leq \alpha$. This is because commuting with a vector field does not increase the order of an operator. The commutator of $\Gamma_\alpha$ with
$$
\left(T_{a+b}-\big[T_{\mathfrak{v}^\sharp}\cdot\nabla_{G_0},T_p\big]\right)T_p^{-1},
\quad
\left(T_c-\big[T_{\mathfrak{v}^\sharp}\cdot\nabla_{G_0},T_q\big]\right)T_q^{-1}
$$
and 
$$
(T_pT_\lambda-T_\gamma T_q)T_q^{-1},
\quad
(-T_qT_h+T_\gamma T_p)T_p^{-1}
$$
are all of order $\leq\alpha$, due to Lemma \ref{ABZ4-10} and (\ref{Symmetrizer}). Furthermore, Proposition \ref{PSSymbol} ensures that
$$
\partial_t\Gamma_\alpha
=\frac{2\alpha}{3} T_{(\gamma^{(1.5)})^{2\alpha/3-1}\partial_t\gamma^{(1.5)}}
+\text{lower order para-differential operator}.
$$
Since $\gamma^{(1.5)}$ involves first order derivative of $\zeta$, while by the assumption we have $\partial_t\zeta\in C_T^0H^{s-1}_x$, it follows that $\partial_t\gamma^{(1.5)}$ is a symbol with $H^{s-2}\subset C^{s-3}_*$ regularity in $x$, so $\partial_t\Gamma_\alpha$ is still a para-differential operator of order $\alpha$. The operator norms of these are all controlled linearly by $R$, as a consequence of Theorem \ref{Compo2}.

To summarize, for any real number $\alpha\leq s$, we have
\begin{equation}\label{CauchySymmalpha}
\partial_t\Gamma_\alpha u=\left(\begin{matrix}
 & T_\gamma \\
-T_\gamma & 
\end{matrix}\right)\Gamma_\alpha u
-T_{\mathfrak{v}^\sharp}\cdot\nabla_{G_0} \Gamma_\alpha u
+\Upsilon_\alpha u
+\Gamma_\alpha\left(\begin{matrix}
T_p & \\
 & T_q
\end{matrix}\right)f,
\end{equation}
where $\Upsilon_\alpha$ is a para-differential operator of order $\alpha$, with operator norm controlled linearly by $R$. We now find, using that $T_\gamma$ and $T_{\mathfrak{v}^\sharp}\cdot\nabla_{G_0}$ are both approximately anti-self adjoint, the following a priori inequality for a solution $u$ of (\ref{CauchySymm}): for any real number $\alpha\leq s$,
$$
\frac{d}{dt}\|\Gamma_\alpha u(t)\|_{L^2_x}^2
\lesssim R\|\Gamma_\alpha u(t)\|_{L^2_x}^2
+\big\|f(\tau);{H^{\alpha+0.5}_x\times  H^\alpha_x}\big\|\cdot\|\Gamma_\alpha u(t)\|_{L^2_x}.
$$
Using the Grönwall inequality and norm equaivalence $\big\|\Gamma_\alpha u\big\|_{L^2_x}
\simeq
\|u\|_{H^\alpha_x}$, this implies the \emph{a priori} energy inequalities for a solution $u$ of (\ref{CauchySymm}), provided that it does exist:
$$
\|u(t)\|_{H^\alpha_x}
\leq C_\alpha e^{C_\alpha Rt}\left(\|u(0)\|_{H^\alpha_x}+\int_0^t\big\|f(\tau);{H^{\alpha+0.5}_x\times  H^\alpha_x}\big\|d\tau\right).
$$
The regularity $\partial_tu\in C^0_TH^{s-1.5}_x$ follows from the equation itself.

We may then apply the standard duality argument for linear hyperbolic systems. These a priori energy estimates do imply well-posedness of the linear Cauchy problem (\ref{CauchySymm}). It is important that $\alpha$ are allowed to be negative. See for example Section 6.3 in H\"{o}rmander's textbook \cite{Hormander1997}. The estimates for $u$ are then converted back to estimates for $(\eta,v)$.
\end{proof}

\subsection{Banach Fixed Point Argument}
We now formulate a Banach fixed point problem. We shall fix a sufficiently small $R>0$ to measure the size of the solution. As we shall see shortly, the major issue in closing a contraction argument is that we need to control some operator norms under \emph{weaker} regularity assumptions than $(\zeta,\phi)\in C_TH_x^{s+0.5}\times C_TH_x^s$. We thus seek aid from Proposition \ref{T_aNegIndex} constantly.

\textbf{The Metric Space and the Map.} 
We will be dealing with the good unknown $(\zeta,w)$ instead of $(\zeta,\phi)$. We set
$$
\mathcal{G}:\left(\begin{matrix}
\zeta \\
\phi
\end{matrix}\right)
\to
\left(\begin{matrix}
\zeta \\
w
\end{matrix}\right)
$$
the assignment of $(\zeta,\phi)$ to the corresponding good unknown. We already know that $\mathcal{G}$ is a diffeomorphism in a neighbourhood of zero in $C_TH^{s+0.5}_x\times C_TH^s_x$. 

Fix $\bar{\mathfrak{X}}_{R}$ to be the set of all 
$$
(\zeta,w)\in \big(C_TH^{s+0.5}_x\times C_TH^s_x\big)\cap\big(C_T^1H^{s-1}_x\times C_T^1H^{s-1.5}_x\big)
$$ 
such that $(\zeta,w)$ has norm $\leq R$ in that space, and also $(\zeta(0),\phi(0))$ has norm $\leq A_sR$ in that space, for some $A_s\ll1$ to be specified. We equip $\bar{\mathfrak{X}}_{R}$ with a \emph{weaker} metric
$$
\begin{aligned}
d\big((\zeta_1,w_1),(\zeta_2,w_2)\big)
:=\|\zeta_1-\zeta_2\|_{C_TH^{s-1}_x}
+\|w_1-w_2\|_{C_TH^{s-1.5}_x}.
\end{aligned}
$$
By weak compactness of bounded closed convex sets in Hilbert spaces, $(\bar{\mathfrak{X}}_R,d)$ is a complete metric space.

Now let $(\zeta,w)\in\bar{\mathfrak{X}}_R$ be given, and $(\zeta,\phi)=\mathcal{G}^{-1}(\zeta,w)$. We define the symbols $\lambda,h,\mathfrak{b}^\sharp,\mathfrak{v}^\sharp$ as in Section \ref{5}, and the mappings $f_1,f_2$ as in (\ref{f1f2}). Let us consider the \emph{linear} Cauchy problem for the unknown $(\eta,v)\in C_TH^{s+0.5}_x\times C_TH^s_x$:
\begin{equation}\label{ParaLin'}
\partial_t\left(\begin{matrix}
\eta^\sharp \\
v^\sharp
\end{matrix}\right)
=
\left(\begin{matrix}
-T_{\mathfrak{v}^\sharp}\cdot\nabla_{G_0} & T_\lambda \\
-T_{h} & -T_{\mathfrak{v}^\sharp}\cdot\nabla_{G_0}
\end{matrix}\right)\left(\begin{matrix}
\eta^\sharp \\
v^\sharp
\end{matrix}\right)
+\left(\begin{matrix}
f_1(\zeta,\phi)\\
f_2(\zeta,\phi)
\end{matrix}\right).
\end{equation}
Using the operator norm estimates in Subsection \ref{Symmetrization}, together with the quadratic estiamte for $f_1,f_2$ in (\ref{f1f2}), Proposition \ref{Energy} makes sure that the Cauchy problem for this linear hyperbolic system can be uniquely solved with given initial data. Thus, (\ref{ParaLin'}) has a unique solution $(\eta,v)\in C_TH_x^{s+0.5}\times C_TH^s_x$ with initial value being $(\zeta(0),w(0))$, satisfying the energy estimates
\begin{equation}\label{Energy'}
\begin{aligned}
\|\eta(t)\|_{H^{s+0.5}_x}+\|v(t)\|_{H^s_x}
&\leq C_se^{Rt}(A_sR+R^2t),\\
\|\partial_t\eta(t)\|_{H^{s-1}_x}+\|\partial_tv(t)\|_{H^{s-1.5}_x}
&\leq C_se^{Rt}(A_sR+R^2t)+C_sR^2.
\end{aligned}
\end{equation}
We set the \emph{solution operator} $\mathcal{S}$ to be the unique solution $(\eta,v)$ of (\ref{ParaLin'}) with initial data being $(\zeta(0),\phi(0))$.

The map $\mathscr{F}$ on $\bar{\mathfrak{X}}_R$ under consideration is thus defined by
$$
\mathscr{F}\left(\begin{matrix}
\zeta \\
w 
\end{matrix}\right)
:=\mathcal{S}\mathcal{G}^{-1}\left(\begin{matrix}
\zeta \\
w 
\end{matrix}\right).
$$
When $R\simeq0$ and $T$ is suitably small, if $A_s$ is also suitably small, then for $(\zeta,w)\in\bar{\mathfrak{X}}_R$, (\ref{Energy'}) implies 
$$
\begin{aligned}
\big\|\mathscr{F}(\zeta,w);{C_TH^{s+0.5}_x\times C_TH^{s}_x}\big\|
&\leq C_se^{C_sT}(A_s+C_sTR)R
\ll R\\
\big\|\partial_t\mathscr{F}(\zeta,w);{C_TH^{s-1}_x\times C_TH^{s-1.5}_x}\big\|
&\leq C_se^{C_sT}(A_s+C_sTR)R+C_sR^2
\ll R
\end{aligned}
$$
Thus a suitable choice of $A_s$ and $T$ makes sure that $\mathscr{F}$ maps $\bar{\mathfrak{X}}_{R}$ to itself.

\textbf{Contraction Argument.}
Let us now consider two different $(\zeta,w),(\zeta_1,w_1)\in\bar{\mathfrak{X}}_R$. We use void subscript or subscript 1 to denote functions or symbols constructed out of $(\zeta,w),(\zeta_1,w_1)\in\bar{\mathfrak{X}}_R$ respectively. Write $(\eta_j,v_j)=\mathscr{F}(\zeta_j,w_j)$. We would like to prove that for small $R>0$, there holds
\begin{equation}\label{Contraction}
\|\eta-\eta_1\|_{C_TH^{s-1}_x}
+\|v-v_1\|_{C_TH^{s-1.5}_x}
\ll d\big((\zeta,w),(\zeta_1,w_1)\big),
\end{equation}
which ensures $\mathscr{F}:\bar{\mathfrak{X}}_R\to\bar{\mathfrak{X}}_R$ is a contraction.

Write for simplicity $U=\eta-\eta_1$, $V=v-v_1$. We compute, by the definition of solution operator, that $U(0)=0$, $V(0)=0$, and
\begin{equation}\label{UV}
\begin{aligned}
\partial_t\left(\begin{matrix}
U \\
V
\end{matrix}\right)
&=\left(\begin{matrix}
-T_{\mathfrak{v}^\sharp}\cdot\nabla_{G_0} & T_{\lambda} \\
-T_{h} & -T_{\mathfrak{v}^\sharp}\cdot\nabla_{G_0}
\end{matrix}\right)\left(\begin{matrix}
U \\
V
\end{matrix}\right)\\
&\quad
+\left(\begin{matrix}
T_{\mathfrak{v}^\sharp-\mathfrak{v}_1^\sharp}\cdot\nabla_{G_0} & -T_{\lambda-\lambda_1} \\
T_{h-h_1} & T_{\mathfrak{v}^\sharp-\mathfrak{v}_1^\sharp}\cdot\nabla_{G_0}
\end{matrix}\right)\left(\begin{matrix}
\eta \\
v
\end{matrix}\right)
+\left(\begin{matrix}
f_1(\zeta,w)-f_1(\zeta_1,w_1)\\
f_2(\zeta,w)-f_2(\zeta_1,w_1)
\end{matrix}\right).
\end{aligned}
\end{equation}
(\ref{UV}) is exactly in the form indicated by Proposition \ref{Energy}. We just need to show that the $H^{s-1}_x\times H^{s-1.5}_x$ norm of the last two terms of the right-hand-side in (\ref{UV}) is controlled linearly by $d\big((\zeta,w),(\zeta_1,w_1)\big)$. But this is exactly the content of Lemma 6.6, Corollary 6.7 and Lemma 6.8 of \cite{ABZ2011}, which concerns with exactly the same quantities. If we want to reproduce the proof, we just constantly use Proposition \ref{SymbolVeryRough} and Proposition \ref{T_aNegIndex}. For example, the first-order symbol $h^{(1)}-h_1^{(1)}$ contains up to second order derivative of $\zeta$ and $\zeta_1$, so 
$$
\big\|h^{(1)}(x,l)-h_1^{(1)}(x,l)\big\|_{C^{s-4}_*}
\leq Cl\|\zeta-\zeta_1\|_{H^{s-1}_x}.
$$
By Proposition \ref{T_aNegIndex}, this implies that the para-differential operator $T_{h^{(1)}-h_1^{(1)}}$ is of order 2, and
$$
\big\|T_{h^{(1)}-h_1^{(1)}}\eta\big\|_{H^{s-1.5}_x}
\lesssim \|\zeta-\zeta_1\|_{H^{s-1}_x}\|\eta\|_{H^{s+0.5}_x}
\lesssim Rd\big((\zeta,w),(\zeta_1,w_1)\big).
$$

The final step then becomes simple. Applying Proposition \ref{Energy} to (\ref{UV}) with $\alpha=s-1.5$, we find
$$
\|U\|_{C_TH^{s-1}_x}+\|V\|_{C_TH^{s-1.5}_x}
\leq C_se^{C_sRT}TRd\big((\zeta,w),(\zeta_1,w_1)\big).
$$
Thus for $T$ suitably small, the mapping $\mathscr{F}$ is a contraction from $\bar{\mathfrak{X}}_R$ to itself, hence has a unique fixed point. The fixed point of $\mathscr{F}$ is exactly the solution of the para-linearized system (\ref{EQPara}), thus the original (\ref{EQ}).

\begin{remark}
In \cite{ABZ2011}, the existence and uniqueness for solution of the symmetrized capillary-gravity water waves system is proved as follows: one first prove energy estimate for a smoothed system with some parameter $\varepsilon$ (e.g. artificial viscosity), and show that a solution sequence converges to a genuine solution of the original one as $\varepsilon\to0$ by a weak compactness argument, then prove uniqueness by considering the energy estimate for the difference of two solutions, which is just (\ref{UV}). The reason that we would rather formulate a Banach fixed point theorem is that the latter is more constructive, and is in fact equivalent to the iterative method used in most literature on quasi-linear hyperbolic systems, for example Chapter 6 of \cite{Hormander1997}.
\end{remark}

\section*{Acknowledgement}
The author would like to thank Professor Carlos Kenig for weekly discussion on this project, and Professor Gigliola Staffilani for constant support. The author benefits a lot from discussion with Professor Jean-Marc Delort, Veronique Fischer, Isabelle Gallagher and David Jerison. Thanks also goes to the author's friend, Kai Xu, for comments on representation theory.

\bibliographystyle{alpha}
\bibliography{References}

\end{spacing}
\end{document}